\newcommand*{\mailto}[1]{\href{mailto:#1}{\nolinkurl{#1}}}
\newcommand{\C}{{\mathbb C}}
\newcommand{\bbC}{{\mathbb{C}}}
\newcommand{\bbN}{{\mathbb{N}}}
\newcommand{\bbR}{{\mathbb{R}}}
\newcommand{\bbZ}{{\mathbb{Z}}}
\newcommand{\bfI}{{\mathbf{I}}}
\newcommand{\bsA}{{\boldsymbol{A}}}
\newcommand{\bsB}{{\boldsymbol{B}}}
\newcommand{\bsD}{{\boldsymbol{D}}}
\newcommand{\bsF}{{\boldsymbol{F}}}
\newcommand{\bsH}{{\boldsymbol{H}}}
\newcommand{\bsI}{{\boldsymbol{I}}}
\newcommand{\bsJ}{{\boldsymbol{J}}}
\newcommand{\bsK}{{\boldsymbol{K}}}
\newcommand{\bsL}{{\boldsymbol{L}}}
\newcommand{\bsM}{{\boldsymbol{M}}}
\newcommand{\bsN}{{\boldsymbol{N}}}
\newcommand{\bsP}{{\boldsymbol{P}}}
\newcommand{\bsQ}{{\boldsymbol{Q}}}
\newcommand{\bsR}{{\boldsymbol{R}}}
\newcommand{\bsS}{{\boldsymbol{S}}}
\newcommand{\bsT}{{\boldsymbol{T}}}
\newcommand{\bsV}{{\boldsymbol{V}}}
\newcommand{\cB}{{\mathcal B}}
\newcommand{\cC}{{\mathcal C}}
\newcommand{\cD}{{\mathcal D}}
\newcommand{\cE}{{\mathcal E}}
\newcommand{\cH}{{\mathcal H}}
\newcommand{\cK}{{\mathcal K}}
\newcommand{\cO}{{\mathcal O}}
\newcommand{\cS}{{\mathcal S}}
\newcommand{\cT}{{\mathcal T}}
\newcommand{\cY}{{\mathcal Y}}
\newcommand{\gE}{{\mathfrak{E}}}
\newcommand{\gF}{{\mathfrak{F}}}
\newcommand{\e}{\varepsilon}
\DeclareMathOperator{\sign}{sign}
\DeclareMathOperator{\supp}{supp}
\DeclareMathOperator{\ran}{ran}
\DeclareMathOperator{\dom}{dom}
\DeclareMathOperator{\tr}{tr}
\DeclareMathOperator*{\nlim}{n-lim}
\DeclareMathOperator*{\slim}{s-lim}
\DeclareMathOperator*{\wlim}{w-lim}
\renewcommand{\Re}{\text{\rm Re}}
\renewcommand{\Im}{\text{\rm Im}}
\renewcommand{\ln}{\text{\rm ln}}
\newcommand{\loc}{\text{\rm{loc}}}
\newcommand{\beq}{\begin{equation}}
\newcommand{\enq}{\end{equation}}
\newcommand{\ind}{\operatorname{index}}
\newcommand{\no}{\notag}
\newcommand{\lb}{\label}
\newcommand{\f}{\frac}
\newcommand{\ol}{\overline}
\newcommand{\wti}{\widetilde}
\newcommand{\Oh}{O}
\newcommand{\hatt}{\widehat} 
\newcommand{\bi}{\bibitem}
\renewcommand{\ge}{\geqslant}
\renewcommand{\le}{\leqslant}
\let\geq\geqslant
\let\leq\leqslant
\def\theequation{\@arabic\c@equation}
\numberwithin{equation}{section}
\newtheorem{theorem}{Theorem}[section]
\newtheorem{proposition}[theorem]{Proposition}
\newtheorem{lemma}[theorem]{Lemma}
\newtheorem{corollary}[theorem]{Corollary}
\newtheorem{definition}[theorem]{Definition}
\newtheorem{hypothesis}[theorem]{Hypothesis}
\newtheorem{example}[theorem]{Example}
\theoremstyle{remark}
\newtheorem{remark}[theorem]{Remark}
\begin{document}

\numberwithin{equation}{section}
\allowdisplaybreaks

\title[Relatively trace class
  perturbations]{The index formula and the spectral shift function for 
relatively trace class perturbations}

\author[F.\ Gesztesy]{Fritz Gesztesy}   
\address{Department of Mathematics,
University of Missouri, Columbia, MO 65211, USA} 
\email{\mailto{gesztesyf@missouri.edu}}
\urladdr{\url{http://www.math.missouri.edu/personnel/faculty/gesztesyf.html}}

\author[F.\ Gesztesy]{Yuri Latushkin} 
\address{Department of Mathematics,
University of Missouri, Columbia, MO 65211, USA}
\email{\mailto{latushkiny@missouri.edu}}
\urladdr{\url{http://www.math.missouri.edu/personnel/faculty/latushkiny.html}}

\author[K.\ A.\ Makarov]{Konstantin A.\ Makarov}
\address{Department of Mathematics,
University of Missouri, Columbia, MO 65211, USA}
\email{\mailto{makarovk@missouri.edu}}
\urladdr{\url{http://www.math.missouri.edu/personnel/faculty/makarovk.html}} 

\author[F.\ Sukochev]{Fedor Sukochev} 
\address{School of Mathematics and Statistics, UNSW, Kensington, NSW 2052,
Australia} 
\email{\mailto{f.sukochev@unsw.edu.au}}

\author[Y.\ Tomilov]{Yuri Tomilov} 
\address{Faculty of Mathematics and Computer Science, Nicholas 
Copernicus University, ul.\ Chopina 12/18, 87-100 Torun, Poland, and Institute of Mathematics, Polish Academy of Sciences. \'Sniadeckich str. 8, 00-956 Warsaw, Poland}
\email{\mailto{tomilov@mat.uni.torun.pl}}

\thanks{Partially supported by the US National Science
Foundation under Grant NSF DMS-0754705, by the Research Board and Research Council of the University of Missouri, by the ARC, by the  
Marie Curie ''Transfer of Knowledge'' programme, project ``TODEQ'', and by 
a MNiSzW grant Nr.\ N201384834.}

\thanks{{\it Adv. Math.} {\bf 227}, 319--420 (2011).}

\date{\today}
\subjclass[2010]{Primary 47A53, 58J30; Secondary 47A10, 47A40.}
\keywords{Fredholm index, spectral flow, spectral shift function, perturbation determinants, relative 
trace class perturbations.}

\hspace*{-3mm} 
\begin{abstract} 
We compute the Fredholm index, $\ind(\bsD_{\bsA})$, of the operator 
$\bsD_\bsA^{} = (d/dt) + \bsA$ on $L^2(\bbR;\cH)$ associated with the operator path 
$\{A(t)\}_{t=-\infty}^{\infty}$, where $(\bsA f)(t) = A(t) f(t)$ for a.e.\ $t\in\bbR$, and 
appropriate $f \in L^2(\bbR;\cH)$, via the spectral shift function 
$\xi(\,\cdot\,;A_+,A_-)$ associated 
with the pair $(A_+, A_-)$ of asymptotic operators $A_{\pm}=A(\pm\infty)$ on the separable complex Hilbert space $\cH$ in the case when $A(t)$ is generally an unbounded (relatively trace class) perturbation of the unbounded self-adjoint operator $A_-$. 

We derive a formula (an extension of a formula due to Pushnitski) relating the 
spectral shift function $\xi(\,\cdot\,;A_+,A_-)$ for the pair $(A_+, A_-)$, and the corresponding spectral shift function 
$\xi(\,\cdot\,;\bsH_2, \bsH_1)$ for the pair of operators 
$(\bsH_2, \bsH_1)=(\bsD_\bsA^{} \bsD_\bsA^*, \bsD_\bsA^* \bsD_\bsA^{})$ in this relative trace class context, 
\begin{equation*}
\xi(\lambda; \bsH_2, \bsH_1)=\frac{1}{\pi}\int_{-\lambda^{1/2}}^{\lambda^{1/2}}
\frac{\xi(\nu; A_+,A_-)\, d\nu}{(\lambda-\nu^2)^{1/2}} \, 
\text{ for a.e.\ $\lambda>0$.} 
\end{equation*}

This  formula is then used to identify the Fredholm index of $\bsD_\bsA^{}$ with 
$\xi(0;A_+,A_-)$. In addition, we prove that $\ind(\bsD_\bsA^{})$ coincides with the 
spectral flow $\text{\rm SpFlow} (\{A(t)\}_{t=-\infty}^\infty)$ of the family 
$\{A(t)\}_{t\in\bbR}$ and also relate it to the 
(Fredholm) perturbation determinant for the pair $(A_+, A_-)$:
\begin{align*}
\ind (\bsD_\bsA^{}) &= \text{\rm SpFlow} (\{A(t)\}_{t=-\infty}^\infty)  \\
& = \xi(0;A_+,A_-) \\
&= \pi^{-1} \lim_{\varepsilon \downarrow 0}\Im\big(\ln\big({\det}_{\cH}
\big((A_+ - i \varepsilon I)(A_- - i \varepsilon I)^{-1}\big)\big)\big)  \\
& = \xi(0_+;\bsH_2, \bsH_1)      
\end{align*}
with the choice of the branch of $\ln({\det}_{\cH}(\cdot))$ on $\bbC_+$ such that 
\begin{equation*}
\lim_{\Im(z) \to +\infty}\ln\big({\det}_{\cH} \big((A_+ - z I)(A_- - z I)^{-1}\big)\big) = 0. 
\end{equation*}
 
We also provide some applications in the context of supersymmetric quantum mechanics to zeta function and heat kernel regularized spectral asymmetries and the eta-invariant. 
\end{abstract}

\maketitle

\newpage 

{\scriptsize{\tableofcontents}}
\normalsize

\section{Introduction}  \lb{s1}

Before attempting to describe a glimpse of the extensive history of the underlying problem at hand, viz., the computation of the Fredholm index for operators of the type $\bsD_\bsA^{} = (d/dt) + \bsA$ in $L^2(\bbR;\cH)$, using a variety of different approaches, we briefly describe the principal setup and the main results in this paper.  

Let $\{A(t)\}_{t\in\bbR}$ be a family of self-adjoint operators in the complex, separable Hilbert space $\cH$, subject to a relative trace class approach described in 
Hypothesis \ref{h2.1}, and denote by $\bsA$ the operator in $L^2(\bbR;\cH)$ 
defined by
\begin{align}
&(\bsA f)(t) = A(t) f(t) \, \text{ for a.e.\ $t\in\bbR$,}   \no \\
& f \in \dom(\bsA) = \bigg\{g \in L^2(\bbR;\cH) \,\bigg|\,
g(t)\in \dom(A(t)) \text{ for a.e.\ } t\in\bbR,     \lb{1.1} \\
& \quad t \mapsto A(t)g(t) \text{ is (weakly) measurable,} \,  
\int_{\bbR} \|A(t) g(t)\|_{\cH}^2 \, dt < \infty\bigg\}.   \no 
\end{align}
Our relative trace class setup ensures that $A(t)$ has self-adjoint limiting operators
\beq
  A_+=\lim_{t\to+\infty}A(t), \quad A_-=\lim_{t\to-\infty}A(t)    \lb{1.2}
\enq
in $\cH$ in an appropriate sense (detailed in Theorem \ref{t3.7}). The principal 
novelty in our approach concerns the fact that we permit relative trace class perturbations $B(t)$ (generally, unbounded) of the asymptotic self-adjoint operator 
$A_-$ such that  
\begin{equation}
A(t) = A_- + B(t), \quad t\in\bbR.    \lb{1.2a}
\end{equation}
With the possible exception of a spectral gap at zero, no other restrictions on the 
self-adjoint unperturbed operator $A_-$ are imposed in this paper. Especially, no 
discrete spectrum hypotheses will be made in this paper. 

The first principal result to be mentioned is the extension of the following trace 
formula to our relative trace class approach,  
\begin{align} \lb{1.3}
\begin{split} 
\tr_{L^2(\bbR;\cH)}\big((\bsH_2 - z \, \bsI)^{-1}-(\bsH_1 - z \, \bsI)^{-1}\big) 
=\frac{1}{2z}\tr_\cH \big(g_z(A_+)-g_z(A_-)\big),&   \\ 
z\in\C\backslash [0,\infty),&
\end{split} 
\end{align}
where we used the abbreviations 
\begin{align}
g_z(x) & = x(x^2-z)^{-1/2}, \quad z\in\C\backslash [0,\infty), \; x\in\bbR,    \lb{1.4} \\
\bsD_\bsA^{} & = \f{d}{dt} + \bsA,
\quad \dom(\bsD_\bsA^{})= \dom(d/dt) \cap \dom(\bsA_-),     \lb{1.5}   \\
\bsH_1 &= \bsD_\bsA^* \bsD_\bsA^{}, \quad \bsH_2 = \bsD_\bsA^{} \bsD_\bsA^*,   
\lb{1.6}
\end{align}
and $\bsA_-$  in $L^2(\bbR;\cH)$ represents the self-adjoint (constant fiber) operator defined according to \eqref{1.1} (with $A(t)$ replaced throughout by $A_-$, cf.\ \eqref{2.DA-}). 

The trace formula \eqref{1.3} then implies the next main result, an extension of 
Pushnitski's formula \cite{Pu08} to our relative trace class formalism:
\beq
\xi(\lambda; \bsH_2, \bsH_1)=\frac{1}{\pi}\int_{-\lambda^{1/2}}^{\lambda^{1/2}}
\frac{\xi(\nu; A_+,A_-)\, d\nu}{(\lambda-\nu^2)^{1/2}} \, 
\text{  for a.e.\ $\lambda>0$.}  \lb{1.7}
\enq
Here $\xi(\,\cdot\,; \bsH_2, \bsH_1)$ and $\xi(\,\cdot\,; A_+,A_-)$ denote appropriately defined spectral shift functions associated with the pairs of self-adjoint operators 
$(\bsH_2, \bsH_1)$ and $(A_+, A_-)$, respectively.

Assuming that $A_-$ and $A_+$ are boundedly invertible, we prove that
$\bsD_\bsA^{}$ is a Fredholm operator in $L^2(\bbR;\cH)$. Moreover, one of the 
main results of this paper is the following pair of formulas relating the 
Fredholm index of $\bsD_\bsA^{}$ with the spectral shift function 
$\xi(\,\cdot\,; A_+, A_-)$ (for which formula \eqref{1.7} is then the major input in the proof), and with the trace of a difference of the Morse spectral projections corresponding to 
$(A_+, A_-)$, 
\beq \lb{1.8}
\ind(\bsD_\bsA^{})=\xi(0; A_+, A_-) = {\tr}_{\cH}\big(E_{A_-}((-\infty,0)) - 
E_{A_+}((-\infty,0))\big).    
\enq
Here
$\{E_{T}(\lambda)\}_{\lambda\in\bbR}$ denotes the family of spectral projections associated with the self-adjoint operator $T$. 

However, our results go considerably beyond \eqref{1.8} in the sense that we also establish the detailed connection between the spectral flow for the path 
$\{A(t)\}_{t=-\infty}^{\infty}$ of self-adjoint Fredholm operators and the Fredholm 
index of $\bsD_\bsA^{}$. More precisely, introducing the spectral flow 
$\text{\rm SpFlow} (\{A(t)\}_{t=-\infty}^\infty)$ as in Definition \ref{defSPF}, and recalling the definition of the index of a pair of Fredholm projections in Definition \ref{defFRP}, 
assuming Hypothesis \ref{h2.1} and supposing that $0 \in \rho(A_+)\cap\rho(A_-)$, we 
prove that the pair $\big(E_{A_+}((-\infty,0)),E_{A_-}((-\infty,0))\big)$ of Morse projections is Fredholm and that the following series of equalities holds:
\begin{align}
\ind (\bsD_\bsA^{}) & = \text{\rm SpFlow} (\{A(t)\}_{t=-\infty}^\infty)   \lb{1.9} \\ 
& = \xi(0_+; \bsH_2, \bsH_1)    \lb{1.9a} \\
& = \xi(0; A_+, A_-)   \lb{1.10} \\  
& = \ind(E_{A_-}((-\infty,0)),E_{A_+}((-\infty,0)))    \lb{1.11} \\
& = {\tr}_{\cH}(E_{A_-}((-\infty,0))-E_{A_+}((-\infty,0)))     \lb{1.12} \\ 
& = \pi^{-1} \lim_{\varepsilon \downarrow 0}\Im\big(\ln\big({\det}_{\cH}
\big((A_+ - i \varepsilon I)(A_- - i \varepsilon I)^{-1}\big)\big)\big),       \lb{1.13}
\end{align}
with a choice of branch of $\ln({\det}_{\cH}(\cdot))$ on $\bbC_+$ analogous to 
\eqref{1.14A} below. 

For completeness we note that $\xi(\,\cdot\,; A_+,A_-)$ can be shown to satisfy  
\beq
\xi(\lambda; A_+,A_-) = \pi^{-1}\lim_{\e\downarrow 0} 
\Im(\ln(D_{A_+/A_-}(\lambda+i\e))) \, 
\text{ for a.e.\ } \, \lambda\in\bbR,    \lb{1.14}
\enq
and we make the choice of branch of $\ln(D_{A_+/A_-}(\cdot))$ on $\bbC_+$ such that 
\begin{equation}
\lim_{\Im(z) \to +\infty}\ln(D_{A_+/A_-}(z)) = 0.       \lb{1.14A}  
\end{equation} 
Here 
\begin{equation}
D_{T/S}(z) = {\det}_{\cH} ((T-z I)(S- z I)^{-1}) = {\det}_{\cH}(I+(T-S)(S-z I)^{-1}), 
\quad z \in \rho(S),     \lb{1.15}
\end{equation}
denotes the perturbation determinant for the pair of operators $(S,T)$ in $\cH$, 
assuming $(T-S)(S-z_0)^{-1} \in \cB_1(\cH)$ for some (and hence for all) 
$z_0 \in \rho(S)$. In addition, we recall M.\ Krein's celebrated  trace formula associated 
with the pair $(A_+, A_-)$, 
\begin{align} 
\begin{split} 
\frac{d}{dz}\ln(D_{A_+/A_-}(z)) &= - {\tr}_{\cH}\big((A_+ - z I)^{-1}-(A_- - z I)^{-1}\big)   \\
& = \int_{\bbR}\frac{\xi(\lambda; A_+, A_-)\,d\lambda}{(\lambda-z)^2},    \quad 
z \in \bbC\backslash\bbR.      \lb{1.16}  
\end{split} 
\end{align} 
Analogous formulas apply of course to $\xi(\,\cdot\, ; \bsH_2, \bsH_1)$ in connection 
with the pair $(\bsH_2, \bsH_1)$.

Arguably, equations \eqref{1.3}, \eqref{1.7}, and \eqref{1.9}--\eqref{1.13} represent the central results of this paper to be developed in subsequent sections.  

The concept of spectral flow has also been developed for Breuer--Fredholm 
operators in semifinite von Neumann algebras (see, e.g., \cite{ACS07}, \cite{BCPRSW06}, \cite{Ph96}). In this context a result analogous to those that form the theme of this paper was proved. In particular, a result relating the spectral flow 
and index in the setting of Atiyah's $L^2$-index theorem was derived in 
\cite[Theorem\ 8.4]{BCPRSW06}. Using the fact that the spectral shift function can 
also be defined when working with semifinite von Neumann algebras, it is likely that
extensions of some results of this paper can be made to this wider setting.

Before describing the contents of our paper we now turn to the relevant history of this subject and a proper placement of our results in this context. Since it is impossible to do justice to a discussion of index theory for elliptic differential operators since the pioneering work of Atiyah and Singer, we only confine ourselves referring to a few 
research monographs (see, e.g., \cite{BGV92}, \cite{BB85}, \cite{BBW93}, \cite{Es98}, \cite{Gi84}, \cite{LM89}, \cite{Me93}, \cite{Mu87}, \cite{Pa65}, \cite{Ro88} and the detailed references cited therein). Operators of the form 
$\bsD_{\bsA} = (d/dt) + \bsA$ were studied by Atiyah, Patodi, and Singer 
\cite{APS75}--\cite{APS76} with $A(t)$, $t\in\bbR$, a first-order 
elliptic differential operator on a compact odd-dimensional manifold with the asymptotes $A_\pm$ boundedly invertible and $A_\pm$, $A(t)$, $t\in\bbR$, assumed to have purely 
discrete spectrum. In particular, the idea that the Fredholm index of $D_A$ equals  
the spectral flow of the family (path) of self-adjoint operators $\{A(t)\}_{t=-\infty}^{\infty}$ was put forward in this series of papers. An abstract theorem concerning the equality of the Fredholm index of 
$\bsD_{\bsA}$ and the spectral flow of the family of self-adjoint operators 
$\{A(t)\}_{t=-\infty}^{\infty}$ under the assumption of a $t$-independent domain for 
$A(t)$ which embeds densely and compactly in $\cH$, with boundedly invertible asymptotes $A_\pm$, was proved by Robbin and Salamon \cite{RS95}. This covered 
the abstract case with purely discrete 
spectra for $A_\pm$, $A(t)$, $t\in\bbR$. This paper contains a fascinating array of applications including Morse theory, Floer homology, Morse and Maslov indices, 
Cauchy--Riemann operators, all the way to oscillation theory of (matrix-valued) one-dimensional Schr\"odinger operators. In particular, both, finite and infinite-dimensional cases are treated. An extension of this approach to the Banach space setting appeared in \cite{Ra04}. Examples in which the Fredholm index and the spectral flow cease to coincide and the Fredholm index not only depends on the endpoints $A_\pm$ of the operator path, but on the path itself, are discussed in \cite{AM03}. In a related setting, necessary and sufficient conditions for $\bsD_{\bsA}$ to be Fredholm and an index formula for operators of the form $\bsD_{\bsA}$, given in terms of exponential dichotomies, can be found in \cite{LP08}, \cite{LT05} and the literature cited 
therein; the operator semigroups generated by the operators of this form were 
studied in \cite[Chapter 3]{CL99}.


These references primarily center around  the equality of the Fredholm index and 
the spectral flow as expressed in \eqref{1.9}, a fundamental part of modern index 
theory. However, the connections with the additional equalities in 
\eqref{1.9a}--\eqref{1.13} require quite different ingredients 
whose roots lie at the heart of scattering theory for the pair of self-adjoint operators 
$(\bsH_2, \bsH_1)$ and, especially, that of 
$(A_+, A_-)$, the asymptotes of the operator path $\{A(t)\}_{t=-\infty}^{\infty}$. In particular, we note that the spectral shift function $\xi(\lambda;A_+,A_-)$ (and hence 
boundary values of the perturbation determinant $D_{A_+/A_-}(\lambda+i\e)$ as 
$\varepsilon \downarrow 0$ in \eqref{1.14}) for a.e.\ 
$\lambda \in \sigma_{\rm ac}(A_\pm)$ is directly related to the determinant of 
the $\lambda$-dependent scattering matrix via the 
celebrated Birman--M.\ G.\ Krein formula \cite{BK62}. It is this additional scattering theoretic 
ingredient which represents one of the principal contributions of this paper, and, as 
evidenced in \eqref{1.9a}--\eqref{1.13}, considerably enhances the usual focus on the equality of the Fredholm index and the spectral flow.  

The first relations between Fredholm index theory and the spectral shift function 
$\xi(\,\cdot\,; \bsH_2, \bsH_1)$ were established by Boll\'e, Gesztesy, Grosse, Schweiger, and Simon \cite{BGGSS87}. In fact, inspired by index calculations of Callias \cite{Ca78} 
in connection with noncompact manifolds, the more general notion of the Witten index was studied and identified with $\xi(0_+; \bsH_2, \bsH_1)$ in \cite{BGGSS87} and 
\cite{GS88} (see also \cite{Ge86}, \cite[Ch.\ 5]{Th92}). The latter created considerable  interest, especially, in connection with certain aspects of supersymmetric quantum mechanics. Since a detailed list of references in this context is beyond the scope of this paper we only refer to \cite{An89}, \cite{An90}, \cite{An90a}, \cite{An93}, \cite{An94}, 
\cite{BS78}, \cite{Ko09}, \cite{NS86}, \cite[Ch.\ 5]{Th92} and the detailed lists of references cited therein. While \cite{BGGSS87} and \cite{Ge86} focused on index theorems for concrete one and two-dimensional  supersymmetric systems (in particular, the trace formula \eqref{1.3} and the function $g_z(\cdot)$ were discussed in 
\cite{BGGSS87} and \cite{Ge86} in the special case where $\cH = \bbC$), \cite{GS88} treated abstract Fredholm and Witten indices in terms of the spectral shift function 
$\xi(\,\cdot\,; \bsH_2, \bsH_1)$ and proved their invariance with respect to appropriate classes of perturbations. Soon after, a general abstract approach to supersymmetric scattering theory involving the spectral shift function was developed by Borisov, M\"uller, and Schrader \cite{BMS88} (see also \cite{Bu92}, \cite[Chs.\ IX, X]{Mu87}, \cite{Mu88}) and applied to relative index theorems in the context of manifolds Euclidean at infinity.

However, closest to the present paper at hand, and the prime motivation for writing 
it, is the recent work by Pushnitski \cite{Pu08} in which he went essentially beyond 
the discrete spectrum hypothesis imposed on $A_\pm$, $A(t)$, $t\in\bbR$, by Robbin 
and Salamon in \cite{RS95}. Basically, Pushnitski replaced the discrete spectrum hypothesis by the assumption 
of an arbitrary self-adjoint operator $A_-$ in $\cH$ and by imposing that $B(\cdot)$ 
in \eqref{1.2a} is trace norm differentiable and satisfies the integrability condition
\begin{equation}
\int_{\bbR} \|B'(t)\|_{\cB_1(\cH)} \, dt < \infty.     \lb{1.18}
\end{equation}
Assuming that $A_-$ and $A_+$ are boundedly invertible, Pushnitski proved that
\begin{align}
\begin{split} 
\ind(\bsD_\bsA^{}) &= \xi(0_+; \bsH_2, \bsH_1)   \\
& = \xi(0; A_+, A_-)      \lb{1.19}
\end{split} 
\end{align} 
(cf.\ \eqref{1.9a}, \eqref{1.10}) and indicated why this might imply \eqref{1.9}. Most importantly, perhaps, he proved the trace formula \eqref{1.3} and used it to derive 
his remarkable formula \eqref{1.7}. This effectively removed any discrete spectrum 
assumptions in this context. (Very recently another derivation of \eqref{1.9} without 
any discrete spectrum hypothesis was given in \cite{AW09}, but without entering a discussion of \eqref{1.9a}--\eqref{1.13}.) In the special case where $\cH$ is 
finite-dimensional, the trace formula \eqref{1.3} was first proved by Callias \cite{Ca78}.

Returning to the content of this paper, our relative trace class hypotheses detailed 
in Hypothesis \ref{h2.1} essentially replaces Pushnitski's assumption \eqref{1.18} by 
\begin{equation}
\int_{\bbR} \|B'(t) (|A_-| + I)^{-1}\|_{\cB_1(\cH)} \, dt < \infty     \lb{1.20}
\end{equation}
and certain additional technical conditions, which therefore permit the treatment 
of unbounded operators $B(\cdot)$ in $\cH$. This extension, however, comes at the price of considerably more involved proofs at every stage in this paper. In particular, we 
are using the theory of double operator integrals to justify the trace class 
property of $[g_z(A_+) - g_z(A_-)]$ (cf.\ the right-hand side of the trace formula in 
\eqref{1.3}). Moreover, the assumptions that we impose on the perturbation $B(t)$
and on $B'(t)$ are so general that some fairly delicate analysis of measurability issues 
is required (cf.\ Appendix \ref{sA} and \cite{GGST10}). 

The paper is organized as follows: In Section \ref{s2} we introduce our principal 
Hypothesis \ref{h2.1} and formulate our principal results. Our setup of relatively trace class perturbations is examined in great detail in Section \ref{s3}. Section \ref{s4} 
is of preliminary character and proves a variety of results on $\bsD_{\bsA_-}^{}$, 
$\bsD_{\bsA}$ and sets up the quadratic forms which define $\bsH_j$, $j=1,2$. In Sections \ref{s5} and \ref{s6} we deal with the left-hand side and the right-hand side of the main trace formula \eqref{1.3}, respectively. Whereas Section \ref{s5} employs various quadratic form perturbation results and associated resolvent equations, 
Section \ref{s6} employs the theory of double operator integrals (DOI) originally pioneered by Daletskii and S.\ G.\ Krein and, especially, by Birman and Solomyak. Section \ref{s7} is devoted to a careful introduction and study of the spectral shift function $\xi(\,\cdot\,;A_+,A_-)$ corresponding to the pair 
$(A_+,A_-)$. The spectral shift function $\xi(\,\cdot\,; \bsH_2, \bsH_1)$ associated 
with the pair $(\bsH_2,\bsH_1)$ is then introduced in Section \ref{s8} and the 
fundamental formula \eqref{1.7} as well as the fact that 
$\ind(\bsD_\bsA^{}) = \xi(0_+; \bsH_2, \bsH_1) =\xi(0; A_+, A_-)$ are proved. In addition, some applications to supersymmetric quantum mechanics including abstract formulas for the zeta function  and heat kernel regularized Atiyah--Patodi--Singer (APS) spectral asymmetry and the associated the eta-invariant are provided. Our final 
Section \ref{s9} details the connection between the Fredholm index and the 
spectral flow and proves the remaining equalities in \eqref{1.9}--\eqref{1.13}. 
Appendix \ref{sA} is of a technical nature and takes a close look at operators of 
the type $\bsA$ in \eqref{1.1} and establishes a precise connection with the notion 
of direct integrals over the operators $A(t)$, $t\in\bbR$, with respect to the Lebesgue 
measure $dt$. Appendix \ref{sB} is devoted to a proof of the trace norm analyticity 
of $[g_z(A_+) - g_z(A_-)]$, $z\in\bbC\backslash [0,\infty)$.  
 
Finally, we briefly summarize some of the notation used in this paper: Let $\cH$ be a separable complex Hilbert space, $(\cdot,\cdot)_{\cH}$ the scalar product in $\cH$ (linear in the second factor), and $I$ the identity operator in $\cH$.
Next, let $T$ be a linear operator mapping (a subspace of) a
Banach space into another, with $\dom(T)$, $\ran(T)$, and $\ker(T)$ denoting the
domain, range, and kernel (i.e., null space) of $T$. The closure of a closable 
operator $S$ is denoted by $\ol S$. 

The spectrum, essential spectrum, discrete spectrum, point spectrum, and resolvent set of a closed linear operator in $\cH$ will be denoted by $\sigma(\cdot)$, $\sigma_{\rm ess}(\cdot)$, $\sigma_{\rm d}(\cdot)$, $\sigma_{\rm p}(\cdot)$, and $\rho(\cdot)$, respectively. The strongly right continuous family of spectral projections of a self-adjoint operator $S$ in $\cH$ will be denoted by $E_S(\lambda)$, $\lambda \in \bbR$. (In 
particular, $E_S(\lambda) = E_S((-\infty,\lambda])$, 
$E_S((-\infty, \lambda)) = \slim_{\varepsilon\downarrow 0} E_S(\lambda - \varepsilon)$, and $E_S((\lambda_1,\lambda_2]) = E_S(\lambda_2) - E_S(\lambda_1)$, 
$\lambda_1 < \lambda_2$, $\lambda, \lambda_1, \lambda_2 \in\bbR$.)

The Banach spaces of bounded and compact linear operators on $\cH$ are
denoted by $\cB(\cH)$ and $\cB_\infty(\cH)$, respectively. Similarly,
the Schatten--von Neumann (trace) ideals will subsequently be denoted
by $\cB_p(\cH)$, $p\in (0,\infty)$. Analogous notation $\cB(\cH_1,\cH_2)$,
$\cB_\infty (\cH_1,\cH_2)$, etc., will be used for bounded, compact,
etc., operators between two Hilbert spaces $\cH_1$ and $\cH_2$. We also 
use the notation $\tr_{\cK}(\cdot)$ for the trace in the Hilbert space $\cK$. 
We use symbols $\nlim$, $\slim$ and $\wlim$ to denote the operator norm limit 
(i.e., convergence in the topology of $\cB(\cH)$), and the operator strong and weak limit. 

Throughout, we use the following functions:
\begin{align} 
& g_z(x) = x(x^2-z)^{-1/2},\quad g(x)  = g_{-1}(x) = x(x^2+1)^{-1/2},  \lb{dgvk1} \\
& \varkappa_z(x)=(x^2-z)^{1/2}, \quad 
\varkappa(x)  = \varkappa_{-1}(x) = (x^2+1)^{1/2},   \lb{dgvk2}  \\
& \hspace*{4.9cm}   z\in\C\backslash [0,\infty),\; x\in\bbR.   \no 
\end{align}

Let $A=(A)^\ast$ be a self-adjoint (and generally, unbounded) 
operator on a separable Hilbert space $\cH$, then one can introduce 
the standard scale of spaces $\cH_m(A)$,
$m\in\bbZ$ ($\cH_{0}=\cH$) associated with $A$. In particular, 
$\cH_1(A)$ is given by
$\cH_1(A)= (\dom(A), \|\cdot\|_{\cH_1(A)})$ the domain of $A$ 
equipped with the graph norm
\begin{equation}
\|f\|_{\cH_1(A)}^2 = \|A f\|_{\cH}^2 + \|f\|_{\cH}^2, \quad f \in 
\dom(A),   \lb{grA-}
\end{equation}
and the obvious scalar product $(\cdot,\cdot)_{\cH_1(A)}$ induced by 
\eqref{grA-}, rendering $\cH_1(A)$ a Hilbert space. In addition, one 
notes that
$ \varkappa (A)= (A^2 + I )^{1/2}$ is the isometric isomorphism between
$\cH_1(A)$ and $\cH$. Similarly, $\cH_2(A)=(\dom(A^2), 
\|\cdot\|_{\cH_2(A)})$ denotes
the domain of $A^2$ equipped with the corresponding graph norm. We 
recall that, of course,
\begin{equation}\lb{dfW}
\dom(A^2)=\{w\in\dom(A) \subseteq \cH \,|\, A w \in \dom(A)\}.
\end{equation}

Hilbert spaces of the type $L^2(\bbR; dt; \cH)$ will be denoted by $L^2(\bbR; \cH)$ since only the Lebesgue measure on $\bbR$ will be involved unless explicitly stated otherwise. Analogously, we will also use the shorthand notation $L^2(\bbR; \cB)$ for $L^2(\bbR; dt; \cB)$ in cases where $\cB$ is a Banach space.

Linear operators acting in the Hilbert space $L^2(\bbR; \cH)$ as defined in 
\eqref{1.1}, denoted by boldface letters, $\bsA$, $\bsB$, etc., play a special role in this paper and are discussed in some detail in Appendix \ref{sA}. 

 Given a pair $(A_-,A_+)$ of self-adjoint operators in $\cH$, we will use \eqref{dgvk2} to obtain operators $\varkappa(A_\pm)$, $\varkappa_z(A_\pm)$ in $\cH$ and $\varkappa(\bsA_-)$, $\varkappa_z(\bsA_-)$ in $L^2(\bbR; \cH)$; sometimes, in proofs, we abbreviate:
 \begin{align}
 \varkappa&=\varkappa_-=\varkappa(A_-)=(A_-^2+I)^{1/2},\quad \varkappa_+=\varkappa(A_+)=(A_+^2+I)^{1/2},\label{dvk}\\
 \widehat \varkappa&= \widehat \varkappa_-=\varkappa(\bsA_-)=(\bsA_-^2+\bfI)^{1/2},\quad \widehat \varkappa_-=\varkappa_z(\bsA_-)=(\bsA_-^2-z\bfI)^{1/2},\label{dhvk}
 \end{align}
where the operators in \eqref{dhvk} are acting in
$L^2(\bbR; \cH)$ and  $\bsA_-$ is the constant fiber operator as defined in \eqref{1.1} with $A(t)=A_-$, $t\in\bbR$. 

Finally, $\bbC_+= \{z\in\bbC\,|\, \Im(z) > 0\}$ denotes the open complex upper half-plane.

\section{Principal Results}\lb{s2}

In this section we state our main hypotheses and principal results.

Throughout, we consider a family of closed, symmetric, densely
defined (generally, unbounded) operators $B(t)$, $t\in\bbR$, that are
infinitesimally bounded with respect to $A_-$, and whose weak
derivative is given by the operators $B'(t)$,
$t\in\bbR$, that are relatively trace class with respect to $A_-$ in
the following sense:

\begin{hypothesis} \lb{h2.1}
Suppose $\cH$ is a complex, separable Hilbert space. \\
$(i)$ Assume $A_-$ is self-adjoint on $\dom(A_-) \subseteq \cH$. \\
$(ii)$ Suppose there exists a family of operators $B(t)$, $t\in\bbR$,
closed and symmetric in $\cH$, with $\dom(B(t)) \supseteq \dom(A_-)$, $t\in\bbR$. \\
$(iii)$ Assume there exists a family of operators $B'(t)$,
$t\in\bbR$, closed and symmetric in $\cH$, with 
$\dom(B'(t)) \supseteq \dom(A_-)$, such that the family
$B(t)(|A_-| + I)^{-1}$, $t\in\bbR$, is weakly locally absolutely continuous, and for a.e.\ $t\in\bbR$,  
\beq
\frac{d}{dt} (g,B(t)(|A_-| + I)^{-1}h)_{\cH}
=(g,B'(t) (|A_-| + I)^{-1}h)_{\cH}, \quad g, h\in\cH.     \lb{2.13h}
\enq
$(iv)$ Assume that $B'(t)(|A_-| + I)^{-1}\in\cB_1(\cH)$,  $t\in\bbR$, and
\begin{equation}  \lb{2.13i}
\int_\bbR \big\|B'(t) (|A_-| + I)^{-1}\big\|_{\cB_1(\cH)}\,dt<\infty.
\end{equation}
$(v)$ Suppose that the families 
\begin{equation}
\big\{\big(|B(t)|^2 + I\big)^{-1}\big\}_{t\in\bbR} \, \text{ and } \, 
\big\{\big(|B'(t)|^2 + I\big)^{-1}\big\}_{t\in\bbR}      \lb{2.13B} 
\end{equation}
are weakly measurable $($cf.\ Definition \ref{dA.1}\,$(ii)$$)$.
\end{hypothesis}

For notational simplicity later on, $B'(t)$ was defined for all $t\in\bbR$ in 
Hypothesis \ref{h2.1}\,$(iii)$; it would have been possible to introduce it for a.e.\ $t\in\bbR$ 
from the outset.

We refer to Section \ref{s3} for a thorough discussion of the implications 
of Hypothesis \ref{h2.1} and to Appendix \ref{sA} for a discussion of measurability 
questions of families of closed operators.

As discussed in detail in Section \ref{s3} (cf.\ Theorem \ref{t3.7}), Hypothesis \ref{h2.1} 
implies the existence of a family of self-adjoint operators $\{A(t)\}_{t\in\bbR}$ in $\cH$ given by
\begin{equation}
A(t) = A_- + B(t), \quad \dom(A(t)) = \dom(A_-), \; t\in\bbR,    \lb{2.A(t)}
\end{equation}
as well as a self-adjoint operator $A_+$ in $\cH$ such that
\begin{equation}
\dom(A_+) = \dom(A_-)     \lb{2.dA+}
\end{equation}
and
\begin{equation}
\nlim_{t\to \pm \infty}(A(t) - z I)^{-1} = (A_{\pm} - z I)^{-1}, \quad z\in\bbC\backslash\bbR.    \lb{2.Apm}
\end{equation}
We therefore also introduce
\begin{equation}
B_- = 0, \quad B_+ = \ol{(A_+ - A_-)}, \quad \dom(B_+) \supseteq \dom(A_-), 
\end{equation}
and note that 
\begin{equation}
A_+ = A_- + B_+, \quad \dom(A_+) = \dom(A_-).    \lb{2.A+B+}
\end{equation}

Next, let $\bsA$ in $L^2(\bbR;\cH)$ be then associated with the family $\{A(t)\}_{t\in\bbR}$ in 
$\cH$ by
\begin{align}
&(\bsA f)(t) = A(t) f(t) \, \text{ for a.e.\ $t\in\bbR$,}   \no \\
& f \in \dom(\bsA) = \bigg\{g \in L^2(\bbR;\cH) \,\bigg|\,
g(t)\in \dom(A(t)) \text{ for a.e.\ } t\in\bbR,     \lb{2.bfA} \\
& \quad t \mapsto A(t)g(t) \text{ is (weakly) measurable,} \,  
\int_{\bbR} \|A(t) g(t)\|_{\cH}^2 \, dt < \infty\bigg\}.    \no
\end{align}

To state our results, we start by introducing in $L^2(\bbR;\cH)$ the operator
\begin{equation}
\bsD_\bsA^{} = \f{d}{dt} + \bsA,
\quad \dom(\bsD_\bsA^{})= \dom(d/dt) \cap \dom(\bsA_-).   \lb{2.DA}
\end{equation}
Here the operator $d/dt$ in $L^2(\bbR;\cH)$  is defined by
\begin{align}
\begin{split}
& \bigg(\f{d}{dt}f\bigg)(t) = f'(t) \, \text{ for a.e.\ $t\in\bbR$,} 
\label{2.ddt}  \\
& \, f \in \dom(d/dt) = \big\{g \in L^2(\bbR;\cH) \, \big|\,
g \in AC_{\loc}(\bbR; \cH), \, g' \in L^2(\bbR;\cH)\big\},
\end{split}
\end{align}
especially, 
\begin{align}
& g \in AC_{\loc}(\bbR; \cH) \, \text{ if and only if $g$ is of the form }        \lb{2.ac} \\
& \quad  g(t) = g(t_0) + \int_{t_0}^t h(s) \, ds, \; t, t_0 \in \bbR, \, \text{ for some } \, 
h \in L^1_{\loc}(\bbR;\cH), \text{ and } \, g' = h \, a.e.   \no 
\end{align} 
(The integral in \eqref{2.ac} is of course a Bochner integral.) In addition, 
$\bsA$ is defined in \eqref{2.bfA} and $\bsA_-$  in $L^2(\bbR;\cH)$ represents 
the self-adjoint (constant fiber) operator defined according to
\begin{align}
&(\bsA_- f)(t) = A_- f(t) \, \text{ for a.e.\ $t\in\bbR$,}   \no \\
& f \in \dom(\bsA_-) = \bigg\{g \in L^2(\bbR;\cH) \,\bigg|\,
g(t)\in \dom(A_-) \text{ for a.e.\ } t\in\bbR,    \no \\
& \quad t \mapsto A_- g(t) \text{ is (weakly) measurable,} \,  
\int_{\bbR} \|A_- g(t)\|_{\cH}^2 \, dt < \infty\bigg\}.    \lb{2.DA-}
\end{align} 

Assuming Hypothesis \ref{h2.1}, we will prove in Lemma \ref{l2.4} 
that the operator
$\bsD_\bsA^{}$ is densely defined and closed in $L^2(\bbR; \cH)$. Similarly, 
the adjoint operator
$\bsD_\bsA^*$ of $\bsD_\bsA^{}$ in $L^2(\bbR; \cH)$ is then given by
\begin{equation}
\bsD_\bsA^*=- \f{d}{dt} + \bsA, \quad
\dom(\bsD_\bsA^*) = \dom(d/dt) \cap \dom(\bsA_-) = \dom(\bsD_\bsA^{}).
\end{equation}

Using these operators, we define in $L^2(\bbR;\cH)$ the nonnegative 
self-adjoint operators
\beq\lb{dfnHtH}
\bsH_1=\bsD_\bsA^* \bsD_\bsA^{},\quad \bsH_2=\bsD_\bsA^{} \bsD_\bsA^*.
\enq
Finally, let us define the functions
\begin{align} \lb{dfngz}
\begin{split}
g_z(x) & = x(x^2-z)^{-1/2}, \quad z\in\C\backslash [0,\infty), \; x\in\bbR,  \\
g(x) & = g_{-1}(x) = x(x^2+1)^{-1/2},  \quad x\in\bbR.
\end{split}
\end{align}

Our first principal result relates the trace of the difference of the 
resolvents of $\bsH_1$ and $\bsH_2$ in $L^2(\bbR;\cH)$, and the trace 
of the difference of $g_z(A_+)$ and $g_z(A_-)$ in $\cH$.

\begin{theorem}\lb{Ntrindf}
Assume Hypothesis \ref{h2.1} and define the operators
$\bsH_1$ and $\bsH_2$ as in \eqref{dfnHtH} and the function 
$g_z$ as in \eqref{dfngz}. Then 
\begin{align}
& \big[(\bsH_2 - z \, \bsI)^{-1}-(\bsH_1 - z \, \bsI)^{-1}\big] \in \cB_1\big(L^2(\bbR;\cH)\big),  
\quad z\in\rho(\bsH_1) \cap \rho(\bsH_2),      \lb{s.trH}  \\
& [g_z(A_+)-g_z(A_-)] \in \cB_1(\cH),  \quad 
z\in\C\backslash [0,\infty),     \lb{2.trgz}
\end{align}
and the following trace formula holds,  
\begin{align}
\begin{split}
   \lb{trfOLD}
   \tr_{L^2(\bbR;\cH)}\big((\bsH_2 - z \, \bsI)^{-1}-(\bsH_1 - z \, 
\bsI)^{-1}\big) = \frac{1}{2z}\tr_\cH \big(g_z(A_+)-g_z(A_-)\big),&  \\  
z\in\C\backslash [0,\infty).&  
\end{split}
\end{align} 
\end{theorem}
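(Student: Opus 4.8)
The plan is to prove the two trace‑class assertions \eqref{s.trH}, \eqref{2.trgz} and the identity \eqref{trfOLD} together, organized by the two sides of \eqref{trfOLD}. Both sides will be holomorphic on $\C\backslash[0,\infty)$ — the left‑hand side since $z\mapsto(\bsH_j-z\bsI)^{-1}$ are and the trace‑norm bounds below are locally uniform in $z$, the right‑hand side by the trace‑norm analyticity of $[g_z(A_+)-g_z(A_-)]$ (Appendix \ref{sB}) — so it suffices to prove \eqref{trfOLD} for $z<z_0$ with $z_0<0$ sufficiently negative and then continue analytically.

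\emph{Left‑hand side.} From the quadratic‑form descriptions of $\bsH_1,\bsH_2$ (Section \ref{s4}) one has, in the form sense, $\bsH_1=-d^2/dt^2+\bsA^2-\bsB'$ and $\bsH_2=-d^2/dt^2+\bsA^2+\bsB'$, where $\bsB'$ denotes multiplication by $B'(t)$; in particular $\bsH_2-\bsH_1=2\bsB'$, whence $R_2(z)-R_1(z)=-2\,R_2(z)\bsB'R_1(z)$ with $R_j(z):=(\bsH_j-z\bsI)^{-1}$. I make this precise by inserting $(|\bsA_-|+\bsI)^{\pm1}$: for $z$ negative enough $\bsH_j-z\bsI$ is bounded below, as a quadratic form, by a positive multiple of $\bsA_-^2+\bsI$ and by $-d^2/dt^2+\varepsilon_0^2$ (using that $B(t)$ and $B'(t)$ are infinitesimally $A_-$‑bounded), so $(|\bsA_-|+\bsI)R_j(z)$ and $R_j(z)(|\bsA_-|+\bsI)$ are bounded and
\[
R_2(z)-R_1(z)=-2\,\big[R_2(z)(|\bsA_-|+\bsI)\big]\,\bsM\,\big[(|\bsA_-|+\bsI)R_1(z)\big],\qquad \bsM:=(|\bsA_-|+\bsI)^{-1}\bsB'(|\bsA_-|+\bsI)^{-1},
\]
with $\bsM$ the multiplication operator by $(|A_-|+I)^{-1}B'(t)(|A_-|+I)^{-1}\in\cB_1(\cH)$. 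Factoring $\bsM$ fibrewise through its polar decomposition, $\bsM=\big(\bsV|\bsM|^{1/2}\big)\big(|\bsM|^{1/2}\big)$, it remains to show that $R_2(z)(|\bsA_-|+\bsI)\bsV|\bsM|^{1/2}$ and $|\bsM|^{1/2}(|\bsA_-|+\bsI)R_1(z)$ are Hilbert--Schmidt. For the latter, its Hilbert--Schmidt norm squared equals $\int_\bbR\int_\bbR\tr_\cH\big(K_1(t,s)^*|M(t)|K_1(t,s)\big)\,dt\,ds$, where $K_1(\cdot,\cdot)$ is the $\cB(\cH)$‑valued kernel of $(|\bsA_-|+\bsI)R_1(z)$; this is at most $\int_\bbR\|M(t)\|_{\cB_1(\cH)}\big(\int_\bbR\|K_1(t,s)\|_{\cB(\cH)}^2\,ds\big)\,dt$, and a Combes--Thomas‑type estimate gives $\|K_1(t,s)\|_{\cB(\cH)}\le Ce^{-\varepsilon_0|t-s|}$, so the inner integral is bounded uniformly in $t$ and the outer integral is $\le C'\int_\bbR\|B'(t)(|A_-|+I)^{-1}\|_{\cB_1(\cH)}\,dt<\infty$ by \eqref{2.13i}. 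The first factor is treated the same way, and the product of two Hilbert--Schmidt operators being trace class gives \eqref{s.trH}.

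\emph{The trace and the right‑hand side.} By cyclicity, $\tr_{L^2(\bbR;\cH)}(R_2(z)-R_1(z))=-2\int_\bbR\tr_\cH\big(B'(t)\,[R_1(z)R_2(z)](t,t)\big)\,dt$, with $[\,\cdot\,](t,s)$ the $\cB(\cH)$‑valued kernel. The heart of the matter is to evaluate this integral: using the intertwining relations $\bsD_\bsA R_1(z)=R_2(z)\bsD_\bsA$, $\bsD_\bsA^*R_2(z)=R_1(z)\bsD_\bsA^*$ together with $\bsD_\bsA=(d/dt)+\bsA$, $\bsD_\bsA^*=-(d/dt)+\bsA$ and $\bsH_2=\bsH_1+2\bsB'$, one computes $[R_1(z)R_2(z)](t,t)=\tfrac14(A(t)^2-zI)^{-3/2}+C(t)$, where the remainder $C(t)$ — nonzero in general — integrates out of the trace, $\int_\bbR\tr_\cH(B'(t)C(t))\,dt=0$ (this, and not merely the asymptotics \eqref{2.Apm}, is the substance of the computation). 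Since $\tfrac14(x^2-z)^{-3/2}=-\tfrac{1}{4z}(dg_z/dx)(x)$, and since $\tr_\cH\big((dg_z/dx)(A(t))\,B'(t)\big)=\tr_\cH\big(\tfrac{d}{dt}g_z(A(t))\big)$ by the Birman--Solomyak double‑operator‑integral chain rule $\tfrac{d}{dt}g_z(A(t))=\iint\frac{g_z(\lambda)-g_z(\mu)}{\lambda-\mu}\,dE_{A(t)}(\lambda)\,B'(t)\,dE_{A(t)}(\mu)$, this yields $\tr_{L^2(\bbR;\cH)}(R_2(z)-R_1(z))=\tfrac{1}{2z}\int_\bbR\tr_\cH\big(\tfrac{d}{dt}g_z(A(t))\big)\,dt$. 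Finally, the double operator integral above is $\cB_1(\cH)$‑valued with $\|\tfrac{d}{dt}g_z(A(t))\|_{\cB_1(\cH)}\le C_z\|B'(t)(|A_-|+I)^{-1}\|_{\cB_1(\cH)}$ — the divided‑difference transformer being bounded on $\cB_1(\cH)$ after absorbing $(|A_-|+I)$ into the right‑hand spectral integration (Section \ref{s6}) — so by \eqref{2.13i} and \eqref{2.Apm} the Bochner integral $\int_\bbR\tfrac{d}{dt}g_z(A(t))\,dt$ converges in $\cB_1(\cH)$ to $g_z(A_+)-g_z(A_-)$, which establishes \eqref{2.trgz} and completes the proof of \eqref{trfOLD}.

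\emph{Main obstacle.} The delicate step is the rigorous evaluation of the diagonal kernel $[R_1(z)R_2(z)](t,t)$ — in particular, proving that the remainder $C(t)$ contributes nothing to the $t$‑integrated trace — together with the accompanying double‑operator‑integral calculus in this unbounded, non‑commuting setting: in the scalar and finite‑dimensional precedents of Callias and of Boll\'e--Gesztesy--Grosse--Schweiger--Simon this step rests on explicit Green's functions and Wronskian identities, which are unavailable here and must be replaced by Birman--Solomyak estimates, all while carefully tracking the form domains of $\bsH_1,\bsH_2$ (Sections \ref{s3}, \ref{s4}) and the measurability of the families $t\mapsto B'(t)$, $t\mapsto g_z(A(t))$ (Appendix \ref{sA}).
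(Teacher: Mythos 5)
Your proposal is not a complete proof: the step you yourself flag as ``the heart of the matter'' --- the identity $[R_1(z)R_2(z)](t,t)=\tfrac14(A(t)^2-zI)^{-3/2}+C(t)$ with $\int_\bbR\tr_\cH(B'(t)C(t))\,dt=0$ --- is asserted, not derived, and it is precisely the entire content of the trace formula \eqref{trfOLD}. In the scalar precedents (Callias, Boll\'e--Gesztesy--Grosse--Schweiger--Simon) this rests on explicit Green's functions and Wronskian identities; here $A(t)$, $A_-$, and $B'(t)$ do not commute, $A(t)$ depends on $t$, and no argument is offered for why the off-diagonal remainder is annihilated by the $t$-integrated trace against $B'(t)$. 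Saying it ``must be replaced by Birman--Solomyak estimates'' names the obstacle without overcoming it. A secondary but real issue: the manipulations $\bsH_2-\bsH_1=2\bsB'$, $R_2-R_1=-2R_2\bsB'R_1$, and the cyclicity step $\tr(R_2-R_1)=-2\int\tr_\cH(B'(t)[R_1R_2](t,t))\,dt$ involve the unbounded, only form-bounded perturbation $\bsB'$, so they require the factored (generalized polar decomposition) resolvent identities of Lemma \ref{trclLHS} and Lemma \ref{l4.relbdd}; your insertion of $(|\bsA_-|+\bsI)^{\pm1}$ is a reasonable substitute in spirit, but the passage from there to a fibrewise kernel computation of the trace is again left unproved.

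For comparison, the paper does not attempt any such direct diagonal evaluation. It truncates with the spectral projections $P_n=E_{A_-}((-n,n))$, observes that for the bounded approximants $A_n(t)=P_nA(t)P_n$ the hypotheses of Pushnitski's result apply, and imports the trace formula \eqref{trfn} from \cite[Proposition 1.3]{Pu08}; the genuinely new work is the passage to the limit $n\to\infty$, carried out in $\cB_1(L^2(\bbR;\cH))$ on the left (Proposition \ref{prTr}, Section \ref{s5}) and in $\cB_1(\cH)$ on the right via double operator integrals (Proposition \ref{propgA}, Section \ref{s6}), together with analyticity in $z$ (Proposition \ref{propAnal}, Appendix \ref{sB}) to extend from $z<0$. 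Parts of your outline do echo the paper's machinery --- your Hilbert--Schmidt factorization with exponential kernel decay parallels Lemma \ref{12bprime} (which uses the explicit kernel \eqref{4.R0}), and your $\cB_1$-bound on $\tfrac{d}{dt}g_z(A(t))$ is essentially what Lemmas \ref{newL6.7} and \ref{ext_subtle} establish --- so those pieces could be salvaged, but the central identity you postulate would still have to be proved from scratch or, as in the paper, obtained by reduction to the known bounded/trace-class case.
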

For notational convenience, cf.\ \eqref{dgvk2} and \eqref{dvk}, we also introduce the self-adjoint operator
\beq
\varkappa = \varkappa(A_-)= \big(A_-^2 + I\big)^{1/2},   \lb{dfnvk1}
\enq
in $\cH$, and for subsequent purposes also the operators
\beq \lb{dfnvk}
\varkappa_z(A_\pm) = \big(A_{\pm}^2 - z I\big)^{1/2}, \quad z \in 
\bbC\backslash [0,\infty).
\enq

We will now outline the main steps in the proof of Theorem \ref{Ntrindf}.
As in \cite{Pu08}, the essential element of our strategy is to pass 
to an appropriate approximation $A_n(t)$. The simplest way to do this 
is to consider the spectral projections $P_n=E_{A_-}((-n,n))$, 
$n\in\bbN$, associated with the operator $A_-$. The projections 
commute with $A_-$, $\varkappa(A_-)$, and their resolvents.
Using $P_n$ just introduced, \eqref{2.A(t)}, \eqref{2.A+B+}, 
\eqref{dfnvk1}, and \eqref{dfnvk}, we define the following operators:
\begin{equation}\lb{defAn}\begin{split} 
& A_n(t) =P_nA(t)P_n, \quad B_n(t)=P_nB(t)P_n, \quad 
B_n'(t) = P_nB'(t)P_n, \\
& A_{\pm,n} =P_nA_{\pm} P_n, \quad
B_n(+\infty) =P_nB(+\infty)P_n, \quad  n\in\bbN.   \end{split}   
\end{equation}
One observes that all operators introduced in \eqref{defAn} are bounded 
operators acting on the space $\cH_n =\ran (P_n)$ which is, in 
general, infinite-dimensional. (To verify this, it suffices to consider 
$A_n(t) = P_n [A(t) (A_- - i I_{\cH})^{-1}][P_n(A_- - i I_{\cH})P_n]$, etc.) For 
an operator $A_n$ acting on 
$\cH_n$, we will keep the same notation $A_n$ to denote the operator 
$A_n\oplus 0$ acting on $\cH=\cH_n \oplus 0$. The proof of the 
following formula \eqref{trfn} uses the main result in \cite{Pu08} applied to 
the bounded approximants $A_n(t)$ of $A(t)$: 

\begin{proposition}\lb{propAppr}
Assume Hypothesis \ref{h2.1}. Then the trace formula \eqref{trfOLD} 
holds for the operators $A_n(t)$, $A_{\pm,n}$ on $\cH$, defined in 
\eqref{defAn}, and the operators $\bsH_{1,n}$ and $\bsH_{2,n}$ on 
$L^2(\bbR;\cH)$, obtained by replacing $A(t)$ by $A_n(t)$ in 
\eqref{dfnHtH}, that is, one has 
\begin{align}
& \lb{trfn}
   \tr_{L^2(\bbR;\cH_n)}\big((\bsH_{2,n} - z  I)^{-1}-(\bsH_{1,n} - z I)^{-1}\big)
   =\frac{1}{2z}\tr_{\cH_n}\big(g_z(A_{+,n})-g_z(A_{-,n})\big),  \no  \\
& \hspace*{9cm} z \in \bbC\backslash [0,\infty). 
\end{align}
\end{proposition}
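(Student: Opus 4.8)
The plan is to reduce \eqref{trfn} to the bounded-operator version of the trace formula \eqref{trfOLD} established by Pushnitski \cite{Pu08}, by verifying that, for each fixed $n\in\bbN$, the truncated family $\{A_n(t)\}_{t\in\bbR}$ acting on $\gh_n=\ran(P_n)$ satisfies the standing assumptions of that result. Since $P_n$ commutes with $A_-$ and with $(|A_-|+I)^{-1}$, and since $B(t)$ is $A_-$-bounded (cf.\ Section \ref{s3}) while $C'(t):=B'(t)(|A_-|+I)^{-1}\in\cB_1(\cH)$ with $\int_\bbR\|C'(t)\|_{\cB_1(\cH)}\,dt<\infty$ by Hypothesis \ref{h2.1}\,$(iii),(iv)$, the operator $C(t):=B(t)(|A_-|+I)^{-1}$ is bounded on $\cH$; writing $K_n:=(|A_-|+I)P_n\in\cB(\cH)$ (so $K_n=P_nK_n=K_nP_n$, $\|K_n\|_{\cB(\cH)}\le n+1$, $\ran(K_n)\subseteq\gh_n$), one has
\begin{equation*}
B_n(t)=P_nC(t)K_n, \qquad B_n'(t)=P_nC'(t)K_n, \qquad A_{\pm,n}=P_nA_\pm P_n,
\end{equation*}
so that all operators in \eqref{defAn} are bounded and self-adjoint on $\gh_n$.

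Next I would upgrade the weak local absolute continuity \eqref{2.13h} to genuine $\cB_1(\gh_n)$-valued absolute continuity of $B_n(\cdot)$. From the above, $B_n'(t)=P_nC'(t)K_n\in\cB_1(\gh_n)$ with $\int_\bbR\|B_n'(t)\|_{\cB_1(\gh_n)}\,dt\le(n+1)\int_\bbR\|C'(t)\|_{\cB_1(\cH)}\,dt<\infty$, while the weak measurability encoded in Hypothesis \ref{h2.1}\,$(v)$ ensures, via the arguments of Appendix \ref{sA}, that $t\mapsto B_n'(t)$ is strongly measurable into the separable space $\cB_1(\gh_n)$ (Pettis), hence $B_n'(\cdot)\in L^1(\bbR;\cB_1(\gh_n))$. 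Since both sides of $(g,B_n(t)h)_{\cH}=(g,B_n(t_0)h)_{\cH}+\int_{t_0}^t(g,B_n'(s)h)_{\cH}\,ds$ agree for all $g,h\in\cH$ and the Bochner integral $\int_{t_0}^t B_n'(s)\,ds$ is $\cB_1(\gh_n)$-norm continuous in $t$, one concludes $B_n(t)=B_n(t_0)+\int_{t_0}^t B_n'(s)\,ds$ in $\cB_1(\gh_n)$. Because $B_-=0$ (cf.\ \eqref{2.Apm} and Section \ref{s3}) we have $B_n(-\infty)=0$, so in fact $B_n(t)=\int_{-\infty}^t B_n'(s)\,ds\in\cB_1(\gh_n)$ with $\sup_{t\in\bbR}\|B_n(t)\|_{\cB_1(\gh_n)}<\infty$, $A_{-,n}=A_n(-\infty)$, $A_{+,n}=A_n(+\infty)$, and $A_{+,n}-A_{-,n}=B_n(+\infty)\in\cB_1(\gh_n)$. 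Thus $\{A_n(t)\}_{t\in\bbR}$ on $\gh_n$ meets precisely the hypotheses under which \eqref{trfOLD} was proved in \cite{Pu08} (bounded self-adjoint $A_{-,n}$; trace-norm absolutely continuous $B_n(\cdot)$ with $\cB_1$-integrable derivative); note that this trace formula does not presuppose bounded invertibility of the asymptotes, which is only needed later for the index identities.

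Finally, $\bsD_{\bsA_n}=(d/dt)+\bsA_n$ is closed and densely defined on $L^2(\bbR;\gh_n)$ (as in Lemma \ref{l2.4}, but now elementary since $\bsA_n$ is bounded, $\|\bsA_n\|=\sup_{t\in\bbR}\|A_n(t)\|<\infty$), so $\bsH_{1,n}=\bsD_{\bsA_n}^*\bsD_{\bsA_n}^{}$ and $\bsH_{2,n}=\bsD_{\bsA_n}^{}\bsD_{\bsA_n}^*$ are nonnegative self-adjoint; Pushnitski's result applied to $\{A_n(t)\}$ on $\gh_n$ then yields $[g_z(A_{+,n})-g_z(A_{-,n})]\in\cB_1(\gh_n)$, $[(\bsH_{2,n}-zI)^{-1}-(\bsH_{1,n}-zI)^{-1}]\in\cB_1(L^2(\bbR;\gh_n))$, and the identity \eqref{trfn} for all $z\in\bbC\backslash[0,\infty)$ (extending by $0$ on $\gh_n^\perp$, resp.\ $L^2(\bbR;\gh_n^\perp)$, where $g_z(A_{\pm,n})$ vanishes and the resolvents of $\bsH_{1,n}$ and $\bsH_{2,n}$ coincide, so the two trace conventions agree). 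The main obstacle is the second paragraph: passing from the merely weak differentiability and measurability built into Hypothesis \ref{h2.1} to the $\cB_1(\gh_n)$-norm absolute continuity of $B_n(\cdot)$ that Pushnitski's theorem requires — precisely the reason Hypothesis \ref{h2.1}\,$(v)$ and Appendix \ref{sA} are in place — together with the routine but necessary check that the truncated family fits his framework verbatim.
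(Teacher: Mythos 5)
Your proposal is correct and follows essentially the same route as the paper: truncate by the spectral projections $P_n=E_{A_-}((-n,n))$, observe that $B_n'(t)(|A_{-,n}|+I_{\gh_n})^{-1}=P_nB'(t)(|A_-|+I)^{-1}P_n$ together with the boundedness of $|A_{-,n}|+I_{\gh_n}$ turns the relative trace class condition \eqref{2.13i} into $\int_\bbR\|B_n'(t)\|_{\cB_1(\gh_n)}\,dt<\infty$, and then invoke Pushnitski's result \cite[Proposition 1.3]{Pu08} for the bounded truncated family. The additional care you take with the $\cB_1(\gh_n)$-valued absolute continuity of $B_n(\cdot)$ and with the extension by zero to $\gh_n^\perp$ is material the paper delegates to Remark \ref{r2.3}, Lemma \ref{l3.1}, and the citation of \cite{Pu08}, and is consistent with its argument.
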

\begin{proof}
As $P_n=E_{A_-}((-n,n))$ are the spectral projections for $A_-$,
for each fixed $n\in\bbN$, formula \eqref{trfn} follows from 
\cite[Proposition 1.3]{Pu08}  under the assumptions in  Hypothesis 
\ref{h2.1}. Indeed, formula \eqref{trfn} has been proved in
\cite[Proposition 1.3]{Pu08}   under the assumption
\beq\lb{trclass}
\int_\bbR\|B'_n(t)\|_{\cB_1(\cH_n)}\, dt<\infty.
\enq
In the current setting, condition \eqref{2.13i} and relation 
\begin{equation}
B'_n(t)(|A_{-,n}|+I_{\cH_n})^{-1}=P_nB'(t)(|A_{-}|+I_\cH)^{-1}P_n\end{equation}  yield:
\beq\lb{reltrclass}
\int_\bbR\|B'_n(t)(|A_{-,n}|+I_{\cH_n})^{-1}\|_{\cB_1(\cH_n)}\, dt<\infty.
\enq
  Since the operator $|A_{-,n}|+I_{\cH_n}$ is bounded for each  $n\in\bbN$,
  \eqref{reltrclass} implies \eqref{trclass}, and thus \eqref{trfn} holds.
\end{proof}

In view of Proposition \ref{propAppr}, to complete the proof of 
Theorem \ref{Ntrindf} it suffices to pass  to the limit in 
$\cB_1(L^2(\bbR;\cH))$ in the left-hand side and in $\cB_1(\cH)$ in 
the right-hand side of \eqref{trfn} as $n\to\infty$. As a result, 
Theorem \ref{Ntrindf} is a consequence of the following three  
propositions proved, respectively, in Sections \ref{s5},  
\ref{s6}, and \ref{s7} (cf.\ Lemma \ref{l7.trfor}).

\begin{proposition}\lb{prTr}
Assume Hypothesis \ref{h2.1}, and consider the operators $\bsH_1$ and 
$\bsH_2$ defined in \eqref{dfnHtH}, and the operators $\bsH_{1,n}$ 
and $\bsH_{2,n}$ on $L^2(\bbR;\cH)$, obtained by replacing $A(t)$ by 
$A_n(t)$ in \eqref{dfnHtH}. Then, for each $n\in\bbN$ and 
$z\in\bbC \backslash [0,\infty)$, 
\beq\lb{2.21}
[(\bsH_2 - z \, \bsI)^{-1}-(\bsH_1 - z \, \bsI)^{-1}],\, [(\bsH_{2,n} 
- z I)^{-1}-(\bsH_{1,n} - z I)^{-1}] \in\cB_1(L^2(\bbR;\cH))
\enq
and
\begin{align} \lb{2.22}
& \lim_{n\to\infty}\big\|\big[(\bsH_2 - z \, \bsI)^{-1}-(\bsH_1 - z \, 
\bsI)^{-1}\big]     \no \\ 
& \qquad \quad \, -\big[(\bsH_{2,n} - z I)^{-1}-(\bsH_{1,n} - z 
I)^{-1}\big]\big\|_{\cB_1(L^2(\bbR;\cH))} = 0.
\end{align}
\end{proposition}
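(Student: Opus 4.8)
The plan is to reduce everything to a norm-convergence statement for the building blocks of $\bsD_\bsA$ under the truncation $P_n = E_{A_-}((-n,n))$, and then to use a resolvent identity that expresses the difference $(\bsH_2 - z\bsI)^{-1} - (\bsH_1 - z\bsI)^{-1}$ as a (finite) sum of products in which at least one factor already lies in $\cB_1$. The membership in $\cB_1\big(L^2(\bbR;\cH)\big)$ asserted in \eqref{2.21} should follow from the general supersymmetric identity
\begin{equation*}
(\bsH_2 - z\bsI)^{-1} - (\bsH_1 - z\bsI)^{-1}
= \bsD_\bsA (\bsH_1 - z\bsI)^{-1} \bsD_\bsA^* (\bsH_2 - z\bsI)^{-1}
- (\bsH_1 - z\bsI)^{-1},
\end{equation*}
rewritten so that the trace-class input is the relative trace class hypothesis \eqref{2.13i}: concretely, one wants to isolate a factor of the form $\bsB'(\cdot)(\,\ol{|\bsA_-|+\bfI}\,)^{-1}$ (or a resolvent comparison $(\bsH_1 - z\bsI)^{-1} - (\bsH_{1,-} - z\bsI)^{-1}$ for the constant-fiber operator $\bsH_{1,-}$ built from $\bsA_-$), which the preparatory results of Sections~\ref{s4}--\ref{s5} show lies in $\cB_1(L^2(\bbR;\cH))$ with a norm controlled by $\int_\bbR \|B'(t)(|A_-|+I)^{-1}\|_{\cB_1(\cH)}\,dt$. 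Since each $P_n$ commutes with $\bsA_-$, $\varkappa(\bsA_-)$, and their resolvents, the truncated operators $\bsH_{j,n}$ fit the identical framework on the fiber $\gh_n$, so the same representation holds verbatim for the difference of truncated resolvents.

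First I would record the two auxiliary norm limits that drive the argument: (a) $\nlim_{n\to\infty} (\bsA_-^2 + \bfI)^{-1/2}(P_n - \bfI) = 0$ and, more usefully, the strong convergence $P_n \to \bfI$ together with the uniform bound $\|B_n(t)(|A_{-,n}|+I)^{-1}\|_{\cB(\gh_n)} \le \|B(t)(|A_-|+I)^{-1}\|_{\cB(\cH)}$, so that the truncated paths $A_n(\cdot)$ obey Hypothesis~\ref{h2.1} with the \emph{same} integrable majorant as $A(\cdot)$; and (b) the resolvent convergence $\nlim_{n\to\infty}(\bsH_{j,n} - z\bsI)^{-1} = (\bsH_j - z\bsI)^{-1}$ for $j=1,2$ and $z \in \bbC\backslash[0,\infty)$, which one obtains by writing $\bsD_{\bsA_n} - \bsD_\bsA$ in terms of $P_n B(t) P_n - B(t)$, controlling it relative to $\varkappa(\bsA_-)$, and invoking the standard stability of $\bsD_\bsA^* \bsD_\bsA$ under relatively form-bounded perturbations (already established in Section~\ref{s4} when $\bsH_j$ is defined via quadratic forms). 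Then I would split the quantity inside the limit in \eqref{2.22} as
\begin{align*}
&\big[(\bsH_2 - z\bsI)^{-1} - (\bsH_1 - z\bsI)^{-1}\big] - \big[(\bsH_{2,n} - zI)^{-1} - (\bsH_{1,n} - zI)^{-1}\big] \\
&\quad = \big[(\bsH_2 - z\bsI)^{-1} - (\bsH_{2,n} - zI)^{-1}\big] - \big[(\bsH_1 - z\bsI)^{-1} - (\bsH_{1,n} - zI)^{-1}\big],
\end{align*}
and re-express each bracket using the $\cB_1$-representation above, so that every summand is a product of one factor that converges to $0$ in operator norm (the $P_n$-truncation error) and one factor uniformly bounded in $\cB_1$, or vice versa; the inequality $\|XY\|_{\cB_1} \le \|X\|_{\cB(\cdot)}\|Y\|_{\cB_1}$ then forces the $\cB_1$-norm to zero. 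The uniform $\cB_1$-bound on the retained factor is exactly where the integrability \eqref{2.13i} enters, via a dominated-convergence estimate over $t \in \bbR$.

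The main obstacle, as the authors themselves flag, is that the natural "trace-class factor" is not a single bounded operator but an operator-valued integral in $t$, and the truncation $P_n$ acts inside that integral; so the passage to the limit is genuinely a dominated convergence argument in $\cB_1(L^2(\bbR;\cH))$ where the dominating function is $t \mapsto \|B'(t)(|A_-|+I)^{-1}\|_{\cB_1(\cH)} \in L^1(\bbR)$, and one must check that the integrand converges in $\cB_1(\cH)$ for a.e.\ $t$. That fiberwise convergence reduces to: $P_n B'(t) P_n (|A_{-,n}|+I)^{-1} = P_n\,[B'(t)(|A_-|+I)^{-1}]\,P_n \to B'(t)(|A_-|+I)^{-1}$ in $\cB_1(\cH)$ as $n\to\infty$, which holds because $P_n \to \bfI$ strongly and left/right multiplication by a strongly convergent uniformly bounded net is $\cB_1$-continuous on a fixed trace-class operator (a standard fact; sandwich with a finite-rank approximant). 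The delicate bookkeeping is to make sure that when one "factors" the resolvent differences, \emph{all} the non-$\cB_1$ factors are uniformly bounded in $\cB(L^2(\bbR;\cH))$ \emph{uniformly in $n$} — this uses the uniform relative boundedness of $B_n(t)$ with respect to $A_-$ noted in (a), together with the resolvent bounds on $\bsH_{j,n}$ away from $[0,\infty)$. Granting Propositions~\ref{propAppr} and the form-theoretic machinery of Sections~\ref{s4}--\ref{s5}, the rest is the routine estimate $\|A_nB_n - AB\|_{\cB_1} \le \|A_n - A\|\,\|B_n\|_{\cB_1} + \|A\|\,\|B_n - B\|_{\cB_1}$ applied termwise.
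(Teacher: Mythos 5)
Your plan has two concrete problems. First, the ``supersymmetric identity'' you start from is false as written: since
\begin{equation*}
\bsD_\bsA (\bsH_1 - z\,\bsI)^{-1} \bsD_\bsA^* \supseteq \bsD_\bsA^{}\bsD_\bsA^* (\bsH_2 - z\,\bsI)^{-1} = \bsI + z(\bsH_2 - z\,\bsI)^{-1},
\end{equation*}
your right-hand side equals $(\bsH_2 - z\,\bsI)^{-1} + z(\bsH_2 - z\,\bsI)^{-2} - (\bsH_1 - z\,\bsI)^{-1}$, not the resolvent difference; the correct Deift-type formula carries a prefactor $1/z$ and a symmetrized second term. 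More importantly, even the corrected identity does not by itself expose a trace-class factor. The paper's Lemma \ref{trclLHS} instead rests on the observation that, in the quadratic form sense, $\bsH_2 - \bsH_1 = 2\bsB'$, combined with the generalized polar decomposition $\bsB' = |(\bsB')^*|^{1/2} U_{\bsB'} |\bsB'|^{1/2}$ and the Hilbert--Schmidt estimate $|\bsB'|^{1/2}(\bsH_0 - z\,\bsI)^{-1/2} \in \cB_2$ of Lemma \ref{12bprime} (proved through the explicit kernel of $\bsR_0^{1/2}$ and the interpolation Theorem \ref{hadamard1}), together with the boundedness of $(\bsH_0 - z\,\bsI)^{1/2}(\bsH_j - z\,\bsI)^{-1/2}$ from the form machinery of Section \ref{s4}. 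You correctly identify $\bsB'(|A_-|+I)^{-1}$ as the trace-class input, so this half is repairable, but the identity you propose is not the vehicle.

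The serious gap is in the limit \eqref{2.22}. You split the quantity into $\big[(\bsH_2 - z\,\bsI)^{-1} - (\bsH_{2,n} - z I)^{-1}\big] - \big[(\bsH_1 - z\,\bsI)^{-1} - (\bsH_{1,n} - z I)^{-1}\big]$ and invoke norm resolvent convergence $\bsH_{j,n} \to \bsH_j$. This fails: $A_{-,n} = P_n A_- P_n$ annihilates $A_-$ on $\ran(I-P_n)$, which is not a small perturbation, so $A_{\pm,n} \to A_\pm$ and $\bsH_{j,n} \to \bsH_j$ hold only in the \emph{strong} resolvent sense (cf.\ \eqref{Pn6}); moreover each bracket taken separately is not trace class (the perturbation $\bsH_j - \bsH_{j,n}$ contains $\bsA_-^2 - \bsA_{-,n}^2$, which is unbounded), so ``re-expressing each bracket by the $\cB_1$-representation'' is unavailable, and ``$P_n$-truncation errors'' of resolvent type do not tend to $0$ in operator norm. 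The paper never separates the pair: it uses the Tiktopoulos-type formulas built from $\bsL(z)$, $\bsL_n(z)$ to write $\bsR_1 - \bsR_2 = 2\,\bsM\,\big[|(\bsB')^*|^{1/2}\bsR_0^{1/2}\big]^* U_{\bsB'} |\bsB'|^{1/2}\bsR_0^{1/2}\,\bsN$ (see \eqref{resdif}), proves that only the middle factor converges in $\cB_1$ to its truncation (via \eqref{Pn4}, i.e.\ your dominated-convergence step, which is the one piece you have right), establishes uniform invertibility of $\bsL_n(z)$ (Lemma \ref{invLLn}) and merely \emph{strong} convergence $\bsM_n \to \bsM$, $\bsN_n \to \bsN$ in \eqref{strMN}, and then concludes with the Gr\"umm-type Lemma \ref{lSTP} combining strong convergence of the uniformly bounded outer factors with $\cB_1$-convergence of the middle factor. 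Without this pairing and the strong-convergence-plus-trace-class lemma, your argument does not close.
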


\begin{proposition}\lb{propgA}
Assume Hypothesis \ref{h2.1}. Consider the operators $A_{\pm}$ 
in \eqref{dfnAplus}, $A_{\pm,n}$ in \eqref{defAn}, and the function 
$g(x)=x(x^2+1)^{-1/2}$, $x\in\bbR$, introduced in \eqref{dfngz}. Then, 
\beq\lb{assrti}
[g(A_+)-g(A_-)],\, [g(A_{+,n})-g(A_{-,n})] \in\cB_1(\cH)
\enq
for each $n\in\bbN$ and
\beq\lb{assrtii}
\lim_{n\to\infty}\|[g(A_+) - g(A_-)] - [g(A_{+,n}) - 
g(A_{-,n})]\|_{\cB_1(\cH)} = 0.
\enq
\end{proposition}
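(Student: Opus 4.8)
The plan is to deduce everything from the \emph{relative} trace class property of the asymptotic perturbation, combined with the double operator integration (DOI) calculus of Section~\ref{s6}. Write $\varkappa = (A_-^2 + I)^{1/2}$ and $B_+ = \overline{A_+ - A_-}$, so that $A_+ = A_- + B_+$ by \eqref{2.A+B+}. As developed in Section~\ref{s3} (see Theorem~\ref{t3.7}), integrating the weak identity \eqref{2.13h} over $\bbR$ and using $B_- = 0$ together with \eqref{2.13i} yields
\begin{equation*}
B_+ (|A_-|+I)^{-1} = \int_{\bbR} B'(t)(|A_-|+I)^{-1}\, dt \in \cB_1(\cH),
\end{equation*}
the integral converging in $\cB_1(\cH)$; since $\varkappa^{-1}$ and $(|A_-|+I)^{-1}$ differ by a bounded operator with bounded inverse, also $B_+\varkappa^{-1}\in\cB_1(\cH)$. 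Because $P_n = E_{A_-}((-n,n))$ commutes with $A_-$ and hence with $\varkappa$, the approximants from \eqref{defAn} satisfy, as operators on $\gh$,
\begin{equation*}
B_{+,n}\,\varkappa_n^{-1} = P_n\big(B_+\varkappa^{-1}\big) P_n, \qquad \varkappa_n = \big(A_{-,n}^2 + I_{\gh_n}\big)^{1/2} = P_n \varkappa,
\end{equation*}
so that $B_{+,n}\varkappa_n^{-1} \to B_+\varkappa^{-1}$ in $\cB_1(\cH)$, by the standard fact that $P_n T P_n \to T$ in $\cB_1(\cH)$ for $T\in\cB_1(\cH)$ when $P_n\uparrow I$ strongly.

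Since $g$ is continuous and bounded, $g(A_\pm)$ and $g(A_{\pm,n})$ are bounded self-adjoint operators, and the Daletskii--Krein/Birman--Solomyak representation expresses the difference as a double operator integral: with the divided difference $g^{[1]}(\lambda,\mu) = [g(\lambda) - g(\mu)]/(\lambda - \mu)$ and $A_+ - A_- = B_+$,
\begin{equation*}
g(A_+) - g(A_-) = \iint_{\bbR^2} g^{[1]}(\lambda,\mu)\, dE_{A_+}(\lambda)\, B_+\, dE_{A_-}(\mu).
\end{equation*}
Using $B_+\, dE_{A_-}(\mu) = (B_+\varkappa^{-1})\,(\mu^2+1)^{1/2}\, dE_{A_-}(\mu)$, this is the double operator integral transformer $\cT_\psi$ (associated with the spectral families of $A_+$ and $A_-$) applied to the trace class operator $B_+\varkappa^{-1}$, with the modified kernel
\begin{equation*}
\psi(\lambda,\mu) = \frac{g(\lambda) - g(\mu)}{\lambda - \mu}\,(\mu^2+1)^{1/2} = \frac{(\mu^2+1)^{1/2}\, g(\lambda) - \mu}{\lambda - \mu}, \qquad (\lambda,\mu)\in\bbR^2 .
\end{equation*}
A short computation (using $g'(x) = (x^2+1)^{-3/2}$ and the $O(|\mu|^{-1})$ decay of $g^{[1]}(\lambda,\cdot)$, which compensates the weight $(\mu^2+1)^{1/2}$) shows $\psi\in L^\infty(\bbR^2)$, with $\psi(\lambda,\lambda) = (\lambda^2+1)^{-1}$. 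To make the displayed identity rigorous for the unbounded $B_+$ I would first verify it for the bounded approximants $A_{\pm,n}$ on $\gh_n$ --- where all operators are bounded and the DOI identity is elementary --- and then pass to the limit.

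The analytic core, supplied by the DOI machinery of Section~\ref{s6}, is that $\psi$ belongs to the Birman--Solomyak class for which the transformer $\cT_\psi$ is bounded on $\cB_1(\cH)$, with a bound depending only on a suitable norm of $\psi$ and not on the particular self-adjoint operators entering the spectral families. Concretely I would produce a representation $\psi(\lambda,\mu) = \int_{\Omega} a(\lambda,s)\, b(\mu,s)\, d\nu(s)$ with $\int_{\Omega}\|a(\cdot,s)\|_{L^\infty}\|b(\cdot,s)\|_{L^\infty}\, d|\nu|(s) < \infty$, starting from $g^{[1]}(\lambda,\mu) = \int_0^1 g'(\mu + r(\lambda-\mu))\, dr$ and exploiting that $g'\in L^1(\bbR)$ has integrable Fourier transform, so that the polynomially growing weight on the $A_-$-side is absorbed. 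Granting this, $\cT_\psi(B_+\varkappa^{-1}) = g(A_+) - g(A_-) \in \cB_1(\cH)$, and likewise for the approximants, which is \eqref{assrti}. For the convergence \eqref{assrtii} I would combine: (a) $B_{+,n}\varkappa_n^{-1}\to B_+\varkappa^{-1}$ in $\cB_1(\cH)$ from the first paragraph; (b) $A_{\pm,n}\oplus 0 \to A_\pm$ in the strong resolvent sense in $\cH$, which follows from $P_n$ commuting with $A_\pm$ and $P_n\uparrow I$ strongly; and (c) the continuity of $\cT_\psi$ under strong resolvent convergence of the underlying spectral families, uniformly on bounded subsets of $\cB_1(\cH)$, another ingredient of Section~\ref{s6}. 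A $3\varepsilon$-argument then gives $\|[g(A_{+,n}) - g(A_{-,n})] - [g(A_+) - g(A_-)]\|_{\cB_1(\cH)}\to 0$.

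The main obstacle is the step in the previous paragraph: verifying that the \emph{asymmetric} kernel $\psi$ --- carrying the polynomial weight $(\mu^2+1)^{1/2}$ on the $A_-$-side only, which is precisely what the relative (rather than genuine) trace class hypothesis \eqref{2.13i} forces upon us --- lies in the appropriate Birman--Solomyak class, together with the attendant uniform boundedness and the stability of $\cT_\psi$ under the approximation $A_{\pm,n}\to A_\pm$. In Pushnitski's bounded-perturbation setting the weight is absent and the plain divided difference $g^{[1]}$ suffices; here one must quantitatively trade the growth of $(\mu^2+1)^{1/2}$ against the decay of $g'$, which is the source of the ``considerably more involved'' analysis referred to in the introduction and runs parallel to the trace-norm analyticity estimate for $g_z$ carried out in Appendix~\ref{sB}.
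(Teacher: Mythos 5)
Your overall double-operator-integral strategy is in the same family as the paper's, but the pivotal step is missing and the sketch you give for it would not go through. You keep the full weight $(\mu^2+1)^{1/2}$ on the $A_-$-side and claim that the asymmetric kernel $\psi(\lambda,\mu)=g^{[1]}(\lambda,\mu)\,(\mu^2+1)^{1/2}$ lies in the Birman--Solomyak class $\mathfrak{A}_0$, to be shown by writing $g^{[1]}(\lambda,\mu)=\int_0^1 g'(\mu+r(\lambda-\mu))\,dr$ and using the integrable Fourier transform of $g'$ so that the weight is ``absorbed.'' That device yields $g^{[1]}(\lambda,\mu)=\int_0^1\int_{\bbR}\widehat{g'}(s)\,e^{isr\lambda}\,e^{is(1-r)\mu}\,ds\,dr$ with bounded separated factors, but multiplying by $(\mu^2+1)^{1/2}$ turns the $\mu$-factor into $e^{is(1-r)\mu}(\mu^2+1)^{1/2}$, which is unbounded; the defining condition $\int\|\alpha_s\|_\infty\|\beta_s\|_\infty\,d\nu(s)<\infty$ of $\mathfrak{A}_0$ then fails, and no bound for the transformer on $\cB_1(\cH)$ results (mere boundedness of $\psi$, which you verify, is well known to be insufficient). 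Whether this one-sidedly weighted kernel is an $\mathfrak{A}_0$-multiplier at all is exactly the delicate issue, and the paper never confronts it: it first \emph{symmetrizes} the weight. By Theorem \ref{hadamard1} (applied with $T=\varkappa$ and $S$ the asymptotic perturbation) together with $\dom(A_+)=\dom(A_-)$ and Remark \ref{r2.8}, the hypothesis $(A_+-A_-)\varkappa^{-1}\in\cB_1(\cH)$ is upgraded to $\overline{(A_+^2+I)^{-1/4}(A_+-A_-)(A_-^2+I)^{-1/4}}\in\cB_1(\cH)$, and the corresponding kernel \eqref{def_phi}, carrying quarter-power weights on \emph{both} sides, reduces (Lemma \ref{newL6.7}) to the function \eqref{defpsi}, which depends only on $\log(\lambda^2+1)^{1/2}-\log(\mu^2+1)^{1/2}$ and therefore admits the bounded Fourier representation \eqref{final_psi}. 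This interpolation-based symmetrization is the idea your proposal lacks, and without it (or a genuinely new multiplier estimate) the inclusion \eqref{assrti} is not established.

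The convergence step \eqref{assrtii} inherits the same gap and contains two further inaccuracies. You invoke continuity of the transformer under strong resolvent convergence ``uniformly on bounded subsets of $\cB_1(\cH)$''; no such uniform statement is available in Section \ref{s6} (and it is not needed). What suffices, and what the paper proves, is the uniform bound \eqref{normTphi} together with $T^{(n)}_\psi(K)\to T_\psi(K)$ in $\cB_1(\cH)$ for each fixed $K\in\cB_1(\cH)$; but that pointwise convergence is itself proved through the explicit representation \eqref{final_psi} (strong convergence of $\varkappa_{\pm,n}^{\pm is}$, Lemma \ref{lSTP}, dominated convergence), so it is again out of reach for your unsymmetrized kernel. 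Finally, $P_n=E_{A_-}((-n,n))$ does not commute with $A_+$, so strong resolvent convergence $A_{+,n}\to A_+$ is not ``by commutation''; it requires the resolvent-identity argument giving \eqref{Pn6} in Lemma \ref{propappr}.
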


\begin{proposition} \lb{propAnal}
Assume Hypothesis \ref{h2.1} and consider the function 
$g_z(x)=x(x^2-z)^{-1/2}$, $x\in\bbR$, $z\in\bbC\backslash [0,\infty)$ 
introduced in \eqref{dfngz}. Then, 
\begin{equation}
[g_z(A_+)-g_z(A_-)] \in \cB_1(\cH),  \quad z\in\C\backslash [0,\infty),      
\lb{2.tracegz}
\end{equation}
and 
\begin{equation}
\bbC\backslash [0,\infty) \ni z \mapsto 
\tr_{\cH} \big(g_{z}(A_+) - g_{z}(A_-)\big) \, \text{ is analytic.} 
\lb{2.tran}
\end{equation}
\end{proposition}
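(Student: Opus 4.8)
The plan is to reduce the general $z$ to the special value $z=-1$, which is already covered by Proposition \ref{propgA}, by an integration in the spectral parameter. Since $\partial_\zeta g_\zeta(x)=\tfrac12\,x(x^2-\zeta)^{-3/2}$, one has, along any path in $\bbC\backslash[0,\infty)$ joining $-1$ to $z$, the elementary identity $g_z(x)-g_{-1}(x)=\tfrac12\int_{-1}^{z}x(x^2-\zeta)^{-3/2}\,d\zeta$, $x\in\bbR$, whence, by the spectral theorem for $A_\pm$,
\[
g_z(A_+)-g_z(A_-)=\big[g_{-1}(A_+)-g_{-1}(A_-)\big]+\frac12\int_{-1}^{z}\big[A_+(A_+^2-\zeta)^{-3/2}-A_-(A_-^2-\zeta)^{-3/2}\big]\,d\zeta .
\]
Set $\Psi(\zeta):=A_+(A_+^2-\zeta)^{-3/2}-A_-(A_-^2-\zeta)^{-3/2}$. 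Since $[g_{-1}(A_+)-g_{-1}(A_-)]\in\cB_1(\cH)$ by Proposition \ref{propgA}, both assertions of the Proposition follow once I show that $\bbC\backslash[0,\infty)\ni\zeta\mapsto\Psi(\zeta)$ is a $\cB_1(\cH)$-valued analytic map whose trace norm is bounded on compact subsets of $\bbC\backslash[0,\infty)$: then the displayed $\cB_1(\cH)$-valued integral is analytic in $z$, which gives $[g_z(A_+)-g_z(A_-)]\in\cB_1(\cH)$ together with the analyticity of $z\mapsto\tr_\cH(g_z(A_+)-g_z(A_-))$.

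To handle $\Psi(\zeta)$ I would use the subordination formula $x(x^2-\zeta)^{-3/2}=\tfrac{2}{\pi}\int_0^\infty x\big(x^2-\zeta+\lambda\big)^{-2}\lambda^{-1/2}\,d\lambda$ together with the partial fraction identity $x(x^2-w)^{-2}=\tfrac{1}{4\mu}\big[(x-\mu)^{-2}-(x+\mu)^{-2}\big]$, where $\mu=\mu(w)$ denotes a fixed square root of $w$; for $w=\zeta-\lambda\in\bbC\backslash[0,\infty)$ one has $\mu,-\mu\notin\bbR$, hence $\mu,-\mu\in\rho(A_+)\cap\rho(A_-)$. This presents $\Psi(\zeta)$ as a $\lambda$-integral of differences of squared resolvents such as $(A_+-\mu)^{-2}-(A_--\mu)^{-2}$, which telescope into $(A_+-\mu)^{-1}\big[(A_+-\mu)^{-1}-(A_--\mu)^{-1}\big]+\big[(A_+-\mu)^{-1}-(A_--\mu)^{-1}\big](A_--\mu)^{-1}$, using the basic resolvent identity $(A_+-\mu)^{-1}-(A_--\mu)^{-1}=-(A_+-\mu)^{-1}\big[B_+(|A_-|+I)^{-1}\big](|A_-|+I)(A_--\mu)^{-1}$. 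Here $B_+(|A_-|+I)^{-1}\in\cB_1(\cH)$ is the decisive relative trace class input (a consequence of Hypothesis \ref{h2.1}; cf.\ Theorem \ref{t3.7}), flanked by the uniformly bounded operators $(A_\pm-\mu)^{-1}$, with $\|(A_\pm-\mu)^{-1}\|\le|\Im\mu|^{-1}$, and $(|A_-|+I)(A_--\mu)^{-1}$, with $\|(|A_-|+I)(A_--\mu)^{-1}\|\le\sup_{x\in\bbR}(|x|+1)\,|x-\mu|^{-1}$. Since for large $\lambda$ one has $|\Im\mu|\sim|\mu|\sim\lambda^{1/2}$ while $\sup_{x\in\bbR}(|x|+1)|x-\mu|^{-1}$ stays bounded, the trace norm of the $\lambda$-integrand is $O(\lambda^{-2})$ as $\lambda\to\infty$ and $O(\lambda^{-1/2})$ as $\lambda\downarrow0$, with constants locally uniform in $\zeta$. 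Hence the representing integral for $\Psi(\zeta)$ converges absolutely in $\cB_1(\cH)$, $\Psi(\cdot)$ is $\cB_1(\cH)$-valued analytic (Morera's theorem, or differentiation under the integral sign, each integrand being analytic in $\zeta$), and $\|\Psi(\cdot)\|_{\cB_1(\cH)}$ is locally bounded — which is exactly what was needed.

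The main obstacle is this trace-norm bookkeeping, not any conceptual difficulty. The function $g_z$ itself does not vanish at infinity — it tends to $\pm1$ — so a direct integral (or double operator integral) representation of $g_z(A_+)-g_z(A_-)$ is only borderline, i.e.\ merely logarithmically divergent, in $\cB_1(\cH)$-norm; differentiating in $z$ replaces $g_z$ by the quadratically decaying function $x(x^2-\zeta)^{-3/2}$ and so supplies the missing power. One must also be scrupulous about placing the compensating factor $(|A_-|+I)^{-1}$ so that $B_+$ enters only through the trace class combination $B_+(|A_-|+I)^{-1}$, always sandwiched between uniformly bounded operators, and about keeping every bound locally uniform in $z$ for the analyticity statement. (This is the content of Appendix \ref{sB}, which in fact establishes the stronger trace norm analyticity of $z\mapsto[g_z(A_+)-g_z(A_-)]$, from which the Proposition is immediate; alternatively, one may run the double operator integral machinery of Section \ref{s6} on the symbol $[(g_z-g_{-1})(\mu)-(g_z-g_{-1})(\nu)](\mu-\nu)^{-1}(|\nu|+1)$, using the decay of $g_z-g_{-1}$ and of its derivative to keep this symbol, uniformly for $z$ in compact subsets of $\bbC\backslash[0,\infty)$, within the relevant class of Schur multipliers.)
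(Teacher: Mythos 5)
Your proposal is correct, but it takes a genuinely different route from the paper's. The paper proves Proposition \ref{propAnal} as Lemma \ref{l7.trfor}: it sets $G_z=g_z-g_{-1}$, checks that $G_z$ satisfies the decay conditions \eqref{yaf1}, \eqref{yaf2}, and invokes Krein's trace formula in Yafaev's form \cite[Theorem 8.7.1]{Ya92} (applicable since $[(A_+-zI)^{-1}-(A_--zI)^{-1}]\in\cB_1(\cH)$ by Theorem \ref{t3.7}) to obtain $[G_z(A_+)-G_z(A_-)]\in\cB_1(\cH)$ together with the explicit representation \eqref{2.381}, $\tr_{\cH}(g_z(A_+)-g_z(A_-))=-z\int_\bbR\xi(\nu;A_+,A_-)(\nu^2-z)^{-3/2}\,d\nu$, from which analyticity is read off; Proposition \ref{propgA} anchors $z=-1$, exactly as in your reduction. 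You instead bypass the spectral shift function and Yafaev's theorem altogether: you prove $\cB_1(\cH)$-valued analyticity of the derivative $\Psi(\zeta)$ by subordination, partial fractions into resolvents at $\pm\mu$, and the single relative trace class input $(A_+-A_-)(|A_-|+I)^{-1}\in\cB_1(\cH)$ (cf.\ \eqref{3.Apmrtrcl}, \eqref{intB}), then recover both assertions by a $\cB_1(\cH)$-valued contour integral from the base point $z=-1$; your trace-norm bookkeeping is sound (the $O(\lambda^{-1/2})$ and $O(\lambda^{-2})$ bounds are correct and locally uniform in $\zeta$). This is closest in spirit to the paper's Appendix \ref{sB}, but note that Lemma \ref{lB.1} there is proved \emph{assuming} the inclusion \eqref{7.trclgz}, so your parenthetical claim that the Proposition is ``immediate'' from the appendix slightly overstates matters — in the paper the inclusion itself comes from Lemma \ref{l7.trfor} or Remark \ref{r2.7}, both via Yafaev; your argument, by contrast, produces the inclusion as well, which makes it more self-contained. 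What the paper's route buys in exchange is the explicit formula \eqref{2.381} in terms of $\xi(\,\cdot\,;A_+,A_-)$, which is the key input for Theorem \ref{t2.8}; your approach yields the trace class property and analyticity but not that identity, so within the paper's overall architecture the computation of Lemma \ref{l7.trfor} would still be needed later.
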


Assuming Propositions  \ref{prTr}--\ref{propAnal}, one finishes the proof of
Theorem \ref{Ntrindf} as follows:

\begin{proof} [Proof of Theorem \ref{Ntrindf}]
The left-hand side of formula \eqref{trfOLD} is an analytic function with respect 
to $z \in \bbC\backslash [0,\infty)$. By Proposition \ref{propAnal}, also the 
right-hand side is an analytic function with respect to 
$z \in \bbC\backslash [0,\infty)$. By analytic continuation, it suffices to show 
\eqref{trfOLD} for $z<0$ only. But for $z<0$ the
conclusions of Proposition \ref{propgA} hold if $g$ is replaced by 
$g_z$, using $g_z(x)=g(x/(-z)^{1/2})$ and rescaling $A(t)\mapsto
A(t)(-z)^{1/2}$. Thus, passing to the limit as $n\to\infty$ in \eqref{trfn},
and using Propositions \ref{prTr} and \ref{propgA}, one concludes that 
\eqref{trfOLD} holds for $z<0$.
\end{proof}

\begin{remark} \lb{r2.7}
Alternatively, one can derive relation \eqref{2.tracegz} from \eqref{assrti} as follows: One 
considers the smooth function $h_z(x)=((x^2+1)/(x^2-z))^{1/2}$ with 
equal limits $1$ as $x\to \pm\infty$. Then the inclusion \eqref{2.tracegz} follows
from the first assertion in \eqref{assrti}, from the 
formula $g_z(x)=h_z(x)g(x)$, and the representation
\beq
g_z(A_+) - g_z(A_-) = [h_z(A_+)-h_z(A_-)]g(A_+) + h_z(A_-)[g(A_+)-g(A_-)],
\enq
since the inclusion $[h_z(A_+)-h_z(A_-)]\in\cB_1(\cH)$ holds, for instance, by
\cite[Theorem 8.7.1]{Ya92}. 

However, much more is true: In Lemma \ref{lB.1} we will, in fact, prove trace norm analyticity of $[g_z(A_+)-g_z(A_-)]$, $z\in\C\backslash [0,\infty)$, which immediately 
yields analyticity of ${\tr}_{\cH}(g_z(A_+)-g_z(A_-))$, $z\in\C\backslash [0,\infty)$, and hence provides yet another alternative start for proving Theorem \ref{Ntrindf}. 
\end{remark}

The following corollary is one of our principal results saying that the difference of the Morse projections is of trace class.

\begin{corollary} \lb{c2.11}
Assume Hypothesis \ref{h2.1} and suppose that $0 \in \rho(A_+)\cap\rho(A_-)$. 
Then
\begin{equation} 
\big[E_{A_-}((-\infty,0))-E_{A_+}((-\infty,0))\big] \in \cB_1(\cH).  \lb{2.43a}
\end{equation}
\end{corollary}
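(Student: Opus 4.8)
The natural strategy is to deduce the trace class property of the difference of Morse projections from the already-established trace class property \eqref{2.trgz} of $[g_z(A_+) - g_z(A_-)]$. The key observation is that the function $g_z(x) = x(x^2-z)^{-1/2}$, for $z$ in a neighborhood of the origin below the cut (say $z = -\varepsilon$ with $\varepsilon > 0$ small, or more pertinently along the positive imaginary axis), is a bounded function of $x$ whose behavior near $x = 0$ encodes the jump of the spectral projection $E_T((-\infty,0))$: indeed $g(x) = x(x^2+1)^{-1/2}$ interpolates between $-1$ (as $x \to -\infty$) and $+1$ (as $x \to +\infty$), and $(1/2)(I - g(T)\,|g(T)|^{-1})$, or more precisely $(1/2)(I - \sign(T))$, equals $E_T((-\infty,0))$ when $0 \in \rho(T)$.

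\textbf{First step.} I would use the hypothesis $0 \in \rho(A_+) \cap \rho(A_-)$ to introduce the signum function $\sign(x)$, which is continuous on $\sigma(A_+) \cup \sigma(A_-)$ (a set bounded away from $0$), and observe that
\[
E_{A_-}((-\infty,0)) - E_{A_+}((-\infty,0)) = \tfrac12\big(\sign(A_+) - \sign(A_-)\big).
\]
Then it suffices to show $[\sign(A_+) - \sign(A_-)] \in \cB_1(\cH)$. Since $0 \in \rho(A_\pm)$, there is a spectral gap $(-\delta,\delta)$ with $\delta > 0$ free of spectrum of both operators, and on $\sigma(A_+) \cup \sigma(A_-)$ one has the identity $\sign(x) = g(x)\,\varkappa(x)\,|x|^{-1}$, or even more directly $\sign(x) = x\,|x|^{-1} = g_0(x)$ in the notation suggesting $z \to 0$; the point is that $\sign$ agrees on the relevant spectrum with a function that is a $C^\infty$ modification of $g_z$.

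\textbf{Second step.} The cleanest route is to write, for a fixed admissible $z$ (e.g.\ $z = -1$), the representation $\sign(x) = \phi(x) g(x)$ where $\phi$ is a bounded $C^\infty$ function on $\bbR$ with $\phi(x) = (x^2+1)^{1/2}|x|^{-1}$ on $\sigma(A_+) \cup \sigma(A_-)$ and $\phi$ having equal finite limits as $x \to \pm\infty$ (namely $1$); such $\phi$ exists precisely because of the spectral gap. Then
\[
\sign(A_+) - \sign(A_-) = [\phi(A_+) - \phi(A_-)]\,g(A_+) + \phi(A_-)\,[g(A_+) - g(A_-)].
\]
The second term lies in $\cB_1(\cH)$ by \eqref{2.trgz} (with $z = -1$) together with boundedness of $\phi(A_-)$. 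For the first term I would invoke a standard result on trace class property of $f(A_+) - f(A_-)$ for sufficiently smooth $f$ with equal asymptotic limits, given that the resolvent difference $(A_+ - zI)^{-1} - (A_- - zI)^{-1}$ — equivalently $B_+(|A_-|+I)^{-1}$ up to bounded factors — is trace class; this is exactly the type of statement quoted in Remark \ref{r2.7} via \cite[Theorem 8.7.1]{Ya92}, or alternatively via the double operator integral machinery of Section \ref{s6}. One must first check that indeed $[(A_+ - zI)^{-1} - (A_- - zI)^{-1}] \in \cB_1(\cH)$, which follows from Hypothesis \ref{h2.1}(iv) applied at (or rather, the asymptotic consequence thereof established in Section \ref{s3}, giving $B_+(|A_-|+I)^{-1} \in \cB_1(\cH)$) combined with the resolvent identity.

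\textbf{Main obstacle.} The delicate point is not the algebra but verifying that $[g(A_+) - g(A_-)] \in \cB_1(\cH)$ is genuinely available at this stage — but it is, being exactly \eqref{2.trgz} (equivalently \eqref{assrti} from Proposition \ref{propgA}, proved in Section \ref{s6}). The only remaining care is the smoothness/decay bookkeeping needed to apply Yafaev's theorem (or the DOI estimates) to $[\phi(A_+) - \phi(A_-)]$: one needs $\phi$ to be, say, twice differentiable with derivatives decaying suitably at infinity, which is arranged by the explicit construction of $\phi$ using the spectral gap at $0$. Thus the proof is short once Theorem \ref{Ntrindf} is in hand; the corollary is essentially a functional-calculus corollary of the trace formula's trace class input, and I expect the author's proof to proceed along exactly these lines, possibly bypassing $\phi$ by working directly with $g_z$ for $z$ near $0$ and exploiting analyticity in $z$ up to the gap.
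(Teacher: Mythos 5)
Your proposal is correct and follows essentially the same route as the paper: reduce the claim to $[\sign(A_+)-\sign(A_-)]\in\cB_1(\cH)$ via $E_{A_\pm}((-\infty,0))=\frac12(I-\sign(A_\pm))$, invoke the trace class property of $[g(A_+)-g(A_-)]$ from Proposition \ref{propgA}, and handle the remaining smooth correction with \cite[Theorem 8.7.1]{Ya92}, using the spectral gap at $0$ and the trace class resolvent difference. The only (immaterial) difference is that you modify $g$ multiplicatively by a factor $\phi$ and use a two-term product decomposition (in the spirit of Remark \ref{r2.7}), whereas the paper modifies $g$ additively to a function $\widetilde g$ coinciding with $g$ on $\sigma(A_+)\cup\sigma(A_-)$ and applies Yafaev's theorem to $f=\widetilde g-\sign$.
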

\begin{proof}
By Hypothesis \ref{h2.1}, $(A_+-A_-)$ is a relatively trace class perturbation of $A_-$, that is 
$(A_+ - A_-)(A_- - z I)^{-1} \in \cB_1(\cH)$, $z\in\bbC\backslash\bbR$ 
(see \eqref{3.Apmrtrcl}), and thus the difference of the resolvents of $A_+$ 
and $A_-$ is of trace class, $\big[(A_+ - z I)^{-1} - (A_- - z I)^{-1}\big] \in \cB_1(\cH)$, 
$z\in\bbC\backslash\bbR$. In this case (cf.\ \cite[Theorem 8.7.1]{Ya92}), one has  
$[f(A_-)-f(A_+)]\in\cB_1(\cH)$ for any function $f$ having two locally bounded derivatives
and satisfying the following conditions:
\begin{align}
(x^2f'(x))'&=O(|x|^{-1-\epsilon}), \,|x|\to\infty,\, \epsilon>0,\label{infc1}\\
\lim_{x\to-\infty}f(x)&=\lim_{x\to+\infty}f(x),\, \lim_{x\to-\infty}x^2f'(x)=\lim_{x\to+\infty}x^2f'(x).\label{infc2}
\end{align}
Since $E_{A_\pm}((-\infty,0))=\frac12\big(I-\sign(A_\pm)\big)$, inclusion \eqref{2.43a} is equivalent to 
\beq \label{inclsign} 
[\sign(A_-)-\sign(A_+)] \in\cB_1(\cH). 
\enq
We choose $\varepsilon_0$ such that $[-\varepsilon_0,\varepsilon_0]\subset\rho(A_-)\cap\rho(A_+)$ and consider a smooth modification $\widetilde{g}$ of the function
$g(x)=x(1+x^2)^{-1/2}$ on $\bbR\backslash\{0\}$ such that $\widetilde{g}(x)=\sign x$ for 
$|x|<\varepsilon_0/2$ and $\widetilde{g}(x)=g(x)$ for $|x|>\varepsilon_0$.
Then $\widetilde{g}(A_\pm)=g(A_\pm)$ since $\widetilde{g}$ and $g$ coincide on the spectrum of $A_\pm$. By the first inclusion in \eqref{assrti} we have
$\big[\widetilde{g}(A_-)-\widetilde{g}(A_+)\big]\in\cB_1(\cH)$, and thus, introducing the function $f(x)=\widetilde{g}(x)-\sign(x)$, the inclusion \eqref{inclsign} is equivalent to $[f(A_-)-f(A_+)]\in\cB_1(\cH)$. But the latter inclusion holds since $f$ satisfies \eqref{infc1}, \eqref{infc2} with $\epsilon=1$.
\end{proof}

Next, we will formulate one of our principal results, relating a 
particular choice of spectral shift functions of the two pairs of 
operators,  $(\bsH_2, \bsH_1)$, and $(A_+,A_-)$. This requires some 
preparations as {\it a priori} in the present general context, the 
Krein spectral shift function for either pair is only defined up to constants.

Since by Theorem \ref{Ntrindf},
\begin{equation}
[g(A_+) - g(A_-)] \in\cB_1(\cH),    
\end{equation}
and $g(A_{\pm})$ are self-adjoint, Krein's trace formula in 
its simplest form (cf.\ \cite[Theorem 8.2.1]{Ya92} yields
\begin{equation}
\tr_{\cH}\big(g(A_+) - g(A_-)\big)
= \int_{[-1,1]} \xi(\omega; g(A_+), g(A_-)) \, d\omega.
\end{equation}
{\it Defining}
\begin{equation}
\xi(\nu; A_+, A_-) := \xi(g (\nu); g(A_+), g(A_-)),   \lb{2.46a}
\quad \nu\in\bbR,
\end{equation}
then $\xi(\,\cdot\,; A_+, A_-) $ can be shown to satisfy 
\begin{equation}
\xi(\,\cdot\,; A_+, A_-) \in L^1\big(\bbR; (|\nu| + 1)^{-2} d\nu\big).  \lb{2.46b}
\end{equation}

Next, one also needs to introduce the spectral shift function 
$\xi(\,\cdot\,; \bsH_2,\bsH_1)$ associated with the pair $(\bsH_2, 
\bsH_1)$. Since $\bsH_2\geq 0$ and
$\bsH_1\geq 0$, and 
\begin{equation}
\big[(\bsH_2 + \bsI)^{-1} - (\bsH_1 + \bsI)^{-1}\big] \in \cB_1 
\big(L^2(\bbR;\cH)\big),
\end{equation}
by Theorem \ref{Ntrindf}, one uniquely introduces $\xi(\,\cdot\,; \bsH_2,\bsH_1)$ by requiring that
\begin{equation}
\xi(\lambda; \bsH_2,\bsH_1) = 0, \quad \lambda < 0,    \lb{2.46c}
\end{equation}
and by 
\begin{align}
\begin{split} 
\tr_{L^2(\bbR;\cH)} \big((\bsH_2 - z \, \bsI)^{-1} - (\bsH_1 - z \, 
\bsI)^{-1}\big)
= - \int_{[0, \infty)}  \frac{\xi(\lambda; \bsH_2, \bsH_1) \, 
d\lambda}{(\lambda -z)^2},&   \\
z\in\bbC\backslash [0,\infty),&  \lb{new8.3} \\
\end{split} 
\end{align}
following \cite[Sect.\ 8.9]{Ya92}.

Given these preparations, we have the following result, an extension of 
Pushnitski's formula \cite{Pu08}, to be proven in Section \ref{s8}.

\begin{theorem}  \lb{t2.8}
Assume Hypothesis \ref{h2.1} and define $\xi(\,\cdot\,; A_+, A_-)$ and
$\xi(\,\cdot\,; \bsH_2,\bsH_1)$ according to \eqref{2.46a} and 
\eqref{2.46c}, \eqref{new8.3}, respectively.
Then one has for a.e.\ $\lambda>0$,
\beq
\xi(\lambda; \bsH_2, \bsH_1)=\frac{1}{\pi}\int_{-\lambda^{1/2}}^{\lambda^{1/2}}
\frac{\xi(\nu; A_+,A_-)\, d\nu}{(\lambda-\nu^2)^{1/2}},   \lb{2.53a}
\enq
with a convergent Lebesgue integral on the right-hand side of \eqref{2.53a}. 
\end{theorem}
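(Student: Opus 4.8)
The plan is to derive \eqref{2.53a} from the trace formula \eqref{trfOLD} by converting both sides into integral representations involving the respective spectral shift functions and then matching the kernels. First I would fix $z = \lambda_0 < 0$ on the negative real axis and use the Krein-type representation \eqref{new8.3} for the left-hand side of \eqref{trfOLD}, writing
\[
\tr_{L^2(\bbR;\cH)}\big((\bsH_2 - z\,\bsI)^{-1} - (\bsH_1 - z\,\bsI)^{-1}\big)
= -\int_{[0,\infty)} \frac{\xi(\lambda;\bsH_2,\bsH_1)\,d\lambda}{(\lambda - z)^2}.
\]
On the right-hand side, $\frac{1}{2z}\tr_\cH(g_z(A_+) - g_z(A_-))$, I would invoke the definition \eqref{2.46a} and Krein's trace formula for the pair $(g(A_+), g(A_-))$, together with a change of variables turning the $\xi(\,\cdot\,; g(A_+),g(A_-))$ integral back into an integral against $\xi(\nu; A_+,A_-)\,d\nu$ over $\bbR$. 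The key computational point is to express $\frac{1}{2z} g_z(x)$ — equivalently $\frac{1}{2z} \cdot x(x^2-z)^{-1/2}$ — in a form whose trace, after the spectral theorem for $A_\pm$, produces an integral of $\xi(\nu; A_+, A_-)$ against an explicit scalar kernel $K(\nu, z)$. A direct calculation (differentiating $g_z$ in $x$, integrating by parts in the Krein formula, and using $\tr_\cH(g_z(A_+)-g_z(A_-)) = \int_\bbR g_z'(\nu)\,\xi(\nu;A_+,A_-)\,d\nu$ up to the relevant change of variable) should yield
\[
\frac{1}{2z}\tr_\cH\big(g_z(A_+) - g_z(A_-)\big) = \int_\bbR \frac{\xi(\nu; A_+, A_-)\,d\nu}{2(\nu^2 - z)^{3/2}}\cdot(\text{const}),
\]
i.e.\ an explicit $\nu$-kernel depending on $z$.

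Next I would bring both representations to a common form. The left side is already a Stieltjes-type transform in $z$ with kernel $(\lambda - z)^{-2}$ and density $\xi(\lambda;\bsH_2,\bsH_1)$. For the right side, the plan is to substitute $\lambda = \nu^2$ into the putative formula \eqref{2.53a}: if one believes \eqref{2.53a}, then inserting it into $-\int_0^\infty \xi(\lambda;\bsH_2,\bsH_1)(\lambda-z)^{-2}\,d\lambda$ and interchanging the order of integration (justified by the absolute convergence coming from \eqref{2.46b} and the decay of the kernels for $z<0$) reduces the identity to a purely scalar claim: for each fixed $\nu \in \bbR$ and $z < 0$,
\[
-\int_{\nu^2}^{\infty} \frac{1}{\pi(\lambda - \nu^2)^{1/2}}\cdot\frac{d\lambda}{(\lambda - z)^2} = \frac{\text{(const)}}{(\nu^2 - z)^{3/2}}.
\]
This one-dimensional integral is elementary (substitute $\lambda = \nu^2 + s^2$, or $\lambda - z = (\nu^2 - z)/\cos^2\theta$) and verifies the constant. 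Thus the content of the theorem is exactly the assertion that the two Borel/Stieltjes transforms in $z$ agree on $(-\infty,0)$, from which one concludes equality of the densities a.e.\ by uniqueness of such transforms (Stieltjes inversion, as in \cite[Sect.\ 8.9]{Ya92}).

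The main obstacle is the rigorous justification of the interchange of integrals and of the Stieltjes inversion step in this relative trace class setting, where $\xi(\,\cdot\,;A_+,A_-)$ is only known to lie in $L^1(\bbR;(|\nu|+1)^{-2}d\nu)$ rather than being bounded or compactly supported. One must check that the inner integral defining the right-hand side of \eqref{2.53a} converges for a.e.\ $\lambda>0$ (this is asserted in the statement and should follow from the $(|\nu|+1)^{-2}$-integrability together with the integrable singularity $(\lambda-\nu^2)^{-1/2}$ near $\nu = \pm\lambda^{1/2}$), that the resulting function of $\lambda$ is itself locally integrable with the right growth so that its Stieltjes transform is well-defined for $z \in \bbC\setminus[0,\infty)$, and that Fubini applies. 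A secondary technical point is controlling the branch of $(x^2-z)^{1/2}$ and $g_z$ uniformly so that the analytic continuation from $z<0$ to $z\in\bbC\setminus[0,\infty)$ (already available for both sides of \eqref{trfOLD} by Proposition \ref{propAnal}) is compatible with the integral representations; here I would lean on the trace norm analyticity of $[g_z(A_+)-g_z(A_-)]$ from Lemma \ref{lB.1}. Once these measure-theoretic and analyticity points are in place, the identification of the two Stieltjes transforms and the appeal to uniqueness give \eqref{2.53a} for a.e.\ $\lambda>0$.
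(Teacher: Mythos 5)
Your proposal is correct in outline and rests on the same two pillars as the paper's own argument: the trace formula \eqref{trfOLD} and the representation $\tr_{\cH}\big(g_z(A_+)-g_z(A_-)\big)=-z\int_{\bbR}\xi(\nu;A_+,A_-)(\nu^2-z)^{-3/2}\,d\nu$ (this is Lemma \ref{l7.trfor}). The difference is the endgame. The paper integrates the resulting identity in $z$ from a fixed $z_0<0$ and applies the Stieltjes inversion formula directly to the mixed identity, reading off \eqref{2.53a} from the boundary values $\lim_{\e\downarrow 0}\Im\big((\nu^2-\lambda-i\e)^{-1/2}\big)$, with the a.e.\ convergence of the Abel-type integral handled by a convolution/Young's-inequality argument plus a tail estimate. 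You instead verify the candidate density: Tonelli/Fubini together with the elementary identity $\int_{\nu^2}^{\infty}(\lambda-\nu^2)^{-1/2}(\lambda-z)^{-2}\,d\lambda=(\pi/2)(\nu^2-z)^{-3/2}$ for $z<0$ shows that the right-hand side of \eqref{2.53a} has the same transform $\int_0^\infty(\,\cdot\,)(\lambda-z)^{-2}d\lambda$ as $\xi(\,\cdot\,;\bsH_2,\bsH_1)$, and uniqueness of that transform finishes the proof. This reorganization is legitimate and even slightly tidier: Tonelli simultaneously delivers the a.e.\ finiteness of the inner integral in \eqref{2.53a} and the weighted integrability needed for the uniqueness step, so the paper's separate convolution argument is not needed; on the other hand, the uniqueness step is itself proved by precisely the Stieltjes inversion you cite, applied to the difference of the two densities, so the analytic machinery is ultimately the same.

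One caveat: the step you describe as a ``direct calculation,'' namely $\tr_{\cH}\big(g_z(A_+)-g_z(A_-)\big)=\int_{\bbR}g_z'(\nu)\,\xi(\nu;A_+,A_-)\,d\nu$, is not immediate from Krein's formula for the pair $(g(A_+),g(A_-))$. With $\xi(\,\cdot\,;A_+,A_-)$ defined via the invariance principle \eqref{2.46a}, the trace formula is directly available only for $f\in C_0^\infty(\bbR)$ and, through the change of variables \eqref{7.cv}, for $f=g_{-1}$; for general $z\in\bbC\backslash[0,\infty)$ one passes to $G_z=g_z-g_{-1}$, which satisfies \eqref{yaf1}, \eqref{yaf2} and hence the trace formula with the resolvent-based shift function $\widehat\xi$, and then uses that $\widehat\xi$ and $\xi$ differ by a constant (Lemma \ref{l7.xiconst}), the constant being harmless because $\int_{\bbR}G_z'(\nu)\,d\nu=0$. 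This is exactly the content of Lemma \ref{l7.trfor}, so your plan goes through by citing it, but the step deserves more than the passing mention it gets in your sketch.
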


Finally, we turn to the connection between the spectral shift function, the spectral 
flow for the path 
$\{A(t)\}_{t=-\infty}^{\infty}$ of self-adjoint Fredholm operators, and the Fredholm 
index of $\bsD_\bsA^{}$ to be studied in detail in Section \ref{s9}. Introducing the spectral flow $\text{\rm SpFlow} (\{A(t)\}_{t=-\infty}^\infty)$ as in Definition \ref{defSPF}, and recalling the definition of the index of a pair of Fredholm projections in Definition \ref{defFRP}, the following result is proved in Theorem \ref{t8.14}, Corollary \ref{c8.index},  and Theorems \ref{thSPFI}. (We note that Theorem \ref{t2.8} is the major input in the proof of the Fredholm index result \eqref{2.54}): 

\begin{theorem}\label{t3.8}
Assume Hypothesis \ref{h2.1} and suppose that $0 \in \rho(A_+)\cap\rho(A_-)$. 
Then the pair $\big(E_{A_+}((-\infty,0)),E_{A_-}((-\infty,0))\big)$ of the Morse projections is Fredholm and  the following equalities hold:
\begin{align}
\ind (\bsD_\bsA^{}) & = \text{\rm SpFlow} (\{A(t)\}_{t=-\infty}^\infty) \no \\ 
& = \xi(0;A_+,A_-)   \lb{2.54} \\  
& = \xi(0_+; \bsH_2, \bsH_1)    \lb{2.54A} \\ 
& = \ind(E_{A_-}((-\infty,0)),E_{A_+}((-\infty,0)))  \lb{2.44a} \\
& = {\tr}_{\cH}(E_{A_-}((-\infty,0))-E_{A_+}((-\infty,0)))  \lb{2.44b} \\ 
& = \pi^{-1} \lim_{\varepsilon \downarrow 0}\Im\big(\ln\big({\det}_{\cH}
\big((A_+ - i \varepsilon I)(A_- - i \varepsilon I)^{-1}\big)\big)\big),       \label{spin}
\end{align}
with a choice of branch of $\ln({\det}_{\cH}(\cdot))$ on $\bbC_+$ analogous to 
\eqref{2.52A} below. 
\end{theorem}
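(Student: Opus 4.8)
The plan is to establish the chain of equalities in Theorem~\ref{t3.8} by combining the trace formula and Pushnitski-type formula from Theorems~\ref{Ntrindf} and~\ref{t2.8} with standard scattering-theoretic and $K$-theoretic facts, assembling the pieces that have already been proved in Sections~\ref{s8} and~\ref{s9}. First I would record that, under the standing hypothesis $0\in\rho(A_+)\cap\rho(A_-)$, Corollary~\ref{c2.11} gives $[E_{A_-}((-\infty,0))-E_{A_+}((-\infty,0))]\in\cB_1(\cH)$, which in particular makes the pair of Morse projections $\big(E_{A_+}((-\infty,0)),E_{A_-}((-\infty,0))\big)$ a Fredholm pair (a trace class difference of orthogonal projections is automatically a Fredholm pair, and its index equals the trace of the difference, cf.\ Definition~\ref{defFRP}); this yields the equality \eqref{2.44a}$=$\eqref{2.44b} immediately. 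The identification of $\ind(\bsD_\bsA)$ with $\xi(0;A_+,A_-)$ in \eqref{2.54} is the analytic heart of the matter and is supposed to come from Theorem~\ref{t8.14}: one uses that $\bsH_1$ and $\bsH_2$ differ (in the resolvent sense) by a trace class operator, that $\ker(\bsD_\bsA)$ and $\ker(\bsD_\bsA^*)$ are spanned by the $L^2$ zero modes of $\bsH_1,\bsH_2$ respectively, and that $\ind(\bsD_\bsA)=\dim\ker(\bsH_1)-\dim\ker(\bsH_2)=\xi(0_+;\bsH_1,\bsH_2)$ via the jump of the spectral shift function at the bottom of the spectrum; feeding this into Pushnitski's formula \eqref{2.53a} and letting $\lambda\downarrow 0$ converts $\xi(0_+;\bsH_1,\bsH_2)$ into $\xi(0;A_+,A_-)$ (the kernel $(\lambda-\nu^2)^{-1/2}$ concentrates at $\nu=0$ as $\lambda\downarrow0$, so the integral limits to $\xi(0;A_+,A_-)$ provided $\xi(\,\cdot\,;A_+,A_-)$ is suitably continuous at $0$, which holds because $0$ lies in the common resolvent set and hence $\xi$ is locally constant near $0$).

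Next I would handle \eqref{2.54}$=$\eqref{2.44a}, i.e.\ $\xi(0;A_+,A_-)={\tr}_{\cH}\big(E_{A_-}((-\infty,0))-E_{A_+}((-\infty,0))\big)$. By the defining relation \eqref{2.46a}, $\xi(0;A_+,A_-)=\xi(g(0);g(A_+),g(A_-))=\xi(0;g(A_+),g(A_-))$, and since $0\in\rho(A_\pm)$ there is a spectral gap of $g(A_\pm)$ around $0$; for a trace class difference of self-adjoint operators with a common spectral gap at a point $\mu$, the spectral shift function is constant on that gap and equals ${\tr}_{\cH}\big(E_{g(A_-)}((-\infty,\mu))-E_{g(A_+)}((-\infty,\mu))\big)$ (this is a classical fact, e.g.\ via the formula $\xi(\mu)=-{\rm index}$ of the pair of spectral projections in a gap). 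Taking $\mu=0$ and using $E_{g(A_\pm)}((-\infty,0))=E_{A_\pm}((-\infty,0))$ (because $g$ is an increasing bijection of $\bbR$ fixing $0$) gives exactly \eqref{2.44a}. The equality with the perturbation determinant expression \eqref{spin} then follows from the Birman--Krein formula \eqref{1.14}, \eqref{1.14A}: with the normalization $\lim_{\Im z\to+\infty}\ln(D_{A_+/A_-}(z))=0$, one has $\xi(\lambda;A_+,A_-)=\pi^{-1}\lim_{\e\downarrow0}\Im\big(\ln D_{A_+/A_-}(\lambda+i\e)\big)$ for a.e.\ $\lambda$, and since $0\in\rho(A_\pm)$ the determinant $D_{A_+/A_-}$ extends continuously (indeed analytically) across $\lambda=0$; evaluating at $\lambda=0$ and rewriting $D_{A_+/A_-}(i\e)={\det}_{\cH}\big((A_+-i\e I)(A_--i\e I)^{-1}\big)$ gives \eqref{spin}, with the branch of $\ln$ fixed by \eqref{1.14A} (which is what \eqref{2.52A} refers to).

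Finally, for the spectral-flow equality $\ind(\bsD_\bsA)=\text{\rm SpFlow}(\{A(t)\}_{t=-\infty}^\infty)$ (the top line of \eqref{2.54}), I would invoke Theorem~\ref{thSPFI} of Section~\ref{s9}: under Hypothesis~\ref{h2.1} with boundedly invertible endpoints, each $A(t)$ is a relatively trace class perturbation of $A_-$, hence has the same essential spectrum as $A_-$, and the path $t\mapsto A(t)$ is continuous in an appropriate (e.g.\ Riesz or relative-resolvent) topology; the spectral flow is then well-defined via Definition~\ref{defSPF} as the net number of eigenvalues crossing $0$, and it is a homotopy invariant depending only on the endpoints, computed as ${\tr}_{\cH}\big(E_{A_-}((-\infty,0))-E_{A_+}((-\infty,0))\big)$ — which we have already identified with $\ind(\bsD_\bsA)$. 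Thus the entire chain closes up. The main obstacle, and the step I would expect to require the most care, is the passage to the limit $\lambda\downarrow0$ in Pushnitski's formula \eqref{2.53a} to extract $\xi(0_+;\bsH_1,\bsH_2)=\xi(0;A_+,A_-)$: one must verify that the spectral shift function $\xi(\,\cdot\,;A_+,A_-)$ has a genuine limit at $0$ (not merely an $L^1(( |\nu|+1)^{-2}d\nu)$ representative), justify interchanging the limit with the singular integral, and control the behavior of $\xi(\lambda;\bsH_2,\bsH_1)$ as $\lambda\downarrow0$ in terms of the zero modes of $\bsH_1,\bsH_2$ — all of which rely delicately on the spectral gap at $0$ forced by $0\in\rho(A_\pm)$ and on the trace class estimates established earlier; the remaining equalities are then essentially bookkeeping with classical spectral shift function identities.
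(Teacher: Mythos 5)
Your proposal is correct and follows essentially the same route as the paper: Theorem \ref{t3.8} is proved there by assembling Corollary \ref{c8.index} (essential spectral gap of $\bsH_1,\bsH_2$, the identification $\ind(\bsD_\bsA)=\dim\ker(\bsH_1)-\dim\ker(\bsH_2)$, and the $\lambda\downarrow 0$ limit in Pushnitski's formula), Lemma \ref{l7.trxi} together with Remark \ref{rem:term} for the Morse-projection equalities, Theorem \ref{t8.14} for the perturbation-determinant expression, and Theorem \ref{thSPFI} (via Lesch's theorem) for the spectral flow, exactly as you outline. The only discrepancies are cosmetic: the attribution of the $\ind(\bsD_\bsA)=\xi(0_+;\cdot,\cdot)$ step to Theorem \ref{t8.14} rather than Corollary \ref{c8.index}, and the ordering of $\bsH_1,\bsH_2$ in $\xi(0_+;\cdot,\cdot)$, an inconsistency already present between \eqref{2.54A} and Corollary \ref{c8.index} in the paper itself.
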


Here we note that $\xi$ can be shown to satisfy (cf.\ Theorem \ref{t8.14})  
\beq
\xi(\lambda; A_+,A_-)=\pi^{-1}\lim_{\e\downarrow 0} 
\Im(\ln(D_{A_+/A_-}(\lambda+i\e))) \, 
\text{ for a.e.\ } \, \lambda\in\bbR,    \lb{3.xidet}
\enq
and we make the choice of branch of $\ln(D_{A_+/A_-}(\cdot))$ on $\bbC_+$ such that 
\begin{equation}
\lim_{\Im(z) \to +\infty}\ln(D_{A_+/A_-}(z)) = 0,     \lb{2.52A}
\end{equation} 
with 
\begin{equation}
D_{T/S}(z) = {\det}_{\cH} ((T-z I)(S- z I)^{-1}) = {\det}_{\cH}(I+(T-S)(S-z I)^{-1}), 
\quad z \in \rho(S), 
\end{equation}
denoting the perturbation determinant for the pair of operators $(S,T)$ in $\cH$, 
assuming $(T-S)(S-z_0)^{-1} \in \cB_1(\cH)$ for some (and hence for all) 
$z_0 \in \rho(S)$. In addition, we recall,   
\begin{align} 
\begin{split} 
\frac{d}{dz}\ln(D_{A_+/A_-}(z)) &= - {\tr}_{\cH}\big((A_+ - z I)^{-1}-(A_- - z I)^{-1}\big)   \\
& = \int_{\bbR}\frac{\xi(\lambda; A_+, A_-)\,d\lambda}{(\lambda-z)^2},    \quad 
z \in \bbC\backslash\bbR,     \lb{2.restr}  
\end{split} 
\end{align} 
the trace formula associated with the pair $(A_+, A_-)$.

\section{The Relative Trace Class Setting} \lb{s3}

Throughout this section we assume Hypothesis \ref{h2.1} and closely examine the basic assumptions made in it. 

\subsection{A Thorough Analysis of the Main Hypothesis}
We start with the following auxiliary result: 

\begin{lemma} \lb{l3.1}
Let $\cH$ be a complex, separable Hilbert space and 
$\bbR \ni t \mapsto F(t) \in \cB_1(\cH)$. Then the following assertions $(i)$ and $(ii)$ 
are equivalent: \\
$(i)$ $\{F(t)\}_{t\in\bbR}$ is a weakly measurable family of operators in $\cB(\cH)$ 
and $\|F(\cdot)\|_{\cB_1(\cH)} \in L^1(\bbR; dt)$.  \\
$(ii)$ $F\in L^1(\bbR; \cB_1(\cH))$. 

Moreover, if either condition $(i)$ or $(ii)$ holds, then
\begin{equation}
\bigg\|\int_{\bbR} F(t)\, dt\bigg\|_{\cB_1(\cH)} \leq 
\int_{\bbR} \|F(t)\|_{\cB_1(\cH)} \, dt    \lb{3.BI}
\end{equation}  
and the $\cB_1(\cH)$-valued function
\begin{equation}
\bbR \ni t \mapsto \int_{t_0}^t F(s) \, ds, \quad t_0 \in \bbR\cup \{-\infty\},  
\lb{3.AC}
\end{equation}
is strongly absolutely continuous with respect to the norm in $\cB_1(\cH)$. \\
In addition we recall the following fact: \\
$(iii)$ Suppose that $\bbR \ni t \mapsto G(t) \in \cB_1(\cH)$ is strongly locally absolutely 
continuous in $\cB_1(\cH)$. Then $H(t) = G'(t)$ exists for a.e.\ $t\in\bbR$, $H(\cdot)$ is 
Bochner integrable over any compact interval, and hence
\begin{equation}
G(t) = G(t_0) + \int_{t_0}^t H(s) \, ds, \quad t, t_0 \in \bbR.   \lb{3.FTC}
\end{equation}
\end{lemma}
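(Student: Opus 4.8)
The statement to prove is Lemma \ref{l3.1}, which asserts the equivalence of weak measurability plus $L^1$-integrability of trace norms with Bochner integrability in $\cB_1(\cH)$, together with the norm inequality, absolute continuity of the primitive, and the differentiation statement $(iii)$.

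\medskip

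The plan is to treat this as a routine instance of the Bochner integration theory, adapted to the Banach space $\cB_1(\cH)$. First I would observe that $\cB_1(\cH)$ is a separable Banach space when $\cH$ is separable (this is standard — finite-rank operators with matrix entries in $\bbQ + i\bbQ$ relative to a fixed orthonormal basis are trace-norm dense). Consequently, by Pettis's measurability theorem, strong (Bochner) measurability of $F \colon \bbR \to \cB_1(\cH)$ is equivalent to weak measurability, i.e.\ measurability of $t \mapsto \ell(F(t))$ for every $\ell \in \cB_1(\cH)^*$. So to establish $(i) \Rightarrow (ii)$ I would need to upgrade ``weakly measurable family in $\cB(\cH)$'' (which literally means $t \mapsto (g, F(t) h)_\cH$ is measurable for all $g, h \in \cH$) to weak measurability in the Banach-space sense relative to $\cB_1(\cH)^* = \cB(\cH)$ under the trace pairing. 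Since bounded operators on $\cH$ are weak-operator limits of finite linear combinations of rank-one operators $(g, \cdot)_\cH h$, and $\Tr(F(t) (g,\cdot)_\cH h) = (g, F(t) h)_\cH$, a limiting argument (using $\|F(\cdot)\|_{\cB_1(\cH)} \in L^1$ to control the tails, or simply pointwise in $t$ since each $F(t)$ is a fixed trace class operator) shows $t \mapsto \Tr(F(t) K)$ is measurable for every $K \in \cB(\cH)$. Hence $F$ is strongly measurable, and combined with $\|F(\cdot)\|_{\cB_1(\cH)} \in L^1(\bbR;dt)$, Bochner's theorem gives $F \in L^1(\bbR; \cB_1(\cH))$, which is $(ii)$. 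The converse $(ii) \Rightarrow (i)$ is immediate: Bochner integrability forces strong measurability (hence weak measurability, hence weak-operator measurability of the family) and integrability of the norm.

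\medskip

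For the remaining assertions: the inequality \eqref{3.BI} is the standard estimate $\big\| \int F \, dt \big\| \le \int \|F\| \, dt$ valid for any Banach-space-valued Bochner integral. The strong absolute continuity in $\cB_1(\cH)$ of $t \mapsto \int_{t_0}^t F(s)\, ds$ in \eqref{3.AC} follows from absolute continuity of the Lebesgue integral: for disjoint intervals $(a_j, b_j)$, $\sum_j \big\| \int_{a_j}^{b_j} F \, ds \big\|_{\cB_1(\cH)} \le \sum_j \int_{a_j}^{b_j} \|F(s)\|_{\cB_1(\cH)} \, ds \to 0$ as $\sum_j (b_j - a_j) \to 0$, since $\|F(\cdot)\|_{\cB_1(\cH)} \in L^1$. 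Finally, $(iii)$ is precisely the vector-valued Lebesgue differentiation / fundamental theorem of calculus for Bochner integrals in a Banach space with the Radon–Nikodym property; $\cB_1(\cH)$ has the RNP because it is a separable dual space (namely $\cB_1(\cH) = \cB_\infty(\cH)^*$). So I would cite the standard fact that a strongly (locally) absolutely continuous $\cB_1(\cH)$-valued function is differentiable a.e., its derivative is locally Bochner integrable, and \eqref{3.FTC} holds.

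\medskip

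The only genuinely non-routine point — and the step I expect to dwell on — is the passage in $(i) \Rightarrow (ii)$ from weak-operator measurability of the \emph{family} (testing against vectors $g, h \in \cH$) to Banach-space weak measurability of $F$ as a $\cB_1(\cH)$-valued function (testing against all of $\cB_1(\cH)^* = \cB(\cH)$). This needs a density/approximation argument: fixing $t$, every $K \in \cB(\cH)$ is a WOT-limit of finite rank operators $K_N$ with $\|K_N\| \le \|K\|$ (e.g.\ $K_N = P_N K P_N$ for spectral-type projections $P_N \uparrow I$), and for fixed trace class $F(t)$ one has $\Tr(F(t) K_N) \to \Tr(F(t) K)$ by dominated convergence for the trace; since each $t \mapsto \Tr(F(t) K_N)$ is a finite linear combination of measurable functions $(h_k, F(t) g_k)_\cH$, the pointwise limit $t \mapsto \Tr(F(t)K)$ is measurable. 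Everything else is an application of off-the-shelf Bochner integration theory, and I would simply invoke the relevant references (e.g.\ Diestel–Uhl, or Hille–Phillips) rather than reprove it.
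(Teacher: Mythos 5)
Your proposal is correct and follows essentially the same route as the paper: separability of $\cB_1(\cH)$, the duality $\cB_1(\cH)^*=\cB(\cH)$ under the trace pairing with rank-one operators carrying the weak-operator data, Pettis' and Bochner's theorems for $(i)\Leftrightarrow(ii)$, the standard Bochner-integral estimate and absolute continuity for \eqref{3.BI}, \eqref{3.AC}, and the Radon--Nikodym property of $\cB_1(\cH)$ as the separable dual $(\cB_\infty(\cH))^*$ for $(iii)$. The only (cosmetic) difference is that where the paper cites the separating-set corollary of Pettis' theorem with the rank-one operators as separating set, you verify the required weak measurability directly by approximating an arbitrary $K\in\cB(\cH)$ in the weak operator topology by finite-rank truncations $P_NKP_N$ and passing to the pointwise limit, which is a perfectly valid substitute.
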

\begin{proof}
Clearly, condition $(ii)$ implies condition $(i)$. 

To prove the converse statement, that is, condition $(i)$ implies condition $(ii)$,  
one can argue as follows. Let $F: \bbR \to \cB_1(\cH)$ be a weakly measurable 
function in the sense that for every $f,g \in \cH$, the function $(f, F(\cdot) g)_{\cH}$ is  measurable on $\bbR$, and suppose that 
$\|F(\cdot)\|_{\cB_1(\cH)} \in L^1(\bbR; dt)$.

One recalls that $\cB_1(\cH)$ is a separable Banach space. Hence, if $F$ is weakly measurable in $\cB_1(\cH)$, then it is measurable in $\cB_1(\cH)$ by Pettis' theorem (cf., e.g., \cite[Theorem\ 1.1.1]{ABHN01}, \cite[Theorem\ II.1.2]{DU77}, 
\cite[3.5.3]{HP85}). Moreover, one recalls that for fixed $A \in \cB(\cH)$, 
$\tr_{\cH}(TA)$, $T\in \cB_1(\cH)$ is a continuous functional on $\cB_1(\cH)$ with 
norm $\|A\|_{\cB(\cH)}$, and every continuous functional on $\cB_1(\cH)$ is obtained in this manner. In particular, one can identify $\cB_1(\cH)^*$ and $\cB(\cH)$ as Banach spaces. 

Next, one notes that $F$ is weakly measurable in $\cB_1(\cH)$ if and only if 
$ \tr_{\cH} (F(\cdot) A)$ is measurable on $\bbR$ for every $A$ taken from a 
separating set $\cS \subseteq (\cB_1(\cH))^*=\cB(\cH)$ (cf.\ \cite[Corollary\ 1.1.3]{ABHN01}). As $\cS$ one may take, for example, the set $\cO$ of rank-one operators on $\cH$. It will clearly be separating since for any $0 \neq T \in \cB_1(\cH)$ one can find an operator $A = (f_0, \cdot)_{\cH} \, g_0 \in \cO$ such that 
$\tr_{\cH} (T A)= (f_0,T g_0)_{\cH} \neq 0$. However, by hypothesis, 
$\tr_{\cH} (F(\cdot) A)=(f_0, F(\cdot) g_0)_{\cH}$ is measurable
on $\bbR$ for every $A = (f_0, \cdot)_{\cH} \, g_0$. Thus $F$ is weakly measurable and hence measurable in $\cB_1(\cH)$. Moreover, since also 
$\|F(\cdot)\|_{\cB_1(\cH)} \in L^1(\bbR; dt)$ by assumption, $F$ is Bochner integrable in $\cB_1(\cH)$ by Bochner's theorem (cf., e,g., \cite[Theorem\ 1.14]{ABHN01}, 
\cite[Theorem\ II.2.2]{DU77}, \cite[Theorem\ 3.7.4]{HP85}). 

The estimate \eqref{3.BI} and the strong absolute continuity of the function in 
\eqref{3.AC} is well-known in the context of Bochner integrals (cf., e.g., 
\cite[p.\ 6--21]{ABHN01}, \cite[p.\ 44--50]{DU77}, \cite[p.\ 71--88]{HP85}).

Finally, also \eqref{3.FTC} is standard (cf., e.g., \cite[Proposition\ 1.2.3]{ABHN01}) since 
$\cB_1(\cH)$ has the Radon--Nikodym property by the Dunford--Pettis Theorem 
(cf., e.g., \cite[Theorem\ 1.2.6]{ABHN01}) as 
$(\cB_\infty(\cH))^* = \cB_1(\cH)$ is a separable dual space (cf., e.g., 
\cite[Theorem\ III.7.1]{GK69}, \cite[Sect.\ IV.1]{Sc60}).
\end{proof}

An application of Lemma \ref{l3.1} yields the following observations:
 
\begin{remark}  \lb{r2.2}
Hypothesis \ref{h2.1}\,(iii) implies that 
 \beq\label{2.10a}
\{B'(t)(|A_-| + I)^{-1}\}_{t\in\bbR} \, \text{ is weakly measurable}
\enq 
since for all $g, h\in\cH$, $(g,B'(\cdot) (|A_-| + I)^{-1}h)_{\cH}$ arises as a pointwise 
a.e.\ limit of measurable functions. Thus, applying Lemma \ref{l3.1}, one concludes 
that assumption \eqref{2.13i},
\beq\lb{2.11a}
\int_\bbR \big\|B'(t) (|A_-| + I)^{-1}\big\|_{\cB_1(\cH)}\,dt<\infty, 
\enq
together with condition \eqref{2.10a}, are equivalent to the (seemingly stronger) condition
\begin{equation}
B'(\cdot)(|A_-| + I)^{-1} \in L^1(\bbR;\cB_1(\cH)).   \lb{2.12a}
\end{equation}
In particular, it would have been possible to just assume
$B'(\cdot)(|A_-| + I)^{-1} \in L^1(\bbR;\cB_1(\cH))$ in Hypothesis \ref{h2.1}\,$(iv)$. 
\end{remark}

\begin{remark}  \lb{r2.3}
We temporarily introduce the Bochner integral in $\cB_1(\cH)$, 
\begin{equation}
C(t) = \int_{-\infty}^t B'(s) (|A_-| + I)^{-1} \,ds \in \cB_1(\cH), \quad t \in \bbR. 
\end{equation}
Applying Lemma \ref{l3.1}\,$(iii)$, one concludes that 
\begin{equation}
C'(t) = B'(t) (|A_-| + I)^{-1} \, \text{ for a.e.\  $t \in \bbR$,} 
\end{equation}
and hence, in particular, for all $f, g \in \cH$, 
\begin{equation}
(f,C'(t) g)_{\cH} = (f,B'(t) (|A_-| + I)^{-1} g)_{\cH} \, \text{ for a.e.\  $t \in \bbR$.} 
\end{equation}
Thus, by Hypothesis \ref{h2.1}\,$(iii)$,  
\begin{align}
\f{d}{dt} (f, C(t) g)_{\cH} &= (f,C'(t) g)_{\cH} = (f,B'(t) (|A_-| + I)^{-1} g)_{\cH}   \no \\
& = \f{d}{dt} (f, B(t)  (|A_-| + I)^{-1} g)_{\cH}  \, \text{ for a.e.\  $t \in \bbR$.} 
\end{align}
Consequently, one arrives at 
\begin{equation}
C(t) = B(t)  (|A_-| + I)^{-1} + C_0 \, \text{ for some $C_0 \in\cB_1(\cH)$.}
\end{equation}
In particular, one infers that 
\begin{equation}
\lim_{t\to -\infty} B(t) (|A_-| + I)^{-1} = D_- \, \text{ exists in the $\cB_1(\cH)$-norm.}  \lb{3.D-} 
\end{equation}

We now choose the convenient normalization 
\begin{equation}
D_- = 0    \lb{3.D-=0}
\end{equation}
and hence obtain 
\beq
B(t) (|A_-| + I)^{-1} = \int_{-\infty}^t B'(s) (|A_-| + I)^{-1} \,ds \in \cB_1(\cH), \quad t \in \bbR,  
\lb{2.13jk}
\enq
(a fact that will be used later in the proof of Lemma \ref{lHYP}), and hence one also has the estimate
\beq
\big\|B(t) (|A_-| + I)^{-1}\big\|_{\cB_1(\cH)}
\leq \int_{-\infty}^t \big\|B'(s) (|A_-| + I)^{-1}\big\|_{\cB_1(\cH)} 
\,ds, \quad t \in \bbR.   \lb{2.13jl}
\enq
\end{remark}

In the following we draw some conclusions from Hypothesis \ref{h2.1}:

We start by recalling the following standard convergence property for 
trace ideals:

\begin{lemma}\lb{lSTP}
Let $p\in[1,\infty)$ and assume that $R,R_n,T,T_n\in\cB(\cH)$, 
$n\in\bbN$, satisfy
$\slim_{n\to\infty}R_n = R$  and $\slim_{n\to \infty}T_n = T$ and that
$S,S_n\in\cB_p(\cH)$, $n\in\bbN$, satisfy 
$\lim_{n\to\infty}\|S_n-S\|_{\cB_p(\cH)}=0$.
Then $\lim_{n\to\infty}\|R_n S_n T_n^\ast - R S T^\ast\|_{\cB_p(\cH)}=0$.
\end{lemma}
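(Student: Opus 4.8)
The plan is to reduce the statement to one elementary fact: left multiplication (and, by taking adjoints, right multiplication) by a sequence converging in the strong operator topology acts continuously on each Schatten--von Neumann ideal $\cB_p(\cH)$. First I would invoke the uniform boundedness principle: since $\slim_{n\to\infty} R_n = R$ and $\slim_{n\to\infty} T_n = T$, the norms $\|R_n\|_{\cB(\cH)}$ and $\|T_n\|_{\cB(\cH)}$ are bounded uniformly in $n$, say by a constant $M \ge 1$. Then I would telescope,
\[
R_n S_n T_n^* - R S T^* = R_n (S_n - S) T_n^* + (R_n - R) S T_n^* + R S (T_n^* - T^*),
\]
and estimate the three summands separately in the $\cB_p(\cH)$-norm.

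The key step is the auxiliary claim: if $K \in \cB_p(\cH)$ and $X_n \to X$ in the strong operator topology (so $\sup_n \|X_n\|_{\cB(\cH)} < \infty$ by uniform boundedness), then $\|X_n K - X K\|_{\cB_p(\cH)} \to 0$. To prove it I would use density of the finite-rank operators in $\cB_p(\cH)$: given $\varepsilon > 0$, choose a finite-rank $F = \sum_{j=1}^m (\phi_j, \cdot)_{\cH}\, \psi_j$ with $\|K - F\|_{\cB_p(\cH)} < \varepsilon$, and bound
\[
\|X_n K - X K\|_{\cB_p(\cH)} \le \big(\|X_n\|_{\cB(\cH)} + \|X\|_{\cB(\cH)}\big)\,\varepsilon + \|X_n F - X F\|_{\cB_p(\cH)};
\]
since $X_n F - X F = \sum_{j=1}^m (\phi_j, \cdot)_{\cH}\, (X_n - X)\psi_j$, one gets $\|X_n F - X F\|_{\cB_p(\cH)} \le \sum_{j=1}^m \|\phi_j\|_{\cH}\,\|(X_n - X)\psi_j\|_{\cH} \to 0$, and the claim follows on taking $\limsup_{n\to\infty}$ and then $\varepsilon \downarrow 0$. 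By the isometry $\|Y\|_{\cB_p(\cH)} = \|Y^*\|_{\cB_p(\cH)}$, the mirror statement also holds: $\|K X_n^* - K X^*\|_{\cB_p(\cH)} = \|X_n K^* - X K^*\|_{\cB_p(\cH)} \to 0$.

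With this in hand the three summands are dispatched: the first satisfies $\|R_n (S_n - S) T_n^*\|_{\cB_p(\cH)} \le M^2 \|S_n - S\|_{\cB_p(\cH)} \to 0$ directly by hypothesis; the second satisfies $\|(R_n - R) S T_n^*\|_{\cB_p(\cH)} \le M \|(R_n - R) S\|_{\cB_p(\cH)} \to 0$ by the auxiliary claim with $X_n = R_n$, $K = S$; and the third satisfies $\|R S (T_n^* - T^*)\|_{\cB_p(\cH)} \le \|R\|_{\cB(\cH)}\, \|S (T_n^* - T^*)\|_{\cB_p(\cH)} \to 0$ by the mirror form of the claim with $X_n = T_n$, $K = S$. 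The one genuine subtlety, which I would flag as the main obstacle, is that strong convergence $T_n \to T$ does \emph{not} pass to the adjoints, so the third term cannot be treated by a naive ``$\slim_n T_n^* = T^*$'' argument; this is exactly what the adjoint-isometry of $\cB_p(\cH)$ circumvents, trading $S(T_n^* - T^*)$ for $(T_n - T) S^*$, to which the auxiliary claim applies since strong convergence of $T_n$ is available there.
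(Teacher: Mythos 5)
Your proof is correct, and it follows essentially the route the paper indicates: the paper does not write out an argument but simply cites Gr\"umm's convergence theorem (Gr\"umm, Simon's \emph{Trace Ideals}, Yafaev) together with exactly the two points you make explicit, namely uniform boundedness of $R_n$, $T_n$ via the uniform boundedness principle and ``taking adjoints'' to handle $T_n^\ast$, since strong convergence does not pass to adjoints. Your telescoping plus finite-rank approximation is precisely the standard proof of the cited theorem, so the approaches coincide; yours is merely self-contained.
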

This follows, for instance, from \cite[Theorem 1]{Gr73}, \cite[p.\ 
28--29]{Si05}, or
\cite[Lemma 6.1.3]{Ya92} with a minor additional effort (taking 
adjoints, etc.). We note that by the uniform boundedness principle, weak (and hence strong) convergence of $R_n\in\cB(\cH)$ to an operator $R\in\cB(\cH)$ implies the uniform boundedness of the sequence $\{R_n\}_{n\in\bbN}$, that is, the existence of a constant $C \in (0,\infty)$ such that $\sup_{n\in\bbN} \|R_n\|_{\cB(\cH)} \leq C$ and 
$\|R\|_{\cB(\cH)} \leq \liminf_{n\to\infty} \|R_n\|_{\cB(\cH)}$ 
(cf., e.g., \cite[Theorem\ 4.26]{We80}). (In particular, the uniform boundedness hypothesis $\sup_{n\in\bbN} \|R_n\|_{\cB(\cH)} \leq C$ (and similarly for $T_n$) 
used in \cite[p.\ 28]{Si05} need not be assumed in Lemma \ref{lSTP}.) 

\begin{lemma}\lb{lHYP}
Assume Hypotheses \ref{h2.1} and introduce the open cone  
$C_{\varepsilon} = \{z\in\bbC \,|\, |\arg(z)| < \varepsilon\}$ for some 
$\varepsilon \in(0, \pi/2)$. Then
\begin{equation}
\sup_{t\in\bbR} \big\|B(t) (|A_-| - z I)^{-1}\big\|_{\cB_1(\cH)}
\underset{\substack{z\to \infty \\ z \notin C_{\varepsilon}}}{=} o(1).     \lb{2.13g}   
\end{equation}
\end{lemma}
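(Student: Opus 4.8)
The plan is to use the representation \eqref{2.13jk} for $B(t)(|A_-|+I)^{-1}$ as a Bochner integral together with the uniform $L^1$ bound \eqref{2.13i}, and then to factor out the $z$-dependence through a ``cutoff'' estimate on the multiplier $(|A_-|+I)(|A_-|-zI)^{-1}$. First I would write, for $t\in\bbR$ and $z\notin C_\varepsilon$,
\[
B(t)(|A_-|-zI)^{-1} = \big[B(t)(|A_-|+I)^{-1}\big]\,\big[(|A_-|+I)(|A_-|-zI)^{-1}\big],
\]
so that, by the ideal property of $\cB_1(\cH)$ and \eqref{2.13jl},
\[
\big\|B(t)(|A_-|-zI)^{-1}\big\|_{\cB_1(\cH)}
\le \Big(\int_{\bbR}\big\|B'(s)(|A_-|+I)^{-1}\big\|_{\cB_1(\cH)}\,ds\Big)\,
\big\|(|A_-|+I)(|A_-|-zI)^{-1}\big\|_{\cB(\cH)}.
\]
The first factor is a finite constant $M$ independent of $t$ by \eqref{2.13i}, so the claim reduces to showing that the scalar quantity $\sup_{x\ge 0}\big|(x+1)(x-z)^{-1}\big|$ tends to $0$ as $z\to\infty$ with $z\notin C_\varepsilon$.

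The key step is then the elementary estimate on the bounded multiplier. Using the spectral theorem for the self-adjoint operator $|A_-|\ge 0$, one has
\[
\big\|(|A_-|+I)(|A_-|-zI)^{-1}\big\|_{\cB(\cH)}
\le \sup_{x\in[0,\infty)}\frac{x+1}{|x-z|}.
\]
For $z\notin C_\varepsilon$ there is a constant $c_\varepsilon>0$ (depending only on $\varepsilon$) with $|x-z|\ge c_\varepsilon(x+|z|)$ for all $x\ge 0$: indeed, writing $z=|z|e^{i\phi}$ with $|\phi|\ge\varepsilon$, one estimates $|x-z|^2 = x^2 - 2x|z|\cos\phi + |z|^2 \ge x^2 + |z|^2 - 2x|z|\cos\varepsilon$, and since $\cos\varepsilon<1$ this dominates $c_\varepsilon^2(x+|z|)^2$ for a suitable $c_\varepsilon$. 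Consequently
\[
\sup_{x\ge 0}\frac{x+1}{|x-z|}\le \sup_{x\ge 0}\frac{x+1}{c_\varepsilon(x+|z|)}
\le \frac{1}{c_\varepsilon}\,\max\!\Big(\frac{1}{|z|},\,1\Big)\cdot\frac{\,?\,}{\,?\,},
\]
and a direct check shows $\frac{x+1}{x+|z|}\le \max(1,|z|^{-1})$ whenever $|z|\ge 1$ is not quite right — rather, for $|z|\ge 2$ one has $\frac{x+1}{x+|z|}\le \frac{x+1}{x+2}\le 1$ and moreover $\frac{x+1}{x+|z|}\to 0$ uniformly: precisely $\frac{x+1}{x+|z|}\le \frac{2}{|z|}$ for $|z|\ge 2$ (since either $x\le |z|$, giving numerator $\le |z|+1\le 2|z|$ over denominator $\ge |z|$... ) — in any case the supremum over $x\ge 0$ of $(x+1)/(x+|z|)$ is attained at $x=0$ when $|z|>1$ and equals $1/|z|\to 0$. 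Hence $\sup_{x\ge 0}(x+1)/|x-z|\le (c_\varepsilon |z|)^{-1}=o(1)$ as $z\to\infty$.

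Combining the two displays gives $\sup_{t\in\bbR}\big\|B(t)(|A_-|-zI)^{-1}\big\|_{\cB_1(\cH)}\le M\,(c_\varepsilon|z|)^{-1}$, which is $o(1)$ as $z\to\infty$ with $z\notin C_\varepsilon$, proving \eqref{2.13g}. I expect the only genuinely delicate point to be bookkeeping the geometry of the cone complement, i.e.\ justifying the bound $|x-z|\ge c_\varepsilon(x+|z|)$ uniformly in $x\ge 0$ and $z\notin C_\varepsilon$; everything else is the ideal property plus the finiteness of $M$ from \eqref{2.13i} and a one-variable supremum computation. One should also note at the start that $B(t)(|A_-|-zI)^{-1}$ is genuinely bounded and trace class because $\dom(B(t))\supseteq\dom(A_-)=\dom(|A_-|)\supseteq\ran((|A_-|-zI)^{-1})$, so the factorization above is legitimate and the left-hand side of \eqref{2.13g} is well defined.
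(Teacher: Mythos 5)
There is a genuine gap, and it sits exactly at the step you flagged as ``delicate bookkeeping.'' Your cone estimate $|x-z|\ge c_\varepsilon(x+|z|)$ for $x\ge 0$, $z\notin C_\varepsilon$ is fine, but the ensuing one-variable supremum is computed incorrectly: for $|z|>1$ the function $x\mapsto (x+1)/(x+|z|)$ is \emph{increasing} on $[0,\infty)$, so its supremum is $1$ (approached as $x\to\infty$), not the value $1/|z|$ at $x=0$. More directly, taking $x=|z|$ and $z=|z|e^{i\phi}$ with $|\phi|\ge\varepsilon$ gives $(x+1)/|x-z|=(|z|+1)/(2|z|\sin(|\phi|/2))\ge 1/2$, so whenever $\sigma(|A_-|)$ is unbounded (the case of interest, since Hypothesis \ref{h2.1} allows unbounded $A_-$) one has $\big\|(|A_-|+I)(|A_-|-zI)^{-1}\big\|_{\cB(\cH)}$ bounded \emph{below} along a sequence $|z|\to\infty$. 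Thus the multiplier is only uniformly bounded outside the cone, as in \eqref{bddAm}; it is not $o(1)$, and your factorization $B(t)(|A_-|-zI)^{-1}=[B(t)(|A_-|+I)^{-1}][(|A_-|+I)(|A_-|-zI)^{-1}]$ combined with \eqref{2.13jl} and \eqref{2.13i} only yields a $z$-uniform bound $O(1)$ on the left-hand side of \eqref{2.13g}, not the asserted decay. (If $A_-$ were bounded your argument would work, but then the lemma is easy anyway.)

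The decay has to come from a different mechanism, which is what the paper's proof does: using \eqref{2.13jk} one first bounds
$\sup_{t\in\bbR}\|B(t)(|A_-|-zI)^{-1}\|_{\cB_1(\cH)}\le\int_{\bbR}\|B'(s)(|A_-|-zI)^{-1}\|_{\cB_1(\cH)}\,ds$,
keeping the $s$-integral rather than pulling out the $z$-dependent factor in operator norm. Then one shows the \emph{pointwise} (in $s$) convergence \eqref{convz}: writing $B'(s)(|A_-|-zI)^{-1}=\big[B'(s)(|A_-|+I)^{-1}\big]W_z$ with $W_z=(|A_-|+I)(|A_-|-zI)^{-1}$, one has $W_z^*\to 0$ \emph{strongly} as $|z|\to\infty$, $z\notin C_\varepsilon$ (since $\|(|A_-|-zI)^{-1}\|_{\cB(\cH)}\to 0$ and $\dom(|A_-|)$ is dense), while $B'(s)(|A_-|+I)^{-1}\in\cB_1(\cH)$ is fixed; a Gr\"umm-type convergence result (Lemma \ref{lSTP}) then gives trace-norm convergence to zero of the product for each $s$. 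Finally, the integrable dominating function $\|B'(s)(|A_-|+I)^{-1}\|_{\cB_1(\cH)}\,c(\varepsilon)$ from \eqref{2.13i} and \eqref{bddAm} lets one conclude via dominated convergence. So the trace-class property of $B'(s)(|A_-|+I)^{-1}$ is not merely a bookkeeping device for finiteness of $M$; it is the essential ingredient that converts strong convergence of $W_z^*$ into trace-norm smallness, which a pure operator-norm estimate on $W_z$ cannot deliver.
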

\begin{proof}
One estimates, assuming for simplicity that $|z| \geq 1$, $z\notin C_{\varepsilon}$, 
\begin{align}
& \sup_{t\in\bbR} \big\|B(t) (|A_-| - z I)^{-1}\big\|_{\cB_1(\cH)}=
\sup_{t\in\bbR}\bigg\|\int_{-\infty}^tB'(s) (|A_-| -
z)^{-1}\,ds \bigg\|_{\cB_1(\cH)}  \no  \\
& \quad \le\int_{\bbR} \big\|B'(s) (|A_-| - z 
I)^{-1}\big\|_{\cB_1(\cH)}\,ds  \lb{Bprime}  \\
& \quad =\int_{\bbR} \big\|B'(s)(|A_-| + I)^{-1} (|A_-| + I)
(|A_-| - z I)^{-1}\big\|_{\cB_1(\cH)}\,ds    \no  \\
& \quad \le \big\|(|A_-| + I) (|A_-| - z I)^{-1}\big\|_{\cB(\cH)}
\int_{\bbR} \big\|B'(s)(|A_-| + I)^{-1}\big\|_{\cB_1(\cH)}\,ds <\infty   
\end{align}
due to condition \eqref{2.13i}, since
\beq\lb{bddAm}
\big\|(|A_-| + I) (|A_-| - z I)^{-1}\big\|_{\cB(\cH)} \leq c(\varepsilon), \quad 
|z|\geq 1, \; z\notin C_{\varepsilon}, 
\enq
for some constant $c(\varepsilon) > 0$. By the dominated convergence theorem and 
\eqref{Bprime}, it remains to show that
\beq \lb{convz}
\big\|B'(s) (|A_-| - z I)^{-1}\big\|_{\cB_1(\cH)}
\underset{\substack{|z|\to \infty \\ z \notin C_{\varepsilon}}}{\longrightarrow}0 \,
\text{ for each $s\in\bbR$.}
\enq
Introducing the normal operators
$W_z=(|A_-| + I) (|A_-| - z I)^{-1}$, $W_z^*=(|A_-| + I) (|A_-| - \ol{z} I)^{-1}$, 
$|z|\geq 1$, $z\notin C_{\varepsilon}$, the norms of $W_z$
are uniformly bounded due to \eqref{bddAm}. In addition, one has
$\|(|A_-| - z I)^{-1}\|_{\cB(\cH)}\to0$
as $|z|\to \infty$, $z\notin C_{\varepsilon}$, and thus for all $f\in\dom(|A_-|)$, 
$(W_z)^* f\to0$ in $\cH$ as $|z|\to \infty$, $z\notin C_{\varepsilon}$.
Since $\dom(|A_-|)$ is dense in $\cH$, it follows that
$(W_z)^* \to0$ strongly in $\cH$ as $|z|\to \infty$, $z\notin C_{\varepsilon}$. 
Due to the fact that
\beq
B'(s) (|A_-| - z I)^{-1}=B'(s) (|A_-| + I)^{-1}W_z,
\enq
and the operator $B'(s) (|A_-| + I)^{-1}$ is in $\cB_1(\cH)$,
Lemma \ref{lSTP} implies \eqref{convz}. 
\end{proof}

\begin{remark}  \lb{r2.6}
Since $B(t)$ and $B'(t)$ are symmetric with 
$\dom(B(t))\cap\dom(B'(t))\supseteq\dom(A_-)$, $t\in\bbR$, one 
concludes that
\begin{align}
\begin{split} 
& B(t)^* (|A_-|+I)^{-1} = B(t) (|A_-|+I)^{-1}, \\ 
& (B'(t))^* (|A_-|+I)^{-1} = B'(t) (|A_-|+I)^{-1}, \quad   t\in\bbR.   \lb{2.19}
\end{split} 
\end{align}
Consequently, \eqref{2.13i}, \eqref{2.13jk}, \eqref{2.13jl}, \eqref{2.12a}, and 
\eqref{2.13g} hold with $B(t)$, $B'(t)$ replaced by $B(t)^*$, $(B'(t))^*$, respectively. 
\end{remark}

Next, assuming Hypothesis \ref{h2.1}, we recall the definition of the family of operators 
$\{A(t)\}_{t\in\bbR}$ in $\cH$ with constant domain $\dom(A_-)$ (cf.\ \eqref{2.A(t)}) by 
\begin{equation}\lb{defAT}
A(t)=A_-+B(t),\quad \dom(A(t))=\dom(A_-), \quad  t\in\bbR,
\end{equation}
and note that (cf.\ \eqref{2.13g})
\begin{align} 
\begin{split}
\sup_{t\in\bbR}\|A(t)\|_{\cB(\cH_1(A_-),\cH)}
& = \sup_{t\in\bbR} \|A(t)(|A_-| + I)^{-1}\|_{\cB(\cH)}    \lb {eq2.22} \\ 
& = \sup_{t\in\bbR} \|[A_-+B(t)](|A_-| + I)^{-1}\|_{\cB(\cH)}<\infty.
\end{split} 
\end{align}

We now turn to a closer examination of the family $\{A(t)\}_{t\in\bbR}$:

\begin{theorem} \lb{t3.7}
Assume Hypothesis \ref{h2.1} and define $A(t)$, $t\in\bbR$, as in \eqref{defAT}. Then the following assertions hold: \\
$(i)$ For all $t\in\bbR$, $A(t)$ with domain $\dom(A(t)) = \dom(A_-)$ is self-adjoint in $\cH$. \\
$(ii)$ For all $t\in\bbR$, $B(t)$ is relatively trace class with respect to $A_-$, that is, 
\begin{equation}
B(t) (A_- - z I)^{-1} \in \cB_1(\cH), \quad z\in\bbC\backslash\bbR, \; t\in\bbR.   \lb{3.B(t)rtrcl}
\end{equation}
$(iii)$ There exists a self-adjoint operator $A_+$ in $\cH$ such that 
\begin{equation}
\dom(A_+) = \dom(A_-),
\end{equation} 
and
\begin{equation}
\nlim_{t\to \pm \infty} (A(t) - z I)^{-1} = (A_{\pm} - z I)^{-1}, 
\quad z\in\bbC\backslash\bbR.      \lb{3.Anlim}
\end{equation}
$(iv)$ $(A_+ - A_-)$ is relatively trace class with respect to $A_-$, that is, 
\begin{equation}
(A_+ - A_-) (A_- - z I)^{-1} \in \cB_1(\cH), \quad z\in\bbC\backslash\bbR.   
\lb{3.Apmrtrcl} 
\end{equation}
$(v)$ One has 
\begin{align}
& \big[(A(t) - z I)^{-1} - (A_- - z I)^{-1}\big] \in \cB_1(\cH), \quad  t\in\bbR, 
\; z \in \bbC\backslash\bbR,    \lb{3.trA(t)} \\
& \big[(A_+ - z I)^{-1} - (A_- - z I)^{-1}\big] \in \cB_1(\cH), \quad  z \in \bbC\backslash\bbR,  
 \lb{3.trApm}
\end{align}
and hence, 
\beq
\sigma_{\text{ess}}(A(t))=\sigma_{\text{ess}}(A_-)
=\sigma_{\text{ess}}(A_+), \quad  t\in\bbR.     \lb{3.esssp}
\enq
\end{theorem}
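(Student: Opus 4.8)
The plan is to establish the five assertions essentially in the order stated, exploiting the single overarching theme that $B(t)$ is an infinitesimally $A_-$-bounded symmetric perturbation that is, after multiplication by $(|A_-|+I)^{-1}$, trace class uniformly in $t$. First, for assertion $(i)$, I would use Lemma \ref{lHYP}: the estimate \eqref{2.13g} shows that $\sup_{t\in\bbR}\|B(t)(|A_-| - zI)^{-1}\|_{\cB(\cH)} = o(1)$ as $z\to\infty$ along the cone complement, so in particular $B(t)$ is infinitesimally bounded relative to $|A_-|$, hence relative to $A_-$. Since $B(t)$ is symmetric with $\dom(B(t))\supseteq\dom(A_-)$, the Kato--Rellich theorem then gives that $A(t) = A_- + B(t)$ is self-adjoint on $\dom(A_-)$ for every $t\in\bbR$. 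For assertion $(ii)$, I would write $B(t)(A_- - zI)^{-1} = \big[B(t)(|A_-|+I)^{-1}\big]\big[(|A_-|+I)(A_- - zI)^{-1}\big]$; the first factor lies in $\cB_1(\cH)$ by Hypothesis \ref{h2.1}\,$(iv)$, the second is bounded, so the product is trace class.

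Assertion $(iii)$ is the heart of the matter and, I expect, the main obstacle. The idea is to show that $B(t)(|A_-|+I)^{-1}$ converges in $\cB_1(\cH)$-norm as $t\to+\infty$. This follows from Remark \ref{r2.3}: there we already identified $B(t)(|A_-|+I)^{-1} = \int_{-\infty}^t B'(s)(|A_-|+I)^{-1}\,ds$ as a Bochner integral in $\cB_1(\cH)$, and since $\|B'(\cdot)(|A_-|+I)^{-1}\|_{\cB_1(\cH)}\in L^1(\bbR)$ by \eqref{2.13i}, the integral converges in $\cB_1(\cH)$-norm as $t\to+\infty$ to some limit $B_+(|A_-|+I)^{-1} =: \int_{-\infty}^{\infty} B'(s)(|A_-|+I)^{-1}\,ds$. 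This defines a bounded operator $B_+(|A_-|+I)^{-1}\in\cB_1(\cH)$, hence a closable operator $B_+$ with $\dom(B_+)\supseteq\dom(A_-)$; one checks $B_+$ is symmetric (as a $\cB_1$-norm limit of the symmetric-on-$\dom(A_-)$ operators $B(t)$, cf.\ the symmetry relations \eqref{2.19}) and infinitesimally $A_-$-bounded (by the same cone estimate, now applied to the limit, or by passing to the limit in \eqref{2.13g}). Then $A_+ := \ol{A_- + B_+}$ is self-adjoint on $\dom(A_-)$ by Kato--Rellich. The norm-resolvent convergence \eqref{3.Anlim} then follows from the second resolvent identity: $(A(t)-zI)^{-1} - (A_\pm - zI)^{-1} = -(A(t)-zI)^{-1}[B(t)-B_\pm](A_\pm - zI)^{-1}$, and writing $[B(t)-B_\pm](A_\pm - zI)^{-1} = \big[[B(t)-B_\pm](|A_-|+I)^{-1}\big]\big[(|A_-|+I)(A_\pm - zI)^{-1}\big]$, the first bracket tends to $0$ in $\cB(\cH)$ (indeed in $\cB_1(\cH)$) as $t\to\pm\infty$ while the second stays bounded, and $\|(A(t)-zI)^{-1}\|_{\cB(\cH)}\le 1/|\Im(z)|$ uniformly; the $t\to-\infty$ case is immediate since $B(-\infty)=0$ by the normalization \eqref{3.D-=0}.

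For assertion $(iv)$, the factorization $(A_+ - A_-)(A_- - zI)^{-1} = \big[B_+(|A_-|+I)^{-1}\big]\big[(|A_-|+I)(A_- - zI)^{-1}\big]$ has first factor in $\cB_1(\cH)$ (established in the previous paragraph) and bounded second factor, giving \eqref{3.Apmrtrcl}. Assertion $(v)$ then follows from the resolvent identities together with $(ii)$ and $(iv)$: $(A(t)-zI)^{-1} - (A_- - zI)^{-1} = -(A(t)-zI)^{-1} B(t)(A_- - zI)^{-1}$, where $B(t)(A_- - zI)^{-1}\in\cB_1(\cH)$ by $(ii)$ and $(A(t)-zI)^{-1}\in\cB(\cH)$, so the product is in $\cB_1(\cH)$, proving \eqref{3.trA(t)}; likewise \eqref{3.trApm} using \eqref{3.Apmrtrcl}. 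Finally, \eqref{3.esssp} is an immediate consequence of Weyl's theorem on the stability of the essential spectrum under relatively compact (here even trace class) resolvent perturbations, applied to the pairs $(A(t),A_-)$ and $(A_+,A_-)$.

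The main difficulty to watch is the careful treatment of the unbounded operators $B_+$ and $A_+$ as genuine self-adjoint operators with the correct domain $\dom(A_-)$ — all the algebraic manipulations above are with bounded operators obtained after sandwiching by $(|A_-|+I)^{-1}$ or resolvents, so one must verify at each stage that the resulting identities lift to statements about the unbounded operators, using that $\dom(A_-)$ is a core and that infinitesimal $A_-$-boundedness is preserved under the $\cB_1$-norm limit.
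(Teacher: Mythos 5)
Your proposal is correct, and parts $(i)$, $(ii)$, and $(v)$ coincide with the paper's argument (Kato--Rellich via the cone estimate \eqref{2.13g}, factorization through $(|A_-|+I)^{-1}$, and resolvent identities plus Weyl's theorem). Where you genuinely diverge is in the construction of $A_+$ in part $(iii)$ (and hence $(iv)$). The paper proceeds ``limit-first'': it shows $B(t)(A_- - zI)^{-1}\to D_+(z)$ in $\cB_1(\cH)$, deduces from the second resolvent equation that $(A(t)-zI)^{-1}\to (A_- - zI)^{-1}[I+D_+(z)]^{-1}$ in norm, and then invokes Kato's characterization of pseudoresolvents (a norm limit of resolvents is a pseudoresolvent, and it is the resolvent of a closed operator iff its $z$-independent kernel is trivial) to identify the limit as $(A_+ - zI)^{-1}$; the domain equality $\dom(A_+)=\dom(A_-)$ and self-adjointness are then read off from $(A_+ - zI)=[I+D_+(z)](A_- - zI)$ and from adjoints of the resolvents, and $(iv)$ follows since $(A_+-A_-)(A_- - zI)^{-1}=D_+(z)$. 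You instead construct the candidate limit first: you define $B_+$ through the $\cB_1(\cH)$-convergent Bochner integral $B_+(|A_-|+I)^{-1}=\int_{\bbR}B'(s)(|A_-|+I)^{-1}\,ds$, verify symmetry on $\dom(A_-)$ and infinitesimal $A_-$-boundedness by passing to the limit in \eqref{2.13g}, apply Kato--Rellich a second time to get $A_+=A_-+B_+$ self-adjoint on $\dom(A_-)$ (the closure you write is superfluous but harmless), and only then prove \eqref{3.Anlim} via the second resolvent identity, using that $(|A_-|+I)(A_+-zI)^{-1}\in\cB(\cH)$ by the closed graph theorem. Both routes are sound; yours is more constructive and makes $(iv)$ immediate (the first factor in your factorization is exactly the integral defining $B_+(|A_-|+I)^{-1}$), while avoiding the pseudoresolvent machinery, whereas the paper's route obtains $A_+$ intrinsically as the norm-resolvent limit and gets the domain statement as a byproduct of the identity $(A_+-zI)=[I+D_+(z)](A_--zI)$, at the cost of the extra abstract lemma on pseudoresolvents.
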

\begin{proof} 
$(i)$ Self-adjointness of $A(t)$ on $\dom(A(t)) = \dom(A_-)$ for all $t\in\bbR$ immediately 
follows from \eqref{2.13g}, which implies 
\begin{equation}
\big\|B(t) (A_- - z I)^{-1}\big\|_{\cB(\cH)} < 1 \, \text{ for $|\Im(z)|>0$ sufficiently large,}   \lb{3.KR}
\end{equation}
and the Kato--Rellich Theorem (cf.\ \cite[Theorem\ V.4.3]{Ka80}). \\
$(ii)$ This instantly follows from \eqref{2.13g}. \\
$(iii)$ Since by \eqref{2.12a}, $B'(t)(|A_-| + I)^{-1} \in L^1(\bbR;\cB_1(\cH))$, one infers in addition 
to \eqref{3.D-}  and \eqref{3.D-=0} that  
\begin{equation}
\lim_{t\to \pm \infty} B(t) (A_- - z I)^{-1} = \begin{cases} D_+(z) \\
0 \end{cases}  \text{exist in the $\cB_1(\cH)$-norm}      \lb{3.Dpm}
\end{equation}
for $|\Im(z)|>0$ sufficiently large. 
Moreover, by \eqref{3.KR} and \eqref{2.13g} (in fact, in this context it would be sufficient to replace 
$\cB_1(\cH)$ by $\cB(\cH)$ in \eqref{2.13g}) one has that 
\begin{equation}
\big[I + B(t) (A_- - z I)^{-1}\big]^{-1}, \, [I + D_+(z)]^{-1}  \in \cB(\cH) \, 
\text{ for $|\Im(z)|>0$ sufficiently large.}    \lb{3.BD}
\end{equation}
Employing the second resolvent equation for $A(t)$ one obtains, using \eqref{3.BD}, 
\begin{equation}
(A(t) - z I)^{-1} = (A_- - z I)^{-1} - (A(t) - z I)^{-1} \big[B(t) (A_- - z I)^{-1}\big],  \quad t\in\bbR,  
\lb{3.resA(t)}  
\end{equation}
for $|\Im(z)|>0$ sufficiently large. Thus, applying \eqref{3.Dpm}, one obtains 
\begin{equation}
\nlim_{t\to\pm\infty} (A(t) - z I)^{-1} = \begin{cases} (A_- - z I)^{-1} [I + D_+(z)]^{-1} \\
(A_- - z I)^{-1} \end{cases}
\end{equation}
for $|\Im(z)|>0$ sufficiently large, and hence also 
\begin{equation}
(A(t) -z I)^{-1} = (A_- - z I)^{-1} \big[I + B(t) (A_- - z I)^{-1}\big]^{-1},  \quad t\in\bbR,  \lb{3.ResA(t)}
\end{equation}
for $|\Im(z)|>0$ sufficiently large.

Next, one notes that the strong (and hence in particular the norm) limit of resolvents of self-adjoint operators is necessarily a pseudoresolvent. The latter is the resolvent of a closed, linear operator if 
and only if the $z$-independent nullspace of the pseudoresolvent equals $\{0\}$ 
(cf.\ \cite[Sect.\ VIII.1.1]{Ka80}). Since 
\begin{equation}
\ker\big((A_- - z I)^{-1} [I + D_+(z)]^{-1}\big) = \{0\} \, \text{ for $|\Im(z)|>0$ sufficiently large,} 
\end{equation}
one thus concludes that 
\begin{equation}
\nlim_{t\to\pm\infty} (A(t) - z I)^{-1} = (A_{\pm} - z I)^{-1} \text{ for $|\Im(z)|>0$ sufficiently large,}  
\lb{3.nlim}
\end{equation}
for some closed, linear operator $A_+$ in $\cH$. Thus, \eqref{3.resA(t)} yields
\begin{equation}
(A_+ - z I)^{-1} = (A_- - z I)^{-1} - (A_+ - z I)^{-1} D_+(z)    \lb{3.resAp}
\end{equation}
for $|\Im(z)|>0$ sufficiently large, and hence (cf.\ also \eqref{3.ResA(t)})
\begin{equation}
(A_+ - z I)^{-1} = (A_- - z I)^{-1} [I + D_+(z)]^{-1} \text{ for $|\Im(z)|>0$ sufficiently large.} 
\lb{3.ApmD}
\end{equation}
Equation \eqref{3.ApmD} then yields 
\begin{equation}
(A_+ - z I )= [I + D_+(z)] (A_- - z I) \text{ for $|\Im(z)|>0$ sufficiently large,}   \lb{3.AD+}
\end{equation}
and hence confirms that $\dom(A_+) = \dom(A_-)$.  
Self-adjointness of $A_+$ then follows from
\begin{align}
\nlim_{t\to\infty}\big[(A(t) - z I)^{-1} \big]^* = \nlim_{t\to\infty}(A(t) - {\ol z} I)^{-1} 
& = (A_+ - {\ol z} I)^{-1} = \big[(A_+ - z I)^{-1} \big]^*   \no \\
& = (A_+^* - {\ol z} I)^{-1}  
\end{align}
for $|\Im(z)|>0$ sufficiently large. Having established self-adjointness of 
$A_{\pm}$, an analytic continuation with respect to $z$ in \eqref{3.nlim}
 then yields \eqref{3.Anlim}. \\
$(iv)$ This immediately follows from \eqref{3.Dpm} and \eqref{3.AD+}, which imply  
\begin{equation}
(A_+ - A_-) (A_- - z I)^{-1} = D_+(z) \in \cB_1(\cH) \, \text{ for $|\Im(z)|>0$ sufficiently large.}     
\lb{3.Artrcl}
\end{equation}
An analytic continuation with respect to $z$ in \eqref{3.Artrcl} then yields \eqref{3.Apmrtrcl}. \\
$(v)$ Relation \eqref{3.trA(t)} follows from \eqref{3.B(t)rtrcl} and \eqref{3.resA(t)}, relation 
\eqref{3.trApm} follows from \eqref{3.Dpm} and \eqref{3.resAp}. Finally, \eqref{3.esssp} follows from 
\eqref{3.trA(t)} and \eqref{3.trApm} (in fact, replacing $\cB_1(\cH)$ by 
$\cB_{\infty}(\cH)$ would be sufficient for this purpose in both equations) and Weyl's Theorem (cf., e.g., \cite[Corollary\ XIII.4.2]{RS78}).  
\end{proof}

Given Theorem \ref{t3.7} one can introduce the densely defined, symmetric (and hence closable)
operator $\dot B(+\infty)$ in $\cH$ by
\begin{equation}
\dot B(+\infty) = A_+ - A_-, \quad \dom(\dot B(+\infty)) = \dom(A_-),   \lb{3.dotB+}
\end{equation}
and its closure $B(+\infty)$ in $\cH$,
\begin{equation}
B(+\infty) = \ol{\dot B(+\infty)}, \quad \dom(B(+\infty)) \supseteq \dom(A_-).    \lb{3.B+}
\end{equation}
In addition, and in accordance with our normalization $D_- = 0$ in \eqref{3.D-=0}, we also introduce
\begin{equation}
B(-\infty) = 0, \quad \dom(B(-\infty)) = \cH.    \lb{3.B-}
\end{equation} 

By \eqref{2.13jk}, \eqref{3.Dpm}, and $D_+(z) = B(+\infty) (A_- - z I)^{-1}$, 
and recalling notation \eqref{dfnvk1},
 one may thus summarize some of the properties of $B(t)$, $B(+\infty)$ by  
\begin{align}\lb{limB}
& \lim_{t\to\infty}\|[B(t)-B(+\infty)](A_-^2+I)^{-1/2}\|_{\cB_1(\cH)}=0, \\
& B(+\infty)(A_-^2+I)^{-1/2}= \int_\bbR B'(s)(A_-^2+I)^{-1/2}\,ds \in\cB_1(\cH),    \lb{intB} \\
& B(t)(A_-^2+I)^{-1/2} =  \int_{-\infty}^t B'(s) 
(A_-^2+I)^{-1/2}\,ds \in\cB_1(\cH), \quad t\in\bbR.   \lb{intBp}
\end{align}
Finally, one also has
\beq\lb{dfnAplus}
A_+=A_-+B(+\infty),\quad\dom(A_+)=\dom(A_-).
\enq 

Next, we denote by $\cH_{1/2}(|A|)$ the domain of the operator $|A|^{1/2}$ equipped with its graph norm. The following lemma shows that the graph norms associated with $A(t)$ and $|A(t)|^{1/2}$, respectively, are equivalent for different $t$ with constants {\em uniform} with respect to $t\in\bbR$:

\begin{lemma}\label{UNIFnorms} 
Assume Hypothesis \ref{h2.1}.
Then there are positive constants $c_1$ and $c_2$ such that for all $t\in\bbR$ one has, 
\begin{align} \label{UNn1} 
\|f\|_{\cH_1(A_-)} & \le c_1 \|f\|_{\cH_1(A(t))} \le c_2\|f\|_{\cH_1(A_-)},\\  
&\hspace*{.74cm} f\in\dom(A_-)=\dom(A(t)),   \no \\
\|f\|_{\cH_{1/2}(|A_-|)} & \le c_1\|f\|_{\cH_{1/2}(|A(t)|)} \le 
c_2\|f\|_{\cH_{1/2}(|A_-|)},\label{UNn2} \\ 
&\hspace*{.1cm}  f\in\dom\big(|A_-|^{1/2}\big)=\dom\big(|A(t)|^{1/2}\big).   \no 
\end{align}
\end{lemma}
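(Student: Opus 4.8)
The plan is to prove the two norm equivalences \eqref{UNn1} and \eqref{UNn2} by exploiting the uniform relative boundedness of $B(t)$ with respect to $A_-$ that is already available from Hypothesis \ref{h2.1}. The key quantitative input is estimate \eqref{eq2.22}, which gives a constant $M := \sup_{t\in\bbR}\|[A_-+B(t)](|A_-|+I)^{-1}\|_{\cB(\cH)}<\infty$, together with the companion fact that $B(t)(|A_-|+I)^{-1}$ has $\cB_1(\cH)$-norm (hence $\cB(\cH)$-norm) tending to $0$ as $z\to\infty$ in the relevant cone, i.e.\ the Kato--Rellich smallness \eqref{3.KR}. I would first record that $B(t)$ is infinitesimally $A_-$-bounded \emph{uniformly} in $t$: for every $a\in(0,1)$ there is $b=b(a)>0$, independent of $t$, with $\|B(t)f\|_{\cH}\le a\|A_-f\|_{\cH}+b\|f\|_{\cH}$ for all $f\in\dom(A_-)$ and all $t\in\bbR$. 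This follows from \eqref{2.13g}/\eqref{3.KR} by the standard argument (write $B(t)(|A_-|-zI)^{-1}$ small and $\|(|A_-|+I)(|A_-|-zI)^{-1}\|$ bounded), the point being that the smallness in \eqref{2.13g} is uniform in $t$.

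Next I would derive \eqref{UNn1}. The two-sided inequality $\|f\|_{\cH_1(A_-)}\asymp\|f\|_{\cH_1(A(t))}$ is equivalent to the two-sided operator bound
\begin{equation}
c_1^{-2}\big(A_-^2+I\big)\le A(t)^2+I\le c_2^{-2}\,c_1^{2}\big(A_-^2+I\big)\quad\text{on }\dom(A_-),
\end{equation}
or, more conveniently, to showing that $(|A_-|+I)(|A(t)|+I)^{-1}$ and its inverse are bounded uniformly in $t$ (the graph norm of a self-adjoint operator $S$ is comparable to $\|(|S|+I)f\|_{\cH}$). The upper bound $\|A(t)f\|_{\cH}\le\|A_-f\|_{\cH}+\|B(t)f\|_{\cH}\le (1+a)\|A_-f\|_{\cH}+b\|f\|_{\cH}$ is immediate from uniform relative boundedness; the lower bound $\|A_-f\|_{\cH}\le(1-a)^{-1}(\|A(t)f\|_{\cH}+b\|f\|_{\cH})$ follows by writing $A_-f=A(t)f-B(t)f$ and reabsorbing the $a\|A_-f\|_{\cH}$ term (choosing, say, $a=1/2$). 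Adding $\|f\|_{\cH}^2$ throughout and taking square roots gives \eqref{UNn1} with explicit $c_1,c_2$ depending only on $a,b$.

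For \eqref{UNn2} I would reduce the half-power statement to \eqref{UNn1} via the Heinz--Kato inequality (operator monotonicity of $x\mapsto x^{1/2}$): if $0\le T_1\le c\,T_2$ for nonnegative self-adjoint $T_1,T_2$, then $T_1^{1/2}\le c^{1/2}T_2^{1/2}$, hence $\dom(T_2^{1/2})\subseteq\dom(T_1^{1/2})$ and $\|T_1^{1/2}f\|\le c^{1/2}\|T_2^{1/2}f\|$. Applying this with $T_1=|A(t)|$, $T_2$ a constant multiple of $|A_-|+I$ — which dominates $|A(t)|$ by \eqref{UNn1} in the form $A(t)^2\le C(A_-^2+I)$, whence $|A(t)|\le C^{1/2}(A_-^2+I)^{1/2}$ and a fortiori $|A(t)|\le C'(|A_-|+I)$ — yields $\||A(t)|^{1/2}f\|_{\cH}\le (C')^{1/2}\|(|A_-|+I)^{1/2}f\|_{\cH}$, and symmetrically with the roles of $A(t)$ and $A_-$ reversed (note $A_- = A(\infty\text{-like})$ is itself of the form covered, or simply use that the estimate is symmetric once both $B(t)(|A_-|+I)^{-1}$ and its reverse-relative version are controlled). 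Collecting the constants and adding $\|f\|_{\cH}^2$ gives \eqref{UNn2}.

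The main obstacle, and the step deserving care, is establishing the \emph{uniformity in $t$} of the relative bound with the constants I want — in particular getting the lower estimate $\|A_-f\|_{\cH}\le c\,\|A(t)f\|_{\cH}+c\,\|f\|_{\cH}$ with a single $c$. The subtlety is that relative boundedness of $B(t)$ w.r.t.\ $A_-$ does not immediately give relative boundedness of $B(t)$ w.r.t.\ $A(t)$ with controlled constants; one must use that the relative bound $a$ can be taken $<1$ uniformly (from \eqref{2.13g}) and then perform the reabsorption. Everything else is a routine combination of the Kato--Rellich perturbation estimates already in Theorem \ref{t3.7}, the spectral theorem, and operator monotonicity of the square root; no new analytic ingredient beyond what the excerpt provides is needed.
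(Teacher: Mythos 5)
Your proposal is correct and follows essentially the same route as the paper: the $t$-uniform smallness of $\|B(t)(|A_-|-zI)^{-1}\|_{\cB(\cH)}$ from Lemma \ref{lHYP} combined with a reabsorption argument yields \eqref{UNn1} (the upper bound being the uniform relative bound \eqref{eq2.22}), and \eqref{UNn2} is then deduced from \eqref{UNn1} by an abstract square-root argument. The only cosmetic difference is that you make the Heinz--Kato (operator monotonicity) step for \eqref{UNn2} explicit, whereas the paper compresses it into the remark that $\dom(|A(t)|)$ is a core for $|A(t)|^{1/2}$.
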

\begin{proof} Since $B(t)$ is relatively compact with respect to $A_-$,
one concludes that (cf., \cite[Theorems\ 9.4(b), 9.7, 9.9]{We80}) 
\begin{align}
\begin{split} 
& \dom(A_-)= \dom(|A_-|) = \dom(|A(t)|) = \dom(A(t)), \\ 
& \dom\big(|A_-|^{1/2}\big)=\dom\big(|A(t)|^{1/2}\big), \quad t\in\bbR. 
\end{split}
\end{align}
For each $t\in\bbR$, the set $\dom(|A(t)|)$ is a core for $|A(t)|^{1/2}$ (see, e.g., 
\cite[Theorem\ V.3.24]{Ka80}). This implies that \eqref{UNn2} follows from \eqref{UNn1}. The second inequality in \eqref{UNn1} is just a reformulation of \eqref{eq2.22}. 
To prove the first inequality in \eqref{UNn1}, we will use Lemma \ref{lHYP}: Fix 
$z \in \bbC\backslash [0,\infty)$ such that $\|B(t)(|A_-|-zI)^{-1}\|_{\cB(\cH)}^2<1/6$, uniformly with respect to $t\in\bbR$. Then, for each $f\in\dom(A_-)$,
\begin{align}
\|f\|_{\cH_1(A_-)}^2 &=\|f\|_\cH^2+\|A(t)f-B(t)f\|_\cH^2\le\|f\|_\cH^2 
+\big(\|A(t)f\|_\cH+\|B(t)f\|_\cH\big)^2  \no \\
&\le \|f\|_\cH^2+2\big(\|A(t)f\|_\cH^2+\|B(t)(|A_-|-zI)^{-1}(|A_-|-zI)f\|_\cH^2\big)  \no \\
&\le \|f\|_{\cH}^2+2\|A(t)f\|_\cH^2+(2/3)\big(\|\,|A_-|f\|_\cH^2+z^2\|f\|_\cH^2\big) \no \\
&\le c(z) \|f\|_{\cH_1(A(t))}^2 + (2/3)\|f\|_{\cH_1(A_-)}^2,
\end{align}
where $c(z)$ is independent of $t$.
\end{proof}

\begin{remark} \label{r2.8} 
Given the operators $\varkappa(A_\pm)=\big((A_\pm)^2+I\big)^{1/2}$ with 
$\dom (\varkappa(A_+)) = \dom (\varkappa(A_-)) = \dom (A_-)$, one concludes that 
$\varkappa(A_-)^{1/2}\varkappa(A_+)^{-1/2}\in\cB(\cH)$ by
the closed graph theorem (cf.\ \cite[Remark IV.1.5]{Ka80}). 
Passing to the adjoint (cf.\ \cite[Theorem\ 4.19\,(b)]{We80}),
one infers that $\varkappa(A_+)^{-1/2}\varkappa(A_-)^{1/2}
\subseteq \big[\varkappa(A_-)^{1/2}\varkappa(A_+)^{-1/2}\big]^*\in\cB(\cH)$ and hence 
 \begin{equation}
 \overline{\varkappa(A_+)^{-1/2}\varkappa(A_-)^{1/2}}
 =\big[\varkappa(A_-)^{1/2}\varkappa(A_+)^{-1/2}\big]^*\in\cB(\cH).   \lb{3.kappapm}
 \end{equation}
\end{remark}

\subsection{The Role of $N$-Measurability}
We continue this section with some remarks concerning the relevance of 
Hypothesis \ref{h2.1}\,$(v)$. Let $\bsT$ in $L^2(\bbR; \cH)$ be defined in terms of the weakly measurable family of densely defined, closed, linear operators 
$T(t)$, $t\in\bbR$, in $\cH$ in analogy to \eqref{A.17}, that is, 
\begin{align}
&(\bsT f)(t) = T(t) f(t) \, \text{ for a.e.\ $t\in\bbR$,}   \no \\
& f \in \dom(\bsT) = \bigg\{g \in L^2(\bbR;\cH) \,\bigg|\,
g(t)\in \dom(T(t)) \text{ for a.e.\ } t\in\bbR,     \lb{2.28}  \\
& \quad t \mapsto T(t)g(t) \text{ is (weakly) measurable,} \,  
\int_{\bbR} \|T(t) g(t)\|_{\cH}^2 \, dt < \infty\bigg\}.   \no 
\end{align}
Then $\bsT$ is closed in $L^2(\bbR; \cH)$, but may not be densely defined. Also,
it is of interest to know if $\bsT$ can be written as the direct integral of the operators $T(t)$. Adding the hypothesis that the family $\{T(t)\}_{t\in\bbR}$ is $N$-measurable (cf.\ the discussion of 
$N$-measurability in Appendix \ref{sA}) guarantees that 
$\bsT$ is densely defined by Theorem \ref{tA.6}. In 
particular, one then has 
\begin{equation}
\bsT = \int_{\bbR}^{\oplus} T(t) \, dt, \quad \bsT^* = \int_{\bbR}^{\oplus} T(t)^* \, dt, 
\quad |\bsT| = \int_{\bbR}^{\oplus} |T(t)| \, dt,    \lb{2.29}
\end{equation}
moreover, the remaining analogs of the direct integral formulas in Theorem \ref{tA.6} 
(such as \eqref{A.21}, \eqref{A.22}) apply to $\bsT$ as well. 

\begin{remark}\label{rem.condv}
We will show in Lemma \ref{l2.9} that Hypotheses \ref{h2.1}\,$(i)$--$(iv)$, in addition to 
Hypothesis \ref{h2.1}\,$(v)$, imply that $\{B(t)\}_{t\in\bbR}$ and 
$\{B'(t)\}_{t\in\bbR}$ are $N$-measurable as introduced in Definition \ref{dA.1}\,$(iii)$ and further discussed in Remark \ref{rA.2}\,$(iv)$. Consequently, $\bsB$ and $\bsB'$, defined according to \eqref{2.28},  
are densely defined in $L^2(\bbR; \cH)$, and the analogs of \eqref{2.29} hold in either case by Theorem \ref{tA.6}. 
\end{remark}

\begin{remark} \lb{r2.10}
$(i)$ Assuming Hypothesis \ref{h2.1}, the weak measurability of $\{B(t)\}_{t\in\bbR}$ 
and $\{B'(t)\}_{t\in\bbR}$, proven in Lemma \ref{l2.9}, yield an alternative and 
direct proof (without relying on Theorem \ref{tA.6}) that $\bsB$ and $\bsB'$ are densely defined in $L^2(\bbR; \cH)$ as follows: Since the function $B'(\,\cdot\,)(|A|+I)^{-1}$ is weakly measurable, for each $f \in L^2(\bbR; \cH)$ with compact support, the function  
$B'(\,\cdot\,)(|A|+I)^{-1} f$ taking values in $L^2(\bbR; \cH)$ is weakly measurable as well. The fact that 
\begin{equation}
S:=\{(|A_{-}|+I)^{-1}f \,|\, {\rm ess \, supp} (f) \, \text{compact}\} \, 
\text{ is dense in $L^2(\bbR; \cH)$,} 
\end{equation}
then implies that the maximal domain of
$\bsB'$ is dense in $L^2(\bbR; \cH)$. Analogous ideas yield that the maximal
domain of $\bsB$ is dense in $L^2(\bbR; \cH)$. 
To see that $S$ is indeed dense in $L^2(\bbR; \cH)$, one argues as follows: Assume that 
there exists $f \in L^2(\bbR; \cH)$ such that $(f,g)_{L^2(\bbR; \cH)} =0$  for 
every $g \in S$. Then $((|A_{-}|+I)^{-1}f, \wti g)_{L^2(\bbR; \cH)} =0$ 
for every $\wti g \in L^2(\bbR; \cH)$ with compact support. Since the latter set is 
dense in $L^2(\bbR; \cH)$ one gets $(|A_{-}|+I)^{-1}f = 0$ a.e.\ Since 
$(|A_{-}|+I)^{-1}$ is injective in $\cH$, $f =0$ a.e., that is, the set $S$ is 
dense in $L^2(\bbR; \cH)$. \\
$(ii)$ It is of course possible to interchange $B(t)$ by $B(t)^*$ in \eqref{2.13B}, and analogously, one may replace $B'(t)$ by $(B'(t))^*$ in \eqref{2.13B}.
\end{remark}

\begin{remark}\label{essHv} We will show by means of Example \ref{e2.11}
that Hypothesis \ref{h2.1}$(v)$ is essential, and cannot be derived from 
assertions $(i)$--$(iv)$ in Hypothesis \ref{h2.1}; in particular, we will show 
that weak measurability of the family 
$\big\{\big(|B'(t)|^2 + I\big)^{-1}\big\}_{t\in\bbR}$ does not follow from weak 
measurability of $\{B'(t)\}_{t\in\bbR}$ and weak measurability of 
$\big\{B'(t)(|A_-| + I)^{-1}\big\}_{t\in\bbR}$. 
\end{remark}

\begin{remark}
In the special case where $\dom(A_-)$ is a core for $B(t)$ for all $t\in\bbR$, that is,
\begin{equation}
\ol{B(t)\big|_{\dom(A_-)}} = B(t), \quad t \in\bbR, 
\end{equation}
an application of Lennon's \cite{Le74} result \eqref{A.43} then yields 
$N$-measurability of the family $\{B(t)\}_{t\in\bbR}$ and 
\begin{align}
\begin{split} 
\bsB &= \int_{\bbR}^{\oplus} B(t) \, dt 
= \int_{\bbR}^{\oplus} \ol{\big[B(t) (|A_-| + I)^{-1}\big] (|A_-| + I)^{-1}} \, dt  \\
&= \ol{\big[\bsB (|\bsA_-| + I)^{-1}\big] (|\bsA_-| + I)^{-1}} 
= \ol{\bsB\big|_{\dom(\bsA_-)}}. 
\end{split} 
\end{align}
Using \eqref{2.13i} and \eqref{2.13jl}, one concludes that (cf.\ \eqref{normT} below)
\begin{equation}
\big\|\bsB (|\bsA_-| - z \, \bsI)^{-1}\big\|_{\cB(L^2(\bbR;\cH))}   
= \sup_{t\in\bbR} \big\|B(t) (|A_-| - z I)^{-1}\big\|_{\cB(\cH)} < \infty. 
\end{equation}
\end{remark}

\begin{remark} \lb{TnormT} 
In the particular case where $T(t)\in\cB(\cH)$, $t\in\bbR$, and 
$T(\cdot)\in L^\infty(\bbR;\cB(\cH))$, the operator $\bsT$ defined in 
\eqref{2.28} is bounded in $L^2(\bbR;\cH)$ and
\begin{equation} \lb{normT}
\|\bsT\|_{\cB(L^2(\bbR;\cH))}=\sup_{t\in\bbR}\|T(t)\|_{\cB(\cH)}.
\end{equation} 
\end{remark}

\subsection{Some Multi-Dimensional PDE Examples} 
We conclude this section with two elementary examples illustrating the feasibility of  Hypothesis \ref{h2.1}.

\begin{example} \lb{e3.13} 
Let $n\in\bbN$, $p > n$, $q \in ((n/2), p -(n/2))$, and $\varepsilon > 0$.  
Consider
\begin{align}
& 0 \leq V_- \in L^2(\bbR^n; (1 + |x|^2)^q d^n x) \cap L^\infty(\bbR^n; d^n x), \label{n3.78}\\
& 0 \leq V(t,\cdot) \in L^2(\bbR^n; (1 + |x|^2)^q d^n x) \cap L^\infty(\bbR^n; d^n x), 
\quad t\in\bbR, \label{n3.79}
\end{align}
and suppose in addition that
\begin{align}
& \partial_tV(t,\cdot) \in L^2(\bbR^n; (1 + |x|^2)^q d^n x) \cap L^\infty(\bbR^n; d^n x), 
\quad t\in\bbR,      \\
& \bbR\ni t\mapsto V(t,\cdot) \in C^1(\bbR; L^\infty(\bbR^n; d^n x)). 
\end{align}
Denoting the operator of multiplication by $V_-$, $V$, and $\partial_tV$ in 
$L^2(\bbR^n; d^n x)$ by the same symbol, respectively, we introduce the linear operators 
\begin{align}
& A_- = (-\Delta)^{p/2} + V_- + \varepsilon I, \quad 
\dom(A_-) = \dom\big((-\Delta)^{p/2}\big),    \lb{3.A-} \\
& B(t) = V(t,\cdot) - V_-, \quad \dom(B(t)) = L^2(\bbR^n; d^n x), \quad t\in\bbR, \\
& A(t) = A_- + B(t), \quad \dom(A(t)) = \dom(A_-), \quad t\in\bbR,   \lb{3.At}
\end{align}
in $L^2(\bbR^n; d^n x)$, with $-\Delta$ abbreviating the self-adjoint Laplacian in 
$L^2(\bbR^n; d^n x)$ whose graph domain equals the usual Sobolev space 
$W^{2,2}(\bbR^n)$.  

Repeatedly applying \cite[Corollary\ 4.8]{Si05}, one verifies that all assumptions 
in Hypothesis \ref{h2.1} are satisfied. Specifically, since 
\begin{align}
\begin{split} 
& (|k|^2 + 1)^{- p/2} \in L^2\big(\bbR^n; (1 + |k|^2)^q d^n k\big),     \\
& V_-, V(t,\cdot) \in L^2\big(\bbR^n; (1 + |x|^2)^q d^n x\big), \quad t\in\bbR, 
\end{split} 
\end{align}
\cite[Corollary\ 4.8]{Si05} implies that 
\begin{equation}
V_- \big((-\Delta)^{p/2} + I\big)^{-1}, \, 
V(t,\cdot) \big((-\Delta)^{p/2} + I\big)^{-1} \in \cB_1\big(L^2(\bbR^n; d^n x)\big),   
\quad t \in \bbR.      \lb{3.trc}
\end{equation}
In addition, one has 
\begin{equation}
\sigma (A(t)) = \sigma (A_-) = [\varepsilon, \infty), \quad t\in\bbR.     \lb{3.85}
\end{equation}
Indeed, to show \eqref{3.85}, one recalls that $V\ge 0$ and $V_-\ge 0$, and 
since both operators are relatively compact (in fact, relatively trace class) with 
respect to $(-\Delta)^{p/2}$ by \eqref{3.trc}, and hence also with respect to 
$A_-$ and $A(t)$ (cf.\ \eqref{3.A-}, \eqref{3.At}), one obtains
\begin{align}
& \sigma(A_-)\subseteq[\varepsilon,\infty), \quad 
\sigma(A(t))\subseteq[\varepsilon,\infty), \quad  t\in\bbR,       \\
& \sigma_{\rm ess}(A_-)=\sigma_{\rm ess}(A(t))=[\varepsilon,\infty), \quad  t\in\bbR, 
\end{align}
implying \eqref{3.85}.
\end{example}

We note that $L^2(\bbR^n; (1 + |x|^2)^q d^n x)$, $q>(n/2)$, in Example \ref{e3.13} 
can be replaced by the Birman--Solomyak space $\ell^1(L^2(\bbR^n))$ (cf., e.g.,  
\cite[Chapter 4]{Si05}). In addition, the $L^\infty$-assumptions in Example \ref{e3.13} 
can be replaced by appropriate relatively boundedness assumptions with respect 
to $A_-$, but we omit further details in the interest of simplicity. 

A similar example, removing the positivity property of $A(t)$ in Example \ref{e3.13}, 
can be constructed as follows:

\begin{example} \lb{e3.14} 
Let $n\in\bbN$, $p > n$, $q \in ((n/2), p -(n/2))$, and $\varepsilon > 0$.  
Consider the self-adjoint $2 \times 2$ matrices $V_- = (V_{-,j,k})_{1\leq j, k \leq 2}$, 
$V(t,\cdot) = (V(t,\cdot)_{j,k})_{1\leq j, k \leq 2}$, with  
\begin{align}
& V_{-,j,k} \in L^2(\bbR^n; (1 + |x|^2)^q d^n x) \cap L^\infty(\bbR^n; d^n x), 
\quad 1\leq j, k \leq 2,  \\
& V(t,\cdot)_{j,k} \in L^2(\bbR^n; (1 + |x|^2)^q d^n x) \cap L^\infty(\bbR^n; d^n x), 
\quad t\in\bbR, \; 1\leq j, k \leq 2,
\end{align}
and suppose in addition that
\begin{align}
& \partial_tV(t,\cdot)_{j,k} \in L^2(\bbR^n; (1 + |x|^2)^q d^n x) \cap L^\infty(\bbR^n; d^n x), 
\quad t\in\bbR, \; 1\leq j, k \leq 2,     \\
& \bbR\ni t\mapsto V(t,\cdot)_{j,k} \in C^1(\bbR; L^\infty(\bbR^n; d^n x)),  \quad  1\leq j, k \leq 2. 
\end{align}
Next, we introduce the linear operators 
\begin{align}
& A_- = \begin{pmatrix} (-\Delta)^{p/2} + \varepsilon I + V_{-,1,1} & V_{-,1,2} 
\\ V_{-,2,1} & - (-\Delta)^{p/2} - \varepsilon I + V_{-,2,2} \end{pmatrix},    \no \\
& \quad \dom(A_-) = \dom\big((-\Delta)^{p/2}\big) \oplus \dom\big((-\Delta)^{p/2}\big),   \\
& B(t) = V(t,\cdot) - V_-, \quad \dom(B(t)) = L^2(\bbR^n; d^n x) \oplus 
L^2(\bbR^n; d^n x), \quad t\in\bbR,     \\
& A(t) = A_- + B(t), \quad \dom(A(t)) = \dom(A_-), \quad t\in\bbR, 
\end{align}
in $L^2(\bbR^n; d^n x) \oplus L^2(\bbR^n; d^n x)$.  Then 
\begin{equation}
\sigma_{\rm ess}(A(t)) = \sigma_{\rm ess}(A_-) 
= (-\infty, - \varepsilon] \cup [\varepsilon, \infty), \quad t\in\bbR,    \lb{3.93}
\end{equation}
and repeatedly applying \cite[Corollary\ 4.8]{Si05} one again verifies that all assumptions 
in Hypothesis \ref{h2.1} are satisfied. In the particular case where 
\begin{align} 
& V_{-,1,2} = V_{-,2,1} = 0, \quad V_{-,1,1} \geq 0, \quad V_{-,2,2} \leq 0,   \\   
& V_{1,2} (t,\cdot) = V_{2,1} (t,\cdot) = 0, \quad  
V_{1,1} (t,\cdot) \geq 0, \quad V_{2,2} (t,\cdot) \leq 0, \quad t\in\bbR, 
\end{align} 
then also 
\begin{equation}
\sigma (A(t)) = \sigma (A_-) = (-\infty, - \varepsilon] \cup [\varepsilon, \infty), 
\quad t\in\bbR,     \lb{3.96}
\end{equation}
holds as in the proof of \eqref{3.85}. 
\end{example}

Employing the norm resolvent convergence as $t \to +\infty$ in \eqref{3.Anlim} 
then shows that $A_+$, constructed according to Theorem \ref{t3.7}, also satisfies 
\eqref{3.85} and \eqref{3.96} (cf., e.g., \cite[Sect.\ VIII.7]{RS80}).

\section{Preliminaries in Connection with the Trace Formula} \lb{s4}

In this section we collected some preliminary results used in the proof
of Propositions \ref{prTr}  and \ref{propgA}.

The following interpolation result (and others) have been proved in \cite{GLST15}. They extend results originally discussed by Lesch \cite{Le05}:

\begin{theorem} [\cite{GLST15}] \lb{hadamard1} Let $\cH$ be a separable Hilbert space and 
$T \geq 0$ a self-adjoint operator with $T^{-1}\in\cB(\cH)$. Assume that
$S$ is closed and densely defined in $\cH$, with
$\big(\dom(S)\cap\dom(S^*)\big) \supseteq \dom(T)$, implying
$ST^{-1}\in\cB(\cH)$ and $S^*T^{-1}\in\cB(\cH)$. If, in addition, 
$ST^{-1}\in\cB_1(\cH)$ and $S^*T^{-1}\in\cB_1(\cH)$, then 
\begin{equation}
T^{-1/2}ST^{-1/2} \in\cB_1(\cH), \quad 
(T^{-1/2}ST^{-1/2})^* = T^{-1/2}S^*T^{-1/2} \in\cB_1(\cH). 
\end{equation}
Moreover,  
\begin{equation} \lb{normin1}
\big\|T^{-1/2}ST^{-1/2}\big\|_{\cB_1(\cH)} 
= \big\|T^{-1/2}S^*T^{-1/2}\big\|_{\cB_1(\cH)}
\le \big\|ST^{-1}\big\|_{\cB_1(\cH)}^{1/2} \, \big\|S^*T^{-1}\big\|_{\cB_1(\cH)}^{1/2}.
\end{equation}
\end{theorem}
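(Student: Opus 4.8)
The plan is to obtain \eqref{normin1} by complex interpolation in the trace ideal $\cB_1(\cH)$ (Hadamard's three-lines theorem), after a spectral truncation that makes every operator involved genuinely bounded. As in all applications in this paper I shall assume $T\ge\varepsilon I$ for some $\varepsilon>0$, so that the complex powers $T^{z}$, $z\in\bbC$, are unambiguously defined by the spectral theorem (the general self-adjoint case needs only cosmetic changes). Two preliminary remarks: since $S$ is closed, $T^{-1}S=(S^{*}T^{-1})^{*}\in\cB_1(\cH)$, so $X:=ST^{-1}$ and $Z:=T^{-1}S$ both lie in $\cB_1(\cH)$ with $\|X\|_{\cB_1(\cH)}=\|ST^{-1}\|_{\cB_1(\cH)}$ and $\|Z\|_{\cB_1(\cH)}=\|S^{*}T^{-1}\|_{\cB_1(\cH)}$. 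Next, I introduce the spectral projections $Q_n=E_T([\varepsilon,n])$, $n\in\bbN$, which commute with all functions of $T$, satisfy $\slim_{n\to\infty}Q_n=I$, and on whose range $T$ is bounded. For $z$ in the closed strip $\{z\in\bbC\,|\,0\le\Re(z)\le1\}$ I set
\begin{equation*}
F_n(z)=T^{-z}\,Q_nXQ_n\,T^{z}=Q_n\,T^{-z}XT^{z}\,Q_n\in\cB_1(\cH).
\end{equation*}
Then $z\mapsto F_n(z)$ is entire as a $\cB_1(\cH)$-valued map; using $\|Q_nT^{\pm z}\|_{\cB(\cH)}\le\max(1,n,\varepsilon^{-1})$ on the strip (independent of $\Im(z)$ since $T^{\pm i\Im(z)}$ is unitary) one sees $F_n$ is bounded on the strip, while unitary invariance of $\|\cdot\|_{\cB_1(\cH)}$ gives the boundary estimates $\|F_n(it)\|_{\cB_1(\cH)}=\|Q_nXQ_n\|_{\cB_1(\cH)}\le\|ST^{-1}\|_{\cB_1(\cH)}$ and $\|F_n(1+it)\|_{\cB_1(\cH)}=\|Q_nZQ_n\|_{\cB_1(\cH)}\le\|S^{*}T^{-1}\|_{\cB_1(\cH)}$, $t\in\bbR$.

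Applying the scalar Hadamard three-lines theorem to each bounded entire function $z\mapsto\tr_\cH(F_n(z)B)$, $B\in\cB(\cH)$ with $\|B\|_{\cB(\cH)}\le1$, and then taking the supremum over such $B$ (recall $\|A\|_{\cB_1(\cH)}=\sup\{|\tr_\cH(AB)|:\|B\|_{\cB(\cH)}\le1\}$), one obtains at $\Re(z)=1/2$
\begin{equation*}
\|F_n(1/2)\|_{\cB_1(\cH)}\le\|ST^{-1}\|_{\cB_1(\cH)}^{1/2}\,\|S^{*}T^{-1}\|_{\cB_1(\cH)}^{1/2},\quad n\in\bbN.
\end{equation*}
A direct computation (legitimate since $\ran(Q_n)\subseteq\dom(T)\subseteq\dom(S)$ and all powers of $T$ are bounded on $\ran(Q_n)$) identifies $F_n(1/2)=Q_n\,T^{-1/2}ST^{-1/2}\,Q_n$, and similarly $F_n(0)=Q_nXQ_n$, $F_n(1)=Q_nZQ_n$. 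Now $\{F_n(1/2)\}_{n\in\bbN}$ is bounded in $\cB_1(\cH)=\cB_\infty(\cH)^{*}$, hence has a weak-$*$ convergent subsequence with limit $W\in\cB_1(\cH)$ satisfying $\|W\|_{\cB_1(\cH)}\le\|ST^{-1}\|_{\cB_1(\cH)}^{1/2}\|S^{*}T^{-1}\|_{\cB_1(\cH)}^{1/2}$. For $f$ in the dense subspace $\bigcup_{m\in\bbN}\ran(Q_m)$ one has $T^{-1/2}f\in\dom(S)$, so $T^{-1/2}ST^{-1/2}f$ is well defined and equals $\lim_nF_n(1/2)f$; comparing with the weak-$*$ limit shows $W$ is precisely the bounded, everywhere-defined extension meant by $T^{-1/2}ST^{-1/2}$, which therefore lies in $\cB_1(\cH)$ and obeys \eqref{normin1}. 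Finally, replacing $S$ by $S^{*}$ (note $S^{**}=S$ and $(S^{*})^{*}T^{-1}=ST^{-1}\in\cB_1(\cH)$) gives $T^{-1/2}S^{*}T^{-1/2}\in\cB_1(\cH)$ with the same bound, and passing to adjoints in the weak-$*$ relation for $W$ yields $(T^{-1/2}ST^{-1/2})^{*}=T^{-1/2}S^{*}T^{-1/2}$; equality of the two $\cB_1(\cH)$-norms in \eqref{normin1} is then automatic.

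I expect the interpolation step itself to be routine once everything is bounded, so the real work lies in the interplay between the unbounded operators. The point requiring care is that $T^{-1/2}ST^{-1/2}$ need not a priori be densely defined as a naive composition, because $\dom(T^{1/2})$ may strictly contain $\dom(S)\supseteq\dom(T)$; correspondingly, $T^{-z}XT^{z}$ is genuinely unbounded for $0<\Re(z)<1$ without the truncation. The spectral cut-off $Q_n$ together with weak-$*$ compactness of balls in $\cB_1(\cH)$ circumvent both difficulties, but one must verify — as above, on the core $\bigcup_m\ran(Q_m)$ — that the limiting operator $W$ genuinely represents $T^{-1/2}ST^{-1/2}$ and is not merely some trace-class operator agreeing with it on a dense set.
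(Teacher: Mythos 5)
The paper itself contains no proof of Theorem \ref{hadamard1}: it is quoted from \cite{GLMST09} (stated there as an extension of interpolation results of Lesch \cite{Le05}), so there is no in-paper argument to compare yours against line by line. That said, your proof is correct and follows exactly the route the theorem's label advertises: a Hadamard three-lines/complex-interpolation argument in $\cB_1(\cH)$, made legitimate by the spectral cut-offs $Q_n=E_T([\varepsilon,n])$, with boundary data $Q_n(ST^{-1})Q_n$ and $Q_n\overline{T^{-1}S}\,Q_n$ and the $\cB_1$-norm recovered by duality against the unit ball of $\cB(\cH)$. Two small points deserve to be made explicit. First, $T^{-1}S$ should be read as its closure: $(T^{-1}S)^{*}=S^{*}T^{-1}$ (product rule with the bounded factor on the left), hence $\overline{T^{-1}S}=(S^{*}T^{-1})^{*}\in\cB_1(\cH)$ with the same trace norm; your formula is right up to this closure. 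Second, you assert rather than verify that $\bigcup_m\ran(Q_m)$ is a core for the naturally defined $T^{-1/2}ST^{-1/2}$; this is true and takes one line: for $f\in\dom\big(T^{1/2}\big)$ one has $g=T^{-1/2}f\in\dom(T)\subseteq\dom(S)$ and $SQ_mg=(ST^{-1})Q_mTg\to Sg$ by boundedness of $ST^{-1}$, so $T^{-1/2}ST^{-1/2}Q_mf\to T^{-1/2}ST^{-1/2}f$, and therefore your weak-$*$ limit $W$ agrees with $T^{-1/2}ST^{-1/2}$ on all of $\dom\big(T^{1/2}\big)$, not merely on the union of the $\ran(Q_m)$. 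Finally, restricting to $T\ge\varepsilon I$ is indeed harmless: in the general boundedly invertible self-adjoint case one replaces $T$ by $|T|$, and since $S|T|^{-1}=(ST^{-1})\sign(T)$ with $\sign(T)$ unitary, all the trace norms entering \eqref{normin1} are unchanged; in every application in this paper $T$ is positive anyway.
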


Next, we study properties of the operator $\bsD_{\bsA}$ defined in \eqref{2.DA}
starting with the constant coefficient case $A(t)=A_-$, $t\in\bbR$. We recall that the operator of differentiation $d/dt$ in $L^2(\bbR;\cH)$, defined in 
\eqref{2.ddt}, is closed, and the graph norm on $\dom(d/dt)$ is 
equivalent to the norm in $W^{1,2}(\bbR;\cH)$, where $W^{1,2}(\cdot)$ denotes the 
usual Sobolev space of $L^2(\bbR; \cH)$-functions with the first distributional 
derivative in $L^2(\bbR; \cH)$. We note that $(d/dt)^* = - (d/dt)$ which will be used in \eqref{2.DA-*}. For a self-adjoint operator $A_-$ in $\cH$ on $\dom(A_-)\subseteq\cH$, 
the operator $\bsA_-$, defined by \eqref{2.DA-}, is closed in $L^2(\bbR;\cH)$ since 
$A_-$ is closed in $\cH$. In addition, the graph norm
$\|\cdot\|_{\cH_1(\bsA_-)}$ on $\dom(\bsA_-)$ is equivalent to the norm in 
$L^2(\bbR;\cH_1(A_-))$ since
\beq\label{eq37}
\begin{split}
\|f\|_{\cH_1(\bsA_-)}^2& = \|\bsA_-f\|_{L^2(\bbR;\cH)}^2 + 
\|f\|_{L^2(\bbR;\cH)}^2
=\int_\bbR \big[\|A_-f(t)\|_\cH^2 +  \|f(t)\|_\cH^2 \big]\,dt  \\
& =\int_\bbR\|f(t)\|_{\cH_1(A_-)}^2\,dt = \|f\|_{L^2(\bbR;\cH_1(A_-))}^2, \quad
f \in \dom(\bsA_-).
\end{split}
\enq
We recall the definition of the constant 
coefficient operator in $L^2(\bbR;\cH)$, 
\begin{equation}
\bsD_{\bsA_-}^{} = \f{d}{dt} + \bsA_-, \quad
\dom(\bsD_{\bsA_-}^{})=\dom(d/dt) \cap \dom(\bsA_-).
\label{3.DA-1}
\end{equation}
\begin{lemma} \lb{l2.3}
Suppose $A_-$ is self-adjoint  in $\cH$ on $\dom(A_-)\subseteq\cH$, 
and define the operator $\bsD_{\bsA_-}^{}$ as in \eqref{3.DA-1}. Then the following
assertions hold: \\
$(i)$ The graph norm
$\|\cdot\|_{\cH_1(\bsD_{\bsA_-}^{})}$ on $\dom(\bsD_{\bsA_-}^{})$ is 
equivalent to the norm on
$W^{1,2}(\bbR;\cH)\cap L^2(\bbR;\cH_1(A_-))$ defined as the maximum 
of the norms in $W^{1,2}(\bbR;\cH)$ and $L^2(\bbR;\cH_1(A_-))$; 
consequently, the operator $\bsD_{\bsA_-}^{}$ is closed. \\
$(ii)$ The adjoint $\bsD_{\bsA_-}^*$ of the operator 
$\bsD_{\bsA_-}^{}$ in $L^2(\bbR;\cH)$ is given by
\begin{equation}
\bsD_{\bsA_-}^* = - \f{d}{dt} + \bsA_-, \quad \dom(\bsD_{\bsA_-}^*)
= \dom(d/dt) \cap \dom(\bsA_-) = \dom(\bsD_{\bsA_-}^{}).  \lb{2.DA-*}
\end{equation}
$(iii)$  The operator $\bsD_{\bsA_-}^{}$ is a normal operator in 
$L^2(\bbR; \cH)$. \\
$(iv)$ The spectra of the operators $\bsD_{\bsA_-}^{}$ in $L^2(\bbR; \cH)$ and $A_-$ in $\cH$ satisfy: 
\begin{equation}\label{sprel}
\sigma(\bsD_{\bsA_-}^{})=\sigma(A_-) + i \, \bbR.
\end{equation}
\end{lemma}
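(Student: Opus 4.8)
The plan is to diagonalize the $t$-dependence by the Fourier transform $\cF$ on $L^2(\bbR;\cH)$, $(\cF f)(p) = (2\pi)^{-1/2}\int_\bbR e^{-ipt}f(t)\,dt$, which is unitary. Under $\cF$ the operator $d/dt$ becomes multiplication by $ip$, while $\bsA_-$, acting only in the $\cH$-fiber, is unchanged; hence $\cF\bsD_{\bsA_-}\cF^{-1}$ is the operator $\bsM$ of multiplication by the $\cB(\cH_1(A_-),\cH)$-valued function $p\mapsto A_-+ipI$, with domain $\cF(\dom(\bsD_{\bsA_-})) = \big\{g\in L^2(\bbR;\cH)\,\big|\, g(p)\in\dom(A_-)\text{ a.e.},\ \int_\bbR[\|A_-g(p)\|_\cH^2 + p^2\|g(p)\|_\cH^2]\,dp<\infty\big\}$. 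Since the fiber family $\{A_-+ipI\}_{p\in\bbR}$ is (trivially) measurable, $\bsM$ is the direct integral $\int_\bbR^\oplus (A_-+ipI)\,dp$, and all four assertions will be read off from this representation together with the corresponding direct-integral facts in Appendix \ref{sA} (in particular Theorem \ref{tA.6}).

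For $(i)$, the key elementary identity is that for each $p\in\bbR$ and $h\in\dom(A_-)$,
\begin{equation}
\|(A_-+ipI)h\|_\cH^2 = \|A_-h\|_\cH^2 + p^2\|h\|_\cH^2,
\end{equation}
the cross terms cancelling because $(h,A_-h)_\cH\in\bbR$. Integrating in $p$ and transporting back through $\cF$ (and using \eqref{eq37}) gives $\|\bsD_{\bsA_-}f\|_{L^2(\bbR;\cH)}^2 = \|(d/dt)f\|_{L^2(\bbR;\cH)}^2 + \|\bsA_-f\|_{L^2(\bbR;\cH)}^2$ for $f\in\dom(\bsD_{\bsA_-})$. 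Consequently the graph norm satisfies $\|f\|_{\cH_1(\bsD_{\bsA_-})}^2 = \|f\|^2 + \|(d/dt)f\|^2 + \|\bsA_-f\|^2$, which lies between $\max\big(\|f\|_{W^{1,2}(\bbR;\cH)}^2,\|f\|_{L^2(\bbR;\cH_1(A_-))}^2\big)$ and twice that maximum; this is the claimed equivalence, and since $W^{1,2}(\bbR;\cH)\cap L^2(\bbR;\cH_1(A_-))$ with the maximum norm is complete (an intersection of Banach spaces), $\bsD_{\bsA_-}$ is closed.

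For $(ii)$ and $(iii)$: the adjoint of multiplication by a measurable operator-valued function is multiplication by the pointwise adjoint (Theorem \ref{tA.6} in the Fourier picture; alternatively, integration by parts shows that multiplication by $p\mapsto A_--ipI$ on the same domain is contained in $\bsM^*$, and the reverse inclusion follows because, running the same argument with the roles of the two operators exchanged, $\bsM$ is contained in the adjoint of that operator, which is closed). Since $(A_-+ipI)^* = A_--ipI$, transporting back yields $\bsD_{\bsA_-}^* = -(d/dt)+\bsA_-$ with $\dom(\bsD_{\bsA_-}^*) = \dom(\bsD_{\bsA_-})$. Moreover, the computation in $(i)$ with $-ip$ in place of $ip$ gives $\|\bsD_{\bsA_-}^*f\| = \|\bsD_{\bsA_-}f\|$ for every $f$ in this common domain; a closed, densely defined operator whose domain coincides with that of its adjoint and on which both have the same graph contribution is normal (the standard characterization of normality), which proves $(iii)$. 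Equivalently, one may note $\bsM^*\bsM = \bsM\bsM^* = \int_\bbR^\oplus(A_-^2+p^2I)\,dp$.

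For $(iv)$: $z$ lies in $\rho(\bsD_{\bsA_-}) = \rho(\bsM)$ precisely when the pointwise resolvent $p\mapsto(A_-+ipI-zI)^{-1}$ exists for a.e.\ $p$ and is essentially bounded, in which case it represents $(\bsM-zI)^{-1}$. Since $A_-$ is self-adjoint,
\begin{equation}
\big\|(A_-+ipI-zI)^{-1}\big\|_{\cB(\cH)} = \dist\big(z-ip,\sigma(A_-)\big)^{-1} = \dist\big(z,\sigma(A_-)+ip\big)^{-1},
\end{equation}
so $\sup_{p\in\bbR}\|(A_-+ipI-zI)^{-1}\|_{\cB(\cH)} = \dist(z,\sigma(A_-)+i\bbR)^{-1}$, which is finite if and only if $z\notin\sigma(A_-)+i\bbR$; as $\sigma(A_-)+i\bbR$ is closed, this gives $\sigma(\bsD_{\bsA_-}) = \sigma(A_-)+i\bbR$. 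The routine steps are these pointwise-in-$p$ Hilbert space identities; the one place requiring care — and where I expect the main (essentially technical) obstacle to lie — is the bookkeeping identifying $\bsD_{\bsA_-}$ with $\int_\bbR^\oplus(A_-+ipI)\,dp$: checking that $\cF$ really intertwines the two operators on exactly the stated domains, and invoking the direct-integral formulas for the adjoint and the spectrum with the correct measurability hypotheses from Appendix \ref{sA}.
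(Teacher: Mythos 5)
Your route is genuinely different from the paper's main proof and is sound in outline. The paper proves an abstract lemma about a pair of resolvent-commuting self-adjoint operators $A,B$ (here $A=\bsA_-$, $B=-i\,d/dt$), establishing $\|(A+iB)h\|^2=\|Ah\|^2+\|Bh\|^2$, normality and $C^*=C'$ via the cutoffs $Q_n=E_A([-n,n])E_B([-n,n])$, and then proves the inclusion $\sigma(A_-)+i\bbR\subseteq\sigma(\bsD_{\bsA_-})$ in $(iv)$ by modulating away $\nu$ and testing against $f_k(t)=\chi_k(t)h$, $h\in\dom(A_-)$. You instead diagonalize $d/dt$ by the Fourier transform and read everything off the fiber operators $A_-+ipI$; this is exactly the alternative the paper itself records for item $(iii)$ in Lemma \ref{lA.10}, and your domain bookkeeping (the fiber identity $\|(A_-+ip)h\|^2=\|A_-h\|^2+p^2\|h\|^2$ forcing $\dom=\dom(\bsA_-)\cap\dom(ip\,\bsI)$) matches \eqref{dominter} there. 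Parts $(i)$--$(iii)$ are fine as you argue them, with two small caveats: to invoke Theorem \ref{tA.6} you need $N$-measurability of $\{A_-+ipI\}_{p\in\bbR}$, not mere weak measurability (Example \ref{e2.11} shows the distinction matters in general); here it holds because the characteristic matrix is norm continuous in $p$, as computed in Lemma \ref{lA.10}. Also, your parenthetical "exchange the roles" argument for the adjoint only produces containments in one direction ($\bsN\subseteq\bsM^*$ and $\bsM\subseteq\bsN^*$, whose adjoint yields again $\overline{\bsN}\subseteq\bsM^*$), so it does not by itself give $\bsM^*\subseteq\bsN$; rely on the direct-integral adjoint formula \eqref{A.20} (or the paper's cutoff argument) instead.

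The one genuine gap is in $(iv)$. Your computation of the fiber resolvent norms cleanly gives $\rho(A_-)+i\bbR\subseteq\rho(\bsD_{\bsA_-})$, i.e. $\sigma(\bsD_{\bsA_-})\subseteq\sigma(A_-)+i\bbR$; but the reverse inclusion rests entirely on the "only if" half of your asserted equivalence "$z\in\rho(\bsM)$ precisely when the pointwise resolvents exist a.e.\ and are essentially bounded", and that half is neither proved in your proposal nor available from Appendix \ref{sA} as stated: Theorem \ref{tA.6}\,$(iii)$ gives a.e.\ injectivity of the fibers and $(\bsM-z\bsI)^{-1}=\int_\bbR^\oplus(A_-+ipI-zI)^{-1}dp$, but a.e.\ injectivity of a normal fiber does not imply a.e.\ \emph{bounded} invertibility, and the boundedness criterion recorded after \eqref{A.19} presupposes bounded fibers. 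This is precisely the direction the paper spends its effort on. The fix is short in your picture: for $z=\mu+i\nu$ with $\mu\in\sigma(A_-)$, take a Weyl sequence $h_k\in\dom(A_-)$, $\|h_k\|_\cH=1$, $\|(A_--\mu)h_k\|_\cH\to0$, and set $g_k(p)=c_k\chi_{[\nu-1/k,\nu+1/k]}(p)h_k$ with $\|g_k\|_{L^2(\bbR;\cH)}=1$; then
\begin{equation}
\|(\bsM-z\bsI)g_k\|_{L^2(\bbR;\cH)}^2\le\|(A_--\mu)h_k\|_\cH^2+k^{-2}\underset{k\to\infty}{\longrightarrow}0,
\end{equation}
so $z\in\sigma(\bsM)=\sigma(\bsD_{\bsA_-})$. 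With this (or with the paper's modulation-plus-cutoff argument) inserted, your proof is complete.
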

\begin{proof} As we will see, the lemma follows by letting $A=\bsA_-$ and $B=(-id/dt)$ in the next assertion (cf.\  \cite[Ex.\
XII.9.11, p.1259]{DS88}, \cite[Ex.\ 7.48]{We80}).\\

\noindent{\bf Assertion.} {\em Suppose that $A$ and $B$ are two resolvent commuting self-adjoint operators in a complex, separable Hilbert space $\cK$, and define the operators $C$ and $C'$ by 
\begin{equation}\label{C}
C=A+iB, \quad C'=A-iB, \quad \dom(C)=\dom( C')=\dom(A)\cap \dom(B).
\end{equation}
Then
\begin{align}\label{gen1}
\|Ch\|_{\cK}^2&=\|Ah\|_{\cK}^2+\|Bh\|_{\cK}^2, \quad h\in \dom (C),\\
\label{gen}
\| C'h\|_{\cK}^2&=\|Ah\|_{\cK}^2+\|Bh\|_{\cK}^2, \quad h\in \dom ( C'),
\end{align}
the operator $C$ is normal,
$C^{*}= C'$, and  
\begin{equation}\label{roAC} 
\rho(A) + i \, \bbR\subseteq \rho(C).
\end{equation}}

To prove this assertion, we introduce the strongly right continuous families of spectral projections $E_A(\lambda)=E_A((-\infty,\lambda])$ and 
$E_B(\lambda)=E_B((-\infty,\lambda])$, $\lambda\in\bbR$, of the operators $A$ and $B$, respectively. Since by hypothesis the resolvents of $A$ and $B$ commute, 
the spectral projections also commute, that is,
$E_A(\lambda) E_B(\mu) = E_B(\mu) E_A(\lambda)$, $\lambda,\mu\in \bbR$, and
\begin{align} \label{s}
\begin{split}  
& E_A(\lambda) A \subseteq A E_A(\lambda), \quad 
E_A(\lambda) B \subseteq B E_A(\lambda), \\
& E_B(\lambda) B \subseteq B E_B(\lambda), \quad 
E_B(\lambda) A \subseteq A E_B(\lambda), \quad \lambda\in \bbR.
\end{split}
\end{align}
It follows from \eqref{s} that $C$ and $ C'$ are densely defined, 
closable operators in $\cK$ and
 \begin{equation}\label{sop}
 C'\subseteq C^{*},\quad C\subseteq  C'^{*}.
 \end{equation}
Next, we define
 \begin{equation}\label{defQ}
 Q_n=E_A([-n,n])E_B([-n,n])=E_B([-n,n])E_A([-n,n]), \quad n\in\bbN,
 \end{equation}
so that  $\lim_{n\to\infty}\|Q_nh-h\|\to 0$ for each $h\in\cK$ and,
in addition,
\begin{align}\label{QQ}
& Q_nC\subseteq CQ_n, \quad Q_nC^{*} \subseteq C^{*}Q_n, \quad n\in\bbN,  \\
 \label{comm}
& ABQ_n=BAQ_n, \quad n \in \bbN.
\end{align}
Let $h\in \dom (C')$ and denote $h_n=Q_n h$, $n\in\bbN$.
Then \eqref{comm} yields
\begin{align}
\|C' h\|_{\cK}^2&=\lim_{n\to\infty} (A h_n-iB h_n,A h-iB h)_{\cK}    \no \\
&=\lim_{n\to\infty}\big[(A h_n,A h)_{\cK} + (B h_n,B h)_{\cK} + i(A h_n,B h)_{\cK} -
i(B h_n,A h)_{\cK}\big]     \no \\
&=\lim_{n\to\infty}\big[(A h_n,A h)_{\cK} + (B h_n,B h)_{\cK}\big] 
= \|A h\|_{\cK}^2 + \|B h\|_{\cK}^2,
\end{align}
proving \eqref{gen}; the proof of \eqref{gen1} is similar. By \eqref{gen1}, the 
graph norm of $C$ is equivalent to the norm 
$\max\big\{\|h\|_{\cH_1(A)}, \|h\|_{\cH_1(B)}\big\}$ on $\dom(A)\cap\dom(B)$. 
Since the latter space is complete, $C$ is closed; similarly, $C'$ is closed. Next, 
let $ h\in \dom(C^{*})$. By \eqref{QQ}, we have
\begin{equation}
\lim_{n\to\infty} C' h_n=\lim_{n\to\infty} C^{*} h_n=C^{*} h.
\end{equation}
Since  $C'$ is closed, one concludes that $ h\in\dom (C')$ and
$C' h=C^{*} h$, which, together with \eqref{sop}, implies that 
$C'=C^{*}$. Since 
\begin{equation} 
\|C h\|_{\cK} = \|C^{*} h\|_{\cK}, \quad h\in \dom (C)=\dom (C^{*}), 
\end{equation}
due to \eqref{gen} and \eqref{gen1}, the normality of $C$ follows by 
\cite[Section\ 5.6]{We80}. Finally, to prove \eqref{roAC}, let us fix a 
$\mu+i\nu\in \rho(A)+i \, \bbR$ and apply \eqref{gen1} with $A$ and $B$ replaced 
by $A-\mu I_{\cK}$ and $B-\nu I_{\cK}$, respectively. Since the operator 
$A-\mu I_{\cK}$ is uniformly 
bounded from below, for some $c>0$,  
\begin{align}\label{unbddC}
\|(C-(\mu+i\nu) I_{\cK})h\|_{\cK}^2 &= \|(A-\mu I_{\cK})h\|_{\cK}^2 
+ \|(B-\nu I_{\cK})h\|_{\cK}^2 \ge \|(A-\mu I_{\cK})h\|_{\cK}^2   \no \\ 
& \ge c\|h\|_{\cK}^2, 
\quad h\in\dom(C), 
\end{align}
proving that the operator $C-(\mu+i\nu) I_{\cK}$ is uniformly bounded from 
below. Using \eqref{gen}, a similar argument for $C^\ast$ completes the proof 
of the inclusion $(\mu+i\nu) \in \rho(C)$, thus finishing the proof of the assertion.

Returning to the proof of Lemma \ref{l2.3}, we remark that items $(i)$, $(ii)$, $(iii)$ 
follow directly from the assertion just proved (with $A=\bsA_-$, $B=(-id/dt)$, and $C=\bsD_{\bsA_-}^{}$). In particular, the equivalence of the norms in item 
$(i)$ follows from \eqref{gen1}, 
\begin{equation}
\|\bsD_{\bsA_-}^{} f\|_{L^2(\bbR; \cH)}^2=\|f'\|_{L^2(\bbR; \cH)}^2+\|\bsA_-f\|_{L^2(\bbR; \cH)}^2, \quad f\in\dom(\bsD_{\bsA_-}^{}),
\end{equation}
which, in turn, for each $ f\in\dom(\bsD_{\bsA_-}^{})=\dom(d/dt)\cap\dom(\bsA_-)$ yields
\begin{align}
& \|f\|_{W^{1,2}(\bbR;\cH)\cap  L^2(\bbR;\cH_1(A_-))}^2=\max\big[
\|f\|_{\cH_1(d/dt)}^2,\|f\|_{\cH_1(\bsA_-)}^2\big]  \no \\
& \quad =\max\big[
\|f\|_{L^2(\bbR;\cH)}^2+\|f'\|_{L^2(\bbR;\cH)}^2,\|f\|_{L^2(\bbR;\cH)}^2
+\|\bsA_-f\|_{L^2(\bbR;\cH)}^2\big] \no \\
& \quad \le \|f\|_{L^2(\bbR;\cH)}^2+\|f'\|_{L^2(\bbR;\cH)}^2 
+ \|\bsA_-f\|_{L^2(\bbR;\cH)}^2      \lb{nnor} \\ 
& \quad \le 2\max\big[\|f\|_{L^2(\bbR;\cH)}^2 
+ \|f'\|_{L^2(\bbR;\cH)}^2,\|f\|_{L^2(\bbR;\cH)}^2 
+ \|\bsA_-f\|_{L^2(\bbR;\cH)}^2\big]   \no \\
& \quad =2\|f\|_{W^{1,2}(\bbR;\cH)\cap  L^2(\bbR;\cH_1(A_-))}^2,     \no 
\end{align}
since the term in \eqref{nnor} is equal to 
$\|f\|_{L^2(\bbR;\cH)}^2+\|\bsD_{\bsA_-}^{} f\|_{L^2(\bbR; \cH)}^2
=\|f\|_{\cH_1(\bsD_{\bsA_-}^{})}^2$. Therefore, $\dom(\bsD_{\bsA_-}^{})$ with the graph norm is a complete space, and thus $\bsD_{\bsA_-}^{}$ is closed.

To finish the proof of item $(iv)$, it remains to show that 
$(\mu+i\nu)\in\rho(\bsD_{\bsA_-}^{})$ implies $\mu\in\rho(A_-)$. As in the proof of \cite[Theorem 3.13]{CL99}, one considers 
the unitary operator of multiplication $\bsM$ in $L^2(\bbR; \cH)$ by the scalar 
function $m(t)=e^{-i\nu t}$, that is, 
\begin{equation} 
(\bsM f)(t)=e^{-i\nu t}f(t),  \quad f\in L^2(\bbR; \cH).
\end{equation}  
In addition, 
\begin{equation}
\text{If $f\in\dom(\bsD_{\bsA_-}^{})$ then $\bsM f\in\dom(\bsD_{\bsA_-}^{})$ and 
$\bsD_{\bsA_-}^{} \bsM f = \bsM (-i\nu \bsI + \bsD_{\bsA_-}^{})f$.} 
\end{equation} 
Thus, 
$\rho(\bsD_{\bsA_-}^{})=\rho(-i\nu \bsI + \bsD_{\bsA_-}^{})$, and therefore 
$(\mu+i\nu) \in\rho(\bsD_{\bsA_-}^{})$ implies $\mu\in\rho(\bsD_{\bsA_-}^{})$. Similarly to \eqref{unbddC}, for some $c>0$, 
\begin{align}\label{bdbel} 
& c\|f\|_{L^2(\bbR;\cH)}^2\le\|(\bsD_{\bsA_-}^{} - \mu \bsI)f\|_{L^2(\bbR;\cH)}^2=
\|(\bsA_- - \mu \bsI)f\|_{L^2(\bbR;\cH)}^2+\|f'\|_{L^2(\bbR;\cH)}^2,   \no   \\ 
& \hspace*{8.5cm} f\in\dom(\bsD_{\bsA_-}^{}). 
\end{align}
For each $k\in\bbN$, we choose a smooth function $\chi_k:\bbR\to[0,1]$
such that 
\begin{equation}
\chi_k(t)= \begin{cases} 1, & |t|\le k, \\ 0, & |t|\ge k+1, \end{cases}  
\quad  |\chi'_k(t)|\le 2, \; t\in\bbR. 
\end{equation} 
We fix any $h\in\dom(A_-)$ and denote $f_k(t)=\chi_k(t)h$, $t\in\bbR$. Then
$f_k\in\dom(\bsD_{\bsA_-}^{})$ with $f'_k(t)=\chi'_k(t)h$ and $(\bsA_-f_k)(t)=\chi_k(t)A_-h$. In addition,  
\begin{equation}\label{limfh}
\frac{\|f'_k\|_{L^2(\bbR;\cH)}}{\|f_k\|_{L^2(\bbR;\cH)}}
=\frac{\|\chi'_k\|_{L^2(\bbR; dt)}}{\|\chi_k\|_{L^2(\bbR; dt)}} 
\underset{k\to\infty}{\longrightarrow} 0.
\end{equation}
Applying \eqref{bdbel} with $f$ replaced by $f_k$ yields:
\begin{equation}
c\|h\|_\cH^2\|\chi_k\|_{L^2(\bbR; dt)}^2 
\le \|(A_--\mu I)h\|_\cH^2\|\chi_k\|_{L^2(\bbR; dt)}^2 
+ \|h\|_\cH^2\|\chi'_k\|_{L^2(\bbR; dt)}^2.
\end{equation}
Using \eqref{limfh}, we arrive at the inequality 
$c\|h\|_\cH^2\le\|(A_- - \mu I)h\|_\cH^2$, thus proving
$\mu\in\rho(A_-)$. 
\end{proof}

\begin{remark} \lb{r3.3}
$(i)$ Lemma \ref{l2.3}\,$(iv)$ shows that if $A_-$ (and hence 
$\bsA_-$) has a spectral gap at $0$, then $(\bsD_{\bsA_-}^{})^{-1} \in 
\cB(L^2(\bbR;\cH))$ and thus $\bsD_{\bsA_-}^{}$ is a Fredholm operator of 
index zero. \\
$(ii)$ One notes the peculiar fact that if $\sigma(A_-)=\bbR$, then 
$\bsD_{\bsA_-}^{}$ has empty resolvent set, or equivalently, 
$\sigma(\bsD_{\bsA_-}^{})=\bbC$.
\end{remark}

For an alternative proof of Lemma \ref{l2.3}\,$(iii)$ using the notion 
of $N$-measurability we refer to Lemma \ref{lA.10}. 

Throughout the remaining part of this section, we continue to assume 
Hypothesis \ref{h2.1}. 

We recall that $\bsA$ denotes the maximally defined operator in 
$L^2(\bbR;\cH)$ associated with the family of operators 
$A(t)$, $t\in\bbR$, in $\cH$, defined by 
\begin{align}\label{dombbA}
& (\bsA f)(t)=A(t)f(t) \, \text{ for a.e.\ $t\in\bbR$,}     \no \\
& f \in \dom(\bsA)=\bigg\{g \in L^2(\bbR;\cH) \, \bigg| \,
g(t)\in\dom(A(t)) \text{ for a.e.\ } t\in\bbR,    \no \\
& \quad t \mapsto A(t)g(t) \text{ is (weakly) measurable,} \, 
  \int_{\bbR} \|A(t)f(t)\|_{\cH}^2\, dt < \infty \bigg\}.
\end{align}

Next, we define in $L^2(\bbR;\cH)$ the operator
\begin{equation}
\bsD_\bsA^{} = \f{d}{dt} + \bsA,   \quad
\dom(\bsD_\bsA^{})=\dom(d/dt)\cap\dom(\bsA), \label{3.DA1}
\end{equation}
as the operator sum of $d/dt$ and $\bsA$.

Assuming Hypothesis \ref{h2.1}, we next prove that $\bsD_\bsA^{}$ is a 
densely defined and closed operator in $L^2(\bbR; \cH)$, and that the 
domain of $\bsD_\bsA^{}$ actually coincides with that of $\bsD_{\bsA_-}^{}$ in 
\eqref{3.DA-1}.

\begin{lemma} \lb{l2.4}
Assume Hypothesis \ref{h2.1}.
Then $\bsD_\bsA^{}$ as defined in \eqref{3.DA1} is a densely defined and closed 
operator in
$L^2(\bbR; \cH)$ and
\beq
\dom(\bsD_\bsA^{}) = \dom(\bsD_\bsA^*) = \dom(\bsD_{\bsA_-}^{}) 
= \dom(d/dt) \cap \dom(\bsA_-).
\enq
Moreover, the adjoint operator $\bsD_\bsA^*$ of 
$\bsD_\bsA^{}$ in $L^2(\bbR; \cH)$ is given by
\begin{align}
\begin{split} 
& \, \bsD_\bsA^*=-\f{d}{dt} + \bsA,   \\ 
& \dom(\bsD_\bsA^*) = \dom(d/dt) \cap \dom(\bsA) = \dom(d/dt) \cap \dom(\bsA_-).
\end{split}
\end{align}
In addition, the graph norm $\|\cdot\|_{\cH_1(\bsD_\bsA^{})}$ on 
$\dom(\bsD_\bsA^{})$ is equivalent to the norm on
$W^{1,2}(\bbR;\cH) \cap L^2(\bbR;\cH_1(A_-))$ defined as the maximum 
of the norms in $W^{1,2}(\bbR;\cH)$ and $L^2(\bbR;\cH_1(A_-))$. 
\end{lemma}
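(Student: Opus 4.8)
The plan is to realize $\bsD_\bsA$ as a perturbation, with relative bound zero, of the constant-coefficient operator $\bsD_{\bsA_-}$ from Lemma \ref{l2.3}, and then to feed this into standard perturbation theory for closed operators together with what is already known about $\bsD_{\bsA_-}$. Throughout, $\bsB$ denotes the fiber operator associated with $\{B(t)\}_{t\in\bbR}$ (which is densely defined by Remark \ref{rem.condv}), and $\bsA = \bsA_- + \bsB$ once the domains are matched.

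First I would pass from the fiberwise estimates of Section \ref{s3} to $L^2(\bbR;\cH)$. For $f\in\dom(\bsA_-)$ one has $f(t)\in\dom(A_-)=\dom(|A_-|)$ for a.e.\ $t$, and writing $B(t)f(t)=[B(t)(|A_-|+rI)^{-1}](|A_-|+rI)f(t)$, integrating in $t$, and using $\||A_-|h\|_{\cH}=\|A_-h\|_{\cH}$, gives
\[
\|\bsB f\|_{L^2(\bbR;\cH)}^2 \le 2\,\eta(r)^2\big(\|\bsA_- f\|_{L^2(\bbR;\cH)}^2 + r^2\|f\|_{L^2(\bbR;\cH)}^2\big), \qquad \eta(r):=\sup_{t\in\bbR}\big\|B(t)(|A_-|+rI)^{-1}\big\|_{\cB(\cH)},
\]
where $\eta(r)\to 0$ as $r\to\infty$ by Lemma \ref{lHYP} (taking $z=-r$). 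In particular $\dom(\bsA_-)\subseteq\dom(\bsA)$; conversely, integrating the uniform-in-$t$ graph-norm equivalence \eqref{UNn1} of Lemma \ref{UNIFnorms} over $t\in\bbR$ gives $\|\bsA_- g\|_{L^2(\bbR;\cH)}^2\le c_2^2\big(\|\bsA g\|_{L^2(\bbR;\cH)}^2+\|g\|_{L^2(\bbR;\cH)}^2\big)$ for $g\in\dom(\bsA)$, so $\dom(\bsA)\subseteq\dom(\bsA_-)$ (the required weak measurability of $t\mapsto A_-g(t)$ follows from $A_-g(t)=A(t)g(t)-B(t)g(t)$). Hence $\dom(\bsA)=\dom(\bsA_-)$ with equivalent graph norms, $\bsA$ is self-adjoint by $N$-measurability of $\{A(t)\}_{t\in\bbR}$ (Remark \ref{rem.condv}), and therefore $\dom(\bsD_\bsA)=\dom(d/dt)\cap\dom(\bsA)=\dom(d/dt)\cap\dom(\bsA_-)=\dom(\bsD_{\bsA_-})$, with $\bsD_\bsA=\bsD_{\bsA_-}+\bsB$ on this common domain; this domain is dense in $L^2(\bbR;\cH)$ by Lemma \ref{l2.3}.

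Next I would verify that $\bsB$ is $\bsD_{\bsA_-}$-bounded with $\bsD_{\bsA_-}$-bound zero. Since $\bsD_{\bsA_-}$ is normal (Lemma \ref{l2.3}(iii)), $\|\bsA_- f\|_{L^2(\bbR;\cH)}\le\|\bsD_{\bsA_-}f\|_{L^2(\bbR;\cH)}$ for $f\in\dom(\bsD_{\bsA_-})$ — this is contained in \eqref{gen1} applied with $C=\bsD_{\bsA_-}$ — so the displayed estimate yields $\|\bsB f\|\le\sqrt2\,\eta(r)\,\|\bsD_{\bsA_-}f\| + \sqrt2\,r\,\eta(r)\,\|f\|$, and letting $r\to\infty$ shows the relative bound is zero. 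The Kato--Rellich-type stability theorem for closed operators (e.g.\ \cite[Theorem IV.1.1]{Ka80}) then gives at once that $\bsD_\bsA=\bsD_{\bsA_-}+\bsB$ is closed on $\dom(\bsD_{\bsA_-})$. Finally, the elementary two-sided bounds $\|\bsD_\bsA f\|\le(1+a)\|\bsD_{\bsA_-}f\|+b\|f\|$ and $\|\bsD_{\bsA_-}f\|\le(1-a)^{-1}\big(\|\bsD_\bsA f\|+b\|f\|\big)$ (with $a<1$, $b>0$ chosen via the relative bound) show that $\|\cdot\|_{\cH_1(\bsD_\bsA)}$ and $\|\cdot\|_{\cH_1(\bsD_{\bsA_-})}$ are equivalent; combined with Lemma \ref{l2.3}(i) this is exactly the asserted equivalence with the $W^{1,2}(\bbR;\cH)\cap L^2(\bbR;\cH_1(A_-))$-norm.

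The remaining point — identifying $\bsD_\bsA^*$ — is the one needing the most care. Since $(d/dt)^*=-d/dt$ and $\bsB$ is symmetric on $\dom(\bsA_-)$ (as the difference of the self-adjoint operators $\bsA$ and $\bsA_-$), an integration by parts with no boundary contribution on $\bbR$ shows $(-d/dt+\bsA)=\bsD_{\bsA_-}^*+\bsB\subseteq\bsD_\bsA^*$. For the reverse inclusion I would use that $\bsD_{\bsA_-}^*=-d/dt+\bsA_-$ is again closed and normal with $\dom(\bsD_{\bsA_-}^*)=\dom(\bsD_{\bsA_-})$ and $\|\bsD_{\bsA_-}^*f\|=\|\bsD_{\bsA_-}f\|$, so that the same computation shows $\bsB$ is $\bsD_{\bsA_-}^*$-bounded with relative bound zero and $-d/dt+\bsA=\bsD_{\bsA_-}^*+\bsB$ is closed as well. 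The decisive input is then the perturbation-of-the-adjoint fact that, for a symmetric $\bsB$ with $\dom(\bsB)\supseteq\dom(\bsD_{\bsA_-})=\dom(\bsD_{\bsA_-}^*)$ and relative bound less than one with respect to both $\bsD_{\bsA_-}$ and $\bsD_{\bsA_-}^*$, one has $(\bsD_{\bsA_-}+\bsB)^*=\bsD_{\bsA_-}^*+\bsB$ (cf.\ the adjoint-of-a-sum results in \cite[Ch.\ IV]{Ka80}); this yields $\bsD_\bsA^*=-d/dt+\bsA$ with $\dom(\bsD_\bsA^*)=\dom(d/dt)\cap\dom(\bsA_-)=\dom(\bsD_\bsA)$. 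I expect this last step to be the main obstacle: the two ``obvious'' inclusions above are perfectly symmetric in $\bsD_\bsA$ and $-d/dt+\bsA$ and do not by themselves force the domains of the adjoints to coincide, so one genuinely needs the relative-boundedness estimate — available here only because $\bsD_{\bsA_-}$ is normal, so that $\bsA_-$ is comparably dominated by both $\bsD_{\bsA_-}$ and $\bsD_{\bsA_-}^*$.
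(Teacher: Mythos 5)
Your proposal is correct and follows essentially the same route as the paper: relative boundedness of $\bsB$ with relative bound zero with respect to the normal operator $\bsD_{\bsA_-}$ (the paper obtains this via the Fourier transform and a polar-decomposition step, you via the identity $\|\bsD_{\bsA_-}f\|^2=\|f'\|^2+\|\bsA_-f\|^2$ from Lemma \ref{l2.3} -- a cosmetic difference), closedness by the Kato stability theorem, and identification of $\bsD_\bsA^*$ by precisely the perturbation-of-adjoints fact you single out, which is the Hess--Kato theorem \cite{HK70} (see also \cite[p.\ 111]{We80}) rather than a result of \cite[Ch.\ IV]{Ka80}. Your explicit verification that $\dom(\bsA)=\dom(\bsA_-)$ via Lemma \ref{UNIFnorms} (which the paper leaves implicit) is fine, and the incidental appeal to self-adjointness of $\bsA$ via Remark \ref{rem.condv} is unnecessary for the argument and harmless.
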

\begin{proof}
Since $\bsD_{\bsA_-}^{}$ may have an  empty resolvent set, we will focus on 
the self-adjoint operator $|\bsD_{\bsA_-}^{}|$ at first. Consider the unitary 
vector-valued Fourier transform 
\begin{equation}\label{ft}
\gF_{\cH}:L^2(\bbR; \cH)\to L^2(\bbR; 
\cH),
\end{equation} first defined by 
\begin{equation}\label{ft1}
F\mapsto \widehat F, \quad 
 \widehat F(\lambda)= (2 \pi)^{-1/2} 
\int_{\bbR} e^{-i\lambda s} F(s)\, ds, \quad  
\lambda \in \bbR,  
\end{equation}
for all $F\in \cS(\bbR; \cH)$, the $\cH$-valued Schwartz class, and then 
extended to a unitary operator in $L^2(\bbR; \cH)$ by taking the
 closure (see, e.g., \cite[Lemma 2]{GN83}, \cite[p.\ 16]{LM72}). 

Via the Fourier 
transform $\gF_{\cH}$, the operator
$|\bsD_{\bsA_-}^{}|$ is unitarily equivalent to the operator 
$|it \bsI + \bsA_-|$ in the space $L^2(\bbR;\cH)$ with domain 
\beq
\dom(|it \, \bsI + \bsA_-|) = \dom(it \, \bsI + \bsA_-) = \dom(it \, 
\bsI) \cap \dom(\bsA_-).   \lb{4.34a}
\enq
Using \eqref{4.34a}, Remark \ref{TnormT},  and the spectral theorem for 
$A_-$, one obtains
\begin{align}
& \big\|(|\bsA_-| - z \, \bsI)(|\bsD_{\bsA_-}^{}| - z \, 
\bsI)^{-1}\big\|_{\cB(L^2(\bbR;\cH))}   \no  \\
& \quad =\sup_{t\in\bbR} \big\|(|A_-| - z I))(|-it I + A_-|-z 
I)^{-1}\big\|_{\cB(\cH)} \no  \\
& \quad =\sup_{t\in\bbR}\sup_{\lambda\in\sigma(A_-)}
\bigg|\frac{|\lambda|-z}{(t^2+\lambda^2)^{1/2}-z}\bigg| = 1, \quad z<0.
\end{align}
This in turn implies (still assuming $z<0$),
\begin{align}
& \big\|\bsB (|\bsD_{\bsA_-}^{}| - z \, 
\bsI)^{-1}\big\|_{\cB(L^2(\bbR;\cH))}   \no  \\
& \quad = \big\|\bsB (|\bsA_-| - z \, \bsI)^{-1}
(|\bsA_-| - z \, \bsI)(|\bsD_{\bsA_-}^{}| - z \, 
\bsI)^{-1}\big\|_{\cB(L^2(\bbR;\cH))}   \no  \\
& \quad \leq  \big\|\bsB (|\bsA_-| - z \, \bsI)^{-1}\big\|_{\cB(L^2(\bbR;\cH))}
  \big\|(|\bsA_-| - z \, \bsI))(|\bsD_{\bsA_-}^{}| - z \, 
\bsI)^{-1}\big\|_{\cB(L^2(\bbR;\cH))}     \no  \\
& \quad = \big\|\bsB (|\bsA_-| - z \, 
\bsI)^{-1}\big\|_{\cB(L^2(\bbR;\cH))}       \no  \\
& \quad = \sup_{t\in\bbR} \big\|B(t) (|A_-| - z 
I)^{-1}\big\|_{\cB(\cH)} \underset{z\downarrow -\infty}{=} o(1),
\lb{3.13}
\end{align}
by \eqref{2.13g}.  Put differently, \eqref{3.13} implies the 
existence of $\varepsilon (z)>0$ with
$\varepsilon (z) \underset{z\downarrow -\infty}{=} o(1)$ and $\eta(z)>0$, such that 
the Kato--Rellich-type bound 
\begin{align}
\begin{split}
\|\bsB f\|_{L^2(\bbR;\cH)} \leq \varepsilon(z) \| |\bsD_{\bsA_-}^{}| 
f\|_{L^2(\bbR;\cH)} + \eta(z) \| f\|_{L^2(\bbR;\cH)},&
\\
f\in \dom(|\bsD_{\bsA_-}^{}|) = \dom(\bsD_{\bsA_-}^{}),&    \lb{3.14}
\end{split}
\end{align}
holds. 
Next, one recalls that the polar decomposition of a densely defined, 
closed, linear operator $T$ in a complex Hilbert space $\cK$ is of 
the form  $T = U_T |T|$, with $U_T$ (and hence $U_T^*$) a partial 
isometry in $\cK$, implying $|T|=U_T^* T$. Applying the latter fact 
to $\bsD_{\bsA_-}^{}$ in \eqref{3.14}, one finally obtains 
\begin{align}
\begin{split}
\|\bsB f\|_{L^2(\bbR;\cH)} \leq \varepsilon(z) \| \bsD_{\bsA_-}^{} 
f\|_{L^2(\bbR;\cH)} + \eta(z) \| f\|_{L^2(\bbR;\cH)}&,
\\
f\in \dom(\bsD_{\bsA_-}^{})&.    \lb{3.15}
\end{split}
\end{align}
Thus, $\bsB$ is relatively bounded with respect to $\bsD_{\bsA_-}^{}$ 
in $L^2(\bbR;\cH)$ with relative bound zero (cf.\ \cite[Sect.\ 4.1.1]{Ka80}). 
Since $\bsD_{\bsA_-}^{}$ is a closed operator in $L^2(\bbR;\cH)$ 
by Lemma \ref{l2.3} ($i$), also $\bsD_\bsA^{} = \bsD_{\bsA_-}^{} + \bsB$ defined on 
$\dom(\bsD_{\bsA_-}^{})$ is closed in $L^2(\bbR;\cH)$.

To prove that $\dom(\bsD_\bsA^*)=\dom(\bsD_\bsA^{})$ one can argue as follows: 
Since $\bsB$ is symmetric on $\dom (\bsD_{\bsA_-}^{})$ and the operator
$\bsD_{\bsA_-}^{}$ is normal, and hence
$\dom({\bsD_{\bsA}^*}_-)=\dom({\bsD_{\bsA}}_-)$, one obtains that 
\begin{equation}
\|\bsB^* f\|_{L^2(\bbR;\cH)}= \|\bsB f\|_{L^2(\bbR;\cH)}, \quad f\in \dom(\bsD_{\bsA_-}^{}), 
\end{equation}
and that 
\begin{equation}
\|\bsD_{\bsA_-}^{} f\|_{L^2(\bbR;\cH)}=\|\bsD_{\bsA_-}^*f\|_{L^2(\bbR;\cH)}, \quad 
f\in \dom(\bsD_{\bsA_-}^{}). 
\end{equation}
Therefore, \eqref{3.15} can be rewritten as 
\begin{align}
\begin{split}
\|\bsB^* f\|_{L^2(\bbR;\cH)} \leq \varepsilon(z) \|\bsD_{\bsA_-}^* 
f\|_{L^2(\bbR;\cH)} + \eta(z) \| f\|_{L^2(\bbR;\cH)}&,
\\
f\in \dom(\bsD_{\bsA_-}^*)& ,   \lb{3.152}
\end{split}
\end{align}
implying that also $\bsB^*$ is relatively bounded with respect to $\bsD_{\bsA_-}^*$ 
with relative bound zero. By the Hess--Kato result \cite{HK70} (see also 
\cite[p.\ 111]{We80}),  
\begin{equation}
\dom(\bsD_\bsA^*) = \dom(\bsD_{\bsA_-}^*) \cap \dom(\bsB^*) = \dom(\bsD_{\bsA_-}^*) 
= \dom(\bsD_{\bsA_-}^{}) = \dom(\bsD_{\bsA}^{})
\end{equation}
and  
\begin{align}
\begin{split}
& \, \bsD_\bsA^* = \bsD_{\bsA_-}^*+ \bsB^* = \bsD_{\bsA_-}^*+\bsB  
= \bigg(- \f{d}{dt} + \bsA_-\bigg) + \bsB = - \f{d}{dt} + \bsA,  \\
& \dom(\bsD_\bsA^*) = \dom(\bsD_{\bsA}^{}) = \dom(\bsD_{\bsA_-}^{}). 
\end{split}
\end{align} 
Here we used again that ${\bsB^*}f={\bsB}f$ for all $f\in\dom(\bsD_{\bsA_-}^{})$.

The statements about graph norms have been proved in Lemma \ref{l2.3}.
\end{proof}

Next, we will discuss some operators needed in the proof of 
Proposition \ref{prTr}. We start with the operator $\bsH_0$ in 
$L^2(\bbR;\cH)$ defined by
\begin{equation}
\bsH_0 =\bsD_{\bsA_-}^* \bsD_{\bsA_-}^{} = \bsD_{\bsA_-}^{} \bsD_{\bsA_-}^*     
\lb{3.19}
\end{equation}
(cf.\ Lemma \ref{l2.3}\,$(iii)$). 
In particular, $\bsH_0$ is self-adjoint since $\bsD_{\bsA_-}^{}$ is closed, and
$\bsH_0 \geq 0$. In addition, one obtains that
\beq\lb{H0}
\dom\big(\bsH_0^{1/2}\big) = \dom(\bsD_{\bsA_-}^{}) = \dom(\bsD_{\bsA_-}^*) 
= \dom(d/dt) \cap\dom(\bsA_-) .
\enq

We will use the following representation for the resolvent of $\bsH_0$,
\begin{align}\lb{R0}
\bsR_0(z)=(\bsH_0 - z \, \bsI)^{-1} 
=\frac12 \big(\bsA_-^2 - z \, \bsI \big)^{-1/2}\widehat{\bsK}_0 (z),
\quad z\in\bbC\backslash [0,\infty), 
\end{align}
where $\widehat{\bsK}_0 (z)$ denotes the operator of convolution with
$e^{- (\bsA_-^2 - z \, \bsI )^{1/2} |t|}$ on $L^2(\bbR;\cH)$, that is, $\bsR_0(z)$ is an integral operator with the operator-valued integral kernel
\beq
R_0 (z,s,t) = \f{1}{2} \varkappa_z(A_-)^{-1} e^{-\varkappa_z(A_-) |t-s|} \in \cB(\cH), 
\quad s, t \in \bbR.    \lb{4.R0}
\enq
Here we used the notation $\varkappa_z(A_-)=(A_-^2-z I)^{1/2}$ in \eqref{dfnvk}. In the scalar-valued context formula \eqref{4.R0}  can be found, for instance, in 
\cite[Theorem\ 9.5.2]{Sc81}.

For subsequent purpose we also recall the integral kernel $\bsR_0^{1/2}(z,s,t)$ 
of $\bsR_0(z)^{1/2}$,
\beq
R_0^{1/2}(z,s,t) = \pi^{-1}K_0(\varkappa_z(A_-)|t-s|), \quad s, t\in\bbR, \;
s\neq t,    \lb{4.R01/2}
\enq
where $K_0(\cdot)$ denotes the modified (irregular) Bessel function of order zero
(cf.\ \cite[Sect.\ 9.6]{AS72}.) Formulas such as \eqref{4.R0} and \eqref{4.R01/2} follow 
from elementary Fourier transform arguments as detailed in \cite[p.\ 57--59]{RS75}.   
Relation \eqref{4.R01/2} requires in addition the integral representation 
\cite[No.\ 3.7542]{GR80} for $K_0(\cdot)$.

Next, we study some properties of $\bsB'$. For this purpose the following 
known result will turn out to be useful:

\begin{lemma} \lb{lHS}
Suppose $T(s,t) \in \cB_2(\cH)$ for a.e.\ $(s,t)\in\bbR^2$ and assume that 
\begin{equation}
\int_{\bbR^2} \|T(s,t)\|^2_{\cB_2(\cH)} \, ds\, dt < \infty.   \lb{3.30}
\end{equation}
Define the operator $\bsT$ in $L^2(\bbR; \cH)$ by
\begin{equation} 
(\bsT f)(s) = \int_{\bbR} T(s,t) f(t) \, dt \, \text{ for a.e.\ $s\in\bbR$, } 
\quad f \in L^2(\bbR; \cH).    \lb{3.31}
\end{equation}
Then $\bsT \in \cB_2(L^2(\bbR; \cH))$ and 
\begin{equation}
\|\bsT\|_{\cB_2(L^2(\bbR; \cH))}^2 = \int_{\bbR^2} \|T(s,t)\|^2_{\cB_2(\cH)} \, ds\, dt. 
\end{equation}
Conversely, any operator $\bsT \in \cB_2(L^2(\bbR; \cH))$ arises in the manner 
\eqref{3.30}, \eqref{3.31}. 
\end{lemma}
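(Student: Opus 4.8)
The plan is to prove the Hilbert--Schmidt characterization of integral operators acting in $L^2(\bbR;\cH)$ by reducing it to the classical scalar-valued case via a tensor product decomposition. First I would fix an orthonormal basis $\{e_j\}_{j\in\bbN}$ of $\cH$ together with an orthonormal basis $\{\phi_k\}_{k\in\bbN}$ of $L^2(\bbR;dt)$, so that $\{\phi_k \otimes e_j\}_{j,k}$ is an orthonormal basis of $L^2(\bbR;\cH) \cong L^2(\bbR;dt) \otimes \cH$. Given $\bsT$ defined by \eqref{3.31}, I would compute the matrix elements $(\phi_k \otimes e_j, \bsT (\phi_\ell \otimes e_m))_{L^2(\bbR;\cH)}$ by Fubini and identify them with the Fourier-type coefficients of the scalar kernels $t \mapsto (e_j, T(s,t) e_m)_{\cH}$. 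Summing $|(\phi_k \otimes e_j, \bsT (\phi_\ell \otimes e_m))|^2$ over all four indices and invoking Parseval twice (once in $L^2(\bbR;dt)$ for the $s$- and $t$-variables, once in $\cH$ for the $j,m$ indices) yields exactly $\int_{\bbR^2} \|T(s,t)\|_{\cB_2(\cH)}^2 \, ds \, dt$, which by hypothesis \eqref{3.30} is finite; hence $\bsT$ is Hilbert--Schmidt with the asserted norm identity.

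The second, converse, direction goes the other way: given an arbitrary $\bsT \in \cB_2(L^2(\bbR;\cH))$, I would use the fact that $\bsT$ is a norm limit of finite-rank operators, each of which is visibly an integral operator with a kernel of the stated form (finite sums $\sum (\psi_i \otimes e_{j_i}) \otimes \overline{(\chi_i \otimes e_{m_i})}$ correspond to kernels $T(s,t) = \sum \psi_i(s)\overline{\chi_i(t)}\, (e_{m_i}, \cdot)_{\cH} e_{j_i}$). The Hilbert--Schmidt norm identity just established shows that the map ``kernel $\mapsto$ operator'' is an isometry from $L^2(\bbR^2; \cB_2(\cH))$ onto its range inside $\cB_2(L^2(\bbR;\cH))$; since the range contains all finite-rank operators, which are dense, it is all of $\cB_2(L^2(\bbR;\cH))$, and every such operator arises from a unique (a.e.) kernel as in \eqref{3.31}.

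The only genuinely delicate point is the measurability bookkeeping needed to apply Fubini: one must check that $(s,t) \mapsto (e_j, T(s,t)e_m)_{\cH}$ is jointly measurable on $\bbR^2$, so that the iterated integrals defining the matrix elements make sense and can be interchanged. This is where the hypothesis that $T(\cdot,\cdot)$ takes values in $\cB_2(\cH)$ with \eqref{3.30} holding is used essentially: it forces $(s,t) \mapsto T(s,t)$ to be (Bochner, hence weakly) measurable as a $\cB_2(\cH)$-valued function, and since $\cB_2(\cH)$ is a separable Hilbert space, weak measurability coincides with strong measurability by Pettis' theorem, giving joint measurability of all scalar matrix entries. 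Once this is in hand, the double application of Parseval is routine and the argument closes.

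Since this is a standard result I would simply cite it rather than belabor the details; in the text it is attributed to well-known sources, and indeed the scalar-valued version ($\cH = \bbC$) is classical (see, e.g., \cite[Theorem\ VI.23]{RS80} or the corresponding discussion in \cite{Si05}), with the operator-valued generalization following by the tensoring argument sketched above. The main obstacle, to reiterate, is purely the measurability reduction; the analytic content is entirely contained in the two Parseval identities.
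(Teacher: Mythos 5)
Your argument is correct, but note that the paper does not actually prove Lemma \ref{lHS}: it simply refers to \cite[Theorem 11.3.6]{BS87} for (an extension of) this statement, so your Parseval/tensor-product proof supplies details the paper outsources. The route you take --- identifying $L^2(\bbR;\cH)$ with $L^2(\bbR;dt)\otimes\cH$, computing matrix elements in a product orthonormal basis and applying Parseval twice, then obtaining the converse from the isometry of the kernel-to-operator map together with density of finite-rank operators --- is the standard argument and is sound; it is essentially what lies behind the Birman--Solomyak citation, whereas the sources you mention (e.g., \cite[Theorem VI.23]{RS80}, \cite{Si05}) cover only the scalar case $\cH=\bbC$, so your tensoring step is exactly what upgrades such a citation to the operator-valued setting. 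One small caution: the finiteness of \eqref{3.30} does not by itself ``force'' measurability of $(s,t)\mapsto T(s,t)$ as a $\cB_2(\cH)$-valued map --- for the integral in \eqref{3.30} even to be defined one must read measurability of the kernel (equivalently, $T\in L^2(\bbR^2;\cB_2(\cH))$, in the spirit of Lemma \ref{l3.1}) as part of the hypothesis; once that implicit assumption is made explicit, Pettis' theorem and separability of $\cB_2(\cH)$ give the joint measurability of the scalar matrix entries, the well-definedness of $(\bsT f)(s)$ for a.e.\ $s$ follows from Fubini and Cauchy--Schwarz, and the rest of your argument goes through.
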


For the proof of an extension of Lemma \ref{lHS} we refer to 
\cite[Theorem 11.3.6]{BS87}. 

At this point it is worth noting that by Theorem \ref{tA.6}, $\bsB$ and $\bsB'$ are 
densely defined, symmetric, and closed operators in $L^2(\bbR;\cH)$ (cf.\ 
Lemma \ref{l2.9}).

\begin{lemma} \lb{12bprime}
Assume Hypothesis \ref{h2.1}. Then
\begin{align}
\begin{split} 
& | \bsB'|^{1/2} \, (\bsH_0 - z \, \bsI)^{-1/2} \in\cB_2(L^2(\bbR;\cH)),   \\ 
& | (\bsB')^*|^{1/2} \, (\bsH_0 - z \, \bsI)^{-1/2} \in\cB_2(L^2(\bbR;\cH)), 
\quad  z\in \bbC\backslash [0,\infty).    \lb{3.32}
\end{split}
\end{align}
Moreover,  
\begin{align} 
& \big\||\bsB'|^{1/2} \, (\bsH_0 - z \, \bsI)^{-1/2}\big\|_{\cB_2(L^2(\bbR;\cH))}^2  
 \le |z|^{-1/2}\int_\bbR \big\|B'(t) (A_-^2 + I)^{-1/2}\big\|_{\cB_1(\cH)}\,dt,  \no \\
 & \big\||(\bsB')^*|^{1/2} \, (\bsH_0 - z \, \bsI)^{-1/2}\big\|_{\cB_2(L^2(\bbR;\cH))}^2  
 \le |z|^{-1/2}\int_\bbR \big\|B'(t) (A_-^2 + I)^{-1/2}\big\|_{\cB_1(\cH)}\,dt,  \no \\
& \hspace*{9.6cm} z < -1.  \lb{zdecay} 
\end{align}
\end{lemma}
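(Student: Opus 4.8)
The plan is to realize $|\bsB'|^{1/2}(\bsH_0-z\,\bsI)^{-1/2}$ as an integral operator with an explicit $\cB(\cH)$-valued kernel and then to estimate its Hilbert--Schmidt norm by Lemma~\ref{lHS}. First I would invoke the $N$-measurability of $\{B'(t)\}_{t\in\bbR}$ (Lemma~\ref{l2.9} via Theorem~\ref{tA.6}), so that $\bsB'=\int_{\bbR}^{\oplus}B'(t)\,dt$ and hence $|\bsB'|^{1/2}$ acts fiberwise as the (generally unbounded) operators $|B'(t)|^{1/2}$. Since $B'(s)\varkappa(A_-)^{-1}\in\cB(\cH)$ for every $s$ (a consequence of Hypothesis~\ref{h2.1}), the Heinz inequality gives $|B'(s)|^{1/2}\varkappa(A_-)^{-1/2}\in\cB(\cH)$, hence $\dom\big(|B'(s)|^{1/2}\big)\supseteq\dom\big(|A_-|^{1/2}\big)$. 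Because $\lambda\mapsto|\lambda|^{1/2}K_0(\varkappa_z(\lambda)|s-t|)$ is a bounded function, the range of $K_0(\varkappa_z(A_-)|s-t|)$ lies in $\dom(|A_-|^{1/2})$, and combining this with the explicit kernel \eqref{4.R01/2} of $(\bsH_0-z\,\bsI)^{-1/2}=\bsR_0(z)^{1/2}$ identifies $|\bsB'|^{1/2}(\bsH_0-z\,\bsI)^{-1/2}$ as the integral operator on $L^2(\bbR;\cH)$ with $\cB(\cH)$-valued kernel $T_z(s,t)=\pi^{-1}|B'(s)|^{1/2}K_0\big(\varkappa_z(A_-)|s-t|\big)$, $s\ne t$.

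Next, by Lemma~\ref{lHS} it suffices to estimate $\int_{\bbR^2}\|T_z(s,t)\|_{\cB_2(\cH)}^2\,ds\,dt$. I would treat $z<-1$ first, where $\varkappa_z(A_-)=(A_-^2-zI)^{1/2}$ is self-adjoint and positive, so that $\|T_z(s,t)\|_{\cB_2(\cH)}^2=\pi^{-2}\big\|\,|B'(s)|^{1/2}K_0(\varkappa_z(A_-)|s-t|)\big\|_{\cB_2(\cH)}^2$. Integrating in $t$ (with $r=|s-t|$, interchanging $\tr_\cH$ and $\int_0^\infty dr$ by Tonelli since the integrand is a nonnegative operator, and using the functional-calculus identity $2\int_0^\infty K_0(\varkappa_z(A_-)r)^2\,dr=c_0\,\varkappa_z(A_-)^{-1}$ with $c_0=2\int_0^\infty K_0(u)^2\,du<\infty$) reduces $\int_{\bbR}\|T_z(s,t)\|_{\cB_2(\cH)}^2\,dt$ to $\pi^{-2}c_0$ times the trace of the nonnegative trace-class operator $\varkappa_z(A_-)^{-1/2}|B'(s)|\varkappa_z(A_-)^{-1/2}$. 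The interpolation Theorem~\ref{hadamard1}, applied with $T=\varkappa_z(A_-)$ and $S=|B'(s)|$ (noting $|B'(s)|\varkappa_z(A_-)^{-1}\in\cB_1(\cH)$), bounds this by $\big\|\,|B'(s)|\varkappa_z(A_-)^{-1}\big\|_{\cB_1(\cH)}$; the polar decomposition of $B'(s)$ identifies the latter with $\big\|B'(s)\varkappa_z(A_-)^{-1}\big\|_{\cB_1(\cH)}$, which is dominated by $\big\|B'(s)(A_-^2+I)^{-1/2}\big\|_{\cB_1(\cH)}$ times a constant read off from the elementary spectral bound on $(\lambda^2+1)^{1/2}(\lambda^2-z)^{-1/2}$; it is precisely at this last step that the $z$-dependence of \eqref{zdecay} is extracted. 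Integrating over $s$ and using \eqref{2.13i} then yields both the membership and the bound \eqref{zdecay}.

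The adjoint statement is obtained by the identical argument with $B'(s)$ replaced by $B'(s)^*$ throughout, using $(\bsB')^*=\int_{\bbR}^{\oplus}B'(t)^*\,dt$ (cf.\ \eqref{2.29}) and the fact from Remark~\ref{r2.6} that $B'(s)^*\varkappa(A_-)^{-1}=B'(s)\varkappa(A_-)^{-1}$, so the same majorant $\|B'(s)(A_-^2+I)^{-1/2}\|_{\cB_1(\cH)}$ appears on the right-hand side. The general case $z\in\bbC\backslash[0,\infty)$ in \eqref{3.32} then follows by writing $|\bsB'|^{1/2}(\bsH_0-z\,\bsI)^{-1/2}=\big[|\bsB'|^{1/2}(\bsH_0+\bsI)^{-1/2}\big]\big[(\bsH_0+\bsI)^{1/2}(\bsH_0-z\,\bsI)^{-1/2}\big]$, where the first factor is Hilbert--Schmidt by the case already treated and the second is bounded since $x\mapsto(x+1)^{1/2}(x-z)^{-1/2}$ is bounded on $[0,\infty)$. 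I expect the main obstacle to be the rigorous domain- and measurability-bookkeeping for the unbounded $B'(t)$: justifying that $T_z$ above really is the integral kernel of the operator in question (so that Lemma~\ref{lHS} applies), that $|B'(s)|^{1/2}K_0(\varkappa_z(A_-)|s-t|)$ is genuinely Hilbert--Schmidt on $\cH$, that the trace–integral interchange is legitimate, and that the trace-ideal indices are tracked correctly in the application of Theorem~\ref{hadamard1}.
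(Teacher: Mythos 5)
Your argument for the Hilbert--Schmidt membership \eqref{3.32} is sound and is essentially the paper's own: both proofs realize $|\bsB'|^{1/2}\bsR_0(z)^{1/2}$ through the kernel \eqref{4.R01/2}, apply Lemma \ref{lHS}, use Theorem \ref{hadamard1} together with the polar decomposition to reduce to $\|B'(t)\varkappa_z(A_-)^{-1}\|_{\cB_1(\cH)}$, invoke \eqref{2.13i}, and handle general $z\in\bbC\backslash[0,\infty)$ by a bounded-factor argument. The genuine gap is in your derivation of \eqref{zdecay}. For $z<-1$ the function $\lambda\mapsto(\lambda^2+1)^{1/2}(\lambda^2-z)^{-1/2}$ is \emph{increasing} in $\lambda^2$ and tends to $1$ as $|\lambda|\to\infty$; its value is $|z|^{-1/2}$ only at $\lambda=0$. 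Hence $\sup_{\lambda\in\sigma(A_-)}(\lambda^2+1)^{1/2}(\lambda^2-z)^{-1/2}=1$ whenever $A_-$ is unbounded (the case of interest), and no factor $|z|^{-1/2}$ can be ``read off'' at your last step. What your computation actually delivers, after the identity $2\int_0^\infty K_0(\varkappa_z(A_-)r)^2\,dr=\tfrac{\pi^2}{2}\varkappa_z(A_-)^{-1}$ and Theorem \ref{hadamard1}, is the bound $\tfrac12\int_\bbR\|B'(t)(A_-^2-zI)^{-1/2}\|_{\cB_1(\cH)}\,dt\le\tfrac12\int_\bbR\|B'(t)(A_-^2+I)^{-1/2}\|_{\cB_1(\cH)}\,dt$, with no explicit decay in $z$.

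This cannot be repaired by redistributing powers of $\varkappa_z(A_-)$ and $\varkappa(A_-)$: your exact evaluation of the $K_0$-integral shows that in the simplest test case $\cH=\bbC$, $A_-=\lambda_0>0$, $B'(t)=b(t)\ge0$, the left-hand side of \eqref{zdecay} equals exactly $\tfrac12(\lambda_0^2+|z|)^{-1/2}\int_\bbR b(t)\,dt$, which exceeds the right-hand side $|z|^{-1/2}(\lambda_0^2+1)^{-1/2}\int_\bbR b(t)\,dt$ as soon as $|z|(\lambda_0^2+1)>4(\lambda_0^2+|z|)$; so the stated rate $|z|^{-1/2}$ against the fixed weight $(A_-^2+I)^{-1/2}$ is stronger than what this (correct) kernel computation can give. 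The paper reaches \eqref{zdecay} by a different bookkeeping: it keeps $\varkappa_z(A_-)^{-1}$ inside the trace-norm factor and extracts $|z|^{-1/2}$ from $\tfrac12\int_\bbR\|e^{-\varkappa_z(A_-)|s|}\|_{\cB(\cH)}\,ds$, after a step replacing $\int_\bbR\|[\varkappa_z(A_-)^{1/2}R_0^{1/2}(t,s)]^2\|_{\cB(\cH)}\,ds$ by $\int_\bbR\|\varkappa_z(A_-)R_0(t,s)\|_{\cB(\cH)}\,ds$ --- precisely the quantity your identity evaluates to the $z$-independent constant $\tfrac12$ --- so the two arguments genuinely part ways there, and your computation indicates that the explicit $|z|^{-1/2}$ in \eqref{zdecay} should be treated with caution. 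For the purposes the lemma serves later (the infinitesimal form bound in Lemma \ref{l4.relbdd} and the smallness in \eqref{boundLL}), only $\lim_{z\downarrow-\infty}\||\bsB'|^{1/2}(\bsH_0-z\,\bsI)^{-1/2}\|_{\cB_2(L^2(\bbR;\cH))}=0$ is needed, and that does follow from your bound: writing $B'(t)(A_-^2-zI)^{-1/2}=[B'(t)\varkappa(A_-)^{-1}][\varkappa(A_-)\varkappa_z(A_-)^{-1}]$, the second factor tends strongly to zero, so each integrand tends to zero by Lemma \ref{lSTP} and is dominated by $\|B'(t)\varkappa(A_-)^{-1}\|_{\cB_1(\cH)}\in L^1(\bbR;dt)$. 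But as a proof of \eqref{zdecay} as stated, your final step fails.
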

\begin{proof}
Abbreviating $\bsR_0^{1/2}=\bsR_0^{1/2}(z)$,  
$\widehat\varkappa_- =\big(\bsA_-^2 - z \, \bsI\big)^{1/2}$ (cf.\ \eqref{dhvk}), and 
$\varkappa_-= \varkappa_z(A_-)= (A_-^2-z I)^{1/2}$ (cf.\ \eqref{dfnvk}), with $z<0$, one estimates 
\begin{align}
& \big\| |\bsB'|^{1/2} \, \bsR_0^{1/2}\big\|_{\cB_2(L^2(\bbR;\cH))}^2 = 
\big\| |\bsB'|^{1/2} \, \widehat \varkappa_-^{-1/2} \,
\widehat \varkappa_-^{1/2} \, \bsR_0^{1/2}\big\|^2_{\cB_2(L^2(\bbR;\cH))}  \no \\
& \quad = \int_\bbR\int_\bbR \big\| |B'(t)|^{1/2} \varkappa_-^{-1/2} 
\varkappa_-^{1/2} R_0^{1/2}(t,s)\big\|^2_{\cB_2(\cH)}\,ds \,dt   \no \\
& \quad \le\int_\bbR \bigg( \big\||B'(t)|^{1/2} \varkappa_-^{-1/2}\big\|^2_{\cB_2(\cH)}
\int_\bbR \big\|\varkappa_-^{1/2} R_0^{1/2}(t,s)\big\|^2_{\cB(\cH)}\,ds\bigg)\,dt  \no \\ 
& \quad = \int_\bbR \bigg( \big\|\varkappa_-^{-1/2} |B'(t)| 
\varkappa_-^{-1/2}\big\|_{\cB_1(\cH)}
\int_\bbR \big\|\big[\varkappa_-^{1/2} R_0^{1/2}(t,s)\big]^2\big\|_{\cB(\cH)}
\,ds\bigg)\,dt  \no \\
& \quad = \int_\bbR \bigg( \big\|\varkappa_-^{-1/2} |B'(t)| 
\varkappa_-^{-1/2}\big\|_{\cB_1(\cH)}
\int_\bbR \big\|\varkappa_- R_0(t,s)\big\|_{\cB(\cH)}\,ds\bigg)\,dt  \no \\ 
& \quad \leq \int_\bbR \bigg( \big\||B'(t)| \varkappa_-^{-1}\big\|_{\cB_1(\cH)}
\int_\bbR \big\|\varkappa_- R_0(t,s)\big\|_{\cB(\cH)}\,ds\bigg)\,dt  \no \\
& \quad \leq \int_\bbR\bigg(\big\| B'(t) \varkappa_-^{-1}\big\|_{\cB_1(\cH)} \, 
\frac{1}{2} \int_\bbR \big\|e^{-\varkappa_- |s-t|}\big\|_{\cB(\cH)}\,ds \bigg)\,dt    \no \\
& \quad = \int_\bbR \big\| B'(t) \varkappa_-^{-1}\big\|_{\cB_1(\cH)}\, dt \, 
\frac{1}{2} \int_\bbR \big\|e^{-\varkappa_- |s|}\big\|_{\cB(\cH)}\,ds    \no \\ 
& \quad \leq |z|^{-1/2} \int_\bbR \big\|B'(t)\varkappa_z(A_-)^{-1}\big\|_{\cB_1(\cH)}\,dt 
< \infty.    \lb{e4.26}
\end{align}
Here we used Lemma \ref{lHS}, employed the fact that 
$\varkappa_-^{1/2}$ and $R_0^{1/2}(t,s)$ commute and that 
$\varkappa_-^{1/2}R_0^{1/2}(t,s)$ is self-adjoint, applied 
Theorem \ref{hadamard1} to obtain 
\begin{equation}
\big\|\varkappa_-^{-1/2} |B'(t)| \varkappa_-^{-1/2}\big\|_{\cB_1(\cH)} 
\leq \big\||B'(t)| \varkappa_-^{-1}\big\|_{\cB_1(\cH)}, 
\end{equation}
used the polar decomposition $B'(t) = U_{B'(t)} |B'(t)|$ of $B'(t)$, employed the explicit form of $R_0(s,t)$ in terms of the convolution operator $\hatt \bsK_0$ in \eqref{R0}, and finally, used the estimate 
\begin{equation}
 \big\|e^{-\varkappa_z(A_-) |s|}\big\|_{\cB(\cH)}
= \sup_{\lambda \in \sigma(A_-)} \Big[e^{-(\lambda^2+|z|)^{1/2}|s|}\Big]  
\leq e^{-|z|^{1/2} |s|}, \quad s\in\bbR, \; z<0, 
\end{equation}
and hence, 
\begin{equation}
\f{1}{2} \int_{\bbR} \big\|e^{-\varkappa_z(A_-)|s|}\big\|_{\cB(\cH)}\,ds 
\leq |z|^{-1/2}, \quad z<0.  
\end{equation}
Next, one notes that 
\begin{align}
\int_\bbR \big\|B'(t)\varkappa_z(A_-)^{-1}\big\|_{\cB_1(\cH)}  \,dt
& = \int_\bbR \big\|B'(t) \varkappa(A_-)^{-1} 
[\varkappa(A_-) \, \varkappa_z(A_-)^{-1}]\big\|_{\cB_1(\cH)} \,dt     \no \\
& \leq \int_\bbR \big\|B'(t) \varkappa(A_-)^{-1} \big\|_{\cB_1(\cH)} \,dt ,   \quad z<-1, 
\end{align}
since $\big\|\varkappa(A_-) \, \varkappa_z(A_-)^{-1}\big\|_{\cB(\cH)} =1$, 
$z \leq -1$, for $\varkappa(A_-)=(A_-^2+I)^{1/2}$, finishing the proof of the first 
relation in \eqref{zdecay} and, using Lemma \ref{lHS}, the first inclusion in \eqref{3.32} 
(for $z<-1$).

An application of Remark \ref{r2.6} (using \eqref{2.19} repeatedly) then yields the second relation in \eqref{3.32} (for $z<-1$) and \eqref{zdecay}.  

The extension of \eqref{3.32} to $z\in\bbC\backslash [0,\infty)$ then follows from
\begin{equation}
\big\|\bsR_0(\zeta)^{-1/2} \bsR_0(z)^{1/2}\big\|_{\cB(L^2(\bbR; \cH))} \leq C(\zeta,z) < \infty, 
\quad \zeta < 0, \; z \in \bbC\backslash [0,\infty).
\end{equation} 
\end{proof}

\begin{lemma}  \lb{l3.6}
Assume Hypothesis \ref{h2.1}. Then,
\begin{align}
& \big\|(\bsA_-^2-z \, \bsI)^{1/2} 
(\bsH_0 - z \, \bsI)^{-1/2}\big\|_{\cB(L^2(\bbR;\cH))} = 1, 
\quad   z<0,  \label{n4.60}\\
& \big\|\bsA_- (\bsH_0 - z \, \bsI)^{-1/2}\big\|_{\cB(L^2(\bbR;\cH))} \leq 1,   
\quad   z<0,    \lb{rhbdd}  \\
& \big\|\bsB (\bsH_0 - z \, \bsI)^{-1/2}\big\|_{\cB(L^2(\bbR;\cH))} 
\underset{z\downarrow -\infty}{=} o(1).   \lb{3.48}
\end{align}
\end{lemma}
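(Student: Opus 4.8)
The plan is to handle the three estimates in turn: the first two reduce to scalar spectral computations after the Fourier transform, and the third is obtained from a Kato--Rellich-type bound together with a limiting argument.

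\textbf{Proof of \eqref{n4.60}.} First I would invoke the Fourier transform reduction from the proof of Lemma \ref{l2.4} (cf.\ \eqref{4.34a}): via $\gF_{\cH}$ the self-adjoint operator $\bsH_0 = |\bsD_{\bsA_-}|^2$ is unitarily equivalent to the decomposable operator $|it\,\bsI + \bsA_-|^2 = t^2\,\bsI + \bsA_-^2$ in $L^2(\bbR;\cH)$, while $\bsA_-$ is unchanged. Hence $(\bsA_-^2 - z\,\bsI)^{1/2}(\bsH_0 - z\,\bsI)^{-1/2}$ becomes the decomposable operator with fibers $(A_-^2 - zI)^{1/2}(t^2 I + A_-^2 - zI)^{-1/2}$, $t\in\bbR$; note this composition is everywhere defined and bounded since $(\bsH_0 - z\,\bsI)^{-1/2}$ maps $L^2(\bbR;\cH)$ into $\dom(\bsH_0^{1/2}) = \dom(\bsD_{\bsA_-}) \subseteq \dom(\bsA_-) = \dom((\bsA_-^2 - z\,\bsI)^{1/2})$ by \eqref{H0}. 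By Remark \ref{TnormT} and the spectral theorem for $A_-$,
\[
\big\|(\bsA_-^2 - z\,\bsI)^{1/2}(\bsH_0 - z\,\bsI)^{-1/2}\big\|_{\cB(L^2(\bbR;\cH))}
= \sup_{t\in\bbR}\ \sup_{\lambda\in\sigma(A_-)} \bigg(\frac{\lambda^2 - z}{t^2 + \lambda^2 - z}\bigg)^{1/2} = 1, \quad z<0,
\]
the value $1$ being attained at $t=0$.

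\textbf{Proof of \eqref{rhbdd}.} This I would deduce from \eqref{n4.60} via the factorization
\[
\bsA_-(\bsH_0 - z\,\bsI)^{-1/2} = \big[\bsA_-(\bsA_-^2 - z\,\bsI)^{-1/2}\big]\big[(\bsA_-^2 - z\,\bsI)^{1/2}(\bsH_0 - z\,\bsI)^{-1/2}\big],
\]
which is valid on all of $L^2(\bbR;\cH)$ by the same domain remark as above and the spectral theorem for $\bsA_-$. The first bracket is bounded with norm $\sup_{\lambda\in\sigma(A_-)}|\lambda|(\lambda^2 - z)^{-1/2} \le 1$ for $z<0$, and the second has norm $1$ by \eqref{n4.60}; the product then has norm $\le 1$.

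\textbf{Proof of \eqref{3.48}.} Here the key input is the bound established inside the proof of Lemma \ref{l2.4}: by \eqref{3.13},
\[
\varepsilon_1(\zeta) := \big\|\bsB(|\bsD_{\bsA_-}| - \zeta\,\bsI)^{-1}\big\|_{\cB(L^2(\bbR;\cH))} = \sup_{t\in\bbR}\big\|B(t)(|A_-| - \zeta I)^{-1}\big\|_{\cB(\cH)} = o(1) \ \text{ as } \zeta\downarrow-\infty,
\]
equivalently $\|\bsB h\| \le \varepsilon_1(\zeta)\,\|(|\bsD_{\bsA_-}| - \zeta\,\bsI)h\|$ for $h\in\dom(\bsD_{\bsA_-})$. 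Now I would \emph{fix} $\zeta<0$, and for $f\in L^2(\bbR;\cH)$ set $g = (\bsH_0 - z\,\bsI)^{-1/2}f \in \dom(\bsH_0^{1/2}) = \dom(\bsD_{\bsA_-})$ (by \eqref{H0}). Since $\bsH_0 = |\bsD_{\bsA_-}|^2$ and $\sigma(|\bsD_{\bsA_-}|)\subseteq[0,\infty)$, functional calculus for the single self-adjoint operator $|\bsD_{\bsA_-}|$ gives, for $z<0$,
\[
\big\|(|\bsD_{\bsA_-}| - \zeta\,\bsI)g\big\| \le \Big(\sup_{s\ge 0}\frac{s + |\zeta|}{(s^2 + |z|)^{1/2}}\Big)\|f\| = \big(1 + |\zeta|^2/|z|\big)^{1/2}\|f\|,
\]
the elementary supremum being attained at $s = |z|/|\zeta|$. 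Hence $\|\bsB(\bsH_0 - z\,\bsI)^{-1/2}\| \le \varepsilon_1(\zeta)\,(1 + |\zeta|^2/|z|)^{1/2}$, so $\limsup_{z\downarrow-\infty}\|\bsB(\bsH_0 - z\,\bsI)^{-1/2}\| \le \varepsilon_1(\zeta)$ for every fixed $\zeta<0$; letting $\zeta\downarrow-\infty$ yields \eqref{3.48}.

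\textbf{Main obstacle.} The only genuinely delicate point is \eqref{3.48}: a naive splitting (taking the auxiliary resolvent parameter equal to $z$) produces a factor growing like $|z|^{1/2}$, which defeats the $o(1)$ gain coming from \eqref{3.13}. The remedy is to keep the resolvent parameter $\zeta$ of $\bsB(|\bsD_{\bsA_-}| - \zeta\,\bsI)^{-1}$ \emph{fixed} while sending $z\downarrow-\infty$, and only afterward let $\zeta\downarrow-\infty$. Apart from this, the arguments are routine once one records (via \eqref{H0}) that all the operator compositions above are everywhere defined and bounded.
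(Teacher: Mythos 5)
Your proof is correct. For \eqref{n4.60} and \eqref{rhbdd} you follow essentially the paper's route (Fourier transform, Remark \ref{TnormT}, and the spectral theorem; the paper simply obtains \eqref{rhbdd} by the same direct supremum computation rather than by factoring through \eqref{n4.60}, an immaterial difference). For \eqref{3.48} your argument genuinely differs: you factor through $(|\bsD_{\bsA_-}| - \zeta\,\bsI)^{-1}$ with a \emph{fixed} auxiliary parameter $\zeta<0$, bound $(|\bsD_{\bsA_-}| - \zeta\,\bsI)(\bsH_0 - z\,\bsI)^{-1/2}$ by $(1+\zeta^2/|z|)^{1/2}$ via the functional calculus of $|\bsD_{\bsA_-}|$, and then take the iterated limit $z\downarrow-\infty$ followed by $\zeta\downarrow-\infty$. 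The paper instead writes
$\bsB(\bsH_0 - z\,\bsI)^{-1/2} = \bsB(\bsA_-^2 - z\,\bsI)^{-1/2}\,(\bsA_-^2 - z\,\bsI)^{1/2}(\bsH_0 - z\,\bsI)^{-1/2}$
with the \emph{same} parameter $z$: the second factor has norm one by \eqref{n4.60}, and the first equals $\sup_{t\in\bbR}\big\|B(t)(A_-^2 - zI)^{-1/2}\big\|_{\cB(\cH)}$, which is $o(1)$ as $z\downarrow-\infty$ by (the argument of) Lemma \ref{lHYP}, since $(\lambda^2+|z|)^{1/2}\ge 2^{-1/2}\big(|\lambda|+|z|^{1/2}\big)$. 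Thus the ``obstacle'' you describe --- the loss of a factor $|z|^{1/2}$ when the auxiliary resolvent parameter is taken equal to $z$ --- is an artifact of factoring through $|\bsD_{\bsA_-}|$ rather than through $\bsA_-$; the paper's decomposition removes it in one line, while your fixed-$\zeta$ two-parameter limit removes it at the cost of a slightly longer, but equally valid, argument.
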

\begin{proof}
Passing to the Fourier transform (cf.\ \eqref{ft}, \eqref{ft1}), and using 
Remark \ref{TnormT}, and the spectral theorem, one obtains,
\begin{align}
& \big\| (\bsA_-^2 - z\, \bsI)^{1/2} (\bsH_0 - z \, \bsI)^{-1/2}\big\|_{\cB(L^2(\bbR;\cH)}
 =\sup_{t\in\bbR} \big\|\varkappa_z(A_-)  
\big(t^2+\varkappa_z(A_-)^2\big)^{-1/2}\big\|_{\cB(\cH)} \no  \\
& \quad =\sup_{t\in\bbR}\sup_{\lambda\in\sigma(A_-)}
\bigg|\frac{\lambda^2-z}{t^2+\lambda^2-z}\bigg|^{1/2} = 1, \quad z<0, 
\end{align}
proving \eqref{n4.60}. The inequality \eqref{rhbdd}
is proved analogously.
Next, one estimates,
\begin{align}
& \big\|\bsB (\bsH_0 - z \, \bsI)^{-1/2}\big\|_{\cB(L^2(\bbR;\cH))}   \no \\
& \quad = \big\|\bsB (\bsA_-^2 -z \, \bsI)^{-1/2} 
(\bsA_-^2 -z \, \bsI)^{1/2}
(\bsH_0 - z \, \bsI)^{-1/2}\big\|_{\cB(L^2(\bbR;\cH))}   \no  \\
& \quad \leq \big\|\bsB (\bsA_-^2 -z \, \bsI)^{-1/2}\big\|_{\cB(L^2(\bbR;\cH))}
\big\| (\bsA_-^2 -z \, \bsI)^{1/2} 
(\bsH_0 - z \,\bsI)^{-1/2}\big\|_{\cB(L^2(\bbR;\cH))}   \no  \\
& \quad = \sup_{t\in\bbR} \big\|B(t) (A_-^2 - z I)^{-1/2}\big\|_{\cB(\cH)} 
\underset{z\downarrow -\infty}{=} o(1),   
\end{align}
by \eqref{2.13g} and \eqref{rhbdd}. 
\end{proof}

For subsequent purposes, we recall the generalized polar decomposition of a densely defined and 
closed operator $T$ in a complex separable Hilbert space $\cK$
\beq\
T = |T^*|^{1/2} U_T |T|^{1/2},    \lb{PDT} 
\enq
derived in \cite{GMMN09}, where $U_T$ is the partial isometry in $\cK$ 
in the standard polar decomposition $T = U_T |T|$ of $T$, with 
$|T| = (T^* T)^{1/2}$. 

Next, we introduce the following sesquilinear forms in $L^2(\bbR;\cH)$,
\begin{align}
Q_{\bsH_0} (f,g) & = \big(\bsH_0^{1/2} f, \bsH_0^{1/2} g\big)_{L^2(\bbR;\cH)}, 
\quad f, g \in \dom(Q_{\bsH_0}) = \dom\big(\bsH_0^{1/2}\big),   \lb{h0h} \\
Q_{\bsV_j} (f,g) & =  (\bsA_- f, \bsB g)_{L^2(\bbR;\cH)} 
+ (\bsB f, \bsA_- g)_{L^2(\bbR;\cH)} + (\bsB f, \bsB g)_{L^2(\bbR;\cH)}    \no \\
& \quad + (-1)^j \big(|(\bsB')^*|^{1/2} f, U_{\bsB'} |\bsB'|^{1/2} g\big)_{L^2(\bbR;\cH)},   
\lb{hsh} \\
& \hspace*{4mm}
f,g \in \dom(Q_{\bsV_j}) = \dom\big(\bsH_0^{1/2}\big), \; j=1,2,    \no \\
Q_{\bsV} (f,g) & = (\bsA_- f, \bsB g)_{L^2(\bbR;\cH)} + (\bsB f, \bsA_- g)_{L^2(\bbR;\cH)} 
+ (\bsB f, \bsB g)_{L^2(\bbR;\cH)},    \lb{hsht2} \\
& \hspace*{4.45cm} f, g \in \dom(Q_{\bsV}) = \dom\big(\bsH_0^{1/2}\big),  \no
\end{align} 
where we employed the generalized polar decomposition 
\begin{equation}
\bsB' = |(\bsB')^*|^{1/2} U_{\bsB'} |\bsB'|^{1/2}     \lb{4.B'}
\end{equation}
of $\bsB'$. 

By Lemmas \ref{12bprime} and \ref{l3.6}, the sesquilinear forms $Q_{\bsV_j}$, 
$j=1,2$, and $Q_{\bsV}$ are well-defined. In addition, $Q_{\bsH_0}$, 
$Q_{\bsV_j}$, $j=1,2$, and $Q_{\bsV}$ are symmetric forms.  

\begin{lemma}\lb{l4.relbdd}
Assume Hypothesis \ref{h2.1}. Then the symmetric forms $Q_{\bsV_j}$, $j=1,2$, and 
$Q_{\bsV}$, defined in  \eqref{hsh}, \eqref{hsht2}, are infinitesimally 
bounded with respect to the form $Q_{\bsH_0}$ of the self-adjoint operator $\bsH_0$
in $L^2(\bbR; \cH)$. Thus, the form sums
\begin{align}
Q_{\hatt \bsH_j} (f,g) &= Q_{\bsH_0} (f,g) + Q_{\bsV_j} (f,g), 
\quad f, g \in \dom(Q_{\hatt \bsH_j}) = \dom(Q_{\bsH_0}),  \quad j=1,2, \\
Q_{\bsH} (f,g) &= Q_{\bsH_0} (f,g) + Q_{\bsV} (f,g), 
\quad f, g \in \dom(Q_{\hatt \bsH_j}) = \dom(Q_{\bsH_0}), 
\end{align}
are densely defined, closed, and bounded from below. Consequently, the forms
$Q_{\hatt \bsH_j}$, $j=1,2$, and $Q_{\bsH}$ uniquely define self-adjoint operators 
$\hatt \bsH_j$, $j=1,2$, and $\bsH$ in $L^2(\bbR; \cH)$, respectively, with 
$\hatt \bsH_j$, $j=1,2$, and $\bsH$ bounded from below, satisfying
\begin{align}
\dom\big(\hatt \bsH_j\big) &= \big\{f \in \dom(Q_{\bsH_0}) \,\big|\,  
\text{the map:} \, \dom(Q_{\bsH_0}) \ni g \mapsto Q_{\hatt \bsH_j} (f,g)   \lb{3.57} \\
& \hspace*{2.8cm}  \text{is continuous in the norm of $L^2(\bbR; \cH)$}\big\}, 
\quad j=1,2,  \no \\
Q_{\hatt \bsH_j} (f,g) &= (f, \hatt \bsH_j g)_{L^2(\bbR;\cH)}, 
\quad f \in \dom(Q_{\bsH_0}), \, g \in \dom\big(\hatt \bsH_j\big),  \quad j=1,2,  \lb{Hsh} \\
\dom(\bsH) &= \big\{f \in \dom(Q_{\bsH_0}) \,\big|\,  
\text{the map:} \, \dom(Q_{\bsH_0}) \ni g \mapsto Q_{\bsH} (f,g) \\
& \hspace*{2.7cm}  \text{is continuous in the norm of $L^2(\bbR; \cH)$}\big\},  \no \\
Q_{\bsH} (f,g) &= (f, \bsH g)_{L^2(\bbR;\cH)}, 
\quad f \in \dom(Q_{\bsH_0}), \, g \in \dom(\bsH),   \lb{Hsht2}
\end{align}
and 
\begin{equation}
\dom\big(|\widehat{\bsH}_j|^{1/2}\big) = \dom\big(|\bsH|^{1/2}\big) 
= \dom\big(\bsH_0^{1/2}\big).   \lb{3.61}
\end{equation} 
\end{lemma}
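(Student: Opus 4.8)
The plan is to reduce Lemma \ref{l4.relbdd} to an application of the KLMN theorem: I would first show that each of the symmetric forms $Q_{\bsV_1}$, $Q_{\bsV_2}$, and $Q_{\bsV}$ is \emph{infinitesimally form bounded} with respect to the nonnegative, densely defined, closed form $Q_{\bsH_0}$, and then quote the first representation theorem for semibounded sesquilinear forms to obtain the operators $\hatt \bsH_1$, $\hatt \bsH_2$, $\bsH$ together with the asserted properties. The substantive input is already contained in Lemmas \ref{12bprime} and \ref{l3.6}. As a first step I would record, from \eqref{3.19} and \eqref{H0}, that $\bsH_0$ is self-adjoint and nonnegative with $\dom(Q_{\bsH_0}) = \dom\big(\bsH_0^{1/2}\big) = \dom(\bsD_{\bsA_-})$ and $Q_{\bsH_0}(f,f) = \big\|\bsH_0^{1/2} f\big\|_{L^2(\bbR;\cH)}^2$. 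Then, fixing $z<-1$ and $f \in \dom\big(\bsH_0^{1/2}\big)$, I would put $h = (\bsH_0 - z \, \bsI)^{1/2} f$, so that $f = (\bsH_0 - z \, \bsI)^{-1/2} h$ and
\[
\|h\|_{L^2(\bbR;\cH)}^2 = \big\|(\bsH_0 - z \, \bsI)^{1/2} f\big\|_{L^2(\bbR;\cH)}^2 = Q_{\bsH_0}(f,f) + |z| \, \|f\|_{L^2(\bbR;\cH)}^2 ;
\]
this identity turns any operator bound on $X (\bsH_0 - z \, \bsI)^{-1/2}$ into a form bound for the term built from $X$.

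Next I would estimate $Q_{\bsV}$ termwise. Writing $\bsA_- f = \bsA_- (\bsH_0 - z \, \bsI)^{-1/2} h$ and $\bsB f = \bsB (\bsH_0 - z \, \bsI)^{-1/2} h$, Lemma \ref{l3.6} gives $\|\bsA_- f\|_{L^2(\bbR;\cH)} \le \|h\|_{L^2(\bbR;\cH)}$ by \eqref{rhbdd} and $\|\bsB f\|_{L^2(\bbR;\cH)} \le \e(z) \, \|h\|_{L^2(\bbR;\cH)}$ by \eqref{3.48}, where $\e(z) = o(1)$ as $z \downarrow -\infty$. By the Cauchy--Schwarz inequality the three terms of $Q_{\bsV}(f,f)$ are then bounded by $\big(2 \e(z) + \e(z)^2\big) \|h\|_{L^2(\bbR;\cH)}^2$, hence by $\big(2 \e(z) + \e(z)^2\big)\big(Q_{\bsH_0}(f,f) + |z| \, \|f\|_{L^2(\bbR;\cH)}^2\big)$; since the coefficient tends to $0$ as $z \downarrow -\infty$, this is precisely infinitesimal $Q_{\bsH_0}$-form boundedness of $Q_{\bsV}$. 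For $Q_{\bsV_j}$, $j=1,2$, it then remains to control the extra term $\big(|(\bsB')^*|^{1/2} f, U_{\bsB'} |\bsB'|^{1/2} f\big)_{L^2(\bbR;\cH)}$. Here I would use $\|U_{\bsB'}\|_{\cB(L^2(\bbR;\cH))} \le 1$, the Cauchy--Schwarz inequality, the inclusion $\cB_2(L^2(\bbR;\cH)) \subseteq \cB(L^2(\bbR;\cH))$, and the Hilbert--Schmidt bounds \eqref{zdecay} of Lemma \ref{12bprime} (together with Remark \ref{r2.6} for the factor involving $(\bsB')^*$) to get
\[
\Big|\big(|(\bsB')^*|^{1/2} f, U_{\bsB'} |\bsB'|^{1/2} f\big)_{L^2(\bbR;\cH)}\Big| \le |z|^{-1/2} M \, \|h\|_{L^2(\bbR;\cH)}^2 , \qquad M := \int_{\bbR} \big\|B'(t) \varkappa^{-1}\big\|_{\cB_1(\cH)} \, dt ,
\]
with $\varkappa = (A_-^2 + I)^{1/2}$ as in \eqref{dfnvk1} and $M < \infty$ by \eqref{2.13i}; since $|z|^{-1/2} M \to 0$ as $z \downarrow -\infty$, adding this to the bound on $Q_{\bsV}$ shows that $Q_{\bsV_j}$, $j=1,2$, is infinitesimally $Q_{\bsH_0}$-form bounded as well.

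Finally, since $Q_{\bsH_0} \ge 0$ is densely defined and closed and the perturbing forms are symmetric and infinitesimally $Q_{\bsH_0}$-form bounded, I would invoke the KLMN theorem (cf., e.g., \cite[Theorem\ X.17]{RS75}) to conclude that the form sums $Q_{\hatt \bsH_j} = Q_{\bsH_0} + Q_{\bsV_j}$, $j=1,2$, and $Q_{\bsH} = Q_{\bsH_0} + Q_{\bsV}$, with common domain $\dom(Q_{\bsH_0}) = \dom\big(\bsH_0^{1/2}\big)$, are densely defined, closed, symmetric, and bounded from below. The first representation theorem (cf., e.g., \cite[Theorem\ VI.2.1]{Ka80}) then produces the self-adjoint, semibounded operators $\hatt \bsH_j$, $j=1,2$, and $\bsH$ satisfying \eqref{3.57}, \eqref{Hsh}, \eqref{Hsht2}. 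For \eqref{3.61} I would use that the form domain of a self-adjoint operator $S$ bounded from below coincides with $\dom\big(|S|^{1/2}\big)$ — both equal $\{f : \int_{\sigma(S)} |\lambda| \, d\|E_S(\lambda) f\|^2 < \infty\}$, as one sees by comparison with $\dom\big((S + c \, I)^{1/2}\big)$ for $c$ sufficiently large — and that all three form domains equal $\dom\big(\bsH_0^{1/2}\big)$. I do not expect a genuine obstacle at any stage: the real estimates are already packaged in Lemmas \ref{12bprime} and \ref{l3.6}, and the only care required is in arranging that all cross terms simultaneously carry an arbitrarily small coefficient, after which the argument is the standard KLMN/representation-theorem routine.
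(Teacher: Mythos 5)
Your proposal is correct and follows essentially the same route as the paper: the cross terms are controlled via \eqref{rhbdd} and \eqref{3.48} of Lemma \ref{l3.6}, the $\bsB'$-term via the Hilbert--Schmidt bounds of Lemma \ref{12bprime}, yielding coefficients that vanish as $z\downarrow-\infty$ and hence infinitesimal form boundedness, after which the representation theorems (equivalently, your KLMN step) give the operators and \eqref{3.57}--\eqref{3.61}. The only differences are cosmetic — you spell out the identity $\|(\bsH_0-z\,\bsI)^{1/2}f\|^2=Q_{\bsH_0}(f,f)+|z|\,\|f\|^2$ and the $\bsB'$-estimate, which the paper leaves implicit.
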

\begin{proof}
Applying \eqref{rhbdd} and \eqref{3.48} one obtains
\begin{align}
& |(\bsA_- f, \bsB f)_{L^2(\bbR;\cH)}|   \no \\
& \quad = \big|\big(\bsA_- \big(\bsH_0 - z \bsI)^{-1/2}\big(\bsH_0 -  
z \bsI)^{1/2}f,
\bsB \big(\bsH_0 - z \bsI)^{-1/2}\big(\bsH_0 - z \bsI)^{1/2}f\big)_ 
{L^2(\bbR;\cH)}\big|
\no \\
& \quad \leq \big\|\bsA_- \big(\bsH_0 - z \bsI)^{-1/2}\big\|_{\cB(L^2 
(\bbR; \cH))} \big\|\bsB \big(\bsH_0 - z \bsI)^{-1/2}\big\|_{\cB(L^2(\bbR; \cH))}   
\no \\
& \qquad\quad \times \big\|\big(\bsH_0 - z \bsI)^{1/2}f\big\|_{L^2(\bbR; 
\cH)}^2    \no \\
& \quad = \big\|\bsB \big(\bsH_0 - z \bsI)^{-1/2}\big\|_{\cB(L^2 
(\bbR; \cH))}
\big\|\big(\bsH_0 - z \bsI)^{1/2}f\big\|_{L^2(\bbR;\cH)}^2  \no \\
& \quad = a(z) \big\|\big(\bsH_0 - z \bsI)^{1/2}f\big\|_{L^2(\bbR; 
\cH)}^2\,, \quad
  f \in \dom\big(\bsH_0^{1/2}\big),
\end{align}
with
\begin{equation}
a(z) \geq 0 \, \text{ and } \, a(z) \underset{z\downarrow -\infty} 
{\longrightarrow} 0.
\end{equation}
The same estimate now applies to the sesquilinear forms 
\begin{equation} 
(\bsB f, \bsA_- f)_{L^2(\bbR;\cH)}, \quad (\bsB f, \bsB f)_{L^2(\bbR;\cH)}, 
\quad f \in \dom\big(\bsH_0^{1/2}\big).
\end{equation} 
Moreover, Lemma \ref{12bprime} yields the same estimate also for the sesquilinear form 
\begin{equation} 
\big(|(\bsB')^*|^{1/2}f, U_{\bsB'}  |\bsB'|^{1/2}f\big)_{L^2(\bbR;\cH)}, \quad f \in \dom\big(\bsH_0^{1/2}\big).
\end{equation}   
Thus, by \eqref{hsh} and \eqref{hsht2}, the sesquilinear forms 
$Q_{\bsV_j}$, $j=1,2$, and $Q_{\bsV}$ are infinitesimally bounded with respect to 
$Q_{\bsH_0}$. The first and second representation theorem for sesquilinear forms (cf., e.g., \cite[Sect.\ IV.2]{EE89}, \cite[Sect.\ 6.2]{Ka80}) then yields 
\eqref{3.57}--\eqref{3.61} and completes the proof.
\end{proof}

Being defined as a self-adjoint form sum, we note that $\bsH$ is an extension of the operator sum 
$-(d^2/dt^2) + \bsA^2$ defined on $\dom(d^2/dt^2) \cap \dom(\bsA^2)$.

Next we will prove that $\widehat{\bsH}_j$ coincides with $\bsH_j$, $j=1,2$:

\begin{lemma}\lb{l4.hatHH}
Assume Hypothesis \ref{h2.1}. Then, 
\begin{equation}
\widehat{\bsH}_j = \bsH_j, \; j=1,2,   \lb{Hshj}
\end{equation}
where
\begin{equation}
\bsH_1 = \bsD_\bsA^* \bsD_\bsA^{}, \quad \bsH_2 = \bsD_\bsA^{} \bsD_\bsA^*.    \lb{Hj}
\end{equation}
In particular,
\begin{align}
\begin{split} 
& \dom\big(\bsH_1^{1/2}\big) = \dom\big(\bsH_2^{1/2}\big) 
= \dom\big(\bsH^{1/2}\big) = \dom\big(\bsH_0^{1/2}\big)   \\
& \quad = \dom(\bsD_\bsA^{}) = \dom(\bsD_\bsA^*) = \dom(d/dt) \cap \dom(\bsA_-).
\lb{doms}
\end{split} 
\end{align}
\end{lemma}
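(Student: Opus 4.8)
The plan is to compute the sesquilinear forms of the two operators $\bsH_1=\bsD_\bsA^*\bsD_\bsA^{}$ and $\bsH_2=\bsD_\bsA^{}\bsD_\bsA^*$ explicitly and match them with $Q_{\widehat{\bsH}_1}$ and $Q_{\widehat{\bsH}_2}$. By Lemma \ref{l2.4}, $\bsD_\bsA$ and $\bsD_\bsA^*$ are closed and densely defined, so the classical von Neumann theorem on products of the form $T^*T$ applies: $\bsH_1,\bsH_2$ are nonnegative and self-adjoint, with $\dom\big(\bsH_1^{1/2}\big)=\dom(\bsD_\bsA)$, $\dom\big(\bsH_2^{1/2}\big)=\dom(\bsD_\bsA^*)$, and $\bsH_1$, $\bsH_2$ are precisely the self-adjoint operators uniquely associated with the closed forms $\mathfrak q_1(f,g)=(\bsD_\bsA f,\bsD_\bsA g)_{L^2(\bbR;\cH)}$ and $\mathfrak q_2(f,g)=(\bsD_\bsA^* f,\bsD_\bsA^* g)_{L^2(\bbR;\cH)}$. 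Since by Lemma \ref{l2.4} and \eqref{H0} one has $\dom(\mathfrak q_j)=\dom(\bsD_\bsA)=\dom(\bsD_\bsA^*)=\dom(d/dt)\cap\dom(\bsA_-)=\dom(Q_{\bsH_0})$, the asserted identities $\widehat{\bsH}_j=\bsH_j$ will follow from the uniqueness part of the first representation theorem, once I show $\mathfrak q_j=Q_{\widehat{\bsH}_j}$ as forms for $j=1,2$.

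To do this I would first expand: on $\dom(Q_{\bsH_0})$ one has $\bsD_\bsA=\bsD_{\bsA_-}+\bsB$ and $\bsD_\bsA^*=\bsD_{\bsA_-}^*+\bsB$ (Lemma \ref{l2.4}), while $(\bsD_{\bsA_-}f,\bsD_{\bsA_-}g)_{L^2(\bbR;\cH)}=Q_{\bsH_0}(f,g)$ by \eqref{H0}; writing $\bsD_{\bsA_-}=(d/dt)+\bsA_-$, $\bsD_{\bsA_-}^*=-(d/dt)+\bsA_-$ and collecting terms gives
\begin{equation*}
\mathfrak q_j(f,g)=Q_{\bsH_0}(f,g)+(\bsA_- f,\bsB g)_{L^2}+(\bsB f,\bsA_- g)_{L^2}+(\bsB f,\bsB g)_{L^2}+(-1)^{j+1}\mathfrak r(f,g),
\end{equation*}
where $\mathfrak r(f,g)=((d/dt)f,\bsB g)_{L^2(\bbR;\cH)}+(\bsB f,(d/dt)g)_{L^2(\bbR;\cH)}$. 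Comparing with \eqref{hsh} and using the generalized polar decomposition \eqref{4.B'}, the whole lemma reduces to the single form identity
\begin{equation*}
\mathfrak r(f,g)=-\big(|(\bsB')^*|^{1/2}f,\,U_{\bsB'}|\bsB'|^{1/2}g\big)_{L^2(\bbR;\cH)},\qquad f,g\in\dom\big(\bsH_0^{1/2}\big).
\end{equation*}
Both sides here are jointly continuous sesquilinear forms in the graph norm of $\bsD_{\bsA_-}$ — the left side because $\|(d/dt)h\|\le\|\bsD_{\bsA_-}h\|$ by \eqref{gen1} and $\bsB$ is $\bsD_{\bsA_-}$-bounded by \eqref{3.15}, the right side by the Hilbert--Schmidt bounds of Lemma \ref{12bprime}, which give $\dom\big(\bsH_0^{1/2}\big)\subseteq\dom\big(|\bsB'|^{1/2}\big)\cap\dom\big(|(\bsB')^*|^{1/2}\big)$ together with the corresponding norm estimates. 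So it is enough to verify the identity on the subspace $\cD$ spanned by the functions $t\mapsto\phi(t)h$ with $\phi\in C_c^\infty(\bbR)$ and $h\in\dom(A_-)$; a standard density argument shows $\cD$ is dense in $\dom(\bsD_{\bsA_-})=W^{1,2}(\bbR;\cH)\cap L^2(\bbR;\cH_1(A_-))$ in the graph norm.

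For $f(t)=\phi(t)h_1$ and $g(t)=\psi(t)h_2$ in $\cD$ I would argue fiberwise. Writing $h_2=(|A_-|+I)^{-1}\big((|A_-|+I)h_2\big)$ and applying Hypothesis \ref{h2.1}\,$(iii)$ via \eqref{2.13h} shows $t\mapsto(h_1,B(t)h_2)_\cH$ is locally absolutely continuous with a.e.\ derivative $(h_1,B'(t)h_2)_\cH$; hence $t\mapsto\overline{\phi(t)}\psi(t)(h_1,B(t)h_2)_\cH$ is in $AC_c(\bbR)$, and integrating its derivative over $\bbR$, using $B(t)=B(t)^*$ on $\dom(A_-)$ to rewrite $(h_1,B(t)\psi'(t)h_2)_\cH=(B(t)h_1,\psi'(t)h_2)_\cH$, yields $\mathfrak r(f,g)=-\int_\bbR\overline{\phi(t)}\psi(t)(h_1,B'(t)h_2)_\cH\,dt$. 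On the other hand, $\cD\subseteq\dom\big(\bsH_0^{1/2}\big)\subseteq\dom\big(|\bsB'|^{1/2}\big)\cap\dom\big(|(\bsB')^*|^{1/2}\big)$, and since $\{B'(t)\}_{t\in\bbR}$ is $N$-measurable (cf.\ the remark preceding Lemma \ref{12bprime}) these operators and $U_{\bsB'}$ act as direct integrals of $|B'(t)|^{1/2}$, $|(B'(t))^*|^{1/2}$, $U_{B'(t)}$; the fiberwise generalized polar decomposition $B'(t)=|(B'(t))^*|^{1/2}U_{B'(t)}|B'(t)|^{1/2}$ on $\dom(A_-)\subseteq\dom(B'(t))$ then gives $\big(|(\bsB')^*|^{1/2}f,U_{\bsB'}|\bsB'|^{1/2}g\big)_{L^2}=\int_\bbR\overline{\phi(t)}\psi(t)(h_1,B'(t)h_2)_\cH\,dt$. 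Comparing establishes the identity on $\cD$, hence on all of $\dom\big(\bsH_0^{1/2}\big)$, so $\mathfrak q_j=Q_{\widehat{\bsH}_j}$ and therefore $\widehat{\bsH}_j=\bsH_j$, $j=1,2$. Finally \eqref{doms} is immediate: $\dom\big(\bsH_j^{1/2}\big)=\dom\big(\widehat{\bsH}_j^{1/2}\big)=\dom(Q_{\bsH_0})=\dom\big(\bsH_0^{1/2}\big)$ by Lemma \ref{l4.relbdd}, $\dom\big(|\bsH|^{1/2}\big)=\dom\big(\bsH_0^{1/2}\big)$ by \eqref{3.61}, and $\dom\big(\bsH_0^{1/2}\big)=\dom(\bsD_{\bsA_-})=\dom(\bsD_{\bsA_-}^*)=\dom(d/dt)\cap\dom(\bsA_-)=\dom(\bsD_\bsA)=\dom(\bsD_\bsA^*)$ by \eqref{H0} and Lemma \ref{l2.4}. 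The hard part will be the rigorous treatment of the cross term $\mathfrak r$: because $\bsB'g$ need not lie in $L^2(\bbR;\cH)$ for $g\in\dom(\bsA_-)$, one cannot simply write $\mathfrak r(f,g)=-(f,\bsB'g)_{L^2}$, and must instead route the argument through the reduction to $\cD$ together with the fiberwise polar decomposition as above.
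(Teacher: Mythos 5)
Your proposal is correct and shares the paper's overall skeleton: expand the form of $\bsH_j=\bsD_\bsA^{(*)}\bsD_\bsA^{(\,)}$ using $\bsD_\bsA=\bsD_{\bsA_-}+\bsB$, match it with $Q_{\widehat{\bsH}_j}$ from \eqref{hsh}, and invoke the representation-theorem uniqueness, so that everything reduces to the cross-term identity $\mathfrak r(f,g)=-\big(|(\bsB')^*|^{1/2}f,U_{\bsB'}|\bsB'|^{1/2}g\big)_{L^2(\bbR;\cH)}$. Where you genuinely diverge is in how that identity is established. The paper (cf.\ \eqref{3.37}, \eqref{3.38}) works directly with arbitrary $f,g\in\dom(\bsD_\bsA)$: it differentiates $t\mapsto(f(t),B(t)g(t))_\cH$ a.e., notes that all three resulting terms, as well as $(f(\cdot),B(\cdot)g(\cdot))_\cH$ itself, lie in $L^1(\bbR;dt)$, and concludes that the boundary terms $\lim_{R\to\pm\infty}(f(R),B(R)g(R))_\cH$ exist and vanish. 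You instead verify the identity only on the core $\cD$ of elementary tensors $\phi\otimes h$, $\phi\in C_c^\infty(\bbR)$, $h\in\dom(A_-)$ -- where Hypothesis \ref{h2.1}\,$(iii)$,$(iv)$ applies immediately and no boundary-term discussion is needed -- and then extend by continuity of both sides in the graph norm of $\bsD_{\bsA_-}$, using \eqref{gen1}, \eqref{3.15} and Lemma \ref{12bprime}. This buys you a cleaner treatment of the analytic subtleties (the a.e.\ product rule and the $L^1$ membership of $(f(\cdot),B'(\cdot)g(\cdot))_\cH$ for general form-domain elements, which the paper asserts, become trivial on $\cD$), at the modest cost of the density claim for $\cD$, which is indeed standard (e.g.\ via the projections $P_n=E_{A_-}((-n,n))$ followed by approximation in $W^{1,2}(\bbR;\ran(P_n))$). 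One shared point to flag: both your step 4 and the paper's second equality in \eqref{3.38} rest on identifying $\big(|(\bsB')^*|^{1/2}f,U_{\bsB'}|\bsB'|^{1/2}g\big)_{L^2}$ with $\int_\bbR(f(t),B'(t)g(t))_\cH\,dt$, i.e.\ on the fiberwise decomposition of $|\bsB'|^{1/2}$, $|(\bsB')^*|^{1/2}$ and $U_{\bsB'}$; your appeal to the $N$-measurability of $\{B'(t)\}_{t\in\bbR}$ (Lemma \ref{l2.9}, Theorem \ref{tA.6}) is the right justification, though note that the decomposition of the square roots and of the partial isometry in the polar decomposition goes slightly beyond what Theorem \ref{tA.6} states verbatim and should be cited as standard direct-integral calculus -- a point on which your argument is no less rigorous than the paper's.
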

\begin{proof}
It suffices to prove $\widehat{\bsH}_1 = \bsH_1$. The sesquilinear 
form $Q_{\bsH_1}$ uniquely associated with $\bsH_1$ is given by
\begin{equation}
Q_{\bsH_1} (f,g) = (\bsD_\bsA^{} f,\bsD_\bsA^{} g)_{L^2(\bbR;\cH)},
\quad f, g \in \dom(Q_{\bsH_1}) = \dom(\bsD_\bsA^{})
= \dom\big(\bsH_1^{1/2}\big),
\end{equation}
with
\begin{equation}
Q_{\bsH_1} (f,g) = (f,\bsH_1 g)_{L^2(\bbR;\cH)},
\quad f \in \dom(Q_{\bsH_1}) = \dom(\bsD_\bsA^{}), \,
g \in \dom(\bsH_1).      \lb{3.36}
\end{equation}
Thus, one computes
\begin{align}
Q_{\bsH_1} (f,g) & = (\bsD_\bsA^{} f,\bsD_\bsA^{} g)_{L^2(\bbR;\cH)}   \no  \\
& = ((\bsD_{\bsA_-}^{} + \bsB) f, (\bsD_{\bsA_-}^{} + \bsB) g)_{L^2(\bbR;\cH)}   \no  \\
& = (\bsD_{\bsA_-}^{} f, \bsD_{\bsA_-}^{} g)_{L^2(\bbR;\cH)} + (\bsD_{\bsA_-}^{} f, 
\bsB g)_{L^2(\bbR;\cH)}
+ (\bsB f, \bsD_{\bsA_-}^{} g)_{L^2(\bbR;\cH)}  \no  \\
& \quad + (\bsB f, \bsB g)_{L^2(\bbR;\cH)}   \no  \\
& = (\bsD_{\bsA_-}^{} f, \bsD_{\bsA_-}^{} g)_{L^2(\bbR;\cH)} + (((d/dt)+ \bsA_-) 
f, \bsB g)_{L^2(\bbR;\cH)}   \no  \\
& \quad + (\bsB f, ((d/dt) + \bsA_-) g)_{L^2(\bbR;\cH)}
+ (\bsB f, \bsB g)_{L^2(\bbR;\cH)}   \no  \\
& = (\bsD_{\bsA_-}^{} f, \bsD_{\bsA_-}^{} g)_{L^2(\bbR;\cH)} + (\bsA_- f, \bsB 
g)_{L^2(\bbR;\cH)}
+ (\bsB f, \bsA_- g)_{L^2(\bbR;\cH)}   \no  \\
& \quad + (\bsB f, \bsB g)_{L^2(\bbR;\cH)} + (f', \bsB g)_{L^2(\bbR;\cH)}
+ (\bsB f, g')_{L^2(\bbR;\cH)}   \no  \\
& = (\bsD_{\bsA_-}^{} f, \bsD_{\bsA_-}^{} g)_{L^2(\bbR;\cH)} + (\bsA_- f, \bsB 
g)_{L^2(\bbR;\cH)}
+ (\bsB f, \bsA_- g)_{L^2(\bbR;\cH)}    \no  \\
& \quad + (\bsB f, \bsB g)_{L^2(\bbR;\cH)} 
- \big(|(\bsB')^*|^{1/2} f, U_{\bsB'} |\bsB'|^{1/2} g\big)_{L^2(\bbR;\cH)},   \lb{3.37} \\
& \hspace*{4cm} f, g \in \dom(\bsD_\bsA^{}) = \dom(\bsD_{\bsA_-}^{}).   \no
\end{align} 
The last step is a consequence of the following observations:
\begin{align}
& (f', \bsB g)_{L^2(\bbR;\cH)} + (\bsB f, g')_{L^2(\bbR;\cH)} \no  \\
& \quad = (f', \bsB g)_{L^2(\bbR;\cH)} + (\bsB f, g')_{L^2(\bbR;\cH)}
+ \big(|(\bsB')^*|^{1/2} f, U_{\bsB'} |\bsB'|^{1/2} g\big)_{L^2(\bbR;\cH)}  \no \\
& \qquad - \big(|(\bsB')^*|^{1/2} f, U_{\bsB'} |\bsB'|^{1/2} g\big)_{L^2(\bbR;\cH)}   
\no  \\
& \quad = \lim_{R\to\infty} \int_{-R}^R \big[(f'(t),B(t) g(t))_{\cH} 
+ (f(t),B(t) g'(t))_{\cH} + (f(t),B'(t) g(t))_{\cH}\big] \, dt   \no \\
& \qquad - \big(|(\bsB')^*|^{1/2} f, U_{\bsB'} |\bsB'|^{1/2} g\big)_{L^2(\bbR;\cH)}   
\no  \\
& \quad = \lim_{R\to\infty} \int_{-R}^R \frac{d}{dt} (f(t),B(t) g(t))_{\cH} \, dt 
- \big(|(\bsB')^*|^{1/2} f, U_{\bsB'} |\bsB'|^{1/2} g\big)_{L^2(\bbR;\cH)}   \no  \\
& \quad = \lim_{R\to\infty}(f(R),B(R) g(R))_{\cH} 
- \lim_{R\to\infty}(f(-R),B(-R) g(-R))_{\cH}    \no \\
& \qquad - \big(|(\bsB')^*|^{1/2} f, U_{\bsB'} |\bsB'|^{1/2} g\big)_{L^2(\bbR;\cH)}   
\no  \\
& \quad = - \big(|(\bsB')^*|^{1/2} f, U_{\bsB'} |\bsB'|^{1/2} g\big)_{L^2(\bbR;\cH)}, 
\quad f, g \in \dom(\bsD_\bsA^{}) = \dom(\bsD_{\bsA_-}^{}).     \lb{3.38}
\end{align}
Here we used the fact that the limits $\lim_{R\to\pm\infty}(f(R),B(R) g(R))_{\cH}$, 
exist since 
\beq
(f'(\cdot),B(\cdot) g(\cdot))_{\cH}, \, (f(\cdot),B(\cdot) g'(\cdot))_{\cH}, \, 
(f(\cdot),B'(\cdot) g(\cdot))_{\cH} \in L^1(\bbR; dt). 
\enq
Moreover, since also 
\beq
(f(\cdot),B(\cdot) g(\cdot))_{\cH} \in L^1(\bbR; dt),  
\enq
one concludes that 
\begin{equation}
\lim_{R\to\pm\infty}(f(R),B(R) g(R))_{\cH} = 0,  
\end{equation}
completing the derivation of \eqref{3.38} and hence of \eqref{3.37}. Equations 
\eqref{3.19} and \eqref{3.37} then imply
\begin{align}
Q_{\bsH_1} (f,g) & = (f, \bsH_0 g)_{L^2(\bbR;\cH)} 
+ (f, \bsA_- \bsB g)_{L^2(\bbR;\cH)}
+ (f, \bsB \bsA_- g)_{L^2(\bbR;\cH)}   \no  \\
& \quad + (f, \bsB^2 g)_{L^2(\bbR;\cH)} 
- \big(|(\bsB')^*|^{1/2} f, U_{\bsB'} |\bsB'|^{1/2} g\big)_{L^2(\bbR;\cH)},     \no  \\
& =  Q_{\hatt \bsH_1} (f,g) , \quad  f, g \in 
\dom(Q_{\bsH_1}) = \dom(Q_{\hatt \bsH_1}) = \dom\big(\bsH_0^{1/2}\big),    \lb{3.42}
\end{align}
and hence $\bsH_1 = \widehat{\bsH}_1$.
\end{proof}

We will use the following notations for the resolvents of the 
operators $\bsH_j$, $j=1,2$, and $\bsH$:
\begin{equation} \lb{RRR}
\bsR_j(z)=(\bsH_j - z \, \bsI)^{-1}, \quad z\in\rho(\bsH_j), \; j=1,2, \quad
\bsR(z)=(\bsH - z \, \bsI)^{-1}, \quad z\in \rho(\bsH).
\end{equation}

Next, we will discuss in detail the properties of the approximative operators
introduced in \eqref{defAn}. Since $P_n=E_{A_-}((-n,n))$, $n\in\bbN$, 
is the spectral projection for $A_-$, we recall the following 
commutation formulas (cf.\ \eqref{defAn}):
\begin{align}
& A_{-,n}= P_n A_{-,n}=A_{-,n} P_n=P_n A_{-,n} P_n = A_-P_n
=P_nA_-=P_nA_- P_n,   \no \\ 
& A_{+,n} = P_n A_+ P_n,    \no \\ 
& (A_- - z I)^{-1}P_n=P_n(A_- - z I)^{-1},  \quad z \in \rho(A_-),     \lb{comAm} \\ 
& B_n(t)=P_nB(t)P_n, \quad B_n'(t) = P_nB'(t)P_n, \quad 
B_n(+\infty) =P_nB(+\infty)P_n, \quad n\in\bbN.   \no 
\end{align}

Next, one recalls the following properties of the spectral projections $P_n$ in $\cH$:
\begin{align}\lb{Pn1}
&\slim_{n\to\infty}P_n = I,     \\
\lb{Pn2}
& \ran (P_n) \subseteq \dom(A_-), \quad n\in\bbN,  \\
\lb{Pn3}
& \lim_{n\to\infty}\|P_nA_-P_nw-A_-w\|_{\cH} = 0, \quad w\in\dom(A_-).
\end{align}
We collect some basic properties of the operators introduced in 
\eqref{defAn} in the next lemma:

\begin{lemma}\lb{propappr} 
Assume Hypothesis \ref{h2.1}. Then
\begin{align}
&\int_\bbR\|[B'(t)-B'_n(t)](A_-^2+I)^{-1/2}\|_{\cB_1(\cH)}\,dt\to0
\, \text{ as } \, n\to\infty;\lb{Pn4} \\
& \lim_{n\to\infty}\|[B(+\infty)-B_n(+\infty)](A_-^2+I)^{-1/2}\|_{\cB_1(\cH)}  \no \\
& \quad 
= \lim_{n\to\infty}\|[A_+-A_--A_{+,n}+A_{-,n}](A_-^2+I)^{-1/2}\|_{\cB_1(\cH)} 
= 0,   \lb{Pn5}  \\
&A_{\pm,n}\to A_{\pm} \, \text{ in the strong resolvent sense in 
$\cH$ as $n\to\infty$},
\lb{Pn6}\\
& \lim_{n\to\infty}
\big\|(\bsB - \bsB_n)\big(\bsA_-^2 + \bsI\big)^{-1/2}\big\|_{\cB(L^2(\bbR;\cH))}= 0.
\lb{Pn7}
\end{align}
\end{lemma}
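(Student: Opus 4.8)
The plan is to reduce all four assertions \eqref{Pn4}--\eqref{Pn7} to the behaviour of the ``sandwiching'' maps $\Phi_n\colon S\mapsto S-P_nSP_n$ on the trace ideal $\cB_1(\cH)$. Two preliminary observations are used throughout. First, since $P_n=E_{A_-}((-n,n))$ is a spectral projection of $A_-$, it commutes with every function of $A_-$, in particular with $(A_-^2+I)^{-1/2}$; consequently $B_n'(t)(A_-^2+I)^{-1/2}=P_nB'(t)(A_-^2+I)^{-1/2}P_n$, and similarly for $B_n(t)$ and $B_n(+\infty)$. Second, as noted after \eqref{2.13i}, the operator $(|A_-|+I)(A_-^2+I)^{-1/2}$ and its inverse are bounded on $\cH$, so \eqref{2.13i}, \eqref{2.13jk}, \eqref{intBp}, \eqref{intB} hold verbatim with $(|A_-|+I)^{-1}$ replaced by $(A_-^2+I)^{-1/2}$; in particular $B'(t)(A_-^2+I)^{-1/2}\in\cB_1(\cH)$, $t\in\bbR$, with $\int_\bbR\|B'(t)(A_-^2+I)^{-1/2}\|_{\cB_1(\cH)}\,dt<\infty$. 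The central fact about $\Phi_n$ is that for each fixed $S\in\cB_1(\cH)$ one has $\|\Phi_n(S)\|_{\cB_1(\cH)}\to0$: writing $\Phi_n(S)=(I-P_n)S+P_nS(I-P_n)$, Lemma \ref{lSTP} with $p=1$ gives $\|(I-P_n)S\|_{\cB_1(\cH)}\to0$ (taking $R_n=I-P_n$, $\slim_{n\to\infty}R_n=0$, $S_n\equiv S$, $T_n\equiv I$) and $\|P_nS(I-P_n)\|_{\cB_1(\cH)}=\|(I-P_n)S^*P_n\|_{\cB_1(\cH)}\to0$ (taking $R_n=I-P_n$, $S_n\equiv S^*$, $T_n=P_n$, $\slim_{n\to\infty}T_n=I$). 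Moreover $\|\Phi_n\|_{\cB(\cB_1(\cH))}\le2$ for all $n$.

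Given this, \eqref{Pn4} follows at once: $[B'(t)-B_n'(t)](A_-^2+I)^{-1/2}=\Phi_n\big(B'(t)(A_-^2+I)^{-1/2}\big)$, whose $\cB_1(\cH)$-norm tends to $0$ for each $t$ and is bounded by $2\|B'(t)(A_-^2+I)^{-1/2}\|_{\cB_1(\cH)}\in L^1(\bbR;dt)$, so dominated convergence applies (the integrand being measurable by Lemma \ref{l3.1} and Remark \ref{r2.2}). For \eqref{Pn5} I would first record the algebraic identity $A_{+,n}-A_{-,n}=P_n(A_+-A_-)P_n=P_nB(+\infty)P_n=B_n(+\infty)$, valid since $\ran(P_n)\subseteq\dom(A_-)=\dom(A_+)$ and $B(+\infty)$ agrees with $A_+-A_-$ there (so, after composing with $(A_-^2+I)^{-1/2}$, the two expressions in \eqref{Pn5} coincide); then, by \eqref{intB}, $B(+\infty)(A_-^2+I)^{-1/2}\in\cB_1(\cH)$ is a single fixed operator, and $[B(+\infty)-B_n(+\infty)](A_-^2+I)^{-1/2}=\Phi_n\big(B(+\infty)(A_-^2+I)^{-1/2}\big)\to0$ in $\cB_1(\cH)$.

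For \eqref{Pn6} I would invoke the standard criterion for strong resolvent convergence (e.g.\ \cite[Theorem\ VIII.25\,(a)]{RS80}): since $A_{\pm,n}$ are bounded self-adjoint and $A_\pm$ self-adjoint, it suffices to check $A_{\pm,n}u\to A_\pm u$ for all $u$ in the core $\dom(A_-)=\dom(A_\pm)$. For $A_-$ this is immediate: $A_{-,n}u=P_nA_-P_nu=P_nA_-u\to A_-u$, using that $P_n$ commutes with $A_-$ and $\slim_{n\to\infty}P_n=I$. For $A_+$, write $A_+=A_-+B(+\infty)$ on $\dom(A_-)$, so for $u\in\dom(A_-)$
\[
A_+P_nu=P_nA_-u+B(+\infty)(A_-^2+I)^{-1/2}\big[P_n(A_-^2+I)^{1/2}u\big];
\]
since $B(+\infty)(A_-^2+I)^{-1/2}\in\cB(\cH)$ and $P_n\to I$ strongly, $A_+P_nu\to A_-u+B(+\infty)u=A_+u$, and applying the contraction $P_n$ (together with $P_nA_+u\to A_+u$) yields $A_{+,n}u=P_nA_+P_nu\to A_+u$.

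The assertion requiring an extra idea is \eqref{Pn7}, where the convergence must be \emph{uniform in $t\in\bbR$}; this is the step I expect to be the main obstacle. By Remark \ref{TnormT} (formula \eqref{normT}) applied to the fibered operator $(\bsB-\bsB_n)(\bsA_-^2+\bsI)^{-1/2}$, whose fiber at $t$ is $[B(t)-P_nB(t)P_n](A_-^2+I)^{-1/2}=\Phi_n(T(t))$ with $T(t):=B(t)(A_-^2+I)^{-1/2}$, the claim \eqref{Pn7} becomes $\sup_{t\in\bbR}\|\Phi_n(T(t))\|_{\cB(\cH)}\to0$, and since $\|\cdot\|_{\cB(\cH)}\le\|\cdot\|_{\cB_1(\cH)}$ it suffices to prove this in $\cB_1(\cH)$-norm. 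The key point is that the curve $t\mapsto T(t)=\int_{-\infty}^tB'(s)(A_-^2+I)^{-1/2}\,ds$ is continuous from $\bbR$ into $\cB_1(\cH)$ by Lemma \ref{l3.1}, and extends continuously to the compact space $[-\infty,+\infty]$ with $T(-\infty)=0$, $T(+\infty)=B(+\infty)(A_-^2+I)^{-1/2}$ (by \eqref{intBp}, \eqref{limB}, \eqref{intB}); hence $K:=\{T(t):t\in\bbR\}$ is precompact in $\cB_1(\cH)$. Finally, a uniformly bounded sequence of linear maps on a Banach space ($\|\Phi_n\|\le2$) that converges to $0$ pointwise converges to $0$ uniformly on any compact set: cover $\ol K$ by finitely many $\tfrac{\varepsilon}{4}$-balls centered at $S_1,\dots,S_m$, pick $n$ so large that $\|\Phi_n(S_i)\|_{\cB_1(\cH)}<\tfrac{\varepsilon}{2}$ for $i=1,\dots,m$, and use $\|\Phi_n\|\le2$ on the remainder. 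Therefore $\sup_{t\in\bbR}\|\Phi_n(T(t))\|_{\cB_1(\cH)}\to0$, which establishes \eqref{Pn7}.
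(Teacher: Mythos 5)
Your proof is correct, and for \eqref{Pn4} it coincides with the paper's argument (pointwise convergence of $S\mapsto S-P_nSP_n$ on $\cB_1(\cH)$ via Lemma \ref{lSTP}, dominated by $2\|B'(t)(A_-^2+I)^{-1/2}\|_{\cB_1(\cH)}$ and \eqref{2.13i}). Where you diverge is in how the remaining items are obtained. The paper funnels everything through \eqref{Pn4}: it gets \eqref{Pn5} by integrating $[B'(t)-B_n'(t)](A_-^2+I)^{-1/2}$ over $\bbR$, and it gets the uniformity in $t$ needed for \eqref{Pn7} from the single inequality $\sup_t\|[B(t)-B_n(t)](A_-^2+I)^{-1/2}\|_{\cB(\cH)}\le\int_\bbR\|[B'(\tau)-B_n'(\tau)](A_-^2+I)^{-1/2}\|_{\cB_1(\cH)}\,d\tau$, so \eqref{Pn7} is an immediate corollary of \eqref{Pn4}. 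You instead apply the pinching map directly to the fixed trace class operator $B(+\infty)(A_-^2+I)^{-1/2}$ for \eqref{Pn5}, and for \eqref{Pn7} you prove that $t\mapsto B(t)(A_-^2+I)^{-1/2}$ has precompact range in $\cB_1(\cH)$ (continuity from Lemma \ref{l3.1}, \eqref{intBp}, plus the limits $0$ and $B(+\infty)(A_-^2+I)^{-1/2}$ at $\mp\infty$) and then upgrade pointwise convergence of the uniformly bounded maps $\Phi_n$ to uniform convergence on that compact set; this is a valid, slightly longer route that trades the paper's one-line $L^1$ domination for a reusable compactness principle which would work whenever the curve $t\mapsto B(t)(A_-^2+I)^{-1/2}$ is merely $\cB_1$-continuous with limits at $\pm\infty$, without re-invoking the derivative. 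For \eqref{Pn6} you also argue differently for $A_+$: you verify convergence on the common core $\dom(A_-)$ and cite \cite[Theorem VIII.25(a)]{RS80}, whereas the paper uses the second resolvent identity, splitting $(A_++iI)^{-1}-(A_{+,n}+iI)^{-1}$ into a piece controlled in $\cB_1(\cH)$ by \eqref{Pn5} and a piece converging strongly by \eqref{Pn3}; both rest on the same convergence criterion and are equally sound (your use of it is legitimate since $\dom(A_-)$ is dense, hence a core for the bounded operators $A_{\pm,n}$, and $A_+P_nu\to A_+u$ follows from boundedness of $B(+\infty)(A_-^2+I)^{-1/2}$ exactly as you say).
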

\begin{proof}
As usual, we abbreviate $\varkappa=(A_-^2+I)^{1/2}$. To prove \eqref{Pn4}, we will employ the dominated convergence theorem. 
By \eqref{comAm} one infers that
\begin{align}
\begin{split}
&\|[B'(t)-B'_n(t)]\varkappa^{-1}\|_{\cB_1(\cH)}=\|B'(t)\varkappa^{-1}
-P_nB'(t)\varkappa^{-1} P_n\|_{\cB_1(\cH)}      \\
& \quad \le 2\|B'(t)\varkappa^{-1}\|_{\cB_1(\cH)},\lb{3.24}
\end{split} 
\end{align}
and the function in the right-hand side of \eqref{3.24} is summable thanks to
\eqref{2.13i}. For each $t\in\bbR$, due to \eqref{comAm}, one may write 
\begin{align}
\begin{split}
& 
[B'(t)-B'_n(t)]\varkappa^{-1}=B'(t)\varkappa^{-1}-P_nB'(t)\varkappa^{-1}P_n 
\\
& \quad =P_nB'(t)\varkappa^{-1}(I-P_n)
+(I-P_n)B'(t)\varkappa^{-1}.
\end{split} 
\end{align}
Since $B'(t)\varkappa^{-1}=B'(t)(|A_-|+I)^{-1}\cdot (|A_-|+I)\varkappa^{-1}\in\cB_1(\cH)$ by Hypothesis 
\ref{h2.1}\,$(iv)$, and since $P_n\to I$ in $\cH$ strongly as 
$n\to\infty$, one can apply Lemma \ref{lSTP},  thus finishing the 
proof of \eqref{Pn4}. By definition, the operators under the $\cB_1(\cH)$-norm 
on either side in equation \eqref{Pn5} are equal (cf.\ \eqref{defAn}, \eqref{dfnAplus}).  Because of 
\begin{equation}
[B(+\infty)-B_n(+\infty)]\varkappa^{-1}=\int_\bbR [B'(t)-B'_n(t)]\varkappa^{-1}\,dt,
\end{equation}
assertion \eqref{Pn5} follows from \eqref{Pn4}. That $A_{-,n}\to A_-$ in
strong resolvent sense follows from \eqref{Pn3} and \cite[Theorem 
VIII.25(a)]{RS80}. To see that  $A_{+,n}\to A_+$ in
strong resolvent sense as $n\to\infty$, one writes 
\begin{align}
& (A_+ + i I)^{-1} - (A_{+,n} + i I)^{-1}=-(A_{+,n} + i 
I)^{-1}(A_+-A_{+,n})(A_+ + i I)^{-1}   \no \\
& \quad =-(A_{+,n} + i I)^{-1}[A_+-A_{+,n}-A_-+A_{-,n}]\varkappa^{-1}
\varkappa(A_+ + i I)^{-1}
\lb{strre1}   \\
& \qquad  -(A_{+,n} + i I)^{-1}(A_--A_{-,n})(A_+ + i I)^{-1}.\lb{strre2}
\end{align}
Since $\|(A_{+,n} + i I)^{-1}\|_{\cB(\cH)}\le1$ for all $n$ for the 
self-adjoint operator $A_{+,n}$, and $\varkappa(A_+ + i 
I)^{-1} = (A_-^2 + I)^{1/2}(A_+ + i I)^{-1} \in\cB(\cH)$ due to \eqref{dfnAplus}, the 
sequence of operators
in \eqref{strre1} converges to zero as $n\to\infty$ (even in $\cB_1(\cH)$
due to \eqref{Pn5}) while the sequence of the operators
in \eqref{strre2} converges to zero as $n\to\infty$ strongly in $\cH$
due to \eqref{Pn3}. Finally, relation \eqref{Pn7} follows from Remark \ref{TnormT}, the estimate
\begin{align}
& \big\|(\bsB - \bsB_n)(\bsA_-^2 + \bsI)^{-1/2}\big\|_{\cB(L^2(\bbR;\cH))}
=\sup_{t\in\bbR}\|[B(t)-B_n(t)]\varkappa^{-1}\|_{\cB(\cH)}    \no \\
&\quad =\sup_{t\in\bbR}\Big\|
\int_{-\infty}^t [B'(\tau)-B'_n(\tau)]\varkappa^{-1}\,d\tau\Big\|_{\cB(\cH)} 
\no \\
& \quad 
\le\int_{-\infty}^\infty \|[B'(\tau)-B'_n(\tau)] \varkappa^{-1}\|_{\cB_1(\cH)}\,d\tau\,,
\end{align}
and \eqref{Pn4}.
\end{proof}

\section{The Left-Hand Side of the Trace Formula and Approximations} \lb{s5}

In this section we deal with the left-hand sides of formulas \eqref{trfOLD} 
and \eqref{trfn}, assuming Hypothesis \ref{h2.1}. We also recall the 
notations introduced in \eqref{dfnvk1}, \eqref{H0}, \eqref{Hsht2}, 
\eqref{Hj}, and \eqref{RRR}, and Lemma \ref{l4.relbdd}.

We start by proving the first inclusion in \eqref{2.21} (the second inclusion is 
proved similarly) and repeatedly use the generalized polar decomposition 
described in \eqref{PDT}. In addition, we will frequently rely on resolvent formulas 
familiar from the perturbation theory of quadratic forms (and more generally, 
for perturbations permitting appropriate factorizations) as pioneered by 
Kato \cite{Ka66} and applied to Schr\"odinger operators by Simon \cite{Si71}  
(see also \cite[Sections\ 2, 3]{GLMZ05}).   

\begin{lemma} \lb{trclLHS}
Assume Hypothesis \ref{h2.1}. Then
\beq
\big[(\bsH_2 - z \, \bsI)^{-1}-(\bsH_1 - z \, \bsI)^{-1}\big] \in\cB_1(L^2(\bbR;\cH)),
\quad z\in\rho(\bsH_2) \cap \rho(\bsH_1),
\enq
for the resolvents of the operators defined in \eqref{dfnHtH}.
\end{lemma}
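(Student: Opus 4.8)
The plan is to realize, via Lemma~\ref{l4.relbdd} and Lemma~\ref{l4.hatHH}, both $\bsH_1$ and $\bsH_2$ as the self-adjoint operators associated with the form sums $Q_{\bsH_0}+Q_{\bsV_j}$, $j=1,2$, and to exploit the fact that $Q_{\bsV_1}$ and $Q_{\bsV_2}$ differ only in the single summand carrying the factor $(-1)^j$ and the operator $\bsB'$; see \eqref{hsh}. First I would record, for $z<0$, the bounded operators on $L^2(\bbR;\cH)$,
\begin{equation*}
\bsP(z) = \overline{\bsA_-(\bsH_0-z\bsI)^{-1/2}}, \qquad \bsQ(z) = \overline{\bsB(\bsH_0-z\bsI)^{-1/2}},
\end{equation*}
which by Lemma~\ref{l3.6} satisfy $\|\bsP(z)\|_{\cB(L^2(\bbR;\cH))}\le 1$ and $\|\bsQ(z)\|_{\cB(L^2(\bbR;\cH))}=o(1)$ as $z\downarrow-\infty$, together with
\begin{equation*}
\bsW_1(z) = \overline{|\bsB'|^{1/2}(\bsH_0-z\bsI)^{-1/2}}, \qquad \bsW_2(z) = \overline{|(\bsB')^*|^{1/2}(\bsH_0-z\bsI)^{-1/2}},
\end{equation*}
which by Lemma~\ref{12bprime} lie in $\cB_2(L^2(\bbR;\cH))$ with $\cB_2$-norms that are $o(1)$ as $z\downarrow-\infty$ by \eqref{zdecay}. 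Invoking the generalized polar decomposition \eqref{4.B'} and the definition \eqref{hsh}, the form $Q_{\bsV_j}$ is then represented, for $f,g\in\dom\big(\bsH_0^{1/2}\big)$, as $Q_{\bsV_j}(g,f)=\big((\bsH_0-z\bsI)^{1/2}g,\bsK_j(z)(\bsH_0-z\bsI)^{1/2}f\big)_{L^2(\bbR;\cH)}$, where
\begin{equation*}
\bsK_j(z)=\bsP(z)^*\bsQ(z)+\bsQ(z)^*\bsP(z)+\bsQ(z)^*\bsQ(z)+(-1)^j\,\bsW_2(z)^* U_{\bsB'}\bsW_1(z), \quad j=1,2.
\end{equation*}

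Next I would fix $z_0<0$ with $|z_0|$ large enough that $\|\bsK_j(z_0)\|_{\cB(L^2(\bbR;\cH))}<1$, $j=1,2$, which is possible by the norm bounds just recorded, so that $[I+\bsK_j(z_0)]$ is boundedly invertible. The standard form-perturbation (Kato--Simon) resolvent identity then yields
\begin{equation*}
(\bsH_j-z_0\bsI)^{-1}=(\bsH_0-z_0\bsI)^{-1/2}\,[I+\bsK_j(z_0)]^{-1}\,(\bsH_0-z_0\bsI)^{-1/2}, \quad j=1,2,
\end{equation*}
which I would justify via the first representation theorem by checking that the right-hand side applied to $u$ lies in $\dom(\bsH_j)$ and solves $(\bsH_j-z_0\bsI)f=u$. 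Subtracting the two identities and using $[I+\bsK_2(z_0)]^{-1}-[I+\bsK_1(z_0)]^{-1}=[I+\bsK_2(z_0)]^{-1}\big(\bsK_1(z_0)-\bsK_2(z_0)\big)[I+\bsK_1(z_0)]^{-1}$ together with $\bsK_1(z_0)-\bsK_2(z_0)=-2\,\bsW_2(z_0)^* U_{\bsB'}\bsW_1(z_0)$, I obtain the resolvent difference at $z_0$ conjugated around a product of the two Hilbert--Schmidt operators $\bsW_2(z_0)^*$ and $U_{\bsB'}\bsW_1(z_0)$; since $\cB_2\cdot\cB_2\subseteq\cB_1$ and $\cB_1(L^2(\bbR;\cH))$ is a two-sided ideal, this gives $\big[(\bsH_2-z_0\bsI)^{-1}-(\bsH_1-z_0\bsI)^{-1}\big]\in\cB_1(L^2(\bbR;\cH))$.

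To pass from $z_0$ to an arbitrary $z\in\rho(\bsH_1)\cap\rho(\bsH_2)$, I would use the first resolvent identity $(\bsH_j-z\bsI)^{-1}=[I+(z-z_0)(\bsH_j-z\bsI)^{-1}](\bsH_j-z_0\bsI)^{-1}$ to write
\begin{align*}
& (\bsH_2-z\bsI)^{-1}-(\bsH_1-z\bsI)^{-1} \\
& \quad =[I+(z-z_0)(\bsH_2-z\bsI)^{-1}]\big[(\bsH_2-z_0\bsI)^{-1}-(\bsH_1-z_0\bsI)^{-1}\big] \\
& \qquad\qquad \times [I+(z-z_0)(\bsH_1-z\bsI)^{-1}],
\end{align*}
and conclude once more from the ideal property of $\cB_1(L^2(\bbR;\cH))$. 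The hard part is not the trace-class bookkeeping, which is essentially forced once one observes that $\bsH_1$ and $\bsH_2$ differ at the form level only through the $\bsB'$-term --- a genuine product of Hilbert--Schmidt factors after conjugation by $(\bsH_0-z\bsI)^{-1/2}$ --- but rather the careful justification of the factorized resolvent identity for the form sums, which requires tracking the domains of $\bsB$ and $\bsB'$ and of the generalized polar decomposition \eqref{4.B'}, and invoking the mapping properties of Lemmas~\ref{12bprime} and \ref{l3.6} at each step.
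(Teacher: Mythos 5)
Your argument is correct, and it rests on the same essential input as the paper's proof --- the Hilbert--Schmidt bounds of Lemma~\ref{12bprime} for $|\bsB'|^{1/2}\bsR_0(z)^{1/2}$ and $|(\bsB')^*|^{1/2}\bsR_0(z)^{1/2}$ together with the observation that, at the form level, $\bsH_1$ and $\bsH_2$ differ only by twice the $\bsB'$-term --- but it handles the resolvents differently. The paper's proof of this lemma is more direct: it writes the difference at once as $-2\,\ol{(\bsH_1-z\,\bsI)^{-1}|(\bsB')^*|^{1/2}U_{\bsB'}|\bsB'|^{1/2}(\bsH_2-z\,\bsI)^{-1}}$ using the second-resolvent-type equations of Kato's form-perturbation theory, then factors the outer resolvents through $\bsH_0$, exploiting the equality of form domains \eqref{doms} (so that $(\bsH_0-z\,\bsI)^{1/2}(\bsH_j-z\,\bsI)^{-1/2}$ is bounded) to isolate the trace class middle factor, and finally extends from $z<0$ to all of $\rho(\bsH_1)\cap\rho(\bsH_2)$ by analytic continuation. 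Your route instead invokes the Tiktopoulos-type factorization $(\bsH_j-z_0\,\bsI)^{-1}=\bsR_0(z_0)^{1/2}[\bsI+\bsK_j(z_0)]^{-1}\bsR_0(z_0)^{1/2}$ at one sufficiently negative $z_0$, where $\|\bsK_j(z_0)\|<1$ by Lemmas~\ref{12bprime} and \ref{l3.6}; this is precisely the device the paper uses later, in the proof of Proposition~\ref{prTr}, rather than in this lemma. What your version buys is that the passage to arbitrary $z\in\rho(\bsH_1)\cap\rho(\bsH_2)$ becomes a purely algebraic step: your three-factor identity is exact, since $[\bsI+(z-z_0)(\bsH_2-z\,\bsI)^{-1}](\bsH_2-z_0\,\bsI)^{-1}=(\bsH_2-z\,\bsI)^{-1}$, $(\bsH_1-z_0\,\bsI)^{-1}[\bsI+(z-z_0)(\bsH_1-z\,\bsI)^{-1}]=(\bsH_1-z\,\bsI)^{-1}$, and the cross terms cancel, so no analytic continuation is needed. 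The price is the smallness requirement at $z_0$ and the need to justify the factorized resolvent formula via the representation theorems; your sketch of that justification (checking $Q_{\hatt\bsH_j}(g,f)-z_0(g,f)=(g,u)_{L^2(\bbR;\cH)}$ for $f=\bsR_0(z_0)^{1/2}[\bsI+\bsK_j(z_0)]^{-1}\bsR_0(z_0)^{1/2}u$ and all $g\in\dom(Q_{\bsH_0})$, then invoking Lemmas~\ref{l4.relbdd} and \ref{l4.hatHH}) is sound.
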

\begin{proof}
By Lemma \ref{12bprime}, one infers that 
\beq 
\big[|(\bsB')^*|^{1/2} \bsR_0(z)^{1/2}\big]^* U_{\bsB'} |\bsB'|^{1/2} \, 
\bsR_0(z)^{1/2} \in \cB_1(L^2(\bbR;\cH)), 
\quad z \in\bbC\backslash [0,\infty).     \lb{clB1}
\enq
Combining \eqref{h0h}, \eqref{hsh}, \eqref{4.B'}, Lemmas \ref{l4.relbdd} and 
\ref{l4.hatHH}, and equation \eqref{clB1}, one computes (for simplicity) for $z<0$, 
\begin{align}
&(\bsH_2 - z \, \bsI)^{-1}-(\bsH_1 - z \, \bsI)^{-1}  \no \\
& \quad = - 2 \ol{(\bsH_1 - z \, \bsI)^{-1} |(\bsB')^*|^{1/2} U_{\bsB'} |\bsB'|^{1/2}
\, (\bsH_2 - z \, \bsI)^{-1}}  \no \\ 
& \quad = - 2 \big[|(\bsB')^*|^{1/2} (\bsH_1 - z \, \bsI)^{-1}\big]^*  
U_{\bsB'} |\bsB'|^{1/2} \, (\bsH_2 - z \, \bsI)^{-1}   \no \\
& \quad = 2 (\bsH_1 - z \, \bsI)^{-1/2} \, 
\big[(\bsH_0 - z \, \bsI)^{1/2}(\bsH_1 - z \, \bsI)^{-1/2}\big]^*  \no \\ 
& \qquad \times \big[|(\bsB')^*|^{1/2} (\bsH_0 - z \, \bsI)^{-1/2}\big]^*  
U_{\bsB'} |\bsB'|^{1/2} \, (\bsH_0 - z \, \bsI)^{-1/2}  \no \\
& \qquad \times \big[(\bsH_2 - z \, \bsI)^{-1/2}(\bsH_0 - z \, \bsI)^{1/2}\big]^* 
(\bsH_2 - z \, \bsI)^{-1/2} \in \cB_1(L^2(\bbR;\cH)). 
\end{align}
By analytic continuation with respect to $z$ based on resolvent equations in a standard manner, this extends to $z\in\rho(\bsH_2) \cap \rho(\bsH_1)$. We note that the 
resolvent equations used repeatedly at the beginning of this computation follow from 
the results in \cite[Sect.\ 1]{Ka66} (see also \cite[Sects.\ 2, 3]{GLMZ05}, \cite[Ch.\ II]{Si71}).
\end{proof}

To prove \eqref{2.22} in Proposition \ref{prTr}, we will need one more technical lemma. We recall the notation introduced in \eqref{H0}, \eqref{hsht2}, 
\eqref{Hsht2}, \eqref{Hj}, \eqref{RRR}, and introduce the following bounded operators 
in $L^2(\bbR; \cH)$:
\begin{align} 
\bsL(z) &= \bsI+ \big[\bsA_- \bsR_0^{1/2}(z)\big]^* \, \bsB \, \bsR_0^{1/2}(z) 
+ \big[\bsB \bsR_0^{1/2}(z)\big]^* \bsA_- \, \bsR_0^{1/2}(z)   \no \\
& \quad + \big[\bsB \bsR_0^{1/2}(z)\big]^* \, \bsB \, \bsR_0^{1/2}(z),    
\quad z<0,      \lb{dfnLa}  \\
\bsL_n(z) &= \bsI+ \big[\bsA_{-,n} \bsR_{0,n}^{1/2}(z)\big]^* \, 
\bsB_n \, \bsR_{0,n}^{1/2}(z) 
+ \big[\bsB_n \bsR_{0,n}^{1/2}(z)\big]^* \bsA_{-,n} \, \bsR_{0,n}^{1/2}(z)   
\no \\
& \quad + \big[\bsB_n \bsR_{0,n}^{1/2}(z)\big]^* \, 
\bsB_n \, \bsR_{0,n}^{1/2}(z), \quad z<0.    \lb{dfnLb} 
\end{align}
In what follows, we use the subscript $n\in\bbN$ for the operators defined in \eqref{H0}, 
\eqref{hsht2}, \eqref{Hsht2}, \eqref{Hj}, and \eqref{RRR}, with $A(t)$, $B(t)$, 
$A_-$ replaced by the operators $A_n(t)$, $B_n(t)$, $A_{-,n}$ introduced in 
\eqref{defAn}.
In addition, one observes that
  \beq\lb{commPn}
\bsR_{0,n}(z)=\bsP_n \, \bsR_0(z) \, \bsP_n=\bsR_0(z) \, 
\bsP_n=\bsP_n \, \bsR_0(z), \quad z\in\bbC\backslash\bbR, 
\enq
with $\bsP_n= E_{\bsA_-}((-n,n))$ the spectral projection for $\bsA_-$.

\begin{lemma}\lb{invLLn}
Assume Hypothesis \ref{h2.1}. Then the following assertions hold for 
the operators defined in \eqref{dfnLa}, \eqref{dfnLb}: \\
$(i)$\,  $ \lim_{n\to\infty} \|\bsL(z) - \bsL_n(z)\|_{\cB(L^2(\bbR;\cH))} = 0$ uniformly for $z\le-1$. \\
$(ii)$ The operators $\bsL(z)$, $\bsL_n(z)$, $n\in\bbN$, are boundedly invertible
on $L^2(\bbR;\cH)$ for $z<0$ and 
\beq\label{n5.7}
\sup_{z\le-1}\|\bsL(z)^{-1}\|_{\cB(L^2(\bbR;\cH))}<\infty,\quad \sup_{z\le-1}\sup_{n\in\bbN} \|\bsL_n(z)^{-1}\|_{\cB(L^2(\bbR;\cH))}<\infty.\enq
\end{lemma}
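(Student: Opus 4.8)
The plan is to handle part $(ii)$ first, as it carries the analytic content, and then to deduce part $(i)$ from \eqref{Pn7}. For $(ii)$ the key is a form identity: for $z<0$ the operator $\bsR_0(z)^{1/2}=(\bsH_0-z\bsI)^{-1/2}$ is a bounded bijection of $L^2(\bbR;\cH)$ onto $\dom\big(\bsH_0^{1/2}\big)=\dom(d/dt)\cap\dom(\bsA_-)$, with inverse $(\bsH_0-z\bsI)^{1/2}$, and comparing the definitions \eqref{hsht2}, \eqref{Hsht2}, \eqref{dfnLa} one finds for every $u\in L^2(\bbR;\cH)$, with $f:=\bsR_0(z)^{1/2}u$,
\[
(u,\bsL(z)u)_{L^2(\bbR;\cH)}=Q_{\bsH}(f,f)-z\,\|f\|_{L^2(\bbR;\cH)}^2,\qquad \|u\|_{L^2(\bbR;\cH)}^2=Q_{\bsH_0}(f,f)-z\,\|f\|_{L^2(\bbR;\cH)}^2 .
\]
Since $\bsL(z)$ is self-adjoint (the two cross terms in \eqref{dfnLa} are mutual adjoints and the last term is nonnegative), it suffices to bound the right-hand side of the first identity from below. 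A computation as in \eqref{3.37}, \eqref{3.38} but without the $\bsB'$-terms, together with $\|\bsD_{\bsA_-}f\|^2=\|f'\|^2+\|\bsA_-f\|^2$ from Lemma \ref{l2.3}, gives $Q_{\bsH}(f,f)=\|f'\|_{L^2(\bbR;\cH)}^2+\|\bsA f\|_{L^2(\bbR;\cH)}^2\ge 0$; in particular $\bsH\ge 0$, so $(-\infty,0)\subseteq\rho(\bsH)$.

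By the $t$-uniform infinitesimal $\bsA_-$-boundedness of $\bsB$ recorded in Lemmas \ref{lHYP} and \ref{UNIFnorms}, the quantities $\|f'\|^2+\|\bsA f\|^2+\|f\|^2$ and $\|f'\|^2+\|\bsA_-f\|^2+\|f\|^2=Q_{\bsH_0}(f,f)+\|f\|^2$ are comparable with absolute constants. Writing $-z\|f\|^2=\|f\|^2+(-z-1)\|f\|^2$ for $z\le-1$ and using $\bsH\ge0$, one deduces the existence of $c>0$, independent of $z<0$ (and, below, of $n$), with $Q_{\bsH}(f,f)-z\|f\|^2\ge c\min\{1,|z|\}\big(Q_{\bsH_0}(f,f)-z\|f\|^2\big)$; combined with the two identities this yields $(u,\bsL(z)u)\ge c\min\{1,|z|\}\|u\|^2$, so $\bsL(z)$ is boundedly invertible for all $z<0$ and $\sup_{z\le-1}\|\bsL(z)^{-1}\|_{\cB(L^2(\bbR;\cH))}\le 1/c$. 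The same argument applies to $\bsL_n(z)$ from \eqref{dfnLb}: on $\ran(\bsI-\bsP_n)$ it reduces to $\bsI$, and on $\ran(\bsP_n)$ the relative bounds of $B_n(t)=P_nB(t)P_n$ with respect to $A_{-,n}=P_nA_-P_n$ are dominated by those of $B(t)$ with respect to $A_-$ (note $\|A_{-,n}g\|=\|A_-g\|$ for $g\in\ran(P_n)$), so the same $c$ serves for all $n$, which gives the second estimate in \eqref{n5.7}.

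For part $(i)$ I would exploit the commutation relations \eqref{comAm}, \eqref{commPn}. Put $X(z)=\bsA_-\bsR_0^{1/2}(z)$, $Z(z)=(\bsA_-^2+\bsI)^{1/2}\bsR_0^{1/2}(z)$, and $W=\bsB(\bsA_-^2+\bsI)^{-1/2}$; then $X(z)$ and $Z(z)$ commute with $\bsP_n$ and, by \eqref{rhbdd} and the spectral theorem (cf.\ \eqref{n4.60}), are contractions for $z\le-1$, while $W\in\cB(L^2(\bbR;\cH))$ with $\|W\|=\sup_{t\in\bbR}\|B(t)(A_-^2+I)^{-1/2}\|_{\cB(\cH)}<\infty$ by Remark \ref{TnormT} and \eqref{eq2.22}. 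Using $\bsR_{0,n}^{1/2}(z)=\bsP_n\bsR_0^{1/2}(z)$ one checks $\bsA_{-,n}\bsR_{0,n}^{1/2}(z)=\bsP_nX(z)$ and $\bsB_n\bsR_{0,n}^{1/2}(z)=W_nZ(z)$ with $W_n:=\bsP_nW\bsP_n$, and then, after using $\bsP_nW_n=W_n$,
\[
\bsL(z)-\bsL_n(z)=X(z)^*(W-W_n)Z(z)+Z(z)^*(W-W_n)^*X(z)+Z(z)^*\big(W^*W-W_n^*W_n\big)Z(z).
\]
Writing $W^*W-W_n^*W_n=W^*(W-W_n)+(W-W_n)^*W_n$ and estimating by the norms of the contractions $X(z),Z(z)$ and by $\|W_n\|\le\|W\|$, one obtains
\[
\|\bsL(z)-\bsL_n(z)\|_{\cB(L^2(\bbR;\cH))}\le(2+2\|W\|)\,\big\|(\bsB-\bsB_n)(\bsA_-^2+\bsI)^{-1/2}\big\|_{\cB(L^2(\bbR;\cH))},\qquad z\le-1,
\]
and the right-hand side tends to $0$ by \eqref{Pn7}, uniformly in $z\le-1$.

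The step I expect to be the main obstacle is exactly the uniformity in $z\le-1$ in both parts. In $(i)$ it is handled by arranging that all the $z$-dependence sits in the contractive factors $X(z),Z(z)$, so that only the $z$-independent quantity $\|(\bsB-\bsB_n)(\bsA_-^2+\bsI)^{-1/2}\|$, which vanishes in the limit by \eqref{Pn7}, carries the convergence. In $(ii)$ it rests on the coercivity estimate $Q_{\bsH}(f,f)-z\|f\|^2\gtrsim Q_{\bsH_0}(f,f)-z\|f\|^2$ with a constant independent of $z\le-1$, which in turn reduces to the $t$-uniform infinitesimal form-boundedness of $\bsB$ relative to $\bsA_-$ furnished by Lemmas \ref{lHYP} and \ref{UNIFnorms}.
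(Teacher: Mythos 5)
Your proposal is correct, and while part $(i)$ follows essentially the paper's own route, part $(ii)$ is argued genuinely differently. For $(i)$ you perform the same reduction the paper does (its four-term decomposition $\bsJ_1,\dots,\bsJ_4$ sandwiched by $\widehat\varkappa\,\bsR_0(z)^{1/2}$ is just a repackaging of your $X(z),Z(z),W,W_n$, with $W-W_n=(\bsB-\bsB_n)(\bsA_-^2+\bsI)^{-1/2}$), so all $z$-dependence sits in contractions (via \eqref{rhbdd}, \eqref{n4.60}) and convergence comes from \eqref{Pn7}, uniformly in $z\le -1$. For $(ii)$ the paper instead obtains bounded invertibility of $\bsL(z)$, $z<0$, through Tiktopoulos' formula $\bsR(z)=\bsR_0(z)^{1/2}\bsL(z)^{-1}\bsR_0(z)^{1/2}$ (analytic continuation from large $|z|$), and gets the uniform bounds in \eqref{n5.7} from $\lim_{z\downarrow-\infty}\|\bsL(z)-\bsI\|=0$, $\lim_{z\downarrow-\infty}\|\bsL_n(z)-\bsI\|=0$ (see \eqref{n5.18}, based on Lemma \ref{l3.6}) for each fixed $n$, and then uses item $(i)$ to transfer the uniformity to all $n\in\bbN$. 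You instead prove a direct coercivity estimate $(u,\bsL(z)u)_{L^2(\bbR;\cH)}\ge c\,\|u\|^2$ with $c$ independent of $z\le-1$ and of $n$, via the exact form identities $(u,\bsL(z)u)=Q_{\bsH}(f,f)-z\|f\|^2$, $\|u\|^2=Q_{\bsH_0}(f,f)-z\|f\|^2$ with $f=\bsR_0(z)^{1/2}u$, the algebraic identity $Q_{\bsH}(f,f)=\|f'\|^2+\|(\bsA_-+\bsB)f\|^2\ge 0$, and the $t$-uniform graph-norm equivalence \eqref{UNn1}; the $n$-uniformity then comes for free because $\|P_nB(t)(|A_-|-zI)^{-1}P_n\|_{\cB(\cH)}\le\|B(t)(|A_-|-zI)^{-1}\|_{\cB(\cH)}$, so the constants of Lemma \ref{UNIFnorms} survive truncation. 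What each approach buys: the paper's argument recycles machinery it needs anyway (the Tiktopoulos/resolvent identities reappear in the proof of Proposition \ref{prTr}) and avoids form bookkeeping, but its uniformity is obtained somewhat indirectly (large-$|z|$ smallness plus continuity on $[z_1,-1]$, plus item $(i)$ for the $n$-uniform bound); your coercivity argument yields \eqref{n5.7} quantitatively and independently of item $(i)$ and of \eqref{n5.18}, at the modest price of spelling out that $\bsL_n(z)$ is block diagonal with respect to $\bsP_n$ and that $\bsR_{0,n}^{1/2}$ agrees with $\bsR_0^{1/2}$ on $\ran(\bsP_n)$ (as in \eqref{PnR}) — a detail you should make explicit, but which is straightforward.
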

\begin{proof}
Using \eqref{defAn}, \eqref{commPn}, the fact that
\begin{align}
& \bsA_{-,n} = \bsA_{-,n} \bsP_n = \bsP_n \bsA_{-,n} = \bsP_n \bsA_{-,n} \bsP_n 
= \bsA_- \, \bsP_n = \bsP_n \, \bsA_- = \bsP_n \, \bsA_- \, \bsP_n,  \no \\
& \hspace*{10cm} n\in\bbN,  
\end{align}
and abbreviating $\widehat\varkappa=(\bsA_-^2 + \bsI)^{1/2}$, one obtains 
the following representation:
\begin{align}
\bsL(z)&-\bsL_n (z)
= \big[\widehat \varkappa \bsR_0(z)^{1/2}\big]^* \no\\
&\times \Big[[\bsA_- \widehat \varkappa^{-1}]^* \, \bsB \, \widehat \varkappa^{-1} 
+ [ \bsB \widehat \varkappa^{-1}]^* \bsA_- \widehat \varkappa^{-1}
+ [\bsB \widehat \varkappa^{-1}]^* \, \bsB \, \widehat \varkappa^{-1}   \no \\
&\quad - [\bsA_{-,n} \widehat \varkappa^{-1}]^* \, \bsB_n \widehat \varkappa^{-1}
- [ \bsB_n \widehat \varkappa^{-1}]^* \bsA_{-,n} \widehat \varkappa^{-1}
- [\bsB_n \widehat \varkappa^{-1}]^* \, \bsB_n \widehat \varkappa^{-1}\Big]\no\\
&\quad\quad\times \big[\widehat \varkappa \, \bsR_0(z)^{1/2} \big]  \no  \\
&= \bsJ_1+ \bsJ_2+ \bsJ_3+ \bsJ_4,
\end{align}
where we denoted 
\begin{align}
\bsJ_1&= \big[\widehat \varkappa \bsR_0(z)^{1/2}\big]^* 
[\bsA_- \widehat \varkappa^{-1}]^* [(\bsB - \bsB_n) 
\widehat \varkappa^{-1}] \widehat \varkappa \, \bsR_0(z)^{1/2}, \\
\bsJ_2&= \big[\widehat \varkappa \bsR_0(z)^{1/2}\big]^* 
[(\bsB - \bsB_n) \widehat \varkappa^{-1}]^* 
[\bsA_- \widehat \varkappa^{-1}] \widehat \varkappa \, \bsR_0(z)^{1/2},\\
\bsJ_3&= \big[\widehat \varkappa \bsR_0(z)^{1/2}\big]^* 
[(\bsB - \bsB_n) \widehat \varkappa^{-1}]^* [\bsB \widehat \varkappa^{-1}]
\widehat \varkappa \, \bsR_0(z)^{1/2},\\
\bsJ_4&= \big[\widehat \varkappa \bsR_0(z)^{1/2}\big]^* 
[\bsB_n \widehat \varkappa^{-1}]^*
[(\bsB - \bsB_n)\widehat \varkappa^{-1}] \widehat \varkappa \, \bsR_0(z)^{1/2}.
\end{align}
One observes that 
\beq
\lim_{n\to\infty} \|(\bsB - \bsB_n) \widehat 
\varkappa^{-1}\|_{\cB(L^2(\bbR;\cH))} = 0 \, \text{ and } \, 
\sup_{n\in\bbN} \|\bsB_n \widehat \varkappa^{-1}\|_{\cB(L^2(\bbR;\cH))}<\infty
\enq
by \eqref{Pn7}, and
\beq 
\|\widehat \varkappa \, \bsR_0(z)^{1/2}\|_{\cB(L^2(\bbR;\cH))}\leq 1
\, \text{ uniformly for $z\le-1$}
\enq
by \eqref{n4.60} and 
$\|(\bsA_-^2+\bsI)^{1/2}(\bsA_-^2-z\,\bsI)^{-1/2}\|_{\cB(L^2(\bbR; \cH))}=1$, $z\le -1$. Thus, assertion $(i)$ in Lemma \ref{invLLn} holds. 

 That the operator $\bsL(z)$ is boundedly invertible for 
$z\in\rho(\bsH)\cap(-\infty,0)$ is well-known. In addition, one has the identity 
\begin{align}
& (\bsH - z \, \bsI)^{-1} = \bsR_0(z)^{1/2} [\bsL(z)]^{-1} \bsR_0(z)^{1/2}   \no \\ 
& \quad = \bsR_0(z)^{1/2}
\Big[\bsI+ \big[\bsA_- \bsR_0^{1/2}(z)\big]^* \, \bsB \bsR_0^{1/2}(z) 
+ \big[\bsB \bsR_0^{1/2}(z)\big]^* \bsA_- \bsR_0^{1/2}(z)   \no \\
& \qquad + \big[\bsB \bsR_0^{1/2}(z)\big]^* \, \bsB \bsR_0^{1/2}(z)\Big]^{-1}
\bsR_0(z)^{1/2}, \quad z\in\rho(\bsH)\cap(-\infty,0). 
\end{align}
This is proved as Tiktopoulos' formula in \cite[Section\ II.3]{Si71} by first choosing 
$z<0$ with $|z|$ sufficiently large followed by an analytic continuation with respect 
to $z$. In particular, 
\begin{align}
\bsL^{-1} (z) &= (\bsH_0 - z \, \bsI)^{1/2} \, \bsR(z)^{1/2} \, \big[(\bsH_0 
- z \, \bsI)^{1/2} \bsR(z)^{1/2}\big]^*,  \quad z\in\rho(\bsH)\cap(-\infty,0),  \\
\bsL (z) &= \big[(\bsH - z \, \bsI)^{1/2}\bsR_0(z)^{1/2}\big]^* 
(\bsH - z \, \bsI)^{1/2}\, \bsR_0(z)^{1/2}, \quad z<0, 
\end{align}
illustrating again that both operators $\bsL(z)$ and $\bsL^{-1}(z)$ are 
bounded in $L^2(\bbR; \cH)$ by \eqref{doms}. Analogous considerations 
apply to $\bsL_n(z)$, $n\in\bbN$. 

The rest  of assertion $(ii)$ follows from item $(i)$. Indeed, 
we conclude from \eqref{dfnLa}, \eqref{dfnLb} that
\beq\lb{n5.18} \lim_{z\downarrow -\infty}\|\bsL(z)- \bsI\|_{\cB(L^2(\bbR;\cH))}=0,
\quad  \lim_{z\downarrow -\infty}\|\bsL_n(z)- \bsI\|_{\cB(L^2(\bbR;\cH))}=0
\enq
for each $n\in\bbN$ by Lemma \ref{l3.6}. This implies 
\beq \sup_{z\le-1}\|L(z)^{-1}\|_{\cB(L^2(\bbR;\cH))}<\infty,\quad
\sup_{z\le-1}\|L_n(z)^{-1}\|_{\cB(L^2(\bbR;\cH))}<\infty\enq for each $n\in\bbN$,
and now item $(i)$ implies the second assertion in \eqref{n5.7}.
\end{proof}

At this point we are ready to prove Proposition \ref{prTr}:

\begin{proof} We will abbreviate $\bsR_0^{1/2}=\bsR_0^{1/2}(z)$,
$\bsR_{0,n}^{1/2}=\bsR_{0,n}^{1/2}(z)$ 
and $L=L(z)$, $L_n=L_n(z)$. In view of Lemma \ref{trclLHS}, it remains to show 
\eqref{2.22}. Using Lemma \ref{invLLn}\,$(ii)$ and Lemma \ref{12bprime} 
we choose $z<-1$ with $|z|$ so large that
\begin{align} 
\begin{split}
& \big\|\bsL^{-1/2} \,\big[|(\bsB')^*|^{1/2}\bsR_0^{1/2}\big]^* 
U_{\bsB'} |\bsB'|^{1/2} \, \bsR_0^{1/2} 
\, \bsL^{-1/2}\big\|_{\cB(L^2(\bbR;\cH))}\le1/2,    \lb{boundLL}   \\
& \sup_{n\in\bbN} \big\|\bsL^{-1/2}_n \, 
\big[|(\bsB'_n)^*|^{1/2}\bsR_{0,n}^{1/2}\big]^* U_{\bsB'_n} |\bsB'_n|^{1/2} \,
\bsR_{0,n}^{1/2} \, \bsL^{-1/2}_n \big\|_{\cB(L^2(\bbR;\cH))}\le1/2.    
\end{split} 
\end{align}
Using \eqref{hsh}, \eqref{hsht2} one infers 
\begin{align}
& (\bsH_1-z \, \bsI)^{-1} =\bsR_0^{1/2}\Big[\bsI +
\big[\bsA_- \bsR_0^{1/2}\big]^* \, \bsB \, \bsR_0^{1/2}   \no \\
& \qquad + \big[\bsB \bsR_0^{1/2}\big]^* \bsA_- \, \bsR_0^{1/2}    
 + \big[\bsB \bsR_0^{1/2}\big]^* \, \bsB \, \bsR_0^{1/2}    \no \\
& \qquad - \big[|(\bsB')^*|^{1/2} \bsR_0^{1/2}\big]^*  
U_{\bsB'} |\bsB'|^{1/2} \, \bsR_0^{1/2} 
\Big]^{-1}\bsR_0^{1/2}   \no \\
& \quad = \bsR_0^{1/2} 
\Big[\bsL- \big[|(\bsB')^*|^{1/2} \bsR_0^{1/2}\big]^*  
U_{\bsB'} |\bsB'|^{1/2} \, \bsR_0^{1/2} \Big]^{-1}\bsR_0^{1/2}   \no \\
& \quad = \bsR_0^{1/2} \, \bsL^{-1/2} \Big[\bsI-\bsL^{-1/2} \, 
\big[|(\bsB')^*|^{1/2} \bsR_0^{1/2}\big]^* U_{\bsB'} |\bsB'|^{1/2} \,
\bsR_0^{1/2} \, \bsL^{-1/2}\Big]^{-1}    \no \\
& \qquad \times \bsL^{-1/2} \, \bsR_0^{1/2}.  
\end{align} 
A similar calculation for $\bsH_2$ and \eqref{boundLL} show that
the resolvents $\bsR_1 = \bsR_1 (z)$ and $\bsR_2=\bsR_2(z)$ can be 
computed as follows (and similarly for $\bsR_{1,n}$, $\bsR_{2,n}$):
\begin{align}
\bsR_1&=\bsR_0^{1/2}\,\bsL^{-1/2} 
\Big[\bsI-\bsL^{-1/2} \, \big[|(\bsB')^*|^{1/2} \bsR_0^{1/2}\big]^*  
U_{\bsB'} |\bsB'|^{1/2} \, \bsR_0^{1/2} \, \bsL^{-1/2}\Big]^{-1} \no \\ 
& \quad \times \bsL^{-1/2}\, \bsR_0^{1/2},    \\
\bsR_2&=\bsR_0^{1/2}\,\bsL^{-1/2} \Big[\bsI+\bsL^{-1/2}\, 
\big[|(\bsB')^*|^{1/2} \bsR_0^{1/2}\big]^* 
U_{\bsB'} |\bsB'|^{1/2} \, \bsR_0^{1/2} \,\bsL^{-1/2}\Big]^{-1} \no \\  
& \quad \times \bsL^{-1/2}\, \bsR_0^{1/2}.
\end{align}
Introducing the bounded operators
\begin{align}
\bsM&=\bsR_0^{1/2}\,\bsL^{-1/2} \Big[\bsI+\bsL^{-1/2}\, 
\big[|(\bsB')^*|^{1/2} \bsR_0^{1/2}\big]^*  
U_{\bsB'} |\bsB'|^{1/2} \, \bsR_0^{1/2} \, \bsL^{-1/2}\Big]^{-1} \, \bsL^{-1/2},  
\lb{MN}\\
\bsN&=\bsL^{-1/2} \Big[\bsI-\bsL^{-1/2}\, 
\big[|(\bsB')^*|^{1/2} \bsR_0^{1/2}\big]^* 
U_{\bsB'} |\bsB'|^{1/2} \, \bsR_0^{1/2}
\, \bsL^{-1/2}\Big]^{-1} \, \bsL^{-1/2} \, \bsR_0^{1/2},     \lb{MN1}\\
\bsM_n&=\bsR_{0,n}^{1/2} \,\bsL^{-1/2}_n 
\Big[\bsI+\bsL^{-1/2}_n\, \big[|(\bsB'_n)^*|^{1/2 }\bsR_{0,n}^{1/2}\big]^* 
U_{\bsB'_n} |\bsB'_n|^{1/2} 
\,\bsR_{0,n}^{1/2} \, \bsL^{-1/2}_n\Big]^{-1} \, \bsL^{-1/2}_n,    \lb{MN2}\\
\bsN_n&=\bsL^{-1/2}_n \Big[\bsI-\bsL^{-1/2}_n \, 
\big[|(\bsB'_n)^*|^{1/2 }\bsR_{0,n}^{1/2}\big]^* 
U_{\bsB'_n} |\bsB'_n|^{1/2} \, \bsR_{0,n}^{1/2}
 \, \bsL^{-1/2}_n\Big]^{-1} \, \bsL^{-1/2}_n \, 
\bsR_{0,n}^{1/2} ,  \lb{MN3}
\end{align}
one obtains the following identities:
\begin{align} 
\begin{split}
\bsR_1-\bsR_2 &= 2\, \bsM \, 
\big[|(\bsB')^*|^{1/2} \bsR_0^{1/2}\big]^* 
U_{\bsB'} |\bsB'|^{1/2} \, \bsR_0^{1/2} \,\bsN,    \lb{resdif} \\
\quad \bsR_{1,n}-\bsR_{2,n} &= 2 \, \bsM_n\,  
\big[|(\bsB'_n)^*|^{1/2 }\bsR_{0,n}^{1/2}\big]^* 
U_{\bsB'_n} |\bsB'_n|^{1/2} \, \bsR_{0,n}^{1/2} \, \bsN_n.
\end{split}
\end{align}

We need two more preparatory facts to finish the proof of Proposition 
\ref{prTr}: First, we claim that
\begin{align} 
\begin{split} 
& \lim_{n\to\infty} \Big\|\big[|(\bsB')^*|^{1/2} \bsR_0^{1/2}\big]^*  
U_{\bsB'} |\bsB'|^{1/2} \, \bsR_0^{1/2}     \lb{12resb1} \\
& \qquad \quad - \big[|(\bsB'_n)^*|^{1/2 }\bsR_{0,n}^{1/2}\big]^* 
U_{\bsB'_n} |\bsB'_n|^{1/2} \, \bsR_{0,n}^{1/2} 
\Big\|_{\cB_1(L^2(\bbR;\cH))} = 0.  
\end{split} 
\end{align}
Indeed, since the spectral projection $\bsP_n$ and the operator 
$-\frac{d^2}{dt^2}$ commute (cf.\ \eqref{commPn}),
\beq\lb{PnR}
\bsP_n \, \bsR_{0,n}^{1/2} =\bsP_n \, \bsR_0^{1/2},
\quad \bsR_{0,n}^{1/2} \, \bsP_n=\bsR_0^{1/2} \, \bsP_n.
\enq
Since $\bsB'_n= \bsP_n \, \bsB' \, \bsP_n$, one can write
\begin{equation} 
\big[|(\bsB'_n)^*|^{1/2 }\bsR_{0,n}^{1/2}\big]^* 
U_{\bsB'_n} |\bsB'_n|^{1/2} \, \bsR_{0,n}^{1/2} 
= \big[|(\bsB'_n)^*|^{1/2 }\bsR_0^{1/2}\big]^* 
U_{\bsB'_n} |\bsB'_n|^{1/2} \, \bsR_0^{1/2}, 
\end{equation}
and, after a short calculation with scalar products using \eqref{4.B'} for $\bsB'$, $\bsB'_n$, and $\bsB'-\bsB'_n$,  obtain the estimate
\begin{align}
& \Big\|\big[|(\bsB')^*|^{1/2} \bsR_0^{1/2}\big]^* 
U_{\bsB'} |\bsB'|^{1/2} \, \bsR_0^{1/2}   \no \\
& \qquad - \big[|(\bsB'_n)^*|^{1/2 }\bsR_{0,n}^{1/2}\big]^* 
U_{\bsB'_n} |\bsB'_n|^{1/2} \, \bsR_{0,n}^{1/2} 
\Big\|_{\cB_1(L^2(\bbR;\cH))}  \\
& \quad = \Big\|\big[|(\bsB' - \bsB'_n)^*|^{1/2} \bsR_0^{1/2}\big]^*
U_{[\bsB' - \bsB'_n]} |\bsB' - \bsB'_n|^{1/2} 
\bsR_0^{1/2}\Big\|_{\cB_1(L^2(\bbR;\cH))}   \no   \\
& \quad \le c\int_\bbR \|[B'(t) - B'_n(t)] (A_-^2+I)^{-1/2}\|_{\cB_1(\cH)}\,dt,
\end{align}
using Lemma \ref{12bprime} with $\bsB'$ replaced by 
$[\bsB' - \bsB'_n]$. Now claim
\eqref{12resb1} follows from \eqref{Pn4}.

Second, we claim that
\beq\lb{strMN}
\slim_{n\to\infty} \bsM_n = \bsM \, \text{ and } \, \slim_{n\to\infty} 
\bsN_n = \bsN
\, \text{ in } \, L^2(\bbR;\cH).
\enq
Indeed, referring to equations \eqref{MN}--\eqref{MN3}, one notes that
$\slim_{n\to\infty} \bsR_{0,n}^{1/2} = \bsR_0^{1/2}$ in $L^2(\bbR;\cH)$, while
$\lim_{n\to\infty} \big\|\bsL^{-1/2}_n - 
\bsL^{-1/2}\big\|_{\cB(L^2(\bbR;\cH))}=0$,
because of the strong resolvent convergence of the self-adjoint operators 
$\bsL_n$ to $\bsL$ as $n\to\infty$ by Lemma \ref{invLLn}. 
Also, due to \eqref{boundLL}, the norms of the operators satisfy
\beq\label{n5.34}
\sup_{n\in\bbN} \Big\|\Big[\bsI\pm \bsL^{-1/2}_n \, 
\big[|(\bsB'_n)^*|^{1/2 }\bsR_{0,n}^{1/2}\big]^* 
U_{\bsB'_n} |\bsB'_n|^{1/2} \, \bsR_{0,n}^{1/2} \, 
\bsL^{-1/2}_n\Big]^{-1}\Big\|_{\cB(L^2(\bbR;\cH))} < \infty.
\enq
Combining this with \eqref{12resb1} proves the claim \eqref{strMN}.

Finally, using \eqref{resdif} and \eqref{PnR}, one infers
\begin{align}
& \bsR_1 - \bsR_2-(\bsR_{1,n}-\bsR_{2,n})    \no \\
& \quad =2\, \bsM\, \big[|(\bsB')^*|^{1/2} \bsR_0^{1/2}\big]^* 
U_{\bsB'} |\bsB'|^{1/2} \, \bsR_0^{1/2}  \, \bsN    \no \\
& \qquad - 2 \, \bsM_n \, 
\big[|(\bsB'_n)^*|^{1/2 }\bsR_{0,n}^{1/2}\big]^* 
U_{\bsB'_n} |\bsB'_n|^{1/2} \, \bsR_{0,n}^{1/2} \,\bsN_n    \no \\
& \quad =\bsJ^{(n)}_1+ \bsJ^{(n)}_2,
\end{align}
where we denoted
\begin{align}
\bsJ^{(n)}_1&=2(\bsM-\bsM_n)\, 
\big[|(\bsB')^*|^{1/2} \bsR_0^{1/2}\big]^*  
U_{\bsB'} |\bsB'|^{1/2} \, \bsR_0^{1/2}  \,\bsN,  \\
\bsJ^{(n)}_2&= 2 \Big[\bsN \, 
\big[|(\bsB')^*|^{1/2} \bsR_0^{1/2}\big]^* 
U_{\bsB'} |\bsB'|^{1/2} \, \bsR_0^{1/2}   \no \\
& \qquad  - \bsN_n \,
\big[|(\bsB'_n)^*|^{1/2 }\bsR_{0,n}^{1/2}\big]^*  
U_{\bsB'_n} |\bsB'_n|^{1/2} \, \bsR_{0,n}^{1/2} \Big]^*   \no \\
&= 2 \Big[(\bsN-\bsN_n) \, 
\big[|(\bsB')^*|^{1/2} \bsR_0^{1/2}\big]^* 
U_{\bsB'} |\bsB'|^{1/2} \, \bsR_0^{1/2}  \no \\
& \qquad + \bsN_n \Big(
\big[|(\bsB')^*|^{1/2} \bsR_0^{1/2}\big]^* 
U_{\bsB'} |\bsB'|^{1/2} \, \bsR_0^{1/2}   \no \\
& \qquad \qquad \quad 
- \big[|(\bsB'_n)^*|^{1/2 }\bsR_{0,n}^{1/2}\big]^*  
U_{\bsB'_n} |\bsB'_n|^{1/2} \, \bsR_{0,n}^{1/2} \Big)\Big]^*.
\end{align}
Since $\big[|(\bsB')^*|^{1/2} \bsR_0^{1/2}\big]^* 
U_{\bsB'} |\bsB'|^{1/2} \, \bsR_0^{1/2} \in\cB_1(L^2(\bbR;\cH))$ by
Lemma \ref{12bprime}, one concludes that
\beq
\lim_{n\to\infty} \big\|\bsJ^{(n)}_{j}\big\|_{\cB_1(L^2(\bbR;\cH))} = 0, \quad j=1,2,
\enq
by \eqref{12resb1}, \eqref{strMN}, and Lemma \ref{lSTP}.
\end{proof}

\section{The Right-Hand Side of the Trace Formula  
and Double Operators Integrals} \lb{s6}

In this section we deal with the right-hand side of the trace
formula \eqref{trfOLD}, and prove Proposition \ref{propgA}. Our
approach based on the theory of double operator integrals. This
theory originated in \cite{BS65}, \cite{BS67},\cite{BS68},
\cite{BS73}, \cite{BS75}, \cite{BS93} (see also the  reviews in 
\cite{BS03}, \cite{Pe09}, \cite{PS10} 
and more recent further developments in  \cite{CPS09}, \cite{dPS04},
\cite{dPSW02}, \cite{PS08}, \cite{PS09}, \cite{PS10}).

To show the first inclusion in assertion
\eqref{assrti} of Proposition \ref{propgA}, we will follow the strategy
in \cite{CPS09}, \cite{PS09}; in particular, see equation (23) in 
\cite[Section\ 6]{CPS09}, where the inclusion  
\beq\lb{subtle}
[g(S_+)-g(S_-)] \in\cB(\cH)
\enq
is proved, assuming $(S_+ - S_-)(S_-^2+I)^{-1/2}\in\cB(\cH)$.
Lemma \ref{ext_subtle} below yields this inclusion with $\cB(\cH)$ replaced by
$\cB_1(\cH)$, but assuming
$(S_+ - S_-)(S_-^2 + I)^{-1/2} \in\cB_1(\cH)$. The argument in 
Lemma \ref{ext_subtle}
involves the concept of double operator integrals. 

We begin by recalling some relevant background
material regarding double operator integrals (cf.\ \cite{CPS09},
\cite{dPS04}, \cite{dPSW02}, \cite{PS08}, \cite{PS09}) and fix two
unbounded self-adjoint operators $S_+$ and $S_-$ in $\cH$.

Let $\mathfrak{A}_0$ denote the set of all bounded Borel functions
$\phi$ admitting the representation 
\beq
\phi(\lambda,\mu)=\int_\bbR\alpha_s(\lambda)\beta_s(\mu)\,d\nu(s), \quad
(\lambda,\mu)\in\bbR^2,    \lb{phialbe} \enq
where 
$\alpha_s(\cdot)$, $\beta_s(\cdot):\bbR\to\bbC$, for
each $s\in\bbR$,  are bounded Borel functions satisfying
\beq
\int_\bbR\|\alpha_s\|_\infty\|\beta_s\|_\infty\,d\nu(s)<\infty,\lb{intalbe}
\enq
and $d\nu$ is a positive Borel measure on $\bbR$ (cf.\ \cite[Proposition 4.7]{dPS04} or
\cite[Corollary 2]{PS09}). We introduce the norm  on $\mathfrak{A}_0$ as the
infimum of the integrals in \eqref{intalbe} taken over all
possible representations in \eqref{phialbe}. It is easy to see that
$\mathfrak{A}_0$ is a Banach algebra.

Given two self-adjoint operators $S_+$ and
$S_-$ in $\cH$, one defines for each $\phi\in\mathfrak{A}_0$ the
operator $T_{\phi,1}=T_\phi^{(S_+,S_-)}\in\cB(\cB_1(\cH))$, that is, a 
bounded operator from $\cB_1(\cH)$ to itself, as the following integral,
absolutely convergent in $\cB_1(\cH)$-norm 
\beq \lb{intreprOI}
T_{\phi,1}(K)=\int_\bbR\alpha_s(S_+)K\beta_s(S_-)d\nu(s),  \quad
K\in\cB_1(\cH).
\enq 
We will call $T_{\phi,1}=T_\phi^{(S_+,S_-)}$
the {\it  operator integral}; the proof of the fact that
$T_{\phi,1}$ is well-defined follows along the same lines as in
\cite[Lemma 4.3]{ACDS09}. The definition above (see also
\cite{ACDS09}) is a particular case of the definition of the
double operator integrals considered in \cite{CPS09}, \cite{dPS04},
\cite{dPSW02}, \cite{PS08}, \cite{PS09}. Replacing
$\cB_1(\cH)$ above with $\cB(\cH)$ one obtains a bounded operator
$T_{\phi,\infty}$ from $\cB(\cH)$ to $\cB(\cH)$. If 
$\phi\in \mathfrak{A}_0$ satisfies the condition 
$\phi(\lambda,\mu)=\phi(\mu,\lambda)$, $(\lambda,\mu)\in\bbR^2$ (and we 
will consider only such $\phi$'s) then $T_{\phi,1}^*=T_{\phi,\infty}$ and 
$T_{\phi,\infty}|_{\cB_1(\cH)}=T_{\phi,1}$ (cf.\ \cite[Lemma 2.4]{PS08}). 
We note that
\beq\lb{normTphi}
\|T_{\phi,1}\|_{\cB(\cB_1(\cH))}=\|T_{\phi,\infty}\|_{\cB(\cB(\cH))}\leq
  \|\phi\|_{ \mathfrak{A}_0} 
  \enq
(cf.\ \cite{dPS04}, \cite{dPSW02}). In what follows we use the notation $T_\phi$ 
for either $T_{\phi,1}$ or $T_{\phi,\infty}$, which should not lead to a confusion.
We remark the following two
properties of the mapping $\phi \to T_\phi$ for which we
again refer to \cite[Lemma 2.4]{PS08}:

$(i)$ $\phi \to T_\phi$ is a homomorphism of $\mathfrak{A}_0$
into $\cB(\cB_1(\cH))$ (or $\cB(\cB(\cH))$), that is,
$T_{\phi\psi}=T_\phi T_\psi$ for $\phi, \psi\in \mathfrak{A}_0$.

$(ii)$ $T_\phi$ is wo-continuous (i.e., continuous in the weak operator topology, 
or ultraweakly continuous) on $\cB(\cH)$. Indeed, $T_{\phi,\infty}$ on $\cB(\cH)$ 
is dual to $T_{\phi,1}$ and therefore is ultra-weakly continuous as a dual operator.

In addition, given bounded Borel functions $\alpha,\beta:\bbR\to\bbC$, one
notes that if $\phi(\lambda,\mu)=\alpha(\lambda)$, then
$T_\phi(K)=\alpha(S_+)K$, and if $\phi(\lambda,\mu)=\beta(\mu)$, 
then $T_\phi(K)=K\beta(S_-)$, $K\in\cB_1(\cH)$ (or $K\in\cB(\cH)$,
cf.\ \cite{dPS04}, \cite{dPSW02}).

\begin{hypothesis}\label{spm}
Assume  that  $S_+$ and $S_-$ are  self-adjoint operators in $\cH$. 
Given two bounded $($real-valued\,$)$ Borel functions $\alpha $ and $\beta$ on $\bbR$,
suppose that $\cD\subseteq \dom(S_-)$ is a core for the operator $S_-$ such that
\beq
\beta(S_-) \cD \subseteq \dom (S_+\alpha(S_+)).
\enq
Assume that the operator $K=K(S_+,S_-)$ in $\cH$ defined by
\beq\label{Kinit}
K=S_+\alpha(S_+)\beta(S_-) -\alpha(S_+)  S_-\beta(S_-), \quad \dom(K)=\cD, 
\enq
is closable and $\overline{K} \in \cB(\cH)$.
\end{hypothesis}

\begin{lemma}\label{zam}
Assume Hypothesis \ref{spm}. Then
\beq\lb{n6.8}
\beta(S_-) \dom (S_-) \subseteq \dom (S_+\alpha(S_+)),
\enq
and hence the operator $K$ admits a natural extension from the initial domain 
$\cD$ to $\dom (S_-)$ provided by the same formula \eqref{Kinit}.
\end{lemma}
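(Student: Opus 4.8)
The plan is to fix an element $h \in \dom(S_-)$ and approximate it by a sequence $h_k \in \cD$ with $h_k \to h$ and $S_- h_k \to S_- h$ in $\cH$, which is possible because $\cD$ is a core for $S_-$ by Hypothesis~\ref{spm}. The goal is to show $\beta(S_-) h \in \dom(S_+ \alpha(S_+))$, i.e.\ that $\beta(S_-) h$ lies in the domain of the (generally unbounded) operator $S_+ \alpha(S_+)$, and that the resulting extension of $K$ is still given by the same algebraic expression.

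First I would note that $\beta(S_-) h_k \to \beta(S_-) h$ in $\cH$ since $\beta(S_-) \in \cB(\cH)$, and moreover $\alpha(S_+) S_- \beta(S_-) h_k \to \alpha(S_+) S_- \beta(S_-) h$ because $\alpha(S_+) \in \cB(\cH)$, $\beta(S_-)$ commutes with $S_-$ on $\dom(S_-)$, and $S_- h_k \to S_- h$. Next, since $h_k \in \cD$, Hypothesis~\ref{spm} gives $\beta(S_-) h_k \in \dom(S_+ \alpha(S_+))$ and
\begin{equation}
S_+ \alpha(S_+) \beta(S_-) h_k = K h_k + \alpha(S_+) S_- \beta(S_-) h_k.
\end{equation}
Because $\overline{K} \in \cB(\cH)$, we have $K h_k = \overline{K} h_k \to \overline{K} h$ in $\cH$; combining with the convergence of the second term, the right-hand side converges in $\cH$. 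Thus the vectors $\beta(S_-) h_k$ converge to $\beta(S_-) h$ while their images under the closed operator $S_+ \alpha(S_+)$ converge as well; since $S_+ \alpha(S_+)$ is closed (it is the product of the bounded operator $\alpha(S_+)$, wait --- actually $S_+ \alpha(S_+)$ need not obviously be closed, so here I would instead argue via the spectral theorem for $S_+$: the operator $S_+\alpha(S_+) = f(S_+)$ with $f(\lambda) = \lambda \alpha(\lambda)$ a Borel function, hence it \emph{is} self-adjoint/closed by the functional calculus). Therefore $\beta(S_-) h \in \dom(S_+ \alpha(S_+))$ and $S_+ \alpha(S_+) \beta(S_-) h = \overline{K} h + \alpha(S_+) S_- \beta(S_-) h$, which rearranges to show that the natural extension of $K$ to $\dom(S_-)$, defined by formula~\eqref{Kinit}, agrees with $\overline{K}\big|_{\dom(S_-)}$; this proves both \eqref{n6.8} and the extension claim.

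The main obstacle, such as it is, is the bookkeeping around closedness of $S_+ \alpha(S_+)$: one must observe that $\lambda \mapsto \lambda \alpha(\lambda)$ is a genuine Borel function of $S_+$ so that $S_+\alpha(S_+)$ is a closed operator defined by the spectral theorem, rather than trying to treat it as a product of operators (which could fail to be closed). Once this is in place, everything reduces to the standard ``closed operator + convergent approximating sequence with convergent images'' argument, using only that $\cD$ is a core for $S_-$, that $\alpha(S_+), \beta(S_-)$ are bounded, that $\beta(S_-)$ commutes with $S_-$, and that $\overline K$ is bounded. No double operator integral machinery is needed for this particular lemma; it is a purely domain-theoretic statement preparing the ground for the subsequent results.
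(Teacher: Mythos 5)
Your proof is correct, and it takes a genuinely different route from the paper. You argue by approximation: pick $h_k\in\cD$ with $h_k\to h$ and $S_-h_k\to S_-h$ (core property), note $\beta(S_-)h_k\to\beta(S_-)h$ and $S_+\alpha(S_+)\beta(S_-)h_k=\overline{K}h_k+\alpha(S_+)S_-\beta(S_-)h_k$ converges, and close the argument with the closedness of $S_+\alpha(S_+)$; your key observation that $S_+\alpha(S_+)$ coincides with the Borel function $\lambda\mapsto\lambda\alpha(\lambda)$ of $S_+$ (so that it is closed, with the product domain equal to the functional-calculus domain) is exactly the right way to dispose of the only delicate point, and the commutation $S_-\beta(S_-)h_k=\beta(S_-)S_-h_k$ is legitimate. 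The paper instead runs a two-step duality argument with the bounded sesquilinear form $(\beta(S_-)f,S_+\alpha(S_+)g)_\cH-(S_-\beta(S_-)f,\alpha(S_+)g)_\cH=(\overline{K}f,g)_\cH$: first, for fixed $g\in\dom(S_+)$, continuity in $f$ over the core $\cD$ yields the auxiliary inclusion $\alpha(S_+)\dom(S_+)\subseteq\dom(S_-\beta(S_-))$; then, after extending the identity to all $f\in\cH$, continuity in $g$ for fixed $f\in\dom(S_-)$ gives $\beta(S_-)f\in\dom(S_+\alpha(S_+))$ (implicitly using that $S_+\alpha(S_+)$ is self-adjoint, $\alpha$ being real-valued, with $\dom(S_+)$ a core). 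Both arguments rest on the same inputs (the core property of $\cD$ and $\overline{K}\in\cB(\cH)$); yours is more elementary and needs only closedness rather than self-adjointness of $S_+\alpha(S_+)$, while the paper's dual-pairing route produces the extra inclusion $\alpha(S_+)\dom(S_+)\subseteq\dom(S_-\beta(S_-))$ as a by-product. Your concluding identity $S_+\alpha(S_+)\beta(S_-)h-\alpha(S_+)S_-\beta(S_-)h=\overline{K}h$ for $h\in\dom(S_-)$ indeed establishes both \eqref{n6.8} and the claim that the extension is given by the same formula \eqref{Kinit}.
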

\begin{proof}
Since $\alpha$ and $\beta$ are bounded, the corresponding
   operators $\alpha(S_+)$ and $\beta(S_-)$ leave the 
   domains $\dom (S_+)$ and $\dom (S_-)$ invariant, 
\beq
   \alpha(S_+) \dom (S_+) \subseteq \dom (S_+) \, 
   \text{ and } \, \beta(S_-) \dom (S_-) \subseteq \dom
   (S_-).
\enq
Next, one considers the following sesquilinear form
   \begin{equation}
     \label{CommDefAssumptionsExplainedSLF}
  (\beta (S_-) f, S_+ \alpha (S_+) g)_{\cH} - (S_- \beta (S_-) f, \alpha (S_+) g)_{\cH} 
     = (\overline{K} f, g)_{\cH},
   \end{equation}
   where $f \in \cD$ and $g \in \dom (S_+)$.  Since $\overline{ K}$ is
   bounded, the form in the left-hand side of 
   \eqref{CommDefAssumptionsExplainedSLF} is also bounded and thus for every
   fixed $g \in \dom (S_+)$ the linear mapping
\beq
   \cD \ni f
   \mapsto (S_- \beta(S_-) f, \alpha (S_+) g)_{\cH}
\enq
  is continuous.  Since $\cD$ is a core for $S_-$ and hence 
   it is also a core for $S_- \beta(S_-)$, this implies
   that $\alpha (S_+) g \in \dom ( S_- \beta(S_-))$, or
\beq
  \alpha (S_+) \dom (S_+) \subseteq \dom (S_- \beta (S_-)).
\enq
  This observation allows one to rewrite
\eqref{CommDefAssumptionsExplainedSLF}
   as
   \begin{equation}
     \label{exten}
    (\beta (S_-) f, S_+ \alpha (S_+) g)_{\cH} - (f, S_- \beta (S_-) \alpha (S_+) g)_{\cH} 
    = (\overline{ K} f, g)_{\cH},
   \end{equation}
   for $f \in \cD$ and $g \in \dom (S_+)$ and then to conclude that
\eqref{exten} holds for all $f \in \cH$ and $g \in \dom (S_+)$, since
the right-hand side  of \eqref{exten} is a bounded sesquilinear form.
  In particular, it follows from \eqref{exten} that
  for every fixed $f \in \dom (S_-)$, the mapping
\beq
\dom (S_+)\ni g  \mapsto (\beta(S_-) f, S_+ \alpha (S_+) g)_{\cH}
\enq
is continuous and thus $\beta(S_-) f \in \dom ( S_+ \alpha(S_+))$, proving \eqref{n6.8}.
\end{proof}

We will use operator integrals via the following result which is 
a variation of \cite[Theorem 15]{CPS09}:

\begin{lemma}\lb{ffTOI}
Assume Hypothesis \ref{spm}. Suppose that $h$ is a  bounded Borel 
function on $\bbR$ such that the function $\phi$ defined by
\beq\lb{def16IO}
\phi(\lambda, 
\mu)=\frac{h(\lambda) - h(\mu)}{\alpha(\lambda)(\lambda- 
\mu)\beta(\mu)},
\quad (\lambda, \mu) \in \bbR^2,
\enq
belongs to the class $\mathfrak{A}_0$. Then the closure  $\overline{K} \in \cB(\cH)$ of the operator $K=K(S_+,S_-)$ satisfies:
\beq\lb{n6.17a}
h(S_+) - h(S_-)=T_\phi(\overline{K}) \in \cB(\cH),
\enq
where $T_\phi$ represents the operator integral 
$T_{\phi,\infty}=T_{\phi,\infty}^{(S_+,S_-)}$.
In addition, assume that $\ol K \in \cB_1(\cH)$. Then
\beq\lb{ggAAIO}
h(S_+) - h(S_-)=T_\phi(\overline{K}) \in \cB_1(\cH),
\enq
where $T_\phi$ represents the operator integral 
$T_{\phi,1}=T_{\phi,1}^{(S_+,S_-)}$.
\end{lemma}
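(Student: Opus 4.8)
The plan is to reduce the assertion to an algebraic identity between sesquilinear forms, then invoke the operator integral machinery already set up. First I would rewrite the difference $h(S_+)-h(S_-)$ so that the operator $\overline K$ from Hypothesis \ref{spm} appears. The defining relation \eqref{Kinit} together with Lemma \ref{zam} gives, for $f\in\dom(S_-)$,
\[
\overline{K} f = S_+\alpha(S_+)\beta(S_-) f - \alpha(S_+) S_-\beta(S_-) f,
\]
so heuristically $\alpha(S_+)^{-1}\overline{K}\beta(S_-)^{-1}$ acts like $S_+ - S_-$; multiplying the kernel $\phi$ in \eqref{def16IO} by the symbol $\alpha(\lambda)(\lambda-\mu)\beta(\mu)$ should reconstruct $h(\lambda)-h(\mu)$. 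The natural strategy is therefore to apply $T_\phi$ to $\overline K$ and verify, using the homomorphism property $(i)$ of the map $\phi\mapsto T_\phi$ and the special cases $\phi(\lambda,\mu)=\alpha(\lambda)$, $\phi(\lambda,\mu)=\beta(\mu)$ recorded just before Hypothesis \ref{spm}, that
\[
T_{\psi}\big(\overline{K}\big)=h(S_+)-h(S_-),
\qquad \psi(\lambda,\mu)=\alpha(\lambda)(\lambda-\mu)\beta(\mu)\,\phi(\lambda,\mu)=h(\lambda)-h(\mu).
\]

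The key steps, in order, are: (1) observe that $\phi\in\mathfrak A_0$ by hypothesis, and that the function $(\lambda,\mu)\mapsto\alpha(\lambda)(\lambda-\mu)\beta(\mu)$ — while unbounded — can be split so that the homomorphism property applies; concretely, write $\psi_1(\lambda,\mu)=\alpha(\lambda)$, $\psi_2(\lambda,\mu)=(\lambda-\mu)\beta(\mu)$ acting on the relevant domain, and use that $T_{\psi_1}(K)=\alpha(S_+)K$, while $T$ applied to the factor involving $\lambda-\mu$ must be interpreted via the representation \eqref{Kinit} of $\overline K$ itself rather than as a genuine element of $\cB(\cB(\cH))$. (2) Use Lemma \ref{zam} to legitimise the manipulation $T_{\phi}(\overline K)$: since $\overline K$ is already the closure of the commutator-type operator on $\dom(S_-)$, the product $\phi\cdot(\text{symbol})$ telescopes. (3) Conclude \eqref{n6.17a} by a density/weak-continuity argument: property $(ii)$, the wo-continuity of $T_\phi$, lets one pass from the spectral-projection-truncated operators $E_{S_+}([-n,n])\,\overline K\,E_{S_-}([-n,n])$ (where everything is bounded and the identity $h(S_+)-h(S_-)=T_\phi(\overline K)$ is an elementary spectral-calculus computation on the truncations) to the limit. (4) For the trace-class statement \eqref{ggAAIO}, note that when $\overline K\in\cB_1(\cH)$ the integral \eqref{intreprOI} defining $T_{\phi,1}$ converges absolutely in $\cB_1(\cH)$-norm and $T_{\phi,1}=T_{\phi,\infty}|_{\cB_1(\cH)}$, so $T_\phi(\overline K)\in\cB_1(\cH)$ and equals the already-identified bounded operator $h(S_+)-h(S_-)$.

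The main obstacle I anticipate is step (2)–(3): the factor $(\lambda-\mu)$ in the symbol is unbounded, so one cannot directly invoke the Banach-algebra homomorphism property of $\phi\mapsto T_\phi$ on $\mathfrak A_0$ — that property is stated only for bounded symbols. The correct way around this is precisely the content of Hypothesis \ref{spm}: the operator $K=S_+\alpha(S_+)\beta(S_-)-\alpha(S_+)S_-\beta(S_-)$ is assumed a priori to close up to a \emph{bounded} operator, which is exactly the statement that the (formally unbounded) double operator integral with symbol $\alpha(\lambda)(\lambda-\mu)\beta(\mu)$ applied to the identity is meaningful and bounded. So the proof must carefully verify the identity
\[
T_\phi\big(\overline K\big)=h(S_+)-h(S_-)
\]
by first checking it on the reducing subspaces $\ran E_{S_\pm}([-n,n])$ — where $S_\pm$ are bounded, $\overline K$ restricts to the genuine commutator, and the identity is the standard divided-difference formula — and then using the wo-continuity (property $(ii)$) of $T_{\phi,\infty}$ together with $\slim_n E_{S_\pm}([-n,n])=I$ to take the limit. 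This truncation argument, which mirrors the proof of \cite[Theorem 15]{CPS09}, is where the real work lies; the remaining assertions are immediate once it is in place.
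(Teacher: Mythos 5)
Your plan follows the paper's proof in all structural respects: compress by $E^\pm_n=E_{S_\pm}([-n,n])$, prove the identity for the compressions, pass to the limit using the wo-continuity of $T_{\phi,\infty}$ together with $\wlim_{n\to\infty}E^+_n\,\overline K\,E^-_n=\overline K$ and $\wlim_{n\to\infty}E^+_n(h(S_+)-h(S_-))E^-_n=h(S_+)-h(S_-)$, and deduce \eqref{ggAAIO} from \eqref{n6.17a} via $T_{\phi,\infty}|_{\cB_1(\cH)}=T_{\phi,1}$. The one substantive criticism concerns the step you yourself single out as the real work: your sketch asserts it rather than proves it, and describes it in a way that is not correct as stated. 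The compression $K_n=E^+_n\overline K E^-_n$ uses spectral projections of two \emph{different} operators, so there is no common reducing subspace, $\overline K$ does not ``restrict'' to anything, and $K_n$ is not the operator \eqref{Kinit} built from bounded truncations of $S_\pm$: by Lemma \ref{zam} it equals $E^+_n\alpha(S_+)S_+\beta(S_-)E^-_n-E^+_n\alpha(S_+)S_-\beta(S_-)E^-_n$ (cf.\ \eqref{n6.20}), with the projections wedged inside. Correspondingly, the identity that must be verified, namely $E^+_n\,(h(S_+)-h(S_-))\,E^-_n=T^{(S_+,S_-)}_\phi(K_n)$ (the claim \eqref{n6.31}), still involves the full unbounded operators $S_\pm$ and the full double operator integral; it is not obtained by simply invoking the bounded-operator divided-difference formula on $\ran(E^\pm_n)$ — attempting that literal reduction produces correction terms coming from $(I-E^\pm_n)$ and the values of $h,\alpha,\beta$ at $0$.

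What is missing, then, is a proof of \eqref{n6.31}. The paper supplies it by a purely symbolic argument: introduce the bounded cutoff symbols $\phi_+(\lambda,\mu)=\chi_n(\lambda)\alpha(\lambda)\,\lambda\,\phi(\lambda,\mu)\,\beta(\mu)\chi_n(\mu)$ and $\phi_-(\lambda,\mu)=\chi_n(\lambda)\alpha(\lambda)\,\phi(\lambda,\mu)\,\mu\,\beta(\mu)\chi_n(\mu)$; the homomorphism property of $\varphi\mapsto T_\varphi$ gives $T_{\phi_\pm}(I)=T_\phi\big(E^+_n\alpha(S_+)S_\pm\beta(S_-)E^-_n\big)$, while by \eqref{def16IO} one has $\phi_+-\phi_-=\chi_n(\lambda)\,(h(\lambda)-h(\mu))\,\chi_n(\mu)$, so that $T_{\phi_+-\phi_-}(I)=E^+_n(h(S_+)-h(S_-))E^-_n$; subtracting the two identities yields \eqref{n6.31}. (Alternatively one can argue by a direct spectral-measure computation as in \cite[Lemma\ 7.1]{dPSW02}, which is presumably the ``elementary spectral-calculus computation'' you had in mind, but it must be carried out for the two-sided compression, not for restricted operators.) With this computation inserted, your outline — including the final trace-class step — is exactly the paper's proof.
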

\begin{proof}
Due to the observation $T_{\phi,\infty}|_{\cB_1(\cH)}=T_{\phi,1}$ 
made above, \eqref{ggAAIO} follows from \eqref{n6.17a}.
To begin the proof of \eqref{n6.17a}, we let
$E^\pm_n = E_{S_\pm} ([-n, n])$
denote the spectral projections associated with the self-adjoint operators
$S_\pm$, and  introduce the sequence of bounded operators
\beq
  K_n = E^+_n \, \overline{K} E^-_n, \quad n\in \bbN.
\enq
Clearly, $ \wlim_{n \rightarrow \infty} K_n = \overline{ K} $, where the limit is taken 
with respect to the weak operator topology.
Lemma \ref{zam} implies that
\beq
\overline{K}f=S_+\alpha(S_+)\beta(S_-)f -\alpha(S_+)  S_-\beta(S_-)f,
  \quad f \in \dom (S_-),
\enq
and therefore, the operator $K_n$ may be alternatively represented by 
\beq\lb{n6.20}
   K_n = E^+_n\alpha(S_+)  S_+ \beta(S_-)E^-_n - E^+_n\alpha(S_+) S_-
   \beta(S_-)E^-_n.
\enq 
We claim that 
\begin{equation} \lb{n6.31}  
E_n^+ \, \left(h(S_+) - h(S_-) \right)\, E^-_n = T^{(S_+, S_-)}_\phi (K_n).  
\end{equation}     
Assuming the claim, one finishes the proof of the lemma as follows:
Since $\phi \in \mathfrak{A}_0$, the operator $T_\phi^{(S_+, S_-)}:\cB(\cH)\to\cB(\cH)$ is
   continuous with respect to the weak operator topology.  Observing
   also that 
\begin{equation}   
\wlim_{n \rightarrow \infty} E^+_n \, \left(h(S_+) - h(S_-) \right)\, E^-_n = h(S_+) - h(S_-), 
\end{equation}     
and passing to the limit as $n\to\infty$ in \eqref{n6.31}, one obtains \eqref{n6.17a}, 
completing the proof of Lemma \ref{ffTOI} (subject to \eqref{n6.31}).

It remains to prove the claim \eqref{n6.31} (which is a slight generalization of 
\cite[Lemma\ 7.1]{dPSW02}), that is, we need to show the identity (cf.\ \eqref{n6.20})
 \begin{align}
  \begin{split}
   \label{GeneralPerturbationApp}
& E^+_n \, \left( h(S_+) - h(S_-)  \right)\, E^-_n   \\
& \quad = T^{(S_+, S_-)}_\phi \left( E_n^+ \alpha(S_+) S_+ \beta(S_-) \,
       E^-_n - E_n^+ \alpha(S_+) S_- \beta(S_-) \, E^-_n  \right). 
\end{split}
\end{align}
For this purpose we let $\chi_n$ denote the characteristic function corresponding to spectral projections $E^\pm_n$ and introduce the functions $\phi_\pm$ by
\begin{equation}\begin{split}
\phi_+(\lambda,\mu)&=\chi_n(\lambda)\alpha(\lambda)\,\lambda\,\phi(\lambda,\mu)\beta(\mu)\chi_n(\mu),\\
\phi_-(\lambda,\mu)&=\chi_n(\lambda)\alpha(\lambda)\phi(\lambda,\mu)\,\mu\,\beta(\mu)\chi_n(\mu).\end{split}
\end{equation} 
Since the mapping $\phi \to T_\phi$ is a homomorphism of 
$\mathfrak{A}_0$ into $\cB(\cB(\cH))$ one has 
\begin{equation}\label{identity1}
T_{\phi_+}^{(S_+, S_-)}(I)=T_{\phi_+}^{(S_+, S_-)} (E_n^+\alpha(S_+)S_+\beta(S_-)E_n^-),
\end{equation}
and
\begin{equation}\label{identity2}
T_{\phi_-}^{(S_+, S_-)}(I)=T_{\phi_+}^{(S_+, S_-)} (E_n^+\alpha(S_+)S_-\beta(S_-)E_n^-),
\end{equation} 
implying 
\begin{equation}\label{GeneralPerturbationPhi1}
T_{\phi_+-\phi_-}^{(S_+, S_-)}(I)=T_{\phi}^{(S_+, 
S_-)}\left(E_n^+\alpha(S_+)S_+\beta(S_-)E_n^- - 
E_n^+\alpha(S_+)S_-\beta(S_-)E_n^-\right).
\end{equation}
Indeed, the operators 
$E_n^+\alpha(S_+)S_\pm\beta(S_-)E_n^-$ in the identities 
\eqref{identity1} and \eqref{identity2} are bounded and hence the 
application of the double operator integral $T_{\phi}^{(S_+, S_-)}$ to 
these operators is justified.
A direct computation shows that
\begin{equation}
\phi_+(\lambda, \mu)-\phi_-(\lambda, 
\mu)=\chi_n(\lambda) \left(h(\lambda) - h(\mu) \right) 
\chi_n(\mu),
\end{equation} 
and therefore (again appealing to the fact that the mapping $\varphi 
\to T_\varphi$ is a homomorphism of $\mathfrak{A}_0$ into $\cB(\cB(\cH))$), one has 
\begin{equation} \label{GeneralPerturbationPhi2}
T_{\phi_+-\phi_-}^{(S_+, S_-)}(I)=E_n^+\Big(T_{h(\lambda)}^{(S_+, 
S_-)}(I) -T_{h(\mu)}^{(S_+, S_-)}(I) \Big) 
E_n^-=E_n^+\big(h(S_+) - h(S_-)\big)E_n^-.
\end{equation}
Combining \eqref{GeneralPerturbationPhi1} and  
\eqref{GeneralPerturbationPhi2} yields \eqref{GeneralPerturbationApp}.
\end{proof}


Next, we turn to the discussion of the analogue of \eqref{subtle} for 
$S_\pm$ in the trace class setting. We recall our usual notation
$g(\lambda) =  \lambda (\lambda^2 + 1)^{-1/2}$,  $\lambda\in\bbR$.
Our intention is to use Lemma \ref{ffTOI} with $h(\lambda)=g(\lambda)$, and $\alpha(\lambda)=(\lambda^2+1)^{-1/4}$ and $\beta(\mu)=(\mu^2+1)^{-1/4}$.
First, we verify the condition $\phi\in\mathfrak{A}_0$ in Lemma \ref{ffTOI}.
\begin{lemma}\lb{newL6.7}
The function $\phi$ defined by
\beq\lb{def_phi}
\phi(\lambda,\mu):=\frac{\lambda(\lambda^2 + 1)^{-1/2}-\mu(\mu^2 + 1)^{-1/2}}{(\lambda^2 + 1)^{-1/4}\,(\lambda-\mu)\,(\mu^2 + 1)^{-1/4}}, 
\quad (\lambda,\mu)\in\bbR^2,
\enq
belongs to the class $\mathfrak{A}_0$.
\end{lemma}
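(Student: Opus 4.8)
The goal is to exhibit $\phi$ in the form \eqref{phialbe}–\eqref{intalbe}, i.e.\ to find a positive measure $d\nu$ and bounded Borel functions $\alpha_s,\beta_s$ with $\int\|\alpha_s\|_\infty\|\beta_s\|_\infty\,d\nu(s)<\infty$ realizing $\phi(\lambda,\mu)=\int_\bbR\alpha_s(\lambda)\beta_s(\mu)\,d\nu(s)$. The standard device (as in \cite{dPS04}, \cite{CPS09}) is to exploit a divided-difference/integral representation of the numerator. The first step is to rewrite
\[
\phi(\lambda,\mu)=(\lambda^2+1)^{1/4}(\mu^2+1)^{1/4}\,\frac{g(\lambda)-g(\mu)}{\lambda-\mu},
\]
so everything reduces to understanding the divided difference $\dfrac{g(\lambda)-g(\mu)}{\lambda-\mu}$ of $g(x)=x(x^2+1)^{-1/2}$ and then multiplying by the (unbounded but harmless, once paired correctly) factors $(\lambda^2+1)^{1/4}$ and $(\mu^2+1)^{1/4}$.

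**Key steps.** First I would use the representation $g(x)=x(x^2+1)^{-1/2}$ together with a convenient integral formula; the cleanest is to write $(x^2+1)^{-1/2}$ via a one-parameter family, e.g.\ $(x^2+1)^{-1/2}=\pi^{-1}\int_{\bbR}(x^2+1+t^2)^{-1}\,dt$ or the subordination formula $(x^2+1)^{-1/2}=\int_0^\infty e^{-s(x^2+1)}\,\dfrac{ds}{\sqrt{\pi s}}$. From such a formula one derives a representation of the divided difference $\dfrac{g(\lambda)-g(\mu)}{\lambda-\mu}$ as an absolutely convergent integral $\int K_s(\lambda)L_s(\mu)\,d\rho(s)$ of \emph{products} of functions of $\lambda$ alone and $\mu$ alone — this is exactly the kind of computation carried out for the function $g$ in the scalar case in \cite{BGGSS87}, and more systematically in the double-operator-integral literature. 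Second, I multiply through by $(\lambda^2+1)^{1/4}(\mu^2+1)^{1/4}$, absorbing $(\lambda^2+1)^{1/4}$ into $K_s$ and $(\mu^2+1)^{1/4}$ into $L_s$, and then check that the resulting $\alpha_s:=(\lambda^2+1)^{1/4}K_s(\lambda)$ and $\beta_s:=(\mu^2+1)^{1/4}L_s(\mu)$ are \emph{bounded} on $\bbR$ with $\int\|\alpha_s\|_\infty\|\beta_s\|_\infty\,d\rho(s)<\infty$. Third, since $\mathfrak A_0$ is a Banach algebra (noted in the text) and products/sums of elements of $\mathfrak A_0$ stay in $\mathfrak A_0$, one may also argue more modularly: show separately that the symmetrized building blocks lie in $\mathfrak A_0$ and combine. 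Finally, one should note $\phi$ is bounded on $\bbR^2$ (the apparent singularity at $\lambda=\mu$ is removable since $g$ is smooth, and the growth factors $(\lambda^2+1)^{1/4}$, $(\mu^2+1)^{1/4}$ are exactly compensated by the decay of the difference quotient of $g$, which behaves like $O((1+|\lambda|)^{-3/2}(1+|\mu|)^{-3/2})\cdot(\text{bounded})$ after symmetrization), so that membership in $\mathfrak A_0$ is not obstructed by size at infinity.

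**Main obstacle.** The delicate point is the bookkeeping that simultaneously (a) separates variables and (b) keeps both families $\alpha_s$, $\beta_s$ uniformly bounded while (c) preserving integrability of $\|\alpha_s\|_\infty\|\beta_s\|_\infty$ against $d\nu(s)$. The naive separation of $\dfrac{g(\lambda)-g(\mu)}{\lambda-\mu}$ tends to produce factors like $(\lambda^2+1)^{-1/2}$ on one side and $(\mu^2+1)^{-1/2}$ on the other, and after multiplying by $(\lambda^2+1)^{1/4}(\mu^2+1)^{1/4}$ one is left with $(\lambda^2+1)^{-1/4}(\mu^2+1)^{-1/4}$ — still bounded, good — but cross-terms arising from the $\lambda-\mu$ in the denominator (handled via $\frac{1}{\lambda-\mu}=\int$ or via Taylor-with-integral-remainder applied to $g$) can spoil boundedness unless one chooses the representation carefully; the symmetry condition $\phi(\lambda,\mu)=\phi(\mu,\lambda)$ must be used to pair terms so that no single factor carries all the decay. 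I expect the bulk of the work (which I would not grind through here) to be precisely this verification, most efficiently done by choosing $g'(x)=(x^2+1)^{-3/2}$ and writing $\dfrac{g(\lambda)-g(\mu)}{\lambda-\mu}=\int_0^1 g'\!\big(\mu+\tau(\lambda-\mu)\big)\,d\tau$, then expanding $g'(\mu+\tau(\lambda-\mu))=\big((\mu+\tau(\lambda-\mu))^2+1\big)^{-3/2}$ through a further subordination integral to achieve full separation of $\lambda$ and $\mu$, and finally bounding the triple integral in $(\tau,s,\ldots)$ — this is where the estimate $\int\|\alpha_s\|_\infty\|\beta_s\|_\infty\,d\nu<\infty$ gets verified.
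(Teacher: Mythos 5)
Your reduction of $\phi$ to $(\lambda^2+1)^{1/4}(\mu^2+1)^{1/4}\,\frac{g(\lambda)-g(\mu)}{\lambda-\mu}$ is the same starting point as the paper's, but the rest of your plan leaves the decisive step unproved, and the route you sketch does not deliver it. Membership in $\mathfrak{A}_0$ requires a representation $\phi(\lambda,\mu)=\int\alpha_s(\lambda)\beta_s(\mu)\,d\nu(s)$ with $\int\|\alpha_s\|_\infty\|\beta_s\|_\infty\,d\nu(s)<\infty$, and this class is not stable under multiplication by the unbounded weights $(\lambda^2+1)^{1/4}$, $(\mu^2+1)^{1/4}$; so showing that the divided difference of $g$ itself lies in $\mathfrak{A}_0$ (which your Taylor-remainder/Fourier scheme does give, since $\widehat{g'}\in L^1$) yields nothing for $\phi$. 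Concretely, in the representation $\frac{g(\lambda)-g(\mu)}{\lambda-\mu}=\int_0^1\int_{\bbR}\widehat{g'}(t)\,e^{i\tau t\lambda}e^{i(1-\tau)t\mu}\,dt\,d\tau$ the separated factors are unimodular, so after absorbing the quarter powers each factor has infinite sup norm; and no redistribution of decay can fix this term by term, because the individual $\tau$-slices are genuinely unbounded: at $\tau=1/2$ the function $(\lambda^2+1)^{1/4}(\mu^2+1)^{1/4}\,g'\big((\lambda+\mu)/2\big)$ grows like $|\lambda|$ along the line $\mu=-\lambda$, and only the full integral over $\tau$ is bounded. Thus the ``main obstacle'' you flag is exactly where the proof lives, and your proposal stops there without an idea that overcomes it.

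The paper supplies that missing idea in two moves. First, an algebraic identity (from Potapov--Sukochev) rewrites $\phi$ as $\psi+\psi\,(\lambda^2+1)^{-1/2}(\mu^2+1)^{-1/2}-\lambda(\lambda^2+1)^{-1/2}\,\psi\,\mu(\mu^2+1)^{-1/2}$, where $\psi(\lambda,\mu)=(\lambda^2+1)^{1/4}(\mu^2+1)^{1/4}\big[(\lambda^2+1)^{1/2}+(\mu^2+1)^{1/2}\big]^{-1}$; the extra factors are bounded functions of $\lambda$ alone and of $\mu$ alone, which multiply $\mathfrak{A}_0$ into itself, so everything reduces to $\psi\in\mathfrak{A}_0$ and gives $\|\phi\|_{\mathfrak{A}_0}\le 3\|\psi\|_{\mathfrak{A}_0}$. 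Second, and crucially, $\psi(\lambda,\mu)=\zeta\big(\log(\lambda^2+1)^{1/2}-\log(\mu^2+1)^{1/2}\big)$ with $\zeta(x)=(e^{x/2}+e^{-x/2})^{-1}$; since $\zeta\in L^1\cap W^{1,2}$ one has $\widehat\zeta\in L^1$, and Fourier inversion yields $\psi(\lambda,\mu)=\frac{1}{2\pi}\int_{\bbR}(\lambda^2+1)^{is/2}(\mu^2+1)^{-is/2}\widehat\zeta(s)\,ds$, a representation with unimodular factors and integrable weight, so the quarter-power growth is compensated exactly. Some device of this kind (a change of variables turning $\psi$ into a function of a difference, or an equivalent mechanism) is what your argument would need in order to be completed; as written, it is a plan whose critical verification is both omitted and, along the specific route proposed, bound to fail.
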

\begin{proof}
Let $(\lambda,\mu)\in\bbR^2$.
A direct calculation (carried out in \cite[(4.3)]{PS09}) reveals:
\begin{align}
\phi(\lambda,\mu)&=(\lambda^2 + 1)^{1/4}\frac{\lambda(\lambda^2 + 1)^{-1/2}-\mu(\mu^2 + 1)^{-1/2}}{\lambda-\mu}(\mu^2 + 1)^{1/4} 
\no \\
&=\frac{(\lambda^2 + 1)^{1/2}((\lambda^2 + 1)^{1/2}
-(\mu^2 + 1)^{1/2})(\mu^2 + 1)^{1/2}}{(\lambda^2 + 1)^{1/4}((\lambda^2 + 1)-(\mu^2 + 1))(\mu^2 + 1)^{1/4}} 
\no \\
&\quad
+\frac{(1-\lambda\mu)((\lambda^2 + 1)^{1/2}-(\mu^2 + 1)^{1/2})}{(\lambda^2 + 1)^{1/4}((\lambda^2 + 1)-(\mu^2 + 1))(\mu^2 + 1)^{1/4}}. 
\lb{dfnphi}
\end{align}
As a result, one can write
\beq\lb{splitphi}
\phi(\lambda,\mu)=\psi(\lambda,\mu)+\frac{\psi(\lambda,\mu)}{(\lambda^2 + 1)^{1/2}(\mu^2 + 1)^{1/2}}-
\frac{\lambda\psi(\lambda,\mu)\mu}{(\lambda^2 + 1)^{1/2}(\mu^2 + 1)^{1/2}},
\enq
where we introduced the function 
\beq \lb{defpsi}
\psi(\lambda,\mu)=\frac{(\lambda^2 + 1)^{1/4}(\mu^2 + 1)^{1/4}}{(\lambda^2 + 1)^{1/2}+(\mu^2 + 1)^{1/2}}. 
\enq
As soon as one knows that $\psi\in\mathfrak{A}_0$, it is  
straightforward that $\phi\in\mathfrak{A}_0$ and
$\|\phi\|_{\mathfrak{A}_0}\leq 3\|\psi\|_{\mathfrak{A}_0}$.
To begin the proof of the assertion $\psi\in\mathfrak{A}_0$, one introduces 
the function
\beq \lb{deffg}
\zeta(x)=\frac{1}{e^{x/2}+e^{-x/2}}, \quad x\in\bbR,
\enq
and observes that $\psi(\lambda,\mu)$ in \eqref{defpsi} can be written as 
\beq \lb{chvarf}
\psi(\lambda,\mu)=\zeta\big(\log((\lambda^2 + 1)^{1/2})-\log((\mu^2 + 1)^{1/2})\big).
\enq 
Since $\zeta\in W^{1,2}(\bbR)$, the Sobolev space of functions satisfying 
$\zeta,\zeta'\in L^2(\bbR; dx)$, one concludes that $\widehat{\zeta}\in
L^1(\bbR; ds)$ for the Fourier transform $\widehat{\zeta}=\widehat{\zeta}(s)$. Since also $\zeta\in
L^1(\bbR; dx)$, the inverse Fourier transform formula yields  
\beq \lb{infrakA}
\zeta(\lambda-\mu)=\frac{1}{2\pi}\int_\bbR
e^{is\lambda}e^{-is\mu} \widehat{\zeta}(s)\,ds, \quad
\lambda,\mu\in\bbR. 
\enq 
Combining \eqref{chvarf} and \eqref{infrakA} yields 
\beq \lb{final_psi}
  \psi(\lambda,\mu)=\frac{1}{2\pi}\int_\bbR
(\lambda^2 + 1)^{is/2}(\mu^2 + 1)^{-is/2}\widehat{\zeta}(s)\,ds, \quad
\lambda,\mu\in\bbR, \enq which immediately implies  $\psi
\in\mathfrak{A}_0$ due to $\widehat{\zeta}\in
L^1(\bbR; ds)$, completing the proof.
\end{proof}

\begin{remark}\lb{rSplit}
In the course of the proof of Lemma \ref{newL6.7} we established  
formula \eqref{splitphi}, yielding the following decomposition of $T_\phi$,  
\beq \lb{splitT}
\begin{split}
T_\phi&=T_\psi+(S_+^2 + I)^{-1/2}T_\psi(S_-^2 + I)^{-1/2}      \\
&\quad -S_+(S_+^2 + I)^{-1/2}T_\psi S_-(S_-^2 + I)^{-1/2},
\end{split}
\enq
where $T_\psi=T_\psi^{(S_+,S_-)}$ is the  operator integral
for the function $\psi$ defined in \eqref{defpsi} for which we proved the integral representation \eqref{final_psi}. Since $\psi\in\mathfrak{A}_0$, which in turn 
implies $T_\psi\in\cB(\cB_1(\cH))$, and since both
operators $(S_{\pm}^2 + I)^{-1/2}$ and $S_{\pm}(S_{\pm}^2 + I)^{-1/2}$
belong to $\cB(\cH)$, it follows from \eqref{splitT} that $T_\phi\in\cB(\cB_1(\cH))$.
We will use the decomposition \eqref{splitT} and the representation \eqref{final_psi} in the proof of Proposition \ref{propgA}.
\end{remark}

\begin{lemma}\lb{ext_subtle}
 Assume that $S_\pm$ are self-adjoint operators in $\cH$ such that
 \beq\lb{domeqdom} \dom(S_+)=\dom(S_-)
 \enq
 and
  \beq\label{trcl}
(S_+ - S_-)\big(S_-^2 + I)^{-1/2} \in\cB_1(\cH).
\enq
Then the closure $\overline{K}$ of the operator $K=K(S_+,S_-)$ in $\cH$ defined by
\beq\label{preclK}
K=(S_+^2+I)^{-1/4}(S_+ - S_-)(S_-^2+I)^{-1/4}, \quad  
\dom (K)=\dom(S_-),
\enq
satisfies $\ol K \in \cB_1(\cH)$. Moreover,
\beq\lb{subtle2}
g(S_+)-g(S_-)=T_\phi \big[\overline{K}\big]\in\cB_1(\cH),
\enq
where $T_\phi \in\cB(\cB_1(\cH))$. 
\end{lemma}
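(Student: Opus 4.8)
The strategy is to verify that the hypotheses of Lemma \ref{ffTOI} are satisfied with the specific choices $h(\lambda) = g(\lambda) = \lambda(\lambda^2+1)^{-1/2}$, $\alpha(\lambda) = (\lambda^2+1)^{-1/4}$, and $\beta(\mu) = (\mu^2+1)^{-1/4}$, and then extract the trace-class conclusion. First I would set $\cD = \dom(S_-) = \dom(S_+)$ (using \eqref{domeqdom}) and check that $\cD$ is a core for $S_-$ (trivially, since $\cD = \dom(S_-)$) and that $\beta(S_-)\cD \subseteq \dom(S_+\alpha(S_+)) = \dom\big(S_+(S_+^2+I)^{-1/4}\big)$. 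The latter holds because $(S_-^2+I)^{-1/4}$ maps $\dom(S_-)$ into $\dom(S_-) = \dom(S_+)$ and $S_+(S_+^2+I)^{-1/4}$ is in fact a bounded operator on $\cH$ (its domain is all of $\cH$), so this inclusion is automatic. Next I would identify the operator $K$ of \eqref{Kinit}: with these choices, $S_+\alpha(S_+)\beta(S_-) - \alpha(S_+)S_-\beta(S_-) = (S_+^2+I)^{-1/4}(S_+ - S_-)(S_-^2+I)^{-1/4}$ on $\dom(S_-)$, which is exactly the operator $K$ of \eqref{preclK}.

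The key point is then to show $\overline{K} \in \cB_1(\cH)$. I would write
\beq
K = (S_+^2+I)^{-1/4}\big[(S_+ - S_-)(S_-^2+I)^{-1/2}\big](S_-^2+I)^{1/2}(S_-^2+I)^{-1/4}
= (S_+^2+I)^{-1/4}\big[(S_+ - S_-)(S_-^2+I)^{-1/2}\big](S_-^2+I)^{1/4}
\enq
on $\dom(S_-)$. The bracketed factor lies in $\cB_1(\cH)$ by hypothesis \eqref{trcl}, $(S_+^2+I)^{-1/4} \in \cB(\cH)$, and $(S_-^2+I)^{1/4}$ is (densely defined and) closed; by the ideal property and standard closure arguments (using that $(S_+^2+I)^{-1/4}$ commutes with nothing in particular but is merely bounded, so one closes from the right), $\overline{K}$ extends to a bounded operator which equals the $\cB_1(\cH)$ product $(S_+^2+I)^{-1/4}\big[(S_+-S_-)(S_-^2+I)^{-1/2}\big](S_-^2+I)^{1/4}\big|_{\dom(S_-)}$ closed off. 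More carefully: since $(S_+-S_-)(S_-^2+I)^{-1/2} \in \cB_1(\cH)$, the operator $T := (S_+^2+I)^{-1/4}(S_+-S_-)(S_-^2+I)^{-1/2}(S_-^2+I)^{1/4}$ with domain $\dom\big((S_-^2+I)^{1/4}\big) \supseteq \dom(S_-)$ is closable with closure in $\cB_1(\cH)$ provided the trace-class factor maps $\cH$ into $\dom(S_+^2+I)^{-1/4}) = \cH$ — which it does — so $\overline{K} = \overline{T|_{\dom(S_-)}} \in \cB_1(\cH)$, with $\|\overline{K}\|_{\cB_1(\cH)} \leq \|(S_+-S_-)(S_-^2+I)^{-1/2}\|_{\cB_1(\cH)}$ after absorbing the bounded factors. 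I would also invoke Hypothesis \ref{spm} being met (closability and boundedness of $\overline{K}$, just established) so that Lemma \ref{ffTOI} applies.

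Finally, Lemma \ref{newL6.7} gives exactly that the associated function
\beq
\phi(\lambda,\mu) = \frac{g(\lambda) - g(\mu)}{\alpha(\lambda)(\lambda-\mu)\beta(\mu)}
= \frac{\lambda(\lambda^2+1)^{-1/2} - \mu(\mu^2+1)^{-1/2}}{(\lambda^2+1)^{-1/4}(\lambda-\mu)(\mu^2+1)^{-1/4}}
\enq
belongs to $\mathfrak{A}_0$, so the hypothesis of Lemma \ref{ffTOI} on $\phi$ is verified. Applying the second part of Lemma \ref{ffTOI} (the case $\overline{K} \in \cB_1(\cH)$) then yields
\beq
g(S_+) - g(S_-) = T_\phi\big[\overline{K}\big] \in \cB_1(\cH),
\enq
with $T_\phi = T_{\phi,1}^{(S_+,S_-)} \in \cB(\cB_1(\cH))$ (and $\|T_\phi\|_{\cB(\cB_1(\cH))} \leq \|\phi\|_{\mathfrak{A}_0} \leq 3\|\psi\|_{\mathfrak{A}_0}$ by \eqref{normTphi} and Lemma \ref{newL6.7}), which is \eqref{subtle2}. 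I expect the main obstacle to be the careful bookkeeping in the previous paragraph: verifying that $\overline{K}$ is genuinely the closure of the explicitly given operator on $\dom(S_-)$ and that the factorization through the trace-class piece is valid — i.e., that inserting $(S_-^2+I)^{-1/2}(S_-^2+I)^{1/2}$ and regrouping is legitimate on the stated domain — which requires invoking Lemma \ref{zam} to know the natural extension of $K$ to $\dom(S_-)$ is well-defined, together with a density/closedness argument. Everything else is a direct citation of Lemmas \ref{ffTOI} and \ref{newL6.7}.
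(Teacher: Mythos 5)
Your overall architecture (verify Hypothesis \ref{spm} with $h=g$, $\alpha(\lambda)=(\lambda^2+1)^{-1/4}$, $\beta(\mu)=(\mu^2+1)^{-1/4}$, invoke Lemma \ref{newL6.7} for $\phi\in\mathfrak{A}_0$, then apply Lemma \ref{ffTOI}) is exactly the paper's, but the central step --- proving $\overline{K}\in\cB_1(\cH)$ --- has a genuine gap. You factor
$K=(S_+^2+I)^{-1/4}\big[(S_+-S_-)(S_-^2+I)^{-1/2}\big](S_-^2+I)^{1/4}$ on $\dom(S_-)$ and then claim the unbounded right factor $(S_-^2+I)^{1/4}$ can be ``absorbed,'' giving $\|\overline{K}\|_{\cB_1(\cH)}\le\|(S_+-S_-)(S_-^2+I)^{-1/2}\|_{\cB_1(\cH)}$. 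That is not legitimate: the composition (trace class)$\circ$(unbounded) acts with the unbounded operator first, and what your factorization actually amounts to is $(S_+-S_-)(S_-^2+I)^{-1/4}$, an operator which hypothesis \eqref{trcl} does not control. Indeed, $B T^{-1}\in\cB(\cH)$ (or $\cB_1(\cH)$) for a symmetric $B$ with $\dom(B)\supseteq\dom(T)$ does \emph{not} imply $BT^{-1/2}$ is even bounded; interpolation only yields the \emph{symmetrized} statement $T^{-1/2}BT^{-1/2}$, which is precisely the content of Theorem \ref{hadamard1} (and the symmetry of $S_+-S_-$, i.e.\ control of both $ST^{-1}$ and $S^*T^{-1}$, is essential there). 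So the inequality you assert cannot be obtained by ``regrouping,'' and without it you have neither $\overline K\in\cB_1(\cH)$ nor even the boundedness of $\overline K$ needed for Hypothesis \ref{spm}.

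The correct route (the paper's) keeps the unbounded weight split symmetrically: write
$K=\big[(S_+^2+I)^{-1/4}(S_-^2+I)^{1/4}\big]\,\big[(S_-^2+I)^{-1/4}(S_+-S_-)(S_-^2+I)^{-1/4}\big]$ on $\dom(S_-)$; the first bracket has bounded closure by Remark \ref{r2.8} (closed graph theorem, using $\dom(S_+)=\dom(S_-)$), and the second bracket, $\widetilde K$, has trace class closure by Theorem \ref{hadamard1} applied with $S=S_+-S_-$, $T=(S_-^2+I)^{1/2}$, using \eqref{trcl} together with the symmetry of $S_+-S_-$. Then $\overline K\in\cB_1(\cH)$ as bounded times trace class, and the rest of your argument goes through. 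A secondary slip: $S_+\alpha(S_+)=S_+(S_+^2+I)^{-1/4}$ is \emph{not} bounded (it corresponds to $\lambda(\lambda^2+1)^{-1/4}\sim|\lambda|^{1/2}$), so the inclusion $\beta(S_-)\cD\subseteq\dom(S_+\alpha(S_+))$ is not ``automatic''; it holds because $(S_-^2+I)^{-1/4}$ maps $\dom(S_-)$ into $\dom(S_-)=\dom(S_+)\subseteq\dom\big(|S_+|^{1/2}\big)$.
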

\begin{proof} 
Assumption \eqref{domeqdom} yields 
\beq
\overline{(S_+^2 + I)^{-1/4} (S_-^2 + I)^{1/4}} \in \cB(\cH)      \lb{6.Spm}
\enq 
by Remark \ref{r2.8} with $A_\pm$ replaced by $S_\pm$. In addition, the operator 
$K$ on $\dom (K)=\dom(S_-)$ can be represented as follows,  
\beq\label{prelim}
\begin{split}
K&=(S_+^2+I)^{-1/4}(S_+ - S_-)(S_-^2+I)^{-1/4}  \\
&= \big[(S_+^2+I)^{-1/4}(S_-^2+I)^{-1/4}\big]
(S_-^2+I)^{-1/4}(S_+ - S_-)(S_-^2+I)^{-1/4}.
\end{split}
\enq
Due to \eqref{trcl} and Theorem \ref{hadamard1}, the closure of the operator
\beq
\widetilde K=
(S_-^2+I)^{-1/4}(S_+ - S_-)(S_-^2+I)^{-1/4}, \quad \dom(\widetilde K)=\dom (S_-), 
\enq
is a trace class operator, and hence from \eqref{6.Spm} and \eqref{prelim} one concludes that $\overline{K}\in \cB_1(\cH)$. Next, we choose 
$h(\lambda)=g(\lambda)$ and 
$\alpha(\lambda)=(\lambda^2+1)^{-1/4}$, $\beta(\mu)=(\mu^2+1)^{-1/4}$ in 
Lemma \ref{ffTOI}.  By \eqref{domeqdom} and $\overline{K}\in \cB(\cH)$, 
Hypothesis \ref{spm} with $\cD=\dom(S_-)$ holds. 
By Lemma \ref{newL6.7}, the function $\phi$ in \eqref{def16IO} 
belongs to the class $\mathfrak{A}_0$, and thus all assumptions of 
Lemma \ref{ffTOI}  are verified.
As a result,  \eqref{subtle2} follows from \eqref{ggAAIO}.
\end{proof}

We note in passing, that we could have used the weaker hypotheses
\begin{equation} 
\dom(|S_+|^{1/2})=\dom(|S_-|^{1/2}) \, \text{ and } \, 
\dom(S_+) \supseteq \dom(S_-)
\end{equation}  
in place of \eqref{domeqdom} in Lemma \ref{ext_subtle}, but we do not 
pursue this here.

At this point we are ready to prove Proposition \ref{propgA}. We switch back to our 
original notation $A_{\pm}$, that is, we will now identify $S_{\pm}$ with the 
self-adjoint operators $A_{\pm}$ studied in the previous sections. In 
particular, we emphasize that Lemma  \ref{ext_subtle} is applicable as assumption \eqref{domeqdom} holds by Theorem \ref{t3.7}\,$(iv)$ and assumption \eqref{trcl} is satisfied by \eqref{3.Apmrtrcl} 
(cf.\ also \eqref{intB}).

\begin{proof}[Proof of Proposition \ref{propgA}]
The first inclusion in assertion \eqref{assrti} of
Proposition \ref{propgA} is proved in Lemma  \ref{ext_subtle}. 
The second inclusion in \eqref{assrti} is proved similarly.

To begin the proof of assertion \eqref{assrtii}, one considers the
operator integral
\beq
T_\phi^{(n)}=T_\phi^{(A_{+,n},A_{-,n})},
\enq
with $\phi$ given in \eqref{def_phi} and the operators $K(A_{+},A_{-})$
and $K(A_{+,n},A_{-,n})$ defined by \eqref{preclK} with $S_{\pm}$
replaced by $A_{\pm}$ and $A_{\pm,n}$, respectively.
  Using formula \eqref{subtle2}, one obtains 
\begin{align}
& g(A_+) - g(A_-)-(g(A_{+,n})-g(A_{-,n}))  \\
& \quad =T_\phi^{(A_+,A_-)}(\overline{K(A_+,A_-)})-
T_\phi^{(A_{+,n},A_{-,n})}(K(A_{+,n},A_{-,n}))    \\
& \quad = \big(T_\phi^{(A_+,A_-)}-T_\phi^{(A_{+,n},A_{-,n})}\big)(
\overline{K(A_+,A_-)})\lb{term1}  \\
&\qquad + 
T_\phi^{(A_{+,n},A_{-,n})}\big(\overline{K(A_+,A_-)}-K(A_{+,n},A_{-,n})\big).\lb{term2}
\end{align}

Since $\phi\in\mathfrak{A}_0$ by Lemma \ref{newL6.7}, the
  sequence   of the operators
$T_\phi^{(A_{+,n},A_{-,n})}$  is uniformly bounded in the Banach space
$\cB(\cB_1(\cH))$ (see
\eqref{normTphi}). Thus, to complete the proof of assertion \eqref{assrtii}
it suffices to establish that
\begin{equation}\lb{ass1}
\lim_{n\to\infty}\big\|\overline{K(A_+,A_-)}-K(A_{+,n},A_{-,n})\big\|_{\cB_1(\cH)}=0     
\end{equation}
and 
\begin{align}\lb{ass2}
 \lim_{n\to\infty}\big\|T_\phi^{(A_+,A_-)}(K) 
 - T_\phi^{(A_{+,n},A_{-,n})}(K)\big\|_{\cB_1(\cH)}=0 \,
\text{ for each } K\in\cB_1(\cH).    
\end{align}

Starting the proof of \eqref{ass1}, we recall that $P_n=E_{A_-}((-n,n))$ is the 
spectral projection associated with $A_-$, that $A_{\pm,n}=P_nA_{\pm} P_n$, 
and that we abbreviate $\varkappa_\pm=((A_\pm)^2+I)^{1/2}$, 
$\varkappa_{\pm,n}=((A_{\pm,n})^2+I)^{1/2}$. It is clear that 
\begin{equation} 
P_n\varkappa_{-,n}^{-1/2}=P_n\varkappa_-^{-1/2},     \lb{KAP-} 
\end{equation} 
and hence one obtains
\begin{align} \lb{pnleft}
& K(A_{+,n},A_{-,n})=((A_{+,n})^2+I)^{-1/4}(A_{+,n}-A_{-,n})((A_{-,n})^2+I)^{-1/4}   \no  \\
& \quad =\big[(A_{+,n}^2+I)^{-1/4}(A_{-,n}^2+I)^{1/4}\big](A_{-,n}^2+I)^{-1/4}P_n(A_{+}-A_{-})(A_-^2+I)^{-1/4}P_n   \no \\
& \quad =
\big[(A_{+,n}^2+I)^{-1/4}(A_{-,n}^2+I)^{1/4}\big]P_n(A_{-}^2+I)^{-1/4}(A_{+}-A_{-})(A_-^2+I)^{-1/4}P_n.
\end{align} 
In addition, $\ol{(A_{+}^2+I)^{-1/4}(A_{-}^2+I)^{1/4}} 
= \big[A_{-}^2+I)^{1/4}(A_{+}^2+I)^{-1/4}\big]^* \in \cB(\cH)$ by Remark \ref{r2.8} , 
and hence one can write
\begin{align}
& K(A_+,A_-) = (A_{+}^2+I)^{-1/4}(A_{+}-A_{-})(A_{-}^2+I)^{-1/4}   \no \\
& \quad =\big[\ol{(A_{+}^2+I)^{-1/4}(A_{-}^2+I)^{1/4}}\big](A_{-}^2+I)^{-1/4}(A_{+}-A_{-})(A_-^2+I)^{-1/4}.
\end{align}
Thus one can represent the difference under the norm in \eqref{ass1} as follows, 
\begin{align}
& K(A_+,A_-)-K(A_{+,n},A_{-,n})  \no  \\
& \quad = \big[\ol{(A_{+}^2+I)^{-1/4}(A_{-}^2+I)^{1/4}}\big]\,(A_{-}^2+I)^{-1/4}(A_{+}-A_{-})(A_-^2+I)^{-1/4}    \\
&\qquad -\big[(A_{+,n}^2+I)^{-1/4}(A_{-,n}^2+I)^{1/4}\big] \,P_n(A_{-}^2+I)^{-1/4}(A_{+}-A_{-})(A_-^2+I)^{-1/4}P_n.   \no 
\end{align}
Since $(A_+-A_-)\varkappa_-^{-1}\in\cB_1(\cH)$ by \eqref{intB}, one has 
\beq\lb{n6.62}
(A_{-}^2+I)^{-1/4}(A_{+}-A_{-})(A_-^2+I)^{-1/4}\in\cB_1(\cH)
\enq
by Theorem \ref{hadamard1}. Hence, Lemma \ref{lSTP} implies  
\begin{align}
\begin{split} 
& \lim_{n \to \infty}\big\|P_n(A_{-}^2+I)^{-1/4}(A_{+}-A_{-})(A_-^2+I)^{-1/4}P_n     \\
& \hspace*{1cm} - (A_{-}^2+I)^{-1/4}(A_{+}-A_{-})(A_-^2+I)^{-1/4}\big\|_{\cB_1(\cH)} =0.
\end{split} 
\end{align}
Again appealing to Lemma \ref{lSTP}, one concludes that for the convergence \eqref{ass1} it  suffices to show that
\begin{equation}
\slim_{n\to\infty} (A_{+,n}^2+I)^{-1/4}(A_{-,n}^2+I)^{1/4} 
= \ol{(A_{+}^2+I)^{-1/4}(A_{-}^2+I)^{1/4}}.      \lb{n6.59}
\end{equation}

By \cite[Theorem\ VIII.25]{RS80} (or \cite[Theorem\ 9.16]{We80}), the sequence $A_{+,n}$ converges to $A_+$ in the strong resolvent sense, and so  \cite[Theorem VIII.20]{RS80} 
(or \cite[Theorem\ 9.17]{We80})) implies that $(A_{+,n}^2+I_{\cH})^{-1/4}$ converges to 
$(A_{+}^2+I)^{-1/4 }$ in the strong operator topology. Moreover, the sequence 
$\{(A_{+,n}^2+I)^{-1/4}\}_{n\in\bbN}$ is uniformly bounded in $\cB(\cH)$. In addition, for fixed 
$g \in \dom (A_-)$ one has  
\begin{equation} 
\slim_{n\to\infty} (A_{-,n}^2+I)^{1/4}g   
= \slim_{n\to\infty} P_n(A_{-}^2+I)^{1/4}f = (A_{-}^2+I)^{1/4}g,
\end{equation}
and hence 
\begin{align}
\begin{split}
\slim_{n\to\infty} (A_{+,n}^2+I)^{-1/4}P_n(A_{-}^2+I)^{1/4}g  
= (A_{+}^2+I)^{-1/4}(A_{-}^2+I)^{1/4} g,&  \\  
g \in \dom(A_-).&
\end{split} 
\end{align}
Since $\dom (A_-)$ is dense in $\cH$, \eqref{n6.59} indeed holds if one can prove that 
\begin{align}
\begin{split} 
& \big\|(A_{+,n}^2+I)^{-1/4}(A_{-,n}^2+I)^{1/4}\big\|_{\cB(\cH)}     \\ 
& \quad = \big\|(A_{-,n}^2+I)^{1/4}(A_{+,n}^2+I)^{-1/4}\big\|_{\cB(\cH)} \leq C  \lb{6.C}
\end{split} 
\end{align}
for some $C \in (0, \infty)$, independent of $n \in \bbN$.
This concludes the proof of \eqref{ass1}, subject to \eqref{6.C}. 

In the remainder of this argument we now establish \eqref{6.C}: Employing \eqref{dfnAplus}, 
one estimates 
\begin{align}
& \|P_n A_- P_n f\|_{\cH} \leq \|P_n A_+ P_n f\|_{\cH} + \|P_n B(+\infty) P_n f\|_{\cH}  \no \\
& \quad = \|P_n A_+ P_n f\|_{\cH} 
+ \big\|P_n B(+\infty) (A_- - i y I)^{-1} P_n (A_- - i y I) P_n f \big\|_{\cH}  \no \\
& \quad \leq \|P_n A_+ P_n f\|_{\cH} + \big\|P_n B(+\infty) (A_- - i y I)^{-1} \big\|_{\cB(\cH)}
\|P_n (A_- - i y I)P_n f\|_{\cH}    \no \\
& \quad  \leq \|P_n A_+ P_n f\|_{\cH} + (1/2) [\|P_n A_- P_nf\|_{\cH} + |y| \|f\|_{\cH}], 
\quad f \in \cH,
\end{align} 
choosing $y>0$ sufficiently large such that 
$\big\|P_n B(+\infty) (A_- - i y I)^{-1} \big\|_{\cB(\cH)} \leq (1/2)$, which is possible since 
by \eqref{intB} $B(+\infty)$ is relatively compact (in fact, relatively trace class) with respect 
to $A_-$. Thus, employing 
\begin{equation}
\|Tg\|_{\cH} = \| |T| g\|_{\cH},  \quad g \in \dom(T) = \dom(|T|)
\end{equation} 
for any closed, densely defined operator $T$ in $\cH$ (using the polar decomposition 
for $T$), one concludes  
\begin{equation}
 \||A_{-,n}|f\|_{\cH} \leq 2 \||A_{+,n}| f\|_{\cH} + |y| \|f\|_{\cH}, \quad f \in \cH,
\end{equation}
implying
\begin{equation}
 \|[|A_{-,n}| + I] f\|_{\cH}^2 \leq 16 \|[|A_{+,n}| + I] f\|_{\cH}^2 
 + 2 [2 |y|^2 + 1] \|f\|_{\cH}^2, \quad f \in \cH.    \lb{n6.66} 
\end{equation}

At this point it suffices to apply the L\"owner--Heinz inequality in the following form: 
Assume that $T$ is a self-adjoint operator in $\cH$ with $T^{-1} \in \cB(\cH)$ and suppose 
that $S$ is a closed symmetric operator in $\cH$ satisfying
\begin{equation}
\dom(S) \supseteq \dom(T).   
\end{equation}
Then $S$ is relatively bounded with respect to $T$ and hence
there exist $a>0$ and $b>0$ such that
\begin{align}
\|Sf\|_{\cH}^2 & \leq a^2 \|Tf\|_{\cH}^2 + b^2 \|f\|_{\cH}^2, \quad f \in \dom(T).   
\end{align}
The L\"owner--Heinz inequality (cf.\ \cite[Sect.~3.2.1]{Fu02}, 
\cite{GLST15}, \cite[Theorem\ 3]{He51},
\cite{Ka52}, \cite[Theorem\ IV.1.11]{KPS82}, \cite{Lo34}, \cite{Mc80}), then entails that 
\begin{equation}
\dom\big(|S|^\alpha\big) \supseteq \dom\big(\big(a^2|T|^2 + b^2 I\big)^{\alpha/2}\big)
= \dom\big(|T|^{\alpha}\big), \quad \alpha \in (0,1],    
\end{equation}
and
\begin{align}
\begin{split} 
\big\||S|^\alpha f\big\|_{\cH}^2 & \leq \big\|[a^2 |T|^2 + b^2 I]^{\alpha/2} f\big\|_{\cH}^2     \\
& \leq a^{2 \alpha} \big\||T|^{\alpha} f \big\|_{\cH}^2 + b^{2 \alpha} \|f\|_{\cH}^2, 
\quad \alpha \in (0,1].    \lb{n6.70}
\end{split} 
\end{align}
Here we used the spectral theorem for $|T|$ and the elementary inequality, 
\begin{equation} 
(x^2 + y^2)^{\alpha} \leq x^{2 \alpha} + y^{2 \alpha}, \quad x, y \in [0, \infty), \; \alpha \in (0,1],  
\end{equation} 
to arrive at the second inequality in \eqref{n6.70}. Thus, by the estimate \eqref{n6.66}, 
identifying $S = |A_{-,n}| + I$, 
$T = |A_{+,n} | + I$, $\alpha = 1/2$, $a = 4$, and $b = 2^{1/2} [2 |y|^2 + 1]^{1/2}$ in 
\eqref{n6.70} yields
\begin{equation}
\big\|[|A_{-,n}| + I]^{1/2} f \big\|_{\cH}^2 \leq 4 \big\|[|A_{+,n}| + I]^{1/2} f \big\|_{\cH}^2 
 + 2^{1/2} [2 |y|^2 + 1]^{1/2} \|f\|_{\cH}^2, \quad f \in \cH.     \lb{n6.72} 
\end{equation}

Given a self-adjoint operator $R$ in $\cH$, and using once again the spectral theorem, 
there exist constants $c_j \in (0, \infty)$, $j=1,2$, such that 
\begin{align}
\begin{split} 
c_1 \big\|(|R|^2 + I)^{1/4}g\big\|_{\cH} 
\leq \big\|(|R| + I)^{1/2}g\big\|_{\cH}  
\leq  c_2 \big\|(|R|^2 + I)^{1/4}g\big\|_{\cH},&   \\
g \in \dom\big(|R|^{1/2}\big).&      \lb{n6.73}
\end{split} 
\end{align} 
Hence, applying the inequalities \eqref{n6.73} to $R = A_{\pm,n}$ in \eqref{n6.72}, 
finally yields 
\begin{equation} 
d_1 \big\|[|A_{-,n}|^2 + I]^{1/4} f \big\|_{\cH}^2 \leq d_2 
\big\|[|A_{+,n}|^2 + I]^{1/4} f \big\|_{\cH}^2 + d_3 \|f\|_{\cH}^2, \quad f \in \cH,     \lb{n6.74} 
\end{equation} 
for appropriate constants $d_j \in (0,\infty)$, $j=1,2,3$, implying \eqref{6.C}. 

Starting the proof of assertion \eqref{ass2}, one uses \eqref{splitT} with $S_\pm$ 
replaced by $A_\pm$ and $A_{\pm,n}$ and writes 
\begin{equation}
T_\phi^{(A_+,A_-)}(K)-T_\phi^{(A_{+,n},A_{-,n})}(K)=T_\psi(K)-T_\psi^{(n)}(K)+
\Delta_n^{(1)}+\Delta_n^{(2)}. 
\end{equation}
Here, we introduced the notation
\begin{align}
&\Delta_n^{(1)}=\varkappa_+^{-1} T_\psi(K)\varkappa_-^{-1}-
\varkappa_{+,n}^{-1}T_\psi^{(n)}(K)\varkappa_{-,n}^{-1},\\
&\Delta_n^{(2)}=A_+\varkappa_+^{-1} T_\psi(K)A_- \varkappa_-^{-1}-
A_{+,n} \varkappa_{+,n}^{-1}T_\psi^{(n)}(K)A_{-,n} \varkappa_{-,n}^{-1},
\end{align}
the abbreviations $T_\psi=T_\psi^{(A_+,A_-)}$ and
$T_\psi^{(n)}=T_\psi^{(A_{+,n},A_{-,n})}$, and used the function $\psi$ 
defined in \eqref{defpsi}. One observes that
\beq\slim_{n\to\infty}\varkappa_{\pm,n}^{-1}=\varkappa_{\pm}^{-1},\quad
\slim_{n\to\infty}\,(A_{\pm,n}
\varkappa_{\pm,n}^{-1})=A_{\pm} \varkappa_{\pm}^{-1}\enq
 by the strong resolvent convergence in
\eqref{Pn6} and \cite[Theorem VIII.20(b)]{RS80}. Thus, by Lemma
\ref{lSTP}, to finish the proof of assertion \eqref{ass2}, it suffices 
to show that 
\beq \lb{ass3}
\lim_{n\to\infty}\big\|T_\psi(K)-T_\psi^{(n)}(K)\big\|_{\cB_1(\cH)}=0 \text{ for each } K\in\cB_1(\cH). 
\enq 
We will employ the integral representation \eqref{final_psi}, 
\beq \lb{IntRep}
T_\psi(K)-T_\psi^{(n)}(K)=\frac{1}{2\pi}\int_\bbR \left (\varkappa_+^{is}K
\varkappa_-^{-is} - \varkappa_{+,n}^{is}K
\varkappa_{-,n}^{-is}\right) \widehat{\zeta}(s)\,ds.
\enq 
Again, $\slim_{n\to\infty}\varkappa_{\pm,n}^{\pm is}=\varkappa_{\pm}^{\pm
is}$  by the strong resolvent
convergence in \eqref{Pn6} and \cite[Theorem VIII.20(b)]{RS80}. By
Lemma \ref{lSTP}, the integrand in \eqref{IntRep} converges to zero
in $\cB_1(\cH)$ as $n\to\infty$ for each $s\in\bbR$. Since
$\widehat{\zeta}\in L^1(\bbR; ds)$, the dominated convergence theorem
yields \eqref{ass3}, completing the proof of Proposition \ref{propgA}.
\end{proof}

\section{The Spectral Shift Function for the Pair $(A_+, A_-)$ and Perturbation Determinants}   
\lb{s7}

In this section we provide a detailed study of the spectral shift function associated 
with the pair $(A_+, A_-)$. 

Introducing the spectral shift function associated with the pair $(A_+, A_-)$  
via the invariance principle one can proceed as follows: One recalls that by 
Theorem \ref{Ntrindf}, the difference of the 
self-adjoint operators
$g(A_+) $ and $ g(A_-)$, with 
\begin{equation}
g(x) = g_{-1} (x) = x (x^2+1)^{-1/2}, \quad x\in \bbR,   \lb{7.0} 
\end{equation} is of trace class, that is,
\begin{equation}
[g(A_+) - g(A_-)] \in\cB_1(\cH).    \lb{7.1} 
\end{equation}
Bearing in mind the membership \eqref{7.1}, we {\it define}
(cf.\ also \cite[eq.\ 8.11.4]{Ya92})
\begin{equation}
\xi(\nu; A_+, A_-) := \xi(g (\nu); g(A_+), 
g(A_-)),   \lb{2.31}
\quad \nu\in\bbR,
\end{equation}
 where
$\xi(\,\cdot\,;g(A_+), g(A_-))$ is the spectral shift function associated with the pair
$(g(A_+), g(A_-))$ uniquely determined by the requirement 
(cf.\ \cite[Sects.\ 9.1, 9.2]{Ya92})
\begin{equation}
\xi(\,\cdot\,;g(A_+), g(A_-)) \in L^1(\bbR; d\omega).  \lb{l1}
\end{equation} 
One recalls that since $\|g(A_\pm)\|\le 1$, $\xi(\,\cdot\,; g(A_+), g(A_-))$ is a
real-valued function supported on the interval $ [-1,1]$,
\begin{equation}
\supp(\xi(\,\cdot\,;g(A_+),g(A_-))) \subseteq [-1,1], 
\end{equation}
and 
\begin{align} 
& \xi(\omega; g(A_+), g(A_-))   \no \\
& \quad = \pi^{-1} \lim_{\varepsilon \downarrow 0} 
\Im \big(\ln\big({\det}_{\cH} \big(I + (g(A_+) - g(A_-))
(g(A_-) - (\omega + i \varepsilon) I)^{-1}\big)\big)\big)      \lb{7.5}  \\
& \hspace*{8.1cm} \text{ for a.e.\ } \, \omega \in [-1,1].   \no 
\end{align}
Here the choice of branch of $\ln({\det}_{\cH}(\cdot))$ on $\bbC_+$ is again 
chosen such that
\begin{equation}
\lim_{\Im(z) \to +\infty} \ln\big({\det}_{\cH}\big(I + (g(A_+) - g(A_-))
(g(A_-) - z I)^{-1}\big)\big) = 0. 
\end{equation}

Moreover, since \eqref{7.1} holds, Krein's trace formula in 
its simplest form yields (cf.\ \cite[Theorem\ 8.2.1]{Ya92})
\begin{equation}\label{7.2}
\tr_{\cH}\big(g(A_+) - g(A_-)\big)
= \int_{[-1,1]} \xi(\omega; g(A_+), g(A_-)) \, d\omega.
\end{equation}

Alternatively, one can also introduce  the spectral shift function associated with the pair $(A_+, A_-)$ taking into account that  the difference of the resolvents of the operators $A_+$ and $A_-$ is of trace class (cf. \eqref{3.trApm}), that is,
\begin{equation}\label{7.1s}
\big[(A_+-zI)^{-1}-(A_--zI)^{-1}\big] 
\in \cB_1(\cH),\quad  z \in \rho(A_+)\cap \rho(A_-).
\end{equation}
Since in this case the difference of the Cayley transforms of the 
operators $A_+$ and $A_-$ is of trace class, 
one can introduce the spectral shift function 
$\widehat \xi(\,\cdot\,; A_+, A_-)$ associated with the pair $(A_+, A_-)$  
upon relating 
$\widehat \xi(\,\cdot\,; A_+, A_-)$ to the spectral shift function associated with the 
Cayley transforms of $A_+$ and $A_-$ as in \cite[eq.\ (8.7.4)]{Ya92}.  
The spectral shift function introduced in this way 
is not unique, in fact, any two of them differ by an integer-valued homotopy invariant
(see a comprehensive discussion of this phenomenon in 
\cite[Sect.\ 8.6]{Ya92}). Moreover,
\begin{equation}
\widehat \xi(\,\cdot\,; A_+, A_-) \in L^1\big(\bbR; (|\nu| + 1)^{-2} d\nu\big)  \lb{2.33}
\end{equation}
for any concrete choice of the  integer-valued constant (cf. \cite[Sect.\ 8.7]{Ya92}). Given the pair $(A_+, A_-)$, we now arbitrarily fix the undetermined  integer-valued constant, and for simplicity, keep denoting the corresponding spectral shift function 
by $\widehat \xi(\,\cdot\,; A_+, A_-)$. 

Our next result states that the  functions 
$\xi(\,\cdot\,\; A_+, A_-)$ and $\widehat \xi(\,\cdot\,\; A_+, A_-)$ differ at most by a constant: 

\begin{lemma} \label{l7.xiconst}
Assume Hypothesis \ref{h2.1}. Let the spectral shift function   
$\xi(\,\cdot\,; A_+, A_-)$ be defined according to \eqref{2.31} and 
$\widehat \xi(\,\cdot\,; A_+, A_-)$  as in \cite[eq.\ (8.7.4)]{Ya92} $($with some determination of the associated integer-valued constant\,$)$.  Then 
there exists a $C\in\bbR$ such that
\begin{equation}
\widehat \xi(\nu; A_+, A_-)=\xi(\nu; A_+, A_-)+C \, \text{ for a.e.\ } \, \nu\in \bbR. 
\lb{7.xiC}
\end{equation}
\end{lemma}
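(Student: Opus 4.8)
\textbf{Proof plan for Lemma \ref{l7.xiconst}.}

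The plan is to compare the two spectral shift functions through the one object they both control, namely the perturbation determinant of the pair $(A_+,A_-)$ (or, equivalently, the resolvent trace). First I would recall that $\widehat\xi(\,\cdot\,;A_+,A_-)$, being a spectral shift function for a pair whose resolvent difference is trace class (cf.\ \eqref{7.1s}), satisfies the Krein trace formula
\begin{equation*}
\tr_{\cH}\big((A_+-zI)^{-1}-(A_--zI)^{-1}\big)=-\int_{\bbR}\frac{\widehat\xi(\lambda;A_+,A_-)\,d\lambda}{(\lambda-z)^2},\quad z\in\bbC\backslash\bbR,
\end{equation*}
for the chosen determination of the integer-valued constant. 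On the other hand, $\xi(\,\cdot\,;A_+,A_-)$ is defined via the invariance principle in \eqref{2.31} from $\xi(\,\cdot\,;g(A_+),g(A_-))$, and the invariance principle for the spectral shift function under the (smooth, monotone) change of variable $g$ gives $\xi(g(\mu);g(A_+),g(A_-))=\xi(\mu;A_+,A_-)$ together with the same trace formula
\begin{equation*}
\tr_{\cH}\big((A_+-zI)^{-1}-(A_--zI)^{-1}\big)=-\int_{\bbR}\frac{\xi(\lambda;A_+,A_-)\,d\lambda}{(\lambda-z)^2},\quad z\in\bbC\backslash\bbR,
\end{equation*}
which is exactly \eqref{2.restr}. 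Here I would lean on the standard fact (Yafaev \cite[Sect.\ 8.11]{Ya92}) that the invariance principle produces a representative of the spectral shift function which reproduces the resolvent trace formula; note both integrals are absolutely convergent since $\xi(\,\cdot\,;A_+,A_-)$ and $\widehat\xi(\,\cdot\,;A_+,A_-)$ both lie in $L^1(\bbR;(|\nu|+1)^{-2}d\nu)$ by \eqref{2.46b} and \eqref{2.33}.

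Subtracting the two trace formulas, the function $\eta:=\widehat\xi(\,\cdot\,;A_+,A_-)-\xi(\,\cdot\,;A_+,A_-)\in L^1(\bbR;(|\nu|+1)^{-2}d\nu)$ satisfies
\begin{equation*}
\int_{\bbR}\frac{\eta(\lambda)\,d\lambda}{(\lambda-z)^2}=0,\quad z\in\bbC\backslash\bbR.
\end{equation*}
Integrating in $z$, the Cauchy(–Borel) transform $\int_{\bbR}\eta(\lambda)(\lambda-z)^{-1}\,d\lambda$ is constant (in fact $z$-independent) on $\bbC_+$ and on $\bbC_-$; since $\eta$ is real-valued the Stieltjes inversion formula (Privalov/Fatou boundary values of the Cauchy transform) then forces $\eta(\lambda)=0$ for a.e.\ $\lambda$ outside a bounded set and, more to the point, forces $\eta$ to be a.e.\ equal to a constant $C$. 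Concretely: if $F(z)=\int_{\bbR}\eta(\lambda)(\lambda-z)^{-1}d\lambda$ then $F'(z)=\int_{\bbR}\eta(\lambda)(\lambda-z)^{-2}d\lambda\equiv 0$, so $F$ is constant, say $=c_\pm$ on $\bbC_\pm$; then $\eta(\lambda)=\pi^{-1}\lim_{\varepsilon\downarrow 0}\Im F(\lambda+i\varepsilon)=0$ for a.e.\ $\lambda$ — wait, this would give $\eta=0$, not merely a constant, unless one accounts for the fact that $\xi(\,\cdot\,;g(A_+),g(A_-))$ is only supported in $[-1,1]$ and the invariance-principle representative may differ from the ``resolvent'' representative by the homotopy-invariant integer. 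So the cleaner route is: the resolvent trace formula determines a spectral shift function only up to an additive constant when $A_\pm$ are unbounded (because $(\lambda-z)^{-2}$ annihilates constants), and both $\xi(\,\cdot\,;A_+,A_-)$ and $\widehat\xi(\,\cdot\,;A_+,A_-)$ satisfy the same formula; hence their difference is a constant a.e., which is \eqref{7.xiC}.

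The main obstacle — and the step requiring the most care — is making precise that the invariance-principle definition \eqref{2.31} actually yields the resolvent trace formula \eqref{2.restr}, i.e.\ that the representative obtained from $g(A_\pm)$ is compatible with the one obtained from the resolvents up to a constant. This is where I would cite \cite[Sect.\ 8.11, eq.\ (8.11.4) and Thm.\ 8.11.1]{Ya92} (the invariance principle under $g$, monotone with $g'>0$ and $g(\pm\infty)$ finite) to get $\xi(g(\mu);g(A_+),g(A_-))$ as a function on $\bbR$ and to transfer Krein's formula; the integrability \eqref{2.46b} is needed so that the integral against $(\lambda-z)^{-2}$ converges. Everything else is the elementary observation that $\int_{\bbR}\eta(\lambda)(\lambda-z)^{-2}d\lambda\equiv 0$ with $\eta\in L^1(\bbR;(|\nu|+1)^{-2}d\nu)$ forces $\eta$ to be constant a.e., which follows by applying the (distributional) Stieltjes inversion to the holomorphic function $F(z)=\int_{\bbR}\eta(\lambda)(\lambda-z)^{-1}d\lambda$ whose derivative vanishes identically, hence $F$ is a constant on each half-plane and its boundary values differ by $2\pi i\,C$ with $C$ the constant value of $\eta$.
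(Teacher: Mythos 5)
Your overall scheme (compare the two spectral shift functions through a common trace identity and conclude that the difference is annihilated in a way only constants can be) is the right idea, but the specific identity you lean on creates a genuine gap, indeed a circularity within this paper's logic. You take as input that the invariance-principle function $\xi(\,\cdot\,;A_+,A_-)$ of \eqref{2.31} satisfies Krein's resolvent trace formula \eqref{2.restr}. In the paper that fact is \emph{not} available before the lemma: it is derived in Remark \ref{r7.trff} (and again in Lemma \ref{l7.trfor}) precisely \emph{from} Lemma \ref{l7.xiconst}, using $\int_{\bbR}(\nu-z)^{-2}\,d\nu=0$ and the constant-difference statement you are trying to prove. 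Nor can you get it directly from \cite[Sect.\ 8.11]{Ya92}: Lemma 8.11.3 there yields the trace formula for the invariance-principle representative only for a restricted class of $f$ (e.g.\ $f\in C_0^\infty(\bbR)$), because one must pull $f$ back through $g$; for $f(\nu)=(\nu-z)^{-1}$ the composition $f\circ g^{-1}$ behaves like $\sqrt{2(1\mp\omega)}$ near $\omega=\pm1$, so its derivative has a square-root singularity and $f\circ g^{-1}$ is not admissible for a direct application of Krein's formula to the bounded pair $(g(A_+),g(A_-))$. A secondary instance of the same circularity is your appeal to \eqref{2.46b}: the definition \eqref{2.31} only gives $\xi(\,\cdot\,;A_+,A_-)\in L^1\big(\bbR;(|\nu|+1)^{-3}d\nu\big)$ a priori (cf.\ \eqref{2.337}); the improvement to the weight $(|\nu|+1)^{-2}$ is again deduced in Remark \ref{r7.trff} from \eqref{7.xiC}. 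With only the cubic weight, even your regularized Cauchy transform needs a further (second-order) regularization, and the unregularized $F(z)=\int_{\bbR}\eta(\lambda)(\lambda-z)^{-1}d\lambda$ you write down need not converge at all.

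The repair is essentially the paper's proof: both $\widehat\xi$ (by \cite[Theorem 8.7.1]{Ya92}, conditions \eqref{yaf1}, \eqref{yaf2}) and the invariance-principle $\xi$ (by \cite[Lemma 8.11.3]{Ya92}, which applies because of \eqref{7.1}) satisfy the trace formula
\begin{equation*}
{\tr}_{\cH}\big(f(A_+)-f(A_-)\big)=\int_{\bbR} f'(\nu)\,\xi_{\#}(\nu;A_+,A_-)\,d\nu
\end{equation*}
for all $f\in C_0^{\infty}(\bbR)$; subtracting gives $\int_{\bbR}f'(\nu)\,\eta(\nu)\,d\nu=0$ for every such $f$, and the Du Bois--Raymond lemma then yields $\eta=C$ a.e. This avoids resolvents entirely, uses only the locally integrable character of the two functions, and is the reason the paper can afterwards (not before) transfer the resolvent trace formula and the sharper integrability to $\xi(\,\cdot\,;A_+,A_-)$. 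Your final harmonic-analysis step (constant Cauchy transform plus Stieltjes inversion forces a constant density) is fine in principle, but as laid out it both over- and under-shoots: your first pass concludes $\eta=0$, which cannot be right since the integer ambiguity in $\widehat\xi$ must survive, and the corrected version still rests on the unproved resolvent identity for $\xi$.
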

\begin{proof}
First we note that by \cite[Theorem\ 8.7.1]{Ya92} the trace formula 
\begin{equation}
{\tr}_{\cH} (f(A_+) - f(A_-)) = \int_{\bbR} f'(\nu) \widehat\xi(\nu;A_+,A_-) \, d\nu   \lb{B.152} 
\end{equation}
holds for the   class of functions $f$ 
having two locally bounded derivatives and satisfying the conditions
\begin{equation}\label{yaf1}
\text{for some $\varepsilon > 0$, } \, 
(\nu^2f'(\nu))' \underset{|\lambda|\to \infty}{=} O(|\nu|^{-1-\varepsilon})  
\end{equation}
and 
\begin{equation}\label{yaf2}
\lim_{\nu\to -\infty} f(\nu)
=\lim_{\nu\to +\infty} f(\nu),
\quad
\lim_{\nu\to -\infty} \nu^2f'(\nu)=\lim_{\nu\to +\infty} \nu^2f'(\nu).
\end{equation}
This class includes, in particular, the functions of the type 
\begin{equation} \label{wide}
f\in C_0^{\infty}(\bbR) \, \text{ and } \, (\cdot - z)^{-n}, \; 
z\in\bbC\backslash\bbR, \; n\in\bbN, \, n\geq 1.    
\end{equation}
Since \eqref{7.1} holds, \cite[Lemma\ 8.11.3]{Ya92}
applies to the $\xi$-function given by the invariance principle
 \eqref{2.31} and hence  the trace formula 
\begin{equation}
{\tr}_{\cH} (f(A_+) - f(A_-)) = \int_{\bbR} f'(\nu) 
\xi(\nu;A_+,A_-) \, d\nu        \lb{B.15} 
\end{equation}
holds for all $f \in C_0^{\infty}(\bbR)$. 
Comparing \eqref{B.15} and \eqref{B.152} one obtains that 
\begin{equation}
\int_{\bbR} f'(\nu) 
\xi(\nu;A_+,A_-) \, d\nu =\int_{\bbR} f'(\nu) 
\widehat\xi(\nu;A_+,A_-) \, d\nu, \quad f \in C_0^{\infty}(\bbR), 
\end{equation}
and therefore, by the Du Bois--Raymond Lemma 
(see, e.g., \cite[Theorem\ 6.11]{LL01}), 
the functions $\xi(\,\cdot\, ;A_+,A_-)$ and  $\widehat \xi(\nu;A_+,A_-)$
 differ a.e. at most by a constant.
\end{proof}

\begin{remark} \label{r7.trff} The fact that 
$\xi(\,\cdot\,;g(A_+), g(A_-)) \in L^1(\bbR; d\omega)$ according to \eqref{l1},  
implies the membership
\begin{equation}
\xi(\,\cdot\,; A_+, A_-) \in L^1\big(\bbR; (|\nu| + 1)^{-3} d\nu\big)  \lb{2.337}
\end{equation}
which can easily be verified taking into account the definition \eqref{2.31} 
of $\xi(\ \cdot \,; A_+, A_-)$ and using the change of variables \eqref{7.cv} below. 
While \eqref{2.337} is correct, it is not  optimal, since, in fact, 
\begin{equation}
\xi(\,\cdot\,; A_+, A_-) \in L^1\big(\bbR; (|\nu| + 1)^{-2} d\nu\big)  \lb{2.33xi}
\end{equation}
as a consequence of \eqref{2.33} and \eqref{7.xiC}. 
Moreover, the following trace formulas hold,
\begin{align}
\begin{split}
- {\tr}_{\cH}\big((A_+ - z I)^{-1}-(A_- - z I)^{-1}\big)   
 &= \int_{\bbR}\frac{\widehat \xi(\nu;A_+,A_-) \, d\nu}{(\nu - z)^2}    \\
&  = \int_{\bbR}\frac{\xi(\nu;A_+,A_-) \, d\nu}{(\nu - z)^2}\label{trtr}, 
\quad z \in \bbC\backslash\bbR,  
\end{split} 
\end{align} 
with two convergent Lebesgue integrals in \eqref{trtr}. 
Indeed, the first equality in \eqref{trtr} follows from \eqref{B.152} and \eqref{wide}, 
and the second from the observation that 
\begin{equation}
\int_\bbR\frac{d\nu}{(\nu - z)^2}=0, \quad z\in \bbC \backslash \bbR, 
\end{equation}
and the fact that by \eqref{7.xiC}, $\xi(\,\cdot\, ;A_+,A_-)$ and 
$\widehat \xi(\,\cdot\,;A_+,A_-)$ differ at most by a constant. 
\end{remark}

Our next result provides a refinement of the trace formula \eqref{7.2}. For 
this purpose we recall the function $g_z$ defined by 
 \begin{equation}
g_z(x)= x (x^2-z)^{-1/2}, \quad x\in \bbR, \;  z\in\bbC\backslash [0,\infty). 
\end{equation}

\begin{lemma}\label{l7.trfor}
Assume Hypothesis \ref{h2.1} and define $\xi(\,\cdot\,; A_+, A_-)$ 
according to \eqref{2.31}.  Then 
\begin{equation}
[g_{z}(A_+) - g_{z}(A_-)] \in\cB_1(\cH), \quad z\in \bbC\backslash [0, \infty), 
\lb{7.trclgz}
\end{equation}
and the following trace formula holds
\begin{equation}
\tr_{\cH}\big(g_{z}(A_+) - g_{z}(A_-)\big)
  = - z \int_{\bbR} \frac{\xi(\nu; A_+, A_-) \, d\nu}{(\nu^2 - z)^{3/2}},
\quad  z\in\bbC\backslash [0,\infty).         \lb{2.381}
\end{equation}
In particular, 
\begin{equation}
\bbC\backslash [0,\infty) \ni z \mapsto 
\tr_{\cH} \big(g_{z}(A_+) - g_{z}(A_-)\big) \, \text{ is analytic.} 
\lb{7.tran}
\end{equation}
\end{lemma}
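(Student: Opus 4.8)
The plan is to reduce the claimed trace formula \eqref{2.381} to the already-established trace formula \eqref{7.2} for $g=g_{-1}$ together with Krein's trace formula and the definition \eqref{2.31} of $\xi(\,\cdot\,;A_+,A_-)$ via the invariance principle. First, the inclusion \eqref{7.trclgz} is already available: it is precisely \eqref{2.tracegz} in Proposition \ref{propAnal} (see also Remark \ref{r2.7} and Lemma \ref{lB.1}), so nothing new is needed there, and \eqref{7.tran} will follow immediately from \eqref{2.381} once the right-hand side is shown to be an analytic function of $z\in\bbC\backslash[0,\infty)$, which is a routine consequence of dominated convergence using \eqref{2.33xi} (the integrand $\xi(\nu;A_+,A_-)(\nu^2-z)^{-3/2}$ is dominated, locally uniformly in $z$, by a constant times $(|\nu|+1)^{-2}|\xi(\nu;A_+,A_-)|\in L^1(\bbR;d\nu)$).

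For the trace formula itself, the natural route is the invariance principle for the spectral shift function. Since $[g(A_+)-g(A_-)]\in\cB_1(\cH)$ by \eqref{7.1} and $g_z(x)=h_z(x)g(x)$ with $h_z(x)=((x^2+1)/(x^2-z))^{1/2}$ a bounded function with equal limits $1$ at $\pm\infty$ (cf.\ Remark \ref{r2.7}), one can view $g_z(A_\pm)$ as obtained from $g(A_\pm)$ by a further functional-calculus transformation. More directly, I would use Krein's trace formula for the pair $(g(A_+),g(A_-))$: for any sufficiently smooth $\varphi$ on $[-1,1]$,
\begin{equation*}
\tr_{\cH}\big(\varphi(g(A_+))-\varphi(g(A_-))\big)=\int_{[-1,1]}\varphi'(\omega)\,\xi(\omega;g(A_+),g(A_-))\,d\omega.
\end{equation*}
The key observation is that $g_z(A_\pm)=\varphi_z(g(A_\pm))$ where $\varphi_z$ is the function determined by $\varphi_z(g(x))=g_z(x)$ for $x\in\bbR$; explicitly, writing $\omega=g(x)=x(x^2+1)^{-1/2}$, so that $x^2=\omega^2/(1-\omega^2)$ and $x^2+1=1/(1-\omega^2)$, a direct computation gives $g_z(x)=\omega(1-(1+z)\omega^2)^{-1/2}$, i.e.\ $\varphi_z(\omega)=\omega\,(1-(1+z)\omega^2)^{-1/2}$, which is smooth on $[-1,1]$ for $z\in\bbC\backslash[0,\infty)$ (the radicand stays away from $0$ on $[-1,1]$). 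Applying Krein's trace formula with $\varphi=\varphi_z$ then yields
\begin{equation*}
\tr_{\cH}\big(g_z(A_+)-g_z(A_-)\big)=\int_{[-1,1]}\varphi_z'(\omega)\,\xi(\omega;g(A_+),g(A_-))\,d\omega.
\end{equation*}

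The last step is the change of variables $\omega=g(\nu)$, $\nu\in\bbR$, under which, by the very definition \eqref{2.31}, $\xi(\omega;g(A_+),g(A_-))=\xi(\nu;A_+,A_-)$; one has $d\omega=g'(\nu)\,d\nu=(\nu^2+1)^{-3/2}d\nu$ and, differentiating $\varphi_z(g(\nu))=g_z(\nu)=\nu(\nu^2-z)^{-1/2}$, one computes $\varphi_z'(g(\nu))\,g'(\nu)=\frac{d}{d\nu}g_z(\nu)=-z(\nu^2-z)^{-3/2}$. Substituting these gives exactly the right-hand side of \eqref{2.381}; the convergence of the Lebesgue integral follows from $|{-z}(\nu^2-z)^{-3/2}|\le C_z(|\nu|+1)^{-2}$ together with \eqref{2.33xi}. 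The main technical point — the only place requiring care — is to justify using Krein's trace formula with the non-compactly-supported function $\varphi_z$ (rather than $C_0^\infty$ test functions): this is handled either by invoking the extended invariance-principle trace formula from \cite[Sect.\ 8.11]{Ya92} (which applies since $\varphi_z$ is $C^\infty$ on a neighborhood of the common spectrum $[-1,1]$ of the bounded operators $g(A_\pm)$, so that in fact $\varphi_z$ may be modified outside $[-1,1]$ to be compactly supported without changing $\varphi_z(g(A_\pm))$), or directly by noting $g(A_\pm)$ are bounded with spectra in $[-1,1]$ so any $\varphi_z\in C^\infty$ near $[-1,1]$ can be replaced by a compactly supported function agreeing with it there. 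I expect this bookkeeping about where the invariance principle applies to be the only real obstacle; everything else is a calculus computation.
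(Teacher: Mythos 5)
Your route is sound and genuinely different from the paper's. The paper splits $g_z=g_{-1}+G_z$ with $G_z(\nu)=g_z(\nu)-g_{-1}(\nu)$: the $g_{-1}$ part is handled by the defining formula \eqref{7.2} plus the change of variables $\omega=g(\nu)$, while the correction $G_z$ is handled by Yafaev's trace formula \cite[Theorem 8.7.1]{Ya92} applied to the unbounded pair $(A_+,A_-)$ with the resolvent-comparison function $\widehat\xi$, after which Lemma \ref{l7.xiconst} (and the vanishing of $G_z$ at $\pm\infty$) lets one replace $\widehat\xi$ by $\xi$; the inclusion \eqref{7.trclgz} then falls out of \eqref{7.1} and \eqref{7.trclGz}. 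You instead stay entirely with the bounded pair $(g(A_+),g(A_-))$ (whose difference is trace class by Proposition \ref{propgA}) and apply Krein's trace formula with the composed test function $\varphi_z$ satisfying $\varphi_z\circ g=g_z$, then change variables; since $\xi(\,\cdot\,;g(A_+),g(A_-))$ is supported in $[-1,1]$ and $\varphi_z$ is smooth near $[-1,1]$, truncating $\varphi_z$ is harmless, as you say. Your version avoids $\widehat\xi$ and Lemma \ref{l7.xiconst} altogether and yields \eqref{7.trclgz} and \eqref{2.381} in one stroke from Krein's theorem for bounded trace-class perturbations; the paper's version avoids having to check admissibility of $\varphi_z$ for that theorem (trivial for smooth compactly supported functions, and the paper itself uses exactly this device in Lemma \ref{l7.trxi}), and keeps the bookkeeping between $\xi$ and $\widehat\xi$ explicit, which it reuses in Remark \ref{r7.trff}. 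Your dominated-convergence argument for \eqref{7.tran} via \eqref{2.33xi} is fine.

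Two small corrections. First, your explicit formula for $\varphi_z$ is wrong: from $\omega^2=x^2/(x^2+1)$ one gets $x^2-z=\big((1+z)\omega^2-z\big)/(1-\omega^2)$, hence $\varphi_z(\omega)=\omega\big((1+z)\omega^2-z\big)^{-1/2}$, not $\omega\big(1-(1+z)\omega^2\big)^{-1/2}$ (the two agree only at $\omega^2=1/2$ or $z=-1$). Nothing downstream uses the explicit form — the chain-rule identity $\varphi_z'(g(\nu))g'(\nu)=-z(\nu^2-z)^{-3/2}$ and the smoothness claim survive, since the correct radicand equals $(x^2-z)/(x^2+1)$ and stays off $(-\infty,0]$ on a neighborhood of $[-1,1]$ for $z\in\bbC\backslash[0,\infty)$ — but the formula as written is not correct. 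Second, citing Proposition \ref{propAnal} for \eqref{7.trclgz} is circular: in the paper that proposition is established precisely via this lemma (cf.\ the pointer ``proved \dots in Section \ref{s7} (cf.\ Lemma \ref{l7.trfor})''). You should instead rely on Remark \ref{r2.7}, or simply note that your own application of Krein's theorem to the pair $(g(A_+),g(A_-))$ already delivers the trace-class inclusion, so no external citation is needed.
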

\begin{proof}
We start with the representation \eqref{7.2}
\begin{equation}
\tr_{\cH}\big(g(A_+) - g(A_-)\big)
= \int_{[-1,1]} \xi(\omega; g(A_+), g(A_-)) \, d\omega.
\end{equation}
 Since
\begin{equation}
g^\prime (\nu) = (\nu^2 + 1)^{-3/2} > 0, 
\quad \nu\in\bbR,
\end{equation}
one can introduce the change of variables
\begin{equation}
\omega = g (\nu) = \nu (\nu^2 + 1)^{-1/2}, \quad \nu\in\bbR,  \lb{7.cv}
\end{equation}
implying
\begin{equation}
\tr_{\cH}\big(g(A_+) - g(A_-)\big)
= \int_{\bbR} \frac{\xi(g (\nu); g(A_+), g(A_-)) \, d\nu}{(\nu^2 + 1)^{3/2}}
\end{equation}
and hence, in accordance with the definition \eqref{2.31}
 of the spectral shift function 
$\xi(\,\cdot \,; A_+, A_-)$, one also obtains that 
\begin{equation}\lb{2.30}
\tr_{\cH}\big(g(A_+) - g(A_-)\big) =  
\int_{\bbR} \frac{\xi(\nu; A_+, A_-) \, d\nu}{(\nu^2 + 
1)^{3/2}}   
\end{equation}
which proves the trace formula \eqref{2.381} for $z=-1$.

To handle the case of arbitrary $z\in \bbC\backslash  [0,\infty)$, 
we remark that 
the function $G_z$ given by 
\begin{equation}
G_z(\nu)=g_z(\nu)-g_{-1}(\nu), \quad \nu \in \bbR,
\; z\in \bbC\backslash  [0, \infty),    \lb{7.Gz}
\end{equation}
satisfies the conditions \eqref{yaf1} and \eqref{yaf2}, and hence  
by \cite[Theorem\ 8.7.1]{Ya92},
one obtains 
\begin{equation}
[G_z(A_+)-G_z(A_-)] \in \cB_1(\cH), \quad z\in \bbC\backslash  [0, \infty), 
\lb{7.trclGz}
\end{equation}
and the trace formula
\begin{equation}\label{pri}
\tr_{\cH}\left (G_z(A_+)-G_z(A_-)\right )=\int_\bbR G_z'(\nu)\widehat \xi 
(\nu; A_+, A_-)\,d\nu,\quad z\in \bbC\backslash  [0, \infty),
\end{equation}
where the spectral shift function 
$\widehat \xi(\,\cdot\,; A_+, A_-)$ associated with the pair $(A_+, A_-)$  
is introduced as in \cite[eq.\ (8.7.4)]{Ya92}.  By Lemma \ref{l7.xiconst}, 
the spectral shift functions $\widehat \xi(\,\cdot\,; A_+, A_-)$
 and $ \xi(\,\cdot\,; A_+, A_-)$ differ at most by a constant and hence, since
\begin{equation}
\lim_{\nu\to +\infty} G_z(\nu)=\lim_{\nu\to +\infty}G_z(\nu)=0,
\end{equation}
eq. \eqref{pri} can be rewritten as
\begin{align}
\begin{split} 
& \tr_{\cH}\left (G_z(A_+)-G_z(A_-)\right )=\int_\bbR G_z'(\nu) \xi 
(\nu; A_+, A_-)\,d\nu,     \\
& \quad =\int_\bbR \left [\frac{-z}{(\nu^2-z)^{3/2}}-\frac{1}{(\nu^2+1)^{3/2}}\right]
\xi (\nu; A_+, A_-)\,d\nu,\lb{pri2}, \quad z\in \bbC\backslash [0, \infty).
\end{split}
\end{align} 
Combining \eqref{7.1}, \eqref{7.Gz}, and \eqref{7.trclGz}, one concludes that 
\eqref{7.trclgz} and the trace formula \eqref{2.381} hold.
\end{proof}

The following result, an improvement of \eqref{7.trclgz} and \eqref{7.tran}, 
will be proved in Appendix \ref{sB}:

\begin{lemma} \lb{l7.4}
Assume Hypothesis \ref{h2.1} and let $z\in \bbC\backslash [0,\infty)$. 
Then $[g_z(A_+) - g_z(A_-)]$ is differentiable with respect to the 
$\cB_1(\cH)$-norm and 
\begin{align}
& \f{d}{dz} \tr_{\cH} \big(g_z(A_+) - g_z(A_-)\big) 
= \tr_{\cH}\bigg(\frac{d}{dz} g_z(A_+) - \frac{d}{dz} g_z(A_-)\bigg)     \\
& \quad = \f{1}{2} {\tr}_{\cH} \big(A_+ (A_+^2 - z I)^{-3/2} 
- A_- (A_-^2 - z I)^{-3/2}\big), 
\quad z\in \bbC\backslash [0,\infty).    \no 
\end{align} 
\end{lemma}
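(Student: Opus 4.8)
\textbf{Proof plan for Lemma \ref{l7.4}.}

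The plan is to prove the stronger assertion that $z\mapsto[g_z(A_+)-g_z(A_-)]$ is $\cB_1(\cH)$-norm differentiable (this is the content of Lemma \ref{lB.1}, to be established in Appendix \ref{sB}), from which the displayed formula follows by continuity of the trace functional $\tr_{\cH}(\cdot)$ on $\cB_1(\cH)$ together with the elementary scalar identity $\frac{d}{dz}g_z(x)=\frac{1}{2}x(x^2-z)^{-3/2}$. Thus the two equalities in the statement are immediate once the $\cB_1$-differentiability and the interchange of $\frac{d}{dz}$ with $\tr_{\cH}$ are justified: the first equality is just $\tr_{\cH}\big(\frac{d}{dz}[g_z(A_+)-g_z(A_-)]\big)=\frac{d}{dz}\tr_{\cH}[g_z(A_+)-g_z(A_-)]$, valid because $T\mapsto\tr_{\cH}(T)$ is a bounded linear functional on $\cB_1(\cH)$; the second equality is the functional-calculus computation of $\frac{d}{dz}g_z(A_\pm)$ as $\frac{1}{2}A_\pm(A_\pm^2-zI)^{-3/2}$, which needs only that, for each fixed $z\in\bbC\backslash[0,\infty)$, the difference quotient $\zeta^{-1}[g_{z+\zeta}(x)-g_z(x)]$ converges to $\frac{1}{2}x(x^2-z)^{-3/2}$ uniformly in $x\in\bbR$ (with a uniform-in-$x$ bound on a small punctured disk around $z$), so that the spectral theorem upgrades this to $\cB(\cH)$-norm convergence of the corresponding operators.

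The substantive step, therefore, is the $\cB_1(\cH)$-norm differentiability of the difference $[g_z(A_+)-g_z(A_-)]$, which is precisely what Lemma \ref{lB.1} supplies and may be assumed here. First I would reduce, via Remark \ref{r2.7}, to analyzing the building blocks: write $g_z=h_z\cdot g$ with $h_z(x)=((x^2+1)/(x^2-z))^{1/2}$ a smooth function with equal limits $1$ at $\pm\infty$, so that
\begin{equation}
g_z(A_+)-g_z(A_-)=[h_z(A_+)-h_z(A_-)]g(A_+)+h_z(A_-)[g(A_+)-g(A_-)].
\end{equation}
The second summand is $\cB_1$-valued and $z$-differentiable because $[g(A_+)-g(A_-)]\in\cB_1(\cH)$ by Proposition \ref{propgA} is $z$-independent while $z\mapsto h_z(A_-)\in\cB(\cH)$ is norm-differentiable by the spectral theorem. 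For the first summand one uses that $g(A_+)\in\cB(\cH)$ and that $[h_z(A_+)-h_z(A_-)]$ is $\cB_1$-valued and $z$-differentiable; this in turn follows from the double operator integral machinery of Section \ref{s6}, applied with $S_\pm=A_\pm$ as in the proof of Proposition \ref{propgA}, since the relevant divided-difference symbol $(h_z(\lambda)-h_z(\mu))/[\alpha(\lambda)(\lambda-\mu)\beta(\mu)]$ (with $\alpha(\lambda)=\beta(\lambda)=(\lambda^2+1)^{-1/4}$) lies in $\mathfrak{A}_0$ and depends analytically on $z\in\bbC\backslash[0,\infty)$ in the $\mathfrak{A}_0$-norm, while the closure of $K=(A_+^2+I)^{-1/4}(A_+-A_-)(A_-^2+I)^{-1/4}$ is a fixed element of $\cB_1(\cH)$ by Lemma \ref{ext_subtle}.

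The main obstacle is the $z$-analyticity (hence differentiability) of the symbol in the $\mathfrak{A}_0$-norm: one must produce, locally uniformly in $z$, a representation $\phi_z(\lambda,\mu)=\int_\bbR\alpha_s^{(z)}(\lambda)\beta_s^{(z)}(\mu)\,d\nu(s)$ whose integral bound $\int_\bbR\|\alpha_s^{(z)}\|_\infty\|\beta_s^{(z)}\|_\infty\,d\nu(s)$ is locally bounded and whose $z$-derivative has an analogous representation — this is done by differentiating the Fourier-type representation analogous to \eqref{final_psi} under the integral sign and controlling the resulting $L^1$-norm of the (now $z$-dependent) Fourier transform. Granting that, boundedness of $T_{\phi_z}$ on $\cB_1(\cH)$ by \eqref{normTphi} converts $\mathfrak{A}_0$-norm differentiability of $z\mapsto\phi_z$ into $\cB_1(\cH)$-norm differentiability of $z\mapsto T_{\phi_z}(\overline{K})=[h_z(A_+)-h_z(A_-)]$, and assembling the two summands above completes the proof; I would defer all of these estimates to Appendix \ref{sB} and merely cite Lemma \ref{lB.1} in the body.
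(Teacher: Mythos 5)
You take a genuinely different route from the paper. The paper's own proof (Lemma \ref{lB.1} in Appendix \ref{sB}) is a direct computation: starting from the integral representation \eqref{B.39} for $A_\pm(A_\pm^2-zI)^{-1/2}$, it writes the difference quotient minus the candidate derivative as an explicit $t$-integral and estimates every term in $\cB_1(\cH)$-norm using $(A_+-A_-)(|A_\pm|+I)^{-1}\in\cB_1(\cH)$ (cf.\ \eqref{3.Apmrtrcl}) together with uniform resolvent bounds, arriving at an $O(h)$ remainder; trace-class membership of the derivative difference is obtained separately from \cite[Theorem 8.7.1]{Ya92}. Your plan instead isolates the $z$-dependence in the bounded functions $h_z$ via the factorization $g_z=h_z\,g$ of Remark \ref{r2.7}, handles $h_z(A_-)[g(A_+)-g(A_-)]$ by spectral calculus plus Proposition \ref{propgA}, and writes $[h_z(A_+)-h_z(A_-)]=T_{\phi_z}(\overline{K})$ with the fixed trace-class operator $\overline{K}$ of Lemma \ref{ext_subtle}, so that $\cB_1$-differentiability would follow from differentiability of $z\mapsto\phi_z$ in $\mathfrak{A}_0$ and \eqref{normTphi}. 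Your reduction of the two displayed equalities to trace-norm differentiability, and the identification of the derivative through uniform-in-$x$ convergence of the scalar difference quotients, are correct and agree with how the formula is extracted in the paper; your route would in fact deliver $\cB_1$-analyticity, which is more than the paper states.

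Two caveats. First, the phrase that the differentiability ``is precisely what Lemma \ref{lB.1} supplies and may be assumed here'' is circular, since Lemma \ref{lB.1} \emph{is} the statement under proof; your argument therefore stands or falls with the DOI sketch that follows it. Second, the load-bearing claim --- that $\phi_z\in\mathfrak{A}_0$ and that $z\mapsto\phi_z$ is analytic in the $\mathfrak{A}_0$-norm --- appears nowhere in the paper and cannot be obtained by ``differentiating the representation \eqref{final_psi} under the integral sign'': Lemma \ref{newL6.7} and \eqref{final_psi} concern only the symbol built from $g$, and your $z$-dependent symbol has a different structure, so a new representation must be built from scratch. It is feasible: a short computation gives
\[
\phi_z(\lambda,\mu)=\frac{-(1+z)\,(\lambda+\mu)}
{(\lambda^2-z)^{1/2}(\lambda^2+1)^{1/4}\,(\mu^2-z)^{1/2}(\mu^2+1)^{1/4}\,\big[a_z(\lambda)+a_z(\mu)\big]},
\qquad a_z(\lambda)=\frac{(\lambda^2-z)^{1/2}}{(\lambda^2+1)^{1/2}},
\]
where $\Re(a_z(\lambda))\ge c_z>0$ uniformly in $\lambda\in\bbR$ and locally uniformly in $z\in\bbC\backslash[0,\infty)$, so that $[a_z(\lambda)+a_z(\mu)]^{-1}=\int_0^\infty e^{-ta_z(\lambda)}e^{-ta_z(\mu)}\,dt$ furnishes an $\mathfrak{A}_0$-representation, the prefactor splits into sums of products of bounded single-variable functions, and the Banach-algebra property of $\mathfrak{A}_0$ yields $\phi_z\in\mathfrak{A}_0$; differentiating this representation in $z$ (with dominated convergence in $t$) then gives the required analyticity. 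Until these estimates are actually written out, your proposal has the right architecture but its central technical step is asserted rather than proved, whereas the paper's Appendix \ref{sB} argument avoids the DOI machinery entirely at the cost of a longer, more pedestrian trace-norm estimate.
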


We note that Lemmas \ref{l7.trfor} and \ref{l7.4} extend to 
$z\in \rho(A_+^2) \cap \rho(A_-^2)$.

Next, we prove the following result which justifies equalities  of \eqref{2.54} and \eqref{2.44b} in Theorem \ref{t3.8}:

\begin{lemma}\label{l7.trxi}
Assume Hypothesis \ref{h2.1} and $0\in\rho(A_-)\cap\rho(A_+)$. Then
\beq \lb{treqxi} 
\tr_{\cH}\big(E_{A_-}((-\infty,0))-E_{A_+}((-\infty,0))\big)=\xi(0; A_+,A_-).
\enq
\end{lemma}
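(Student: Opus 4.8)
The plan is to show that the left-hand side of \eqref{treqxi} can be computed via Krein's trace formula applied to a well-chosen function of $A_\pm$, and that this gives precisely $\xi(0;A_+,A_-)$ as defined by the invariance principle in \eqref{2.31}. First I would fix $\varepsilon_0>0$ so that $[-\varepsilon_0,\varepsilon_0]\subset\rho(A_-)\cap\rho(A_+)$ (possible since $0\in\rho(A_-)\cap\rho(A_+)$), and introduce the modified function $\widetilde g$ exactly as in the proof of Corollary \ref{c2.11}: a function that is smooth on $\bbR\backslash\{0\}$, equals $\sign(x)$ for $|x|<\varepsilon_0/2$, and equals $g(x)=x(1+x^2)^{-1/2}$ for $|x|>\varepsilon_0$. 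Since $\widetilde g$ and $g$ agree on $\sigma(A_\pm)$, we have $\widetilde g(A_\pm)=g(A_\pm)$, hence $[\widetilde g(A_+)-\widetilde g(A_-)]=[g(A_+)-g(A_-)]\in\cB_1(\cH)$ by Theorem \ref{Ntrindf}. Similarly, because $E_{A_\pm}((-\infty,0))=\tfrac12(I-\sign(A_\pm))$ and $\sign(A_\pm)=\widetilde g(A_\pm)$ on the spectrum via a further modification (or directly, since $\widetilde g=\sign$ near $0$ and both differ from $\sign$ only by an $L^1$-type tail), Corollary \ref{c2.11} already gives $[E_{A_-}((-\infty,0))-E_{A_+}((-\infty,0))]\in\cB_1(\cH)$, so the trace on the left-hand side of \eqref{treqxi} is well defined.

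Next I would compute the trace. Write $\sign(x)=\widetilde g(x)-f(x)$ where $f(x)=\widetilde g(x)-\sign(x)$; then $f$ has two locally bounded derivatives, vanishes near $0$, coincides with $g-\sign$ for large $|x|$, and satisfies \eqref{infc1}, \eqref{infc2} with $\epsilon=1$ (this was checked in the proof of Corollary \ref{c2.11}). Therefore Krein's trace formula in the form \cite[Theorem\ 8.7.1]{Ya92} applies both to $\widetilde g$ (equivalently to $g$, via \eqref{B.152}) and to $f$, giving
\begin{equation}
\tr_{\cH}\big(E_{A_-}((-\infty,0))-E_{A_+}((-\infty,0))\big)
= \tfrac12\,\tr_{\cH}\big(\sign(A_-)-\sign(A_+)\big)
= -\tfrac12\,\tr_{\cH}\big(\widetilde g(A_+)-\widetilde g(A_-)\big)
+ \tfrac12\,\tr_{\cH}\big(f(A_+)-f(A_-)\big).
\end{equation}
The first term equals $-\tfrac12\tr_{\cH}(g(A_+)-g(A_-))$, and by \eqref{2.30} this is $-\tfrac12\int_{\bbR}\xi(\nu;A_+,A_-)(\nu^2+1)^{-3/2}\,d\nu$. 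For the second term I would apply \eqref{B.15} (valid since $f\in C^\infty$ away from $0$ but actually one needs the extension to the class \eqref{yaf1}, \eqref{yaf2}, which by Lemma \ref{l7.xiconst} still computes against $\xi(\,\cdot\,;A_+,A_-)$ up to the constant $C$ that cancels because $f$ has equal limits at $\pm\infty$): $\tr_{\cH}(f(A_+)-f(A_-))=\int_{\bbR}f'(\nu)\xi(\nu;A_+,A_-)\,d\nu$.

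Finally, one must identify the resulting integral with $\xi(0;A_+,A_-)$. By definition \eqref{2.31}, $\xi(0;A_+,A_-)=\xi(g(0);g(A_+),g(A_-))=\xi(0;g(A_+),g(A_-))$, the value at $0$ of the Krein spectral shift function for the bounded pair $(g(A_+),g(A_-))$, normalized by \eqref{l1}. The cleanest route is to observe that $\sign(A_\pm)=G(g(A_\pm))$ where $G(\omega)=\sign(\omega)$ on $[-1,1]$, and $g$ maps $\rho(A_\pm)\ni 0$ to a spectral gap of $g(A_\pm)$ around $0$; then apply the invariance principle for the spectral shift function under the (piecewise monotone, here monotone) map $G$ together with the fact that the spectral shift function of a pair with a common spectral gap around $0$ is constant on that gap and equals $\tr$ of the difference of the spectral projections onto the negative half-line. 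Concretely, $\xi(0;g(A_+),g(A_-))=\tr_{\cH}(E_{g(A_-)}((-\infty,0))-E_{g(A_+)}((-\infty,0)))=\tr_{\cH}(E_{A_-}((-\infty,0))-E_{A_+}((-\infty,0)))$, the last equality because $g$ is strictly increasing so $E_{g(A_\pm)}((-\infty,0))=E_{A_\pm}((-\infty,0))$. This closes the loop. The main obstacle I anticipate is the careful bookkeeping of the undetermined additive constant in the spectral shift function (the distinction between $\xi$ and $\widehat\xi$ in Lemma \ref{l7.xiconst}) and making rigorous the claim that the Krein SSF of a pair with a common resolvent point $0$ equals the trace of the difference of Morse projections at that point — one should invoke a standard continuity/constancy argument for $\xi$ on spectral gaps (as in \cite[Sect.\ 8.11]{Ya92}) rather than recomputing it, and verify that the normalization \eqref{l1}, \eqref{2.31} is the one for which this holds with no spurious constant.
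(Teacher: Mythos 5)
Your proposal assembles the right ingredients and largely parallels the paper's route (pass to the bounded pair $(g(A_+),g(A_-))$, identify $E_{g(A_\pm)}((-\infty,0))=E_{A_\pm}((-\infty,0))$ by spectral mapping, use Krein's trace formula and constancy of the spectral shift function on the joint gap, and watch the additive-constant normalization), but it has a genuine gap at the decisive step. Your ``cleanest route'' concludes by asserting
$\xi(0;g(A_+),g(A_-))=\tr_{\cH}\big(E_{g(A_-)}((-\infty,0))-E_{g(A_+)}((-\infty,0))\big)$,
which, once the projections are identified, \emph{is} the statement to be proved; invoking it as a ``standard continuity/constancy argument (as in \cite[Sect.\ 8.11]{Ya92})'' is circular here, since constancy on the gap only tells you that $\xi(\,\cdot\,;g(A_+),g(A_-))$ equals some constant a.e.\ on $(-\nu_0,\nu_0)$, not that this constant is the trace of the projection difference (and Sect.\ 8.11 of \cite{Ya92} concerns the invariance principle, not this identity). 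The paper closes exactly this gap by a short explicit computation: choose a smooth $\varphi$ as in \eqref{defphi} with $\varphi=1$ on $(-\infty,-\nu_0]$, $\varphi=0$ on $[\nu_0,\infty)$ and $\int_{-\nu_0}^{\nu_0}\varphi'\,d\nu=-1$, note $E_{A_\pm}((-\infty,0))=\varphi(g(A_\pm))$ as in \eqref{eqsppr} (since $\varphi$ coincides with $\chi_{(-\infty,0)}$ on $\sigma(g(A_\pm))$), and apply Krein's trace formula for the bounded pair together with the constancy of $\xi(\,\cdot\,;g(A_+),g(A_-))$ on $(-\nu_0,\nu_0)$ to get $\tr_{\cH}(\varphi(g(A_+))-\varphi(g(A_-)))=-\xi(0;g(A_+),g(A_-))=-\xi(0;A_+,A_-)$, which is \eqref{treqxi} after using \eqref{2.31}. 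Without this (or an equivalent Riesz-projection/contour argument) your proof does not actually establish the lemma — as you yourself flag in your final sentence.

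Your first route (writing $\sign=\widetilde g-f$ as in Corollary \ref{c2.11} and applying the trace formula to $g$ and to $f$) would in fact work and is essentially a variant of the same computation, but you abandon it before evaluating the integrals, and its one displayed identity has a sign error: since $E_{A_\pm}((-\infty,0))=\tfrac12(I-\sign(A_\pm))$, one has $E_{A_-}((-\infty,0))-E_{A_+}((-\infty,0))=\tfrac12(\sign(A_+)-\sign(A_-))$, not $\tfrac12(\sign(A_-)-\sign(A_+))$. To complete that route, observe that $g'-f'$ vanishes for $|\nu|>\varepsilon_0$, use the a.e.\ constancy of $\xi(\,\cdot\,;A_+,A_-)$ on the gap $(-\varepsilon_0,\varepsilon_0)$, and compute $\int_{-\varepsilon_0}^{\varepsilon_0}\big(g'(\nu)-f'(\nu)\big)d\nu=2$, whence $\tr_{\cH}(\sign(A_+)-\sign(A_-))=2\,\xi(0;A_+,A_-)$ and \eqref{treqxi} follows; your handling of the undetermined constant via Lemma \ref{l7.xiconst} (it cancels because $f$ has equal limits at $\pm\infty$) is correct and would survive this completion.
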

\begin{proof}Since $0\in\rho(A_\pm)$, the spectral mapping property implies 
$0\in\rho(g(A_\pm))$ for $g(x)=x(x^2+1)^{-1/2}$. 
Fixing $\nu_0>0$ such that $[-\nu_0,\nu_0]\subset\rho(g(A_-))\cap\rho(g(A_+))$, 
one notes that $\xi(\,\cdot\,; g(A_+), g(A_-))=\xi(0; g(A_+), g(A_-))$ a.e.\ on the interval 
$(-\nu_0,\nu_0)$. In addition, we introduce a smooth cut-off function 
$\varphi \in C^{\infty}(\bbR)$ satisfying 
\beq\label{defphi}
\varphi(\nu) = \begin{cases} 1, & \nu \le -\nu_0, \\
0, & \nu \ge\nu_0, \end{cases}  \, \text{ and }  \, 
\int_{-\nu_0}^{\nu_0}\varphi'(\nu) \, d \nu=-1.
\enq
Next, using a change of variables in the spectral theorem 
\cite[Theorem\ XII.2.9(c)]{DS88}, and noting that $\varphi$ coincides with the characteristic function of $(-\infty,0)$ on the spectrum of $g(A_\pm)$, one infers, 
\beq \lb{eqsppr} 
E_{A_\pm}((-\infty,0))=E_{A_\pm}(g^{-1}(-1,0))=E_{g(A_\pm)}(-1,0)
=\varphi(g(A_\pm)). 
\enq 
We recall that $[g(A_+)-g(A_-)]\in\cB_1(\cH)$ by Proposition \ref{propgA}. Thus, Krein's trace formula holds for the pair of bounded operators $g(A_+)$ and $g(A_-)$ and the spectral shift function $\xi(\,\cdot\,\; g(A_+),g(A_-))$ (cf.\ \cite[Theorem 8.2.1]{Ya92}). 
Using  \eqref{defphi}, \eqref{eqsppr}, and the trace formula, one then completes the proof as follows: 
\begin{align}
\tr_{\cH}&\big(E_{A_+}((-\infty,0))-E_{A_-}((-\infty,0))\big)   \no \\ 
& = \tr_{\cH}\big(\varphi(g(A_+))-\varphi(g(A_-))\big)=\int_{-\infty}^\infty\xi(\nu; g(A_+),g(A_-))\varphi'(\nu)\,d\nu  \no \\
&=\int_{-\nu_0}^{\nu_0}\xi(\nu; g(A_+),g(A_-))\varphi'(\nu)\,d\nu
= \xi(0; g(A_+),g(A_-))\int_{-\nu_0}^{\nu_0}\varphi'(\nu)\,d\nu  \no \\
&=-\xi(0; g(A_+),g(A_-))=-\xi(0; A_+,A_-),
\end{align}
utilizing \eqref{2.31} in the last equality. 
\end{proof}

In the final part of this section we detail the precise connection between $\xi$ and Fredholm perturbation  
determinants associated with the pair $(A_-, A_+)$. In particular, this will justify the perturbation determinants formula \eqref{spin} in the index computation in Theorem \ref{t3.8}.
 In practice, these determinants are often simpler to handle than the projection operators used in \eqref{2.44a} and \eqref{2.44b}.

Let 
\begin{equation}
D_{T/S}(z) = {\det}_{\cH} ((T-z I)(S- z I)^{-1}) = {\det}_{\cH}(I+(T-S)(S-z I)^{-1}), 
\quad z \in \rho(S), 
\end{equation}
denote the perturbation determinant for the pair of operators $(S,T)$ in $\cH$, 
assuming $(T-S)(S-z_0)^{-1} \in \cB_1(\cH)$ for some (and hence for all) 
$z_0 \in \rho(S)$. 

\begin{theorem} \lb{t8.14} 
Assume Hypothesis \ref{h2.1} and $0\in\rho(A_-)\cap\rho(A_+)$. Then
\beq
\xi(\lambda; A_+,A_-) = \pi^{-1}\lim_{\e\downarrow 0} 
\Im(\ln(D_{A_+/A_-}(\lambda+i\e))) \, 
\text{ for a.e.\ } \, \lambda\in\bbR,    \lb{8.xidet}
\enq
where $\xi(\,\cdot\,; A_+,A_-)$ is introduced by \eqref{2.31} and we make the choice 
of branch of $\ln(D_{A_+/A_-}(\cdot))$ on $\bbC_+$ such that 
$\lim_{\Im(z) \to +\infty}\ln(D_{A_+/A_-}(z))=0$.
In particular, for a continuous representative of  $\xi(\,\cdot\,; A_+,A_-)$
 in a neighborhood of $\lambda = 0$ the equality 
\beq
\xi(0; A_+,A_-)= \pi^{-1} \lim_{\varepsilon \downarrow 0}
\Im(\ln(D_{A_+/A_-}(i\e)))   \lb{8.xiind}
\enq
holds.
\end{theorem}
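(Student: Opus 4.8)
The plan is to establish \eqref{8.xidet} first, and then deduce \eqref{8.xiind} as a corollary using the continuity of a representative of $\xi(\,\cdot\,;A_+,A_-)$ near $\lambda=0$. The starting point is the trace formula \eqref{trtr}, rewritten in terms of the perturbation determinant: since $\big[(A_+-zI)^{-1}-(A_--zI)^{-1}\big]\in\cB_1(\cH)$ for $z\in\bbC\backslash\bbR$ by \eqref{3.trApm}, M.~Krein's trace formula (cf.\ \cite[Sect.\ 8.3, 8.7]{Ya92} and \eqref{2.restr}) gives
\begin{equation}
\frac{d}{dz}\ln(D_{A_+/A_-}(z)) = -\,{\tr}_{\cH}\big((A_+-zI)^{-1}-(A_--zI)^{-1}\big) = \int_{\bbR}\frac{\xi(\nu;A_+,A_-)\,d\nu}{(\nu-z)^2}, \quad z\in\bbC\backslash\bbR,
\end{equation}
where we used that $\xi(\,\cdot\,;A_+,A_-)$ and $\widehat\xi(\,\cdot\,;A_+,A_-)$ differ only by a constant (Lemma \ref{l7.xiconst}) together with $\int_{\bbR}(\nu-z)^{-2}\,d\nu=0$. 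Integrating in $z$ along a path in $\bbC_+$ and using the normalization $\lim_{\Im(z)\to+\infty}\ln(D_{A_+/A_-}(z))=0$, together with $\xi(\,\cdot\,;A_+,A_-)\in L^1(\bbR;(|\nu|+1)^{-2}d\nu)$ (see \eqref{2.33xi}), yields the Cauchy-type representation
\begin{equation}
\ln(D_{A_+/A_-}(z)) = \int_{\bbR}\xi(\nu;A_+,A_-)\Big[\frac{1}{\nu-z}-\frac{\nu}{\nu^2+1}\Big]\,d\nu + i\,c_0, \quad z\in\bbC_+,
\end{equation}
for some real constant $c_0$; in fact $c_0=0$ because both sides have imaginary part tending to $0$ as $\Im(z)\to+\infty$ (the integral is $O(1/\Im(z))$ after subtracting the convergence factor, and the real part of the bracket is integrable).

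Next I would take the boundary values as $\e\downarrow 0$. The function $z\mapsto\ln(D_{A_+/A_-}(z))$ is a Herglotz-type (more precisely, a function whose derivative is Herglotz) object on $\bbC_+$, and the standard Stieltjes inversion / Fatou theorem for Cauchy integrals of $L^1$-densities (cf.\ \cite[Sect.\ 8.1]{Ya92} or \cite[App.\ B]{Ya92}) gives
\begin{equation}
\lim_{\e\downarrow 0}\Im\big(\ln(D_{A_+/A_-}(\lambda+i\e))\big) = \pi\,\xi(\lambda;A_+,A_-) \quad\text{for a.e.\ }\lambda\in\bbR,
\end{equation}
since $\Im\big((\nu-\lambda-i\e)^{-1}\big) = \e/((\nu-\lambda)^2+\e^2) \to \pi\delta(\nu-\lambda)$ weakly and the subtracted term $\nu/(\nu^2+1)$ is real. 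This is exactly \eqref{8.xidet}. I expect this step — justifying the a.e.\ boundary limit and the identification of the constant — to be the main technical point: one must invoke the measurability and integrability properties of $\xi$ already recorded, and cite the appropriate Fatou-type theorem; everything else is bookkeeping with Krein's formula.

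Finally, for \eqref{8.xiind}: by hypothesis $0\in\rho(A_-)\cap\rho(A_+)$, so (using the spectral mapping property as in the proof of Lemma \ref{l7.trxi}) $\xi(\,\cdot\,;A_+,A_-)$ is, after the change of variables \eqref{2.31}, constant and equal to $\xi(0;A_+,A_-)$ on a neighborhood $(-\nu_0,\nu_0)$ of $0$; in particular it has a continuous (indeed locally constant) representative there. Along the vertical segment $z=i\e$, $\e\downarrow 0$, one has $i\e\in\bbC_+$, and the Cauchy integral representation above shows that $z\mapsto\ln(D_{A_+/A_-}(z))$ extends continuously (in fact holomorphically) across the interval $(-\nu_0,\nu_0)\subseteq\rho(A_+)\cap\rho(A_-)$, because on that interval the integrand $\xi(\nu;A_+,A_-)[(\nu-z)^{-1}-\nu(\nu^2+1)^{-1}]$ is smooth in $z$ near $0$ once we isolate the contribution of $\nu\in(-\nu_0,\nu_0)$ (where $\xi$ is constant and the resulting integral is elementary). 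Hence $\lim_{\e\downarrow 0}\Im(\ln(D_{A_+/A_-}(i\e)))$ exists and, by \eqref{8.xidet} applied at $\lambda=0$ together with continuity, equals $\pi\,\xi(0;A_+,A_-)$, giving \eqref{8.xiind}. Alternatively, and perhaps more cleanly, one observes directly that $D_{A_+/A_-}(\cdot)$ is holomorphic and nonvanishing on a complex neighborhood of $0$ (since $0\in\rho(A_-)\cap\rho(A_+)$ and the determinant of an invertible operator of the form $I+$ trace class is nonzero), so $\ln(D_{A_+/A_-}(\cdot))$ is holomorphic near $0$ with the chosen branch, and $\e\mapsto\Im(\ln(D_{A_+/A_-}(i\e)))$ converges to $\Im(\ln(D_{A_+/A_-}(0)))$; combined with the a.e.\ identity \eqref{8.xidet} and continuity of the relevant representative of $\xi$ at $0$, this yields the claim.
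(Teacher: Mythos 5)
Your route is the same as the paper's (Krein's trace formula plus the logarithmic derivative of the perturbation determinant, integration in $z$, boundary values via Stieltjes inversion, then local constancy of $\xi$ near $0$), but there is a genuine error in your identification of the integration constant. After integrating you write $\ln(D_{A_+/A_-}(z))=\int_{\bbR}\xi(\nu;A_+,A_-)\big[\tfrac{1}{\nu-z}-\tfrac{\nu}{\nu^2+1}\big]d\nu+i\,c_0$ with $c_0\in\bbR$, and then conclude $c_0=0$, that is, that the constant of integration vanishes. A priori the constant is an arbitrary complex number $\gamma$, and the normalization $\lim_{\Im(z)\to+\infty}\ln(D_{A_+/A_-}(z))=0$ does \emph{not} force $\gamma=0$: the kernel $\tfrac{1}{\nu-z}-\tfrac{\nu}{\nu^2+1}$ converges pointwise, as $\Im(z)\to+\infty$, to the real function $-\tfrac{\nu}{\nu^2+1}$, so the real part of the integral does not decay and instead cancels against $\Re(\gamma)$ in the limit. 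Your supporting claim that "the integral is $O(1/\Im(z))$" is false; given only \eqref{2.33xi} not even its imaginary part is $O(1/\Im(z))$. This is exactly why the paper, in \eqref{KY1}--\eqref{8.gam}, proves only that $\Im(\gamma)=0$; indeed $\gamma=\Re(\ln(D_{A_+/A_-}(i)))$ (see \eqref{8.gamma}) and is generically nonzero, e.g., $\gamma=\int_{\bbR}\nu\,\xi(\nu;A_+,A_-)(\nu^2+1)^{-1}d\nu$ when $A_-$ is bounded from below (Remark \ref{r8.16}). So the representation with vanishing constant, as you state it, is wrong in general.

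The error is, however, localized and reparable, because only $\Im(\ln(D_{A_+/A_-}))$ enters \eqref{8.xidet} and \eqref{8.xiind}: what you actually need is $\Im(\gamma)=0$, which is obtained (as in the paper) by taking $z=iy$, separating real and imaginary parts, and showing $y\int_{\bbR}\xi(\nu;A_+,A_-)(\nu^2+y^2)^{-1}d\nu\to 0$ as $y\to\infty$. With that replacement, the remainder of your argument is sound and parallels the paper's proof of Theorem \ref{t8.14}: the Stieltjes inversion/Fatou argument for the boundary values of the Cauchy-type integral (the paper does this after splitting $\xi=\xi_+-\xi_-$) yields \eqref{8.xidet}, and for \eqref{8.xiind} your observation that $0\in\rho(A_+)\cap\rho(A_-)$ makes $\xi$ constant on a neighborhood $(-\nu_0,\nu_0)$ and makes $D_{A_+/A_-}$ holomorphic and nonvanishing near $0$ (so $\ln(D_{A_+/A_-})$ extends holomorphically and its imaginary part is continuous up to $z=0$) is a legitimate, slightly different packaging of the paper's computation, which instead isolates $\xi(0;A_+,A_-)\int_{\bbR}\big[\tfrac{1}{\nu-z}-\tfrac{\nu}{\nu^2+1}\big]d\nu=i\pi\,\xi(0;A_+,A_-)$ and notes that the remaining integral is real at $z=0$.
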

\begin{proof} By \eqref{trtr} in Remark \ref{r7.trff},
\begin{align}
- {\tr}_{\cH}\big((A_+ - z I)^{-1}-(A_- - z I)^{-1}\big)   
 &= \int_{\bbR}\frac{\xi(\nu;A_+,A_-) \, d\nu}{(\nu - z)^2}\label{trtr1},  
\quad  z \in \bbC\backslash\bbR,   
\end{align} 
with a convergent Lebesgue integral in \eqref{trtr1}.

The general formula for the logarithmic derivative of 
the perturbation determinant  (see, e.g., \cite[Sect.\ IV.3]{GK69})
 yields  
\begin{equation} \label{restr}
\frac{d}{dz}\ln(D_{A_+/A_-}(z))
 = - {\tr}_{\cH}\big((A_+ - z I)^{-1}-(A_- - z I)^{-1}\big),  \quad 
 z \in \rho(A_+)\cap \rho(A_-).  
\end{equation}

A comparison of \eqref{trtr1} and \eqref{restr} yields
 \begin{align} 
\frac{d}{dz}\ln(D_{A_+/A_-}(z)) =\int_{\bbR}\frac{\xi(\nu;A_+,A_-) \, 
d\nu}{(\nu - z)^2},  \quad z \in \bbC\backslash\bbR.  \label{N9.41}
\end{align} 

Integrating \eqref{N9.41} with respect to $z$ (cf.\ also \cite[eq.\ (1.10)]{KY81}), one obtains 
\begin{align}
\ln(D_{A_+/A_-}(z)) & = \gamma+\int_{\bbR}
\bigg(\frac{1}{\nu - z}-\frac{\nu}{\nu^2 + 1}\bigg)\xi(\nu;A_+,A_-) \, d\nu, 
\quad z\in \bbC_+, \lb{KY1}  
\end{align}
for some constant $\gamma \in \bbC$. 

Next, we claim that actually, 
\begin{equation}
\gamma \in \bbR.   \lb{8.gam}
\end{equation} 
Indeed, taking $z \in \bbC$ and letting $|\Im(z)| \to\infty$, one infers that 
\beq
\lim_{|\Im(z)| \to +\infty} D_{A_+/A_-}(z) = 1,     \lb{8.limdet}
\enq
similarly to the proof of Lemma \ref{lHYP}. More precisely, one uses the fact that
\beq
(A_+ - A_-)(A_- - z I)^{-1} = [(A_+ - A_-) A_-^{-1}] [A_- (A_- - z I)^{-1}], 
\quad z \in \bbC\backslash\bbR,    \lb{3.A+A-}
\enq
implying
\beq
\lim_{|\Im(z)| \to \infty} \|(A_+ - A_-)(A_- - z I)^{-1}\|_{\cB_1(\cH)} =0   \lb{8.limtrn}
\enq
since 
\beq
(A_+ - A_-) A_-^{-1} \in \cB_1(\cH) \, \text{ and } \, 
\slim_{|\Im(z)| \to \infty} A_- (A_- z I)^{-1} = 0,
\enq
employing Lemma \ref{lSTP}. Clearly, \eqref{3.A+A-} and \eqref{8.limtrn} yield 
\eqref{8.limdet}. Hence we now fix the branch of $\ln(D_{A_+/A_-}(\cdot))$ 
on $\bbC_+$ by requiring 
\beq
\lim_{\Im(z) \to +\infty} \ln(D_{A_+/A_-}(z)) = 0.    \lb{8.limlndet}
\enq
Rewriting 
\eqref{KY1} in the form 
\begin{align}
\begin{split} 
\ln(D_{A_+/A_-}(iy)) & = \Re(\gamma) + \int_{\bbR}
\bigg(\frac{\nu}{\nu^2 + y^2}-\frac{\nu}{\nu^2 + 1}\bigg) \xi(\nu;A_+,A_-) \, 
d\lambda \\
& \quad + i \bigg[\Im(\gamma) + y \int_{\bbR}
\f{\xi(\nu;A_+,A_-) \, d\nu}{\nu^2 + y^2}\bigg], \quad y>0, \lb{8.Img}  
\end{split} 
\end{align}
and applying the dominated convergence theorem to conclude that 
\begin{equation}
\lim_{y\to\infty} y \int_{\bbR}
\f{\xi(\nu;A_+,A_-) \, d\nu}{\nu^2 + y^2} = 0,
\end{equation} 
combining \eqref{8.limdet} with taking the limit $y\to\infty$ in \eqref{8.Img} 
yields $\Im(\gamma) = 0$ and hence \eqref{8.gam}. 

Decomposing $\xi$ into its positive and negative parts $\xi_{\pm}$, respectively, 
\begin{align} 
\begin{split} 
\xi(\,\cdot\,; A_+,A_-) &=  \xi_+ (\,\cdot\,;A_+,A_-) - \xi_- (\,\cdot\,;A_+,A_-),   \\ 
\xi_{\pm}(\,\cdot\,; A_+,A_-) 
&= \big[|\xi(\,\cdot\,; A_+,A_-)| \pm \xi(\,\cdot\,; A_+,A_-)\big]\big/2,
\end{split} 
\end{align}  
and applying the Stieltjes inversion formula to the absolutely continuous measures 
$\xi_{\pm} (\nu;A_+,A_-) \, d\nu$  (cf., e.g., \cite[p.\ 328]{AD56}, \cite[App.\ B]{We80}),  
then yields \eqref{8.xidet}. Since by hypothesis, $0\in\rho(A_-)\cap\rho(A_+)$, one concludes \eqref{8.xiind}  (cf.\ also the discussion in connection with \eqref{2.47} which 
defines $\xi(0;A_+,A_-)$) as follows: Given the fact \eqref{8.gam}, one obtains that 
\begin{align}
& \ln(D_{A_+/A_-}(z)) = \gamma + \int_{\bbR}
\bigg(\frac{1}{\nu - z}-\frac{\nu}{\nu^2 + 1}\bigg)\xi(\nu;A_+,A_-) \, d\nu  \no \\
& \quad = \gamma + \xi(0;A_+,A_-) \int_{\bbR} 
\bigg(\frac{1}{\nu - z}-\frac{\nu}{\nu^2 + 1}\bigg) d\nu   \no \\
& \qquad + \int_{\bbR} \bigg(\frac{1}{\nu - z}-\frac{\nu}{\nu^2 + 1}\bigg)
[\xi(\nu;A_+,A_-) - \xi(0;A_+,a_-)] \, d\nu    \no \\
& \quad = \gamma + i \pi \, \xi(0;A_+,A_-)    \no \\
& \qquad + \int_{\bbR} \bigg(\frac{1}{\nu - z}-\frac{\nu}{\nu^2 + 1}\bigg)
[\xi(\nu;A_+,A_-) - \xi(0;A_+,a_-)] \, d\nu, \quad z\in \bbC_+,      \lb{7.57}
\end{align}
using 
\begin{equation}
\int_{\bbR} \bigg(\frac{1}{\nu - z}-\frac{\nu}{\nu^2 + 1}\bigg) d\nu = i \pi. 
\end{equation}
Since the last integral in \eqref{7.57} is supported in 
$(-\infty, - \varepsilon) \cup (\varepsilon,\infty)$ for some $\varepsilon > 0$ and hence 
real-valued for $z=0$ (as $\xi(\,\cdot\,;A_+,A_-)$ is constant a.e.\ in a sufficiently small neighborhood of the origin), \eqref{7.57} proves \eqref{8.xiind} taking $z=i\varepsilon$ 
and $\varepsilon\downarrow 0$. 
\end{proof}

\begin{remark} \lb{r8.15}
Given the fact \eqref{8.gam}, one explicitly obtains
\begin{equation}
\gamma = \Re(\ln(D_{A_+/A_-}(i))).       \lb{8.gamma}
\end{equation}
Moreover, from 
\begin{align}
& \ln(D_{A_+/A_-}(z)) - \ln(D_{A_+/A_-}(i)) 
= \int_{\bbR} \bigg(\frac{1}{\nu - z}-\frac{1}{\nu - i}\bigg)\xi(\nu;A_+,A_-) \, d\nu  \no \\ 
& \quad = -i \int_{\bbR} \f{\xi(\nu;A_+,A_-) \, d\nu}{\nu^2 + 1} 
+ \int_{\bbR} \bigg(\frac{1}{\nu - z}-\frac{\nu}{\nu^2 + 1}\bigg)\xi(\nu;A_+,A_-) \, d\nu, 
\quad z\in \bbC_+,
\end{align}
one concludes that 
\begin{equation}
\Im(\ln(D_{A_+/A_-}(i))) = \int_{\bbR} \f{\xi(\nu;A_+,A_-) \, d\nu}{\nu^2 + 1}. 
\end{equation}
\end{remark}

\begin{remark} \lb{r8.15a}
To illustrate the relevance of the choice of branch of $\ln(D_{A_+/A_-}(\cdot))$ we briefly look at the following elementary situation where $\cH = \bbC^2$, $A_{\pm} = \pm I_2$. Then obviously, 
\begin{equation}
D_{I_2/-I_2}(z) = \bigg(\f{z-1}{z+1}\bigg)^2, \quad z\in\bbC\backslash \{-1\}. 
\end{equation}
The function $\ln(D_{I_2/-I_2}(\cdot))$ has the branch points $\pm 1$ (we note, however, that 
the point $z=\infty$ is not a branch point of this function). Applying our convention of choosing the principal branch of $\ln(D_{I_2/-I_2}(\cdot))$ near infinity then yields that
\begin{equation}
\ln(D_{I_2/-I_2}(z)) = 2 \, \ln\big(1 - 2(z+1)^{-1}\big) \underset{|z|\to\infty}{=} \f{- 4}{z+1} + \Oh(|z|^{-2}). 
\end{equation}
Taking into account the branch cut $[-1,1]$ for $\ln(D_{I_2/-I_2}(\cdot))$ then implies 
\begin{equation}
\lim_{\varepsilon\downarrow 0} \ln(D_{I_2/-I_2}(\lambda \pm i \varepsilon)) = 
\begin{cases} 2 \, \ln(|(\lambda - 1)/(\lambda + 1)|), 
& \lambda \in \bbR\backslash [-1,1], \\[1mm]
2 \, \ln(|(\lambda - 1)/(\lambda + 1)|) \pm 2\pi i, & \lambda \in (-1,1), 
\end{cases}
\end{equation}
and hence, 
\begin{equation}
\xi(\lambda; I_2,-I_2) =\begin{cases} 0, &  \lambda \in \bbR\backslash [-1,1], \\
2, & \lambda \in (-1,1), \end{cases} 
\end{equation}
consistent with the spectral flow $\text{\rm SpFlow} (\{A(t)\}_{t=-\infty}^\infty) =2$ in an example 
where $A(t)$, $t\in\bbR$, has asymptotes $A_\pm = \pm I_2$ as $t\to \pm \infty$ (cf.\ Section \ref{s9} 
for the notion of the spectral flow). 
\end{remark}

We conclude this section with the following known fact under the additional hypothesis of $A_-$ being 
bounded from below:

\begin{remark} [\cite{BY93}, Proposition\ 6.5, \cite{KY81}, \cite{Ya64}] \lb{r8.16} 
Assume Hypothesis \ref{h2.1} and, also,  $0\in\rho(A_-)\cap\rho(A_+)$. In addition, 
assume that $A_-$ $($and hence $A_+$ and $A(t)$, $t\in\bbR$$)$ is bounded from below. 
Then one obtains the following refinements of \eqref{2.33xi}, \eqref{restr}, 
\eqref{8.gam}, and \eqref{8.gamma}, 
\begin{align}
& \xi(\,\cdot\,; A_+,A_-) \in L^1\big(\bbR; (|\lambda| + 1)^{-1} d\lambda\big),    \lb{xiint} \\
& \ln(D_{A_+/A_-}(z)) = \int_{\bbR} \f{\xi(\lambda; A_+,A_-) \, d\lambda}{\lambda-z}, 
\quad z \in \bbC_+,     \label{xiDA} \\
& \gamma = \int_{\bbR} \lambda \f{\xi(\lambda; A_+, A_-) \, d\lambda}{\lambda^2 + 1}. 
\end{align} 
\end{remark}

\section{The Spectral Shift Function for the Pair
 $(\bsH_2,\bsH_1)$  and an Index Computation}   
\lb{s8}

In this section we will prove one of our principal results, an extension 
of Pushnitski's formula \cite{Pu08}, 
relating a particular choice of spectral shift functions of the two 
pairs of operators,  $(\bsH_2, \bsH_1)$, and
$(A_+,A_-)$. 

\subsection{Pushnitski's Formula}
We start by introducing the spectral shift function
$\xi(\,\cdot\,; \bsH_2,\bsH_1)$ associated with the pair $(\bsH_2, 
\bsH_1)$. Since $\bsH_2\geq 0$ and $\bsH_1\geq 0$, and 
\begin{equation}
\big[(\bsH_2 + \bsI)^{-1} - (\bsH_1 + \bsI)^{-1}\big] \in \cB_1 
\big(L^2(\bbR;\cH)\big),     \lb{2.34a}
\end{equation}
by Lemma \ref{trclLHS}, one uniquely introduces $\xi(\,\cdot\,; \bsH_2,\bsH_1)$ by requiring that
\begin{equation}
\xi(\lambda; \bsH_2,\bsH_1) = 0, \quad \lambda < 0,    \lb{2.35}
\end{equation}
and
\begin{align}
\begin{split}
\tr_{L^2(\bbR;\cH)} \big((\bsH_2 - z \, \bsI)^{-1} - (\bsH_1 - z \, 
\bsI)^{-1}\big)
= - \int_{[0, \infty)}  \frac{\xi(\lambda; \bsH_2, \bsH_1) \, 
d\lambda}{(\lambda -z)^2},&  \\
z\in\bbC\backslash [0,\infty),&      \lb{2.36}
\end{split}
\end{align} 
following \cite[Sect.\ 8.9]{Ya92}. In addition, one has
\begin{equation}
\xi(\,\cdot\,; \bsH_2, \bsH_1) \in L^1\big(\bbR; (|\lambda| + 1)^{-2} 
d\lambda\big).   \lb{2.36a}
\end{equation}
However, \eqref{2.36a} can be improved as follows:

\begin{lemma} \lb{l8.1}
Assume Hypothesis \ref{h2.1}. Then 
\begin{equation}
\xi(\,\cdot\,; \bsH_2, \bsH_1) \in L^1\big(\bbR; (|\lambda| + 1)^{-1} 
d\lambda\big)   \lb{2.36aa}
\end{equation}
and 
\begin{equation}
\xi(\lambda; \bsH_2, \bsH_1) = \pi^{-1} \lim_{\varepsilon \downarrow 0} 
\Im\big(\ln\big(\wti D_{\bsH_2/\bsH_1}(\lambda + i \varepsilon)\big)\big) 
\, \text{ for a.e.\ } \, \lambda \in \bbR,   
\end{equation}
where we used the abbreviation 
\begin{align}
& \wti D_{\bsH_2/\bsH_1} (z) = {\det}_{L^2(\bbR;\cH)} 
\big((\bsH_1 - z \bsI)^{-1/2} (\bsH_2 - z \bsI) (\bsH_1 - z \bsI)^{-1/2}\big)  \no \\
& \quad = {\det}_{L^2(\bbR;\cH)} 
\big(\bsI + 2 (\bsH_1 - z \bsI)^{-1/2} \bsB' (\bsH_1 - z \bsI)^{-1/2}\big), 
\quad z \in \rho(\bsH_1).   
\end{align}
\end{lemma}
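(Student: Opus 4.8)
\textbf{Proof plan for Lemma \ref{l8.1}.}
The plan is to combine the abstract Birman--Krein machinery for the nonnegative pair $(\bsH_2,\bsH_1)$ with the factorized resolvent identity
\[
(\bsH_2 - z\bsI)^{-1} - (\bsH_1 - z\bsI)^{-1} = - 2\,\ol{(\bsH_1 - z\bsI)^{-1} |(\bsB')^*|^{1/2} U_{\bsB'} |\bsB'|^{1/2}(\bsH_2 - z\bsI)^{-1}}
\]
already derived in the proof of Lemma \ref{trclLHS}, which shows that the perturbation can be written (up to the generalized polar factors of $\bsB'$) as a ``trace class sandwiched between resolvent halves'' of the self-adjoint operator $\bsH_1\ge 0$. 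First I would record that the operator $2(\bsH_1 - z\bsI)^{-1/2}\bsB'(\bsH_1 - z\bsI)^{-1/2}$ — interpreted via the generalized polar decomposition $\bsB' = |(\bsB')^*|^{1/2} U_{\bsB'} |\bsB'|^{1/2}$ as $2[|(\bsB')^*|^{1/2}(\bsH_1 - z\bsI)^{-1/2}]^* U_{\bsB'} |\bsB'|^{1/2}(\bsH_1 - z\bsI)^{-1/2}$ — lies in $\cB_1(L^2(\bbR;\cH))$ by Lemma \ref{12bprime} (together with $\|(\bsH_0 - z\bsI)^{1/2}(\bsH_1 - z\bsI)^{-1/2}\|_{\cB(L^2(\bbR;\cH))} < \infty$, which follows from \eqref{doms}), so that $\wti D_{\bsH_2/\bsH_1}(z)$ is a well-defined Fredholm determinant on $\rho(\bsH_1)$, analytic there, nonvanishing, and equal to ${\det}_{L^2(\bbR;\cH)}((\bsH_1 - z\bsI)^{-1/2}(\bsH_2 - z\bsI)^{-1}(\bsH_1 - z\bsI)^{-1/2})$ by the standard identity $\det(I + AB) = \det(I + BA)$ and the resolvent formula above.

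Next, by the general logarithmic-derivative formula for perturbation determinants (cf.\ \cite[Sect.\ IV.3]{GK69}) applied to the pair $(\bsH_2,\bsH_1)$, together with the trace formula \eqref{2.36} that defines $\xi(\,\cdot\,;\bsH_2,\bsH_1)$, one gets
\[
\frac{d}{dz}\ln\big(\wti D_{\bsH_2/\bsH_1}(z)\big) = \tr_{L^2(\bbR;\cH)}\big((\bsH_2 - z\bsI)^{-1} - (\bsH_1 - z\bsI)^{-1}\big) = \int_{[0,\infty)}\frac{\xi(\lambda;\bsH_2,\bsH_1)\,d\lambda}{(\lambda - z)^2}, \quad z\in\bbC\backslash[0,\infty).
\]
Integrating in $z$ exactly as in the proof of Theorem \ref{t8.14} (this is the step that requires care), one obtains a representation $\ln(\wti D_{\bsH_2/\bsH_1}(z)) = \wti\gamma + \int_{[0,\infty)}\big(\tfrac{1}{\lambda - z} - \tfrac{\lambda}{\lambda^2+1}\big)\xi(\lambda;\bsH_2,\bsH_1)\,d\lambda$; the key normalization input is that $\ln(\wti D_{\bsH_2/\bsH_1}(z)) \to 0$ as $\Im(z)\to+\infty$, which follows because the $\cB_1$-norm of the sandwiched operator tends to $0$ there (again by Lemma \ref{12bprime}, specifically the $|z|^{-1/2}$-decay in \eqref{zdecay} passed to $\bsH_1$ via the comparison with $\bsH_0$). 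The constant $\wti\gamma$ is then seen to be real by the same imaginary-part argument used for \eqref{8.gam}. Finally, applying the Stieltjes inversion formula to the absolutely continuous measure $\xi(\lambda;\bsH_2,\bsH_1)\,d\lambda$ (after splitting $\xi$ into positive and negative parts, as in the end of the proof of Theorem \ref{t8.14}) yields the boundary-value formula $\xi(\lambda;\bsH_2,\bsH_1) = \pi^{-1}\lim_{\varepsilon\downarrow 0}\Im\big(\ln(\wti D_{\bsH_2/\bsH_1}(\lambda + i\varepsilon))\big)$ for a.e.\ $\lambda\in\bbR$.

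For the improved integrability \eqref{2.36aa}, the plan is to exploit that $(\bsH_1 - z\bsI)^{-1/2}\bsB'(\bsH_1 - z\bsI)^{-1/2} \in \cB_1(L^2(\bbR;\cH))$ decays like $|z|^{-1/2}$ in the trace norm as $z\downarrow -\infty$ (from \eqref{zdecay} together with $\|(\bsH_0 - z\bsI)^{1/2}(\bsH_1 - z\bsI)^{-1/2}\| = O(1)$), so that $\ln(\wti D_{\bsH_2/\bsH_1}(iy)) = O(y^{-1/2})$ as $y\to\infty$; feeding this into the representation above and bounding $\int_{[0,\infty)}\tfrac{\xi(\lambda;\bsH_2,\bsH_1)}{\lambda + y^2}\,d\lambda$ forces $\int_{[0,\infty)}\tfrac{|\xi(\lambda;\bsH_2,\bsH_1)|}{\lambda+1}\,d\lambda < \infty$ (this is the analogue of the argument for \eqref{xiint} in Remark \ref{r8.16}, with the role of ``$A_-$ bounded below'' played here automatically by $\bsH_1 \ge 0$). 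I expect the main obstacle to be the justification of the $\cB_1$-norm decay rate of the sandwiched operator uniformly enough to integrate the logarithmic derivative and pin down the normalization constant — i.e., carefully transferring the estimates of Lemma \ref{12bprime}, which are phrased in terms of $\bsH_0$ and $z<-1$, to $\bsH_1$ and to the determinant's behaviour along the positive imaginary axis; everything else is a direct adaptation of the scheme already carried out for the pair $(A_+,A_-)$ in Theorem \ref{t8.14}.
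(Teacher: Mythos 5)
Your identification of the key analytic input coincides with the paper's: beyond a citation, the only fact the paper supplies is that $(\bsH_1-z\bsI)^{-1/2}\bsB'(\bsH_1-z\bsI)^{-1/2}\in\cB_1\big(L^2(\bbR;\cH)\big)$, obtained exactly as you propose from Lemma \ref{12bprime}, the generalized polar decomposition \eqref{4.B'}, and \eqref{3.61}; the remainder of the paper's proof is a direct appeal to Krein--Yavryan \cite{KY81} (see also \cite[Proposition 6.5]{BY93}). Your plan is therefore a self-contained reproof of that cited result. Its first half (well-definedness and analyticity of $\wti D_{\bsH_2/\bsH_1}$, the differentiated trace formula, normalization at infinity, Stieltjes inversion) is workable, with two small corrections: both middle equalities in your chain $\tfrac{d}{dz}\ln\wti D_{\bsH_2/\bsH_1}(z)=\tr_{L^2(\bbR;\cH)}\big((\bsH_2-z\bsI)^{-1}-(\bsH_1-z\bsI)^{-1}\big)=\int_{[0,\infty)}\xi(\lambda;\bsH_2,\bsH_1)(\lambda-z)^{-2}\,d\lambda$ should carry a minus sign (by \eqref{2.36} and the standard logarithmic-derivative formula; the two errors cancel), and for the normalization you only need the $\cB_1$-norm of the sandwiched operator to be $o(1)$, which follows from Lemma \ref{lSTP}; the claimed $O(y^{-1/2})$ rate along the imaginary axis is neither established by \eqref{zdecay} (stated for real $z<-1$ and for $\bsH_0$) nor needed there.

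The genuine gap is in your argument for the improved integrability \eqref{2.36aa}, which is precisely the nontrivial content of the cited Krein--Yavryan theorem. One cannot simply ``feed'' the decay of $\ln\wti D_{\bsH_2/\bsH_1}$ along a ray into the integrated representation: writing $\ln\wti D_{\bsH_2/\bsH_1}(-y)=-\int_{[0,\infty)}\xi(\lambda;\bsH_2,\bsH_1)(\lambda+y)^{-1}\,d\lambda$ from the differentiated trace formula requires a Fubini interchange whose justification is $\int_{[0,\infty)}|\xi(\lambda;\bsH_2,\bsH_1)|(\lambda+y)^{-1}\,d\lambda<\infty$, i.e., the very statement to be proved; and even granting that representation, decay of the \emph{signed} integral gives no control of $\int_{[0,\infty)}|\xi(\lambda;\bsH_2,\bsH_1)|(1+\lambda)^{-1}\,d\lambda$, since $\xi(\,\cdot\,;\bsH_2,\bsH_1)$ has no definite sign (the form perturbation $2\bsB'$ is not sign-definite). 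The standard way to close this---and, in essence, what \cite{KY81}, \cite{BY93} do---is to split the form perturbation into nonnegative and nonpositive parts, insert an intermediate operator, use additivity and sign-definiteness of the spectral shift function for each monotone piece, and apply monotone convergence as $z\to-\infty$; alternatively one must reproduce the function-theoretic argument of \cite{KY81}. As written, your reference to ``the analogue of the argument for \eqref{xiint} in Remark \ref{r8.16}'' has no argument behind it, since that remark is itself only a citation of the same results.
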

\begin{proof}
This follows from results of Krein and Yavryan \cite{KY81} (see also 
\cite[Proposition\ 6.5]{BY93}) and the fact that
\begin{align}
& (\bsH_1 - z \bsI)^{-1/2} \bsB' (\bsH_1 - z \bsI)^{-1/2} =  
 \big[\big((\bsH_0 - z \bsI)^{1/2}\big)^* \big((\bsH_1 - z \bsI)^{-1/2}\big)^*\big]^*    \\
& \quad \times \big[|(\bsB')^*|^{1/2} \big((\bsH_0 - z \bsI)^{-1/2}\big)^*\big]^* 
U_{\bsB'} \big[|\bsB'|^{1/2}(\bsH_0 - z \bsI)^{-1/2}\big]   \\
& \quad \times 
\big[(\bsH_0 - z \bsI)^{1/2} (\bsH_1 - z \bsI)^{-1/2}\big] \in \cB_1\big(L^2(\bbR; \cH)\big), 
\quad z \in \rho(\bsH_1),
\end{align}
applying \eqref{3.32}, \eqref{4.B'}, and \eqref{3.61}.
\end{proof}

Given these preparations, one can prove the following result, an extension 
of Pushnitski's formula \cite{Pu08}: 

\begin{theorem}  \lb{t8.xi}
Assume Hypothesis \ref{h2.1} and define $\xi(\,\cdot\,; A_+, A_-)$ and
$\xi(\,\cdot\,; \bsH_2, \bsH_1)$ according to \eqref{2.31} and 
\eqref{2.35}, \eqref{2.36}, respectively. Then, 
\begin{equation}
\xi(\lambda; \bsH_2, \bsH_1) = \frac{1}{\pi}\int_{-\lambda^{1/2}}^{\lambda^{1/2}}
\frac{\xi(\nu; A_+,A_-)\, d\nu}{(\lambda-\nu^2)^{1/2}} 
\, \text{ for a.e.\ } \, \lambda>0,     \lb{2.37} 
 \end{equation}
with a convergent Lebesgue integral on the right-hand side of \eqref{2.37}. 
\end{theorem}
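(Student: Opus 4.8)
The plan is to derive \eqref{2.37} from the trace formula \eqref{trfOLD} of Theorem \ref{Ntrindf} by combining it with the two trace formulas that express the left- and right-hand sides in terms of spectral shift functions. On the left, by \eqref{2.36} one has
\begin{equation*}
\tr_{L^2(\bbR;\cH)}\big((\bsH_2 - z \, \bsI)^{-1}-(\bsH_1 - z \, \bsI)^{-1}\big)
= - \int_{[0,\infty)} \frac{\xi(\lambda; \bsH_2, \bsH_1)\, d\lambda}{(\lambda - z)^2},
\quad z \in \bbC\backslash [0,\infty).
\end{equation*}
On the right, by Lemma \ref{l7.trfor},
\begin{equation*}
\tr_{\cH}\big(g_z(A_+) - g_z(A_-)\big) = - z \int_{\bbR} \frac{\xi(\nu; A_+, A_-)\, d\nu}{(\nu^2 - z)^{3/2}},
\quad z \in \bbC\backslash [0,\infty),
\end{equation*}
so that $\frac{1}{2z}\tr_{\cH}(g_z(A_+)-g_z(A_-)) = -\frac12 \int_{\bbR} \xi(\nu;A_+,A_-)(\nu^2-z)^{-3/2}\,d\nu$. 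Equating the two sides of \eqref{trfOLD} gives, for all $z\in\bbC\backslash[0,\infty)$,
\begin{equation*}
\int_{[0,\infty)} \frac{\xi(\lambda; \bsH_2, \bsH_1)\, d\lambda}{(\lambda - z)^2}
= \frac12 \int_{\bbR} \frac{\xi(\nu; A_+, A_-)\, d\nu}{(\nu^2 - z)^{3/2}}.
\end{equation*}

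The next step is to transform the right-hand side into a Stieltjes-type integral against $d\lambda/(\lambda-z)^2$. First I would symmetrize in $\nu$ and substitute $\nu = \lambda^{1/2}$, i.e. $\lambda = \nu^2$, $d\lambda = 2\nu\,d\nu$ on $(0,\infty)$, writing
\begin{equation*}
\frac12 \int_{\bbR} \frac{\xi(\nu; A_+, A_-)\, d\nu}{(\nu^2 - z)^{3/2}}
= \frac12 \int_0^\infty \frac{[\xi(\nu;A_+,A_-)+\xi(-\nu;A_+,A_-)]\, d\nu}{(\nu^2-z)^{3/2}}.
\end{equation*}
Then I would invoke the elementary identity (a Fubini/Tonelli computation with the integrable kernel coming from \eqref{2.33xi})
\begin{equation*}
\frac{1}{(\nu^2 - z)^{3/2}} = \frac{1}{\pi}\int_{\nu^2}^{\infty} \frac{d\lambda}{(\lambda - z)^2 (\lambda - \nu^2)^{1/2}},
\quad \nu\in\bbR,\ z\in\bbC\backslash[0,\infty),
\end{equation*}
which is verified by a one-variable substitution $\lambda - \nu^2 = s^2$. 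Inserting this and interchanging the order of integration (justified by $\xi(\,\cdot\,;A_+,A_-)\in L^1(\bbR;(|\nu|+1)^{-2}d\nu)$ from \eqref{2.33xi}, the integrability of the kernel, and a dominated-convergence argument controlling the $(\lambda-z)^{-2}$ factor away from $[0,\infty)$) yields
\begin{equation*}
\frac12 \int_{\bbR} \frac{\xi(\nu; A_+, A_-)\, d\nu}{(\nu^2 - z)^{3/2}}
= \int_0^\infty \frac{d\lambda}{(\lambda - z)^2}\cdot\frac{1}{\pi}\int_{-\lambda^{1/2}}^{\lambda^{1/2}}\frac{\xi(\nu;A_+,A_-)\,d\nu}{(\lambda - \nu^2)^{1/2}}.
\end{equation*}

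Comparing with the left-hand side, both $\xi(\lambda;\bsH_2,\bsH_1)$ and the function $\lambda\mapsto \pi^{-1}\int_{-\lambda^{1/2}}^{\lambda^{1/2}}\xi(\nu;A_+,A_-)(\lambda-\nu^2)^{-1/2}\,d\nu$ are supported on $[0,\infty)$ and, by \eqref{2.36aa} respectively by the $L^1$-estimate on $\xi(\,\cdot\,;A_+,A_-)$, lie in $L^1(\bbR;(|\lambda|+1)^{-1}d\lambda)$; since they produce the same Borel (Cauchy/Stieltjes) transform $\int_{[0,\infty)}(\cdot)(\lambda)\,d\lambda/(\lambda-z)^2$ for all $z\in\bbC\backslash[0,\infty)$, the Stieltjes inversion formula (as already used in the proof of Theorem \ref{t8.14}) forces them to coincide for a.e.\ $\lambda>0$, which is \eqref{2.37}. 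Finally I would record that the inner integral on the right-hand side of \eqref{2.37} is a convergent Lebesgue integral for a.e.\ $\lambda>0$: this follows from $\xi(\,\cdot\,;A_+,A_-)\in L^1_{\loc}(\bbR)$ together with the fact that $\int_{-\lambda^{1/2}}^{\lambda^{1/2}}(\lambda-\nu^2)^{-1/2}\,d\nu = \pi$ is finite and the kernel is a fixed integrable weight, so the (symmetrized) convolution is finite a.e. The main obstacle I anticipate is the rigorous justification of the interchange of integrations and the uniqueness-via-Borel-transform step: one must check that both representing densities genuinely lie in a class for which equality of their Cauchy transforms off $[0,\infty)$ implies a.e.\ equality (this is where \eqref{2.36aa} and \eqref{2.33xi} — rather than merely the weaker $(|\lambda|+1)^{-2}$ weights — are essential), and that the Fubini step is legitimate uniformly on compact subsets of $\bbC\backslash[0,\infty)$.
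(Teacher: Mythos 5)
Your proposal is correct in substance and starts exactly where the paper starts: equating the two sides of the trace formula \eqref{trfOLD} via Lemma \ref{l7.trfor} and \eqref{2.36}. From there, however, you take a genuinely different route. The paper integrates the resulting identity in $z$ from a fixed $z_0<0$ (see \eqref{2.42}) and then applies the Stieltjes inversion formula directly, so that the analytic work consists of passing the limit $\varepsilon\downarrow 0$ of $\Im\big((\nu^2-\lambda-i\varepsilon)^{-1/2}\big)$ inside the $\nu$-integral (dominated convergence on $\nu^2\leq\lambda+1$, the bound $\varepsilon(\nu^2-\lambda)^{-3/2}$ outside, and a Young/Minkowski convolution argument to get local integrability of the limiting integrand); you instead rewrite $(\nu^2-z)^{-3/2}$ by a beta-function subordination identity, use Tonelli--Fubini to recognize the right-hand side as the $(\lambda-z)^{-2}$-transform of the candidate density $\Xi(\lambda)=\pi^{-1}\int_{-\lambda^{1/2}}^{\lambda^{1/2}}\xi(\nu;A_+,A_-)(\lambda-\nu^2)^{-1/2}d\nu$, and conclude by uniqueness of that transform. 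This variant buys you the a.e.\ finiteness of the inner integral for free from the Tonelli step (the paper needs its separate convolution estimate for that), at the price of letting the uniqueness step carry the analytic weight.

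Two points need repair. First, the constant in your subordination identity: by \eqref{8.51} with $a=1/2$, $b=2$ one has $\int_{\nu^2}^{\infty}(\lambda-z)^{-2}(\lambda-\nu^2)^{-1/2}\,d\lambda=\tfrac{\pi}{2}(\nu^2-z)^{-3/2}$, so the prefactor must be $2/\pi$, not $1/\pi$; your subsequent display (and hence \eqref{2.37}) is consistent with the correct constant, so this is only a slip in the stated identity, but as written the two displays contradict each other by a factor of $2$. Second, your closing claim that the $(|\lambda|+1)^{-1}$ weights are ``essential'' is misplaced: under Hypothesis \ref{h2.1} only $\xi(\,\cdot\,;A_+,A_-)\in L^1\big(\bbR;(|\nu|+1)^{-2}d\nu\big)$ (cf.\ \eqref{2.337}, \eqref{2.33xi}) is established --- the paper explicitly leaves the $(1+|\nu|)^{-1}$ weight open at the end of Section \ref{s8} --- so your assertion that $\Xi\in L^1\big((|\lambda|+1)^{-1}d\lambda\big)$ is unsupported. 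Fortunately it is also unnecessary: the same Tonelli computation with $z=-1$ in \eqref{8.51} gives $\int_0^{\infty}|\Xi(\lambda)|(1+\lambda)^{-2}d\lambda\leq\tfrac12\int_{\bbR}|\xi(\nu;A_+,A_-)|(\nu^2+1)^{-3/2}d\nu<\infty$, i.e.\ $\Xi$ lies in the same class $L^1\big([0,\infty);(1+\lambda)^{-2}d\lambda\big)$ as $\xi(\,\cdot\,;\bsH_2,\bsH_1)$ in \eqref{2.36a}, and for densities $h$ in this class the transform $\int_0^{\infty}h(\lambda)(\lambda-z)^{-2}d\lambda$, $z\in\bbC\backslash[0,\infty)$, determines $h$ a.e.: integrating in $z$ produces the absolutely convergent normalized Cauchy transform $\int_0^{\infty}h(\lambda)\big[(\lambda-z)^{-1}-(\lambda-z_0)^{-1}\big]d\lambda$ --- precisely the object the paper inverts in \eqref{2.42}, \eqref{2.43} --- and Stieltjes inversion (splitting $h$ into positive and negative parts, as in the proof of Theorem \ref{t8.14}) recovers $h$ a.e. With these two repairs your argument is complete and yields \eqref{2.37}.
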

\begin{proof} By Lemma \ref{l7.trfor} one has 
\begin{equation}
\tr_{\cH}\big(g_{z}(A_+) - g_{z}(A_-)\big)
  = - z \int_{\bbR} \frac{\xi(\nu; A_+, A_-) \, d\nu}{(\nu^2 - z)^{3/2}},
\quad z\in\bbC\backslash [0,\infty).       \lb{2.38}
\end{equation}
The trace identity \eqref{trfOLD} then yields
\begin{equation}
\int_{[0, \infty)}  \frac{\xi(\lambda; \bsH_2, \bsH_1) \, 
d\lambda}{(\lambda -z)^{2}}
= \frac{1}{2} \int_{\bbR} \frac{\xi(\nu; A_+, A_-) \, d\nu}{(\nu^2 - z)^{3/2}},
\quad z\in\bbC\backslash [0,\infty),
\end{equation}
and hence,
\begin{align}
& \int_{[0, \infty)}  \xi(\lambda; \bsH_2, \bsH_1) 
\bigg(\frac{d}{dz}(\lambda -z)^{-1}\bigg)
  d\lambda
= \int_{\bbR} \xi(\nu; A_+, A_-)\bigg(\frac{d}{dz} (\nu^2 - z)^{-1/2}\bigg) d\nu,  \no  \\
& \hspace*{9cm} z\in\bbC\backslash [0,\infty).     \lb{2.40}
\end{align}
Integrating \eqref{2.40} with respect to $z$ from a fixed point $z_0 
\in (-\infty,0)$ to
$z\in\bbC\backslash\bbR$ along a straight line connecting $z_0$ and 
$z$ then results in
\begin{align}
\begin{split}
& \int_{[0, \infty)}  \xi(\lambda; \bsH_2, \bsH_1)
\bigg(\frac{1}{\lambda - z} - \frac{1}{\lambda - z_0}\bigg) d\lambda      \\
& \quad = \int_{\bbR} \xi(\nu; A_+, A_-)\big[(\nu^2 - z)^{-1/2} - (\nu^2 - 
z_0)^{-1/2}\big] \, d\nu,
\quad z\in\bbC\backslash [0,\infty).    \lb{2.42}
\end{split} 
\end{align}
One notes that $\big[(\nu^2 - z)^{-1/2} - (\nu^2 - z_0)^{-1/2}\big] = O 
\big(|\nu|^{-3}\big)$ as
$|\nu|\to\infty$, compatible with the fact \eqref{2.33} and similarly,
$\big[(\lambda - z)^{-1} - (\lambda - z_0)^{-1}\big] = O 
\big(|\lambda|^{-2}\big)$, compatible with the fact \eqref{2.36a}.

Applying the Stieltjes inversion formula (cf., e.g., \cite{AD56}, 
\cite[Theorem\ B.3]{We80}) to \eqref{2.42} then yields
\begin{align}
\xi (\lambda; \bsH_2, \bsH_1)
&= \lim_{\varepsilon\downarrow 0} \frac{1}{\pi} \int_{[0,\infty)}
\xi (\lambda'; \bsH_2, \bsH_1) \Im\big((\lambda'  - 
\lambda) - i \varepsilon)^{-1}\big) d\lambda'
\no  \\
& =  \lim_{\varepsilon\downarrow 0} \frac{1}{\pi}  \int_{\bbR} \xi(\nu; A_+, A_-)
\Im\big((\nu^2 - \lambda - i \varepsilon)^{-1/2}\big) d\nu     \no  \\
& = \frac{1}{\pi}  \int_{- \lambda^{1/2}}^{\lambda^{1/2}}
\frac{\xi(\nu; A_+, A_-) \, d\nu}{(\lambda - \nu^2)^{1/2}} \, \text{ for 
a.e.\ $\lambda > 0$.}   \lb{2.43}
\end{align}
The last step in \eqref{2.43} still warrants some comments: One 
splits $\bbR$ into the two regions $0 \leq \nu^2 \leq \lambda + 1$ and 
$\nu^2 \geq \lambda + 1$. In the compact region $0 \leq \nu^2 \leq 
\lambda + 1$ one can immediately apply Lebesgue's dominated 
convergence theorem since $\xi(\,\cdot\,; A_+, A_-)$ is locally 
integrable. One also uses that $\Im\big((\nu^2 -\lambda)^{1/2}\big) = 
0$ for $\nu^2 \in [\lambda, \lambda + 1]$ and that
$\xi(\nu; A_+, A_-) (\lambda - \nu^2)^{-1/2}$ is locally integrable for 
a.e.\ $\lambda > 0$.

The latter fact can be seen as follows: Decomposing $(\lambda - 
\nu^2)^{-1/2}$ into
$(\lambda^{1/2} - \nu)^{-1/2}(\lambda^{1/2} + \nu)^{-1/2}$, and focusing 
on the case
$\nu\geq 0$ at first, one sees that only the factor $(\lambda^{1/2} - 
\nu)^{-1/2}$ is relevant in this case and one can reduce matters to a 
convolution estimate. Thus, one introduces
\begin{equation}
  f_R(\nu) = \begin{cases} \nu^{-1/2}, & 0 < \nu < R, \\
0, & \nu>R, \, \nu<0,  \end{cases} \quad R>0,   \quad
  g(\nu) = \begin{cases} |\xi(\nu; A_+, A_-)|, & \nu>0,  \\
0, & \nu<0.  \end{cases}
\end{equation}
Then $f_R, \, g \in L^1(\bbR; d\nu)$ and hence a special case of 
Minkowski's inequality (which in turn is a special case of Young's 
inequality, $\|h*k\|_r \leq \|h\|_p \, \|k\|_q$,
$1\leq p, q, r \leq \infty$, $p^{-1}+q^{-1}=1+r^{-1}$, with 
$\|\cdot\|_p$ the norm in
$L^p(\bbR; d\lambda)$, cf., e.g., \cite[p.\ 20--22]{Gr04}), shows that $f_R 
* g \in L^1(\bbR; d\lambda)$, in particular, $(f_R * g)(\lambda)$ exists 
for a.e.\ $\lambda>0$. Since $R>0$ is arbitrary, $\xi(\nu; A_+, A_-) 
(\lambda - \nu^2)^{-1/2}$ is locally integrable with respect to $\nu$ on
$[0,\infty)$ for a.e.\ $\lambda > 0$. The case $\nu\leq 0$ is handled 
analogously.

Finally, in the region $\nu^2 \geq \lambda + 1$ one estimates that
\begin{equation}
\big|\Im\big((\nu^2 - \lambda - i \varepsilon)^{-1/2}\big)\big| \leq 
\frac{\varepsilon}{(\nu^2 - \lambda)^{3/2}},
\quad \nu^2 \geq \lambda + 1,
\end{equation}
completing the proof of \eqref{2.43}. 
\end{proof}

One notes that while the outline of this proof still closely follows 
the corresponding proof of Theorem\ 1.1 by Pushnitski in \cite{Pu08}, 
the finer details of our 
approach now necessarily deviate from his proof due to  our more 
general Hypothesis \ref{h2.1}.

The next result also follows Pushnitski in \cite{Pu08} closely (but 
again necessarily deviates in some details):

\begin{lemma}  \lb{l8.2}
Assume Hypothesis \ref{h2.1} and suppose that $0 \in \rho(A_+)\cap\rho(A_-)$. 
Then $\bsH_1$ $($and hence $\bsH_2$$)$ has an essential spectral gap near 
$0$, that is, there exists an $a>0$ such that
\beq \lb{ess}
\sigma_{\rm ess}(\bsH_1) = \sigma_{\rm ess}(\bsH_2) \subseteq [a,\infty).
\enq
\end{lemma}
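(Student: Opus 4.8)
The plan is to show that the essential spectra of $\bsH_1$ and $\bsH_2$ both stay away from $0$ by reducing to the constant-coefficient operator $\bsH_0 = \bsD_{\bsA_-}^* \bsD_{\bsA_-}^{}$ and exploiting the relatively trace class (hence relatively compact) nature of the perturbation. First I would recall from Lemma \ref{l2.3}\,$(iv)$ that $\sigma(\bsD_{\bsA_-}) = \sigma(A_-) + i\,\bbR$, so that the hypothesis $0 \in \rho(A_-)$ forces a strip $\{z : |\Re(z)| < \delta\}$ to lie in $\rho(\bsD_{\bsA_-})$ for some $\delta > 0$; via the spectral mapping for the normal operator $\bsD_{\bsA_-}$, this gives $\sigma(\bsH_0) = \sigma(\bsD_{\bsA_-}^* \bsD_{\bsA_-}) \subseteq [\delta^2, \infty)$, i.e.\ $\bsH_0$ has a genuine spectral gap $(0, \delta^2)$ at the bottom. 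Since $\bsH_0 \geq 0$ is self-adjoint, $\sigma_{\rm ess}(\bsH_0) \subseteq [\delta^2, \infty)$ as well.

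The next step is to pass from $\bsH_0$ to $\bsH_j$, $j=1,2$, by showing that the resolvent difference $\big[(\bsH_j + \bsI)^{-1} - (\bsH_0 + \bsI)^{-1}\big]$ is compact in $L^2(\bbR;\cH)$. By Lemma \ref{l4.relbdd} and Lemma \ref{l4.hatHH}, the operators $\bsH_j = \widehat\bsH_j$ arise as form sums $Q_{\bsH_0} + Q_{\bsV_j}$ with $Q_{\bsV_j}$ infinitesimally form-bounded relative to $Q_{\bsH_0}$; moreover Lemma \ref{12bprime} and Lemma \ref{l3.6} provide the factorized bounds showing that the relevant building blocks $|\bsB'|^{1/2}(\bsH_0 - z\bsI)^{-1/2}$ (and its variants with $\bsA_-$, $\bsB$) are actually in $\cB_2$, respectively $\cB(L^2(\bbR;\cH))$ with norm tending to zero as $z\downarrow -\infty$. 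Feeding this into the Tiktopoulos-type resolvent identity already written down in the proof of Lemma \ref{invLLn} (the formula $(\bsH - z\bsI)^{-1} = \bsR_0(z)^{1/2}\bsL(z)^{-1}\bsR_0(z)^{1/2}$, and its analogues for $\bsH_1$, $\bsH_2$) expresses $\bsR_j(z) - \bsR_0(z)$ as a product containing at least one genuinely compact factor coming from $\bsB'$ (indeed from $|(\bsB')^*|^{1/2}\bsR_0^{1/2} \in \cB_2$), and the remaining factors bounded; hence $\bsR_j(z) - \bsR_0(z) \in \cB_\infty(L^2(\bbR;\cH))$ for $z < 0$, and by the resolvent equation for all $z \in \rho(\bsH_j)\cap\rho(\bsH_0)$. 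Actually one can shortcut this: Lemma \ref{trclLHS} already gives $\bsR_2(z) - \bsR_1(z) \in \cB_1 \subseteq \cB_\infty$, and essentially the same factorization argument gives $\bsR_1(z) - \bsR_0(z) \in \cB_\infty$, which is all that is needed.

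Having established that $(\bsH_j + \bsI)^{-1} - (\bsH_0 + \bsI)^{-1}$ is compact for $j=1,2$, Weyl's theorem on the invariance of the essential spectrum under compact perturbations of the resolvent (cf.\ \cite[Corollary\ XIII.4.2]{RS78} in the form applied in Theorem \ref{t3.7}\,$(v)$, via the spectral mapping $\lambda \mapsto (\lambda+1)^{-1}$) yields
\begin{equation}
\sigma_{\rm ess}(\bsH_1) = \sigma_{\rm ess}(\bsH_2) = \sigma_{\rm ess}(\bsH_0) \subseteq [\delta^2, \infty),
\end{equation}
so \eqref{ess} holds with $a = \delta^2$. (The equality $\sigma_{\rm ess}(\bsH_1) = \sigma_{\rm ess}(\bsH_2)$ is also immediate from Lemma \ref{trclLHS} alone.) The main obstacle I anticipate is the compactness of $\bsR_1(z) - \bsR_0(z)$: unlike the difference $\bsR_2(z) - \bsR_1(z)$ handled in Lemma \ref{trclLHS}, this difference involves not only the derivative term $\bsB'$ but also the genuinely unbounded cross terms $\bsA_-\bsB$ and $\bsB\bsA_-$ and the term $\bsB^2$, which are only form-bounded, not operator-bounded; one must therefore carefully insert the factorizations $\bsB\,\bsR_0(z)^{1/2}$, $\bsA_-\bsR_0(z)^{1/2}$ (bounded by Lemma \ref{l3.6}) together with at least one $\cB_2$-factor $|(\bsB')^*|^{1/2}\bsR_0(z)^{1/2}$ arising through the Tiktopoulos expansion, taking care that the compact factor is not absorbed. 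Since all these ingredients are already in place in Sections \ref{s4} and \ref{s5}, the argument is a bookkeeping exercise rather than a new estimate.
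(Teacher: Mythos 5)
Your reduction to $\bsH_0$ breaks down at precisely the point you dismiss as ``bookkeeping'': the resolvent difference $\bsR_1(z)-\bsR_0(z)$ is \emph{not} compact in general, and in fact $\sigma_{\rm ess}(\bsH_1)\neq\sigma_{\rm ess}(\bsH_0)$ in general. The obstruction is that $\bsB$ does not decay as $t\to+\infty$: under Hypothesis \ref{h2.1} (with the normalization \eqref{3.D-=0}) one has $B(t)\to B(+\infty)=\ol{A_+-A_-}$ relatively in trace norm, and $B(+\infty)\neq 0$ in general. Lemma \ref{l3.6} gives only boundedness (with small norm as $z\downarrow-\infty$) of $\bsB\,\bsR_0(z)^{1/2}$, never compactness, and in the Tiktopoulos-type identities of Lemma \ref{invLLn} the difference $\bsR_1(z)-\bsR_0(z)$ contains the part of $\bsL(z)-\bsI$ built from the cross terms $\big[\bsA_-\bsR_0^{1/2}\big]^*\bsB\,\bsR_0^{1/2}$, $\big[\bsB\bsR_0^{1/2}\big]^*\bsA_-\bsR_0^{1/2}$, $\big[\bsB\bsR_0^{1/2}\big]^*\bsB\,\bsR_0^{1/2}$, which carries no factor of $\bsB'$ at all and is not compact. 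A concrete counterexample: take $\cH=\bbC$, $A_-=2$, $B$ smooth with $B'\in L^1(\bbR)$, $B(-\infty)=0$, $B(+\infty)=-1$, so $A_+=1$ and $0\in\rho(A_+)\cap\rho(A_-)$. Then $\bsH_0=-d^2/dt^2+4$ has $\sigma_{\rm ess}(\bsH_0)=[4,\infty)$, while $\bsH_1=-d^2/dt^2+A(t)^2-A'(t)$ has potential tending to $1$ as $t\to+\infty$, so $\sigma_{\rm ess}(\bsH_1)=[1,\infty)$; hence the resolvents cannot differ by a compact operator, and your asserted conclusion $\sigma_{\rm ess}(\bsH_1)\subseteq[\delta^2,\infty)$ with $\delta=\dist(0,\sigma(A_-))$ is false. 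The essential spectrum of $\bsH_1$ is governed by \emph{both} asymptotes $A_\pm$, not by $A_-$ alone.

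What is sound in your plan is the equality $\sigma_{\rm ess}(\bsH_1)=\sigma_{\rm ess}(\bsH_2)$ from Lemma \ref{trclLHS}, and the removal of the $\bsB'$ term: by Lemma \ref{12bprime} that term is relatively form compact (indeed form trace class) with respect to $\bsH_0$ and $\bsH$, which is legitimate precisely because $\|B'(t)(|A_-|+I)^{-1}\|_{\cB_1(\cH)}$ is integrable in $t$, so $\bsB'$ genuinely ``vanishes at infinity''. The paper then avoids comparing $\bsH$ with $\bsH_0$ altogether: using $0\in\rho(A_+)\cap\rho(A_-)$ and the convergence $A(t)\to A_\pm$ it finds $a>0$, $T_0>0$ with $A(t)^2\geq a I$ for $|t|\geq T_0$, writes the pointwise form inequality $A(t)^2\geq aI+F(t)$ with $F(t)=[A(t)^2-aI]E_{A(t)^2}([0,a])$ finite rank, vanishing for $|t|\geq T_0$, and with $\int_{\bbR}\|F(t)\|_{\cB_1(\cH)}\,dt<\infty$, so that $\bsF$ is form compact relative to $\bsH_{0,0}=-d^2/dt^2$ (via \cite[Lemma 2.2]{Pu08}); the inequality $\bsH\geq\bsH_{0,0}+\bsF+a\bsI$ together with min--max then gives $\sigma_{\rm ess}(\bsH)\subseteq[a,\infty)$ and hence \eqref{ess}. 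If you insist on a resolvent-comparison route, the comparison operator must encode both asymptotics (e.g.\ a fiber operator equal to $A_-$ for $t<0$ and $A_+$ for $t>0$), not the constant-fiber $\bsH_0$.
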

\begin{proof}
By Lemma \ref{trclLHS} and a variant of Weyl's theorem one concludes that
\begin{equation}
\sigma_{\rm ess}(\bsH_1)  = \sigma_{\rm ess}(\bsH_2).      \lb{6.22}
\end{equation}
Next, one recalls the definition of the operators $\bsH$ and $\bsH_1$ from Lemma \ref{l4.relbdd}, and  
\begin{equation}
\dom\big(\bsH_1^{1/2}\big) =  \dom\big((\bsH)^{1/2}\big)
= \dom\big(\bsH_0^{1/2}\big) = \dom(d/dt) \cap\dom(\bsA_-).
\end{equation}
By Lemma \ref{12bprime}, one obtains
\begin{align}
& \big\|(\bsH - z \, \bsI)^{-1/2} \bsB' (\bsH - z \, \bsI)^{-1/2}\big\|_{\cB_1(L^2(\bbR;\cH))} 
\no \\
& \quad  \leq \big\|(\bsH_0 - z \, \bsI)^{1/2}(\bsH - z \, 
\bsI)^{-1/2}\big\|_{\cB(L^2(\bbR;\cH))}^2   \no  \\
& \qquad \times \big\|(\bsH_0 - z \, \bsI)^{-1/2} \bsB' (\bsH_0 - z \, 
\bsI)^{-1/2}\big\|_{\cB_1(L^2(\bbR;\cH))} < \infty, \quad z<0,
\end{align}
and hence $\bsB'$ is relatively form compact with respect to $\bsH_0$ and $\bsH$.
Hence,
\begin{equation}
\sigma_{\rm ess}(\bsH_j)  = \sigma_{\rm ess}(\bsH), \quad j=1,2.
\end{equation}
Since by hypothesis $0 \in \rho(A_+)\cap\rho(A_-)$, one obtains the 
existence of $a>0$ and $T_0>0$ such that
\begin{equation}
A(t)^2 \geq a I \, \text{ for all $|t|\geq T_0$.}
\end{equation}
Next, one writes 
\begin{align}
& (A(t) g,A(t) g)_{\cH} 
= \big[(A(t) g, A(t) g)_{\cH} - a \|g\|_{\cH}^2 \big]  
+ a \|g\|_{\cH}^2    \no  \\
& \quad = \big(g, \big[A(t)^2 - a I\big] E_{A(t)^2}([0,a]) g\big)_{\cH} \no \\
& \quad + 
(A(t) g, E_{A(t)^2}((a,\infty)) A(t) g)_{\cH} - a (g, E_{A(t)^2}((a,\infty))g)_{\cH} 
+ a \|g\|_{\cH}^2   \no  \\
& \quad \geq \big(g, \big[A(t)^2 - a I\big] E_{A(t)^2}([0,a]) g\big)_{\cH}  + a \|g\|_{\cH}^2   \no  \\
& \quad = (g, F(t) g)_{\cH} + a \|g\|_{\cH}^2,  \quad g \in \dom(A_-), \; 
t \in \bbR,    \lb{6.28}
\end{align}
where
\begin{equation}
F(t) = \big[A(t)^2 - a I\big] E_{A(t)^2}([0,a]) = \begin{cases} 0,\, 
|t| \geq T_0, \\
\text{of finite rank for all $t\in\bbR$,} \end{cases}
\end{equation}
choosing $a>0$ sufficiently small.
Indeed, the strongly right continuous family of spectral projections 
of $A(t)^2$ is given in terms of that of $A(t)$ by
\begin{equation}
E_{A(t)^2} (\lambda) = \begin{cases} 0, & \lambda < 0, \\
E_{A(t)}(\{0\}), & \lambda =0, \\
E_{A(t)}\big([-\lambda^{1/2}, \lambda^{1/2}]\big), & \lambda > 0.\end{cases}
\end{equation}
Since $A(t) = A_- + B(t)$ on $\dom(A(t)) = \dom(A_-)$, $t\in\bbR$, and
\begin{equation}
B(t) (A_- - z I)^{-1} \in \cB_1(\cH), \quad z \in\bbC\backslash\bbR, \; t\in\bbR, 
\end{equation}
by Theorem \ref{t3.7}, one infers
\begin{equation}
\sigma_{\rm ess}(A(t))  = \sigma_{\rm ess}(A_-), \quad t\in\bbR.
\end{equation}
Since $0\in\rho(A_-)$, choosing $a>0$ sufficiently small, $A(t)$ 
has at most finitely many eigenvalues of finite multiplicity in 
the interval $[-a^{1/2}, a^{1/2}]$, and thus $A(t)^2$ has at 
most finitely many eigenvalues of finite multiplicity in the interval 
$[0, a]$, implying the finite rank property of $F(t)$ for all 
$t\in\bbR$. Thus, one obtains
\begin{equation}
\int_{\bbR} \|F(t)\|_{\cB_1(\cH)} < \infty,
\end{equation}
and applying \cite[Lemma 2.2]{Pu08} to the operator $F(t)$, 
$t\in\bbR$,  then proves that $\bsF$ in $L^2(\bbR;\cH)$, defined by
\begin{equation}
(\bsF f)(t) = F(t) f(t), \quad t\in\bbR, \; f \in L^2(\bbR;\cH),
\end{equation}
is form compact relative to the operator $\bsH_{0,0}$ in $L^2(\bbR;\cH)$ defined 
by $\bsH_{0,0} = - \frac{d^2}{dt^2}$ with maximal domain. Thus,
\begin{equation}
\sigma_{\rm ess}(\bsH_{0,0} + \bsF)  = \sigma_{\rm ess}(\bsH_{0,0}) = [0,\infty).
\end{equation}
Finally, \eqref{6.28} implies $\bsH \geq \bsH_{0,0} + \bsF + a \bsI$, and hence
\begin{equation}
\sigma_{\rm ess}(\bsH_j)  = \sigma_{\rm ess}(\bsH) \subseteq 
[a,\infty), \quad j=1,2.
\end{equation}
\end{proof}

Theorem \ref{t8.xi} now easily yields the following Fredholm index result:

\begin{corollary}  \lb{c8.index}
Assume Hypothesis \ref{h2.1} and define $\xi(\,\cdot\,; A_+, A_-)$ and
$\xi(\,\cdot\,; \bsH_2, \bsH_1)$ as in \eqref{2.31} and \eqref{2.35}, 
\eqref{2.36}, respectively.  Moreover, suppose that $0 \in 
\rho(A_+)\cap\rho(A_-)$. Then $\bsD_\bsA^{}$
is a Fredholm operator in $L^2(\bbR;\cH)$ and
\beq\lb{indform}
\begin{split}
\ind(\bsD_\bsA^{}) &= \xi(0_+; \bsH_2, \bsH_1)   \\
&=\xi(0; A_+, A_-).
\end{split} 
\enq
\end{corollary}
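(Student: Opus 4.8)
\textbf{Proof plan for Corollary \ref{c8.index}.}

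The plan is to establish the Fredholm property of $\bsD_\bsA$ first, then read off the index from $\xi(0_+;\bsH_2,\bsH_1)$ via the supersymmetric relation between $\ind(\bsD_\bsA)$, $\ker(\bsH_1)$, and $\ker(\bsH_2)$, and finally use Pushnitski's formula (Theorem \ref{t8.xi}) to pass to $\xi(0;A_+,A_-)$. For the Fredholm property: by Lemma \ref{l8.2}, the hypothesis $0\in\rho(A_+)\cap\rho(A_-)$ forces an essential spectral gap $\sigma_{\rm ess}(\bsH_1)=\sigma_{\rm ess}(\bsH_2)\subseteq[a,\infty)$ for some $a>0$. Hence $0$ is either in $\rho(\bsH_1)$ or an isolated eigenvalue of finite multiplicity, so $\dim\ker(\bsH_1)=\dim\ker(\bsD_\bsA)<\infty$ and similarly $\dim\ker(\bsH_2)=\dim\ker(\bsD_\bsA^*)<\infty$. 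Since $\bsD_\bsA$ is closed (Lemma \ref{l2.4}) with $\ran(\bsD_\bsA)$ closed (the essential gap also gives $0\notin\sigma_{\rm ess}(\bsD_\bsA^{}\bsD_\bsA^*)$, so the range is closed), $\bsD_\bsA$ is Fredholm and
\begin{equation*}
\ind(\bsD_\bsA)=\dim\ker(\bsD_\bsA)-\dim\ker(\bsD_\bsA^*)=\dim\ker(\bsH_1)-\dim\ker(\bsH_2).
\end{equation*}

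The next step is to identify this difference of kernel dimensions with $\xi(0_+;\bsH_2,\bsH_1)$. The key point is that $\bsH_1$ and $\bsH_2$ are isospectral away from $0$ (the partial isometry in the polar decomposition of $\bsD_\bsA$ intertwines their restrictions to the orthogonal complements of their kernels), so for $\lambda>0$ small, $E_{\bsH_1}((0,\lambda))$ and $E_{\bsH_2}((0,\lambda))$ have equal, finite rank. Using the defining normalization $\xi(\lambda;\bsH_2,\bsH_1)=0$ for $\lambda<0$ and the trace formula \eqref{2.36}, together with the fact that $\xi(\,\cdot\,;\bsH_2,\bsH_1)$ jumps by $-[\dim\ker(\bsH_2)-\dim\ker(\bsH_1)]$ across the isolated eigenvalue $0$ and is locally constant on $(0,\delta)$ for small $\delta$ (by the finite-rank contribution and isospectrality), one obtains $\xi(0_+;\bsH_2,\bsH_1)=\dim\ker(\bsH_1)-\dim\ker(\bsH_2)=\ind(\bsD_\bsA)$. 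Here I would invoke the standard spectral-shift-function jump relation at an isolated eigenvalue below the essential spectrum (available since $0\in[0,a)$ with $a=\inf\sigma_{\rm ess}(\bsH_j)$), as developed, e.g., following \cite[Sect.\ 8.9]{Ya92}.

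Finally, I apply Theorem \ref{t8.xi}: for a.e.\ $\lambda>0$,
\begin{equation*}
\xi(\lambda;\bsH_2,\bsH_1)=\frac{1}{\pi}\int_{-\lambda^{1/2}}^{\lambda^{1/2}}\frac{\xi(\nu;A_+,A_-)\,d\nu}{(\lambda-\nu^2)^{1/2}}.
\end{equation*}
Since $0\in\rho(A_+)\cap\rho(A_-)$, the function $\xi(\,\cdot\,;A_+,A_-)$ is constant a.e.\ in a neighborhood $(-\varepsilon_0,\varepsilon_0)$ of $0$, with that constant value equal to a continuous representative's value $\xi(0;A_+,A_-)$; for $0<\lambda<\varepsilon_0^2$ the integral therefore evaluates to $\xi(0;A_+,A_-)\cdot\frac{1}{\pi}\int_{-\lambda^{1/2}}^{\lambda^{1/2}}(\lambda-\nu^2)^{-1/2}\,d\nu=\xi(0;A_+,A_-)$, using $\frac{1}{\pi}\int_{-1}^{1}(1-u^2)^{-1/2}\,du=1$ after the substitution $\nu=\lambda^{1/2}u$. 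Letting $\lambda\downarrow 0$ gives $\xi(0_+;\bsH_2,\bsH_1)=\xi(0;A_+,A_-)$, which combined with the previous paragraph yields \eqref{indform}. The main obstacle I anticipate is the middle step — rigorously justifying that $\xi(0_+;\bsH_2,\bsH_1)$ equals the difference of kernel dimensions of $\bsH_2$ and $\bsH_1$; this requires carefully controlling the behavior of the spectral shift function at the isolated point $0$ at the bottom of the spectrum and using both the isospectrality of $\bsH_1,\bsH_2$ on $(0,\infty)$ and their nonnegativity, rather than just the trace-class resolvent difference.
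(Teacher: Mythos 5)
Your proposal is correct and follows essentially the same route as the paper: Lemma \ref{l8.2} supplies the essential spectral gap, the supersymmetric isospectrality of $\bsH_1$ and $\bsH_2$ away from zero together with the standard behavior of the spectral shift function in essential spectral gaps gives $\ind(\bsD_\bsA)=\dim\ker(\bsH_1)-\dim\ker(\bsH_2)=\xi(\lambda;\bsH_2,\bsH_1)$ for small $\lambda>0$, and the constancy of $\xi(\,\cdot\,;A_+,A_-)$ near $0$ plus the arcsine normalization in Theorem \ref{t8.xi} yields $\xi(0_+;\bsH_2,\bsH_1)=\xi(0;A_+,A_-)$. The only difference is that you spell out the Fredholm property and the jump argument at the isolated eigenvalue $0$ in more detail, whereas the paper compresses these into a citation of Yafaev's general results.
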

\begin{proof}
Since $\sigma_{\rm ess} (\bsH_2) = \sigma_{\rm ess} (\bsH_1)$ by equation
\eqref{6.22}, $\bsH_1$ and $\bsH_2$ have an essential spectral gap 
near $0$ by Lemma \ref{l8.2}. In addition,
$\bsH_1=\bsD_\bsA^* \bsD_\bsA^{}$ and $\bsH_2=\bsD_\bsA^{} \bsD_\bsA^*$ 
have the same 
nonzero eigenvalues including multiplicities, and hence one concludes 
by the general properties of $\xi(\,\cdot\,; \bsH_2, \bsH_1)$ in 
essential spectral gaps of $\bsH_2$ and $\bsH_1$ (cf.\ \cite[p.\ 276, 
300]{Ya92}) that
\begin{equation}
\ind(\bsD_\bsA^{}) = \dim(\ker(\bsH_1)) - \dim(\ker(\bsH_2)) = \xi 
(\lambda; \bsH_2, \bsH_1), \quad
\lambda \in (0,\lambda_0),   \lb{2.46}
\end{equation}
for $\lambda_0 < \inf(\sigma_{\rm ess} (\bsH_2)) = \inf(\sigma_{\rm ess} (\bsH_1))$. 

On the other hand, since  $0 \in \rho(A_+)\cap\rho(A_-)$, there exists a constant 
$c\in\bbR$ such that $\xi(\,\cdot\,; A_+, A_-) = c$ a.e.\ on the interval 
 $(-\nu_0,\nu_0)$ for $0 < \nu_0$ sufficiently small. (This follows from the basic properties of the spectral shift function in joint essential spectral gaps of $A_-$ 
 and $A_+$, cf.\ \cite[p.\ 300]{Ya92}.) Hence, one may define 
\begin{equation}
\xi(\nu; A_+, A_-) = \xi(0; A_+, A_-), \quad \nu \in (-\nu_0,\nu_0).    \lb{2.47}
\end{equation}
Thus, taking $\lambda \to 0$ in \eqref{2.37}, utilizing \eqref{2.46}, \eqref{2.47}, and
\begin{equation}
\frac{1}{\pi}\int_{-\lambda^{1/2}}^{\lambda^{1/2}}
\frac{d\nu}{(\lambda- \nu^2)^{1/2}} = 1 \, \text{ for all $\lambda > 0$},
\end{equation}
finally yields \eqref{indform}.
\end{proof}

\subsection{Supersymmetry and the Atiyah--Patodi--Singer Spectral Asymmetry} 
We conclude this section with an application involving the Atiyah--Patodi--Singer 
(APS) spectral asymmetry (cf., e.g.,  \cite{APS73}--\cite{APS76}, 
\cite{BB04}, \cite{BL99a}, \cite{DW91}, \cite{Gi84}, \cite{GS83}, \cite{Gr01}, 
\cite{GS96}, \cite{KL04}, \cite{LW96}, \cite{Lo84}, \cite{Mu94}, \cite{Mu98}, 
\cite{NS84}, \cite{NT85}, \cite{Si87}, and the extensive list of references in 
\cite{BGGSS87}) applied to the case of supersymmetric Dirac-type 
operators $\bsQ_m$ (cf.\ \cite{BGGSS87}, \cite{Ge86}, 
\cite{GS88}, \cite[Ch.\ 5]{Th92}, and the references cited therein) 
defined as follows: In the Hilbert 
space $L^2(\bbR; \cH) \oplus L^2(\bbR; \cH)$ we 
consider the $2 \times 2$ block operator-valued matrix
\begin{equation}
\bsQ_m = \begin{pmatrix} m & \bsD_{\bsA} \\ \bsD_{\bsA}^* & - m \end{pmatrix}, 
\quad m \in \bbR\backslash\{0\},     \lb{8.39}
\end{equation}
such that 
\begin{equation}
\bsQ_m^2 = \begin{pmatrix} \bsH_2 + m^2 \bsI & 0 \\ 
0 & \bsH_1 + m^2 \bsI \end{pmatrix},      \lb{8.40}
\end{equation}
and hence
\begin{equation}
\bsQ_m e^{-t \bsQ_m^2} = \begin{pmatrix}
m \, e^{-t(\bsH_2 + m^2 \bsI)} & \bsD_{\bsA} e^{-t(\bsH_1 + m^2 \bsI)} \\
\bsD_{\bsA}^* e^{-t(\bsH_2 + m^2 \bsI)} & - m \, e^{-t(\bsH_1 + m^2 \bsI)}
\end{pmatrix}, \quad t\geq 0. 
\end{equation}

The zeta function regularized spectral asymmetry $\eta_m(t)$, $t>0$, associated 
with $\bsQ_m$, is defined by 
\begin{align} 
& \eta_m(s) = \f{m}{\Gamma ((s+1)/2)} \int_{[0,\infty)} t^{(s-1)/2} \, 
{\tr}_{L^2(\bbR; \cH)} 
\big(e^{-t (\bsH_1 + m^2 \bsI)} - e^{-t (\bsH_2 + m^2 \bsI)}\big) dt,&    \no \\  
& \hspace*{8cm} m \in \bbR\backslash\{0\}, \; s>0,       \lb{8.41}
\end{align}
and the APS spectral asymmetry (eta invariant) $\eta_m$ is then given by 
\begin{equation}
\eta_m = \lim_{s \downarrow 0} \eta_m (s), \quad m \in \bbR\backslash\{0\}, 
\lb{8.42}
\end{equation}
whenever the limit in \eqref{8.42} exists. Intuitively, $\eta_m$ measures the 
asymmetry of the positive and negative spectrum of $Q_m$, 
$m\in\bbR\backslash\{0\}$. The 
asymmetry vanishes if $m=0$ since then $Q_0$ is unitarily equivalent to 
$- Q_0$ (cf.\ \cite{GSS91}).  

Similarly, using the fact that 
\begin{align} 
& \bsQ_m |\bsQ_m|^{-1} e^{-t \bsQ_m^2}    \no \\ 
& \quad = \begin{pmatrix} 
m \, (\bsH_2 + m^2 \bsI)^{-1/2} e^{-t (\bsH_2 + m^2 \bsI)}  
& \bsD_{\bsA} (\bsH_1 + m^2 \bsI)^{-1/2} e^{-t (\bsH_1 + m^2 \bsI)}  \\
\bsD_{\bsA}^* (\bsH_2 + m^2 \bsI)^{-1/2} e^{-t (\bsH_2 + m^2 \bsI)}  
& - m \, (\bsH_1 + m^2 \bsI)^{-1/2} e^{-t (\bsH_1 + m^2 \bsI)} 
\end{pmatrix},    \no \\
& \hspace*{10cm} t \geq 0,    
\end{align}
the heat kernel regularized spectral asymmetry $\wti \eta_m(t)$, $t>0$, associated 
with $\bsQ_m$, is defined by 
\begin{align}
& \wti \eta_m(t) = m \, {\tr}_{L^2(\bbR; \cH)} 
\big((\bsH_2 + m^2 \bsI)^{-1/2} e^{-t (\bsH_2 + m^2 \bsI)}     \lb{8.43} \\
& \hspace*{3.2cm} - (\bsH_1 + m^2 \bsI)^{-1/2} e^{-t (\bsH_1 + m^2 \bsI)}\big), 
\quad m \in \bbR\backslash\{0\},  \; t>0,   \no
\end{align}
and the corresponding spectral asymmetry $\wti \eta_m$ is then given by 
\begin{equation}
\wti \eta_m = \lim_{t \downarrow 0} \wti \eta_m (t), \quad m \in \bbR\backslash\{0\}, 
\lb{8.44}
\end{equation}
whenever the limit in \eqref{8.44} exists.  

Denoting by $\Gamma(\cdot)$ the gamma function \cite[Sect.\ 6.1]{AS72}, 
by $K_0(\cdot)$ the modified (irregular) Bessel function of order zero 
\cite[Sect.\ 9.6]{AS72}, and by $W_{\kappa,\mu}(\cdot)$ the (irregular) 
Whittaker function \cite[Sect.\ 13.1]{AS72}, one obtains the following explicit 
result for $\eta_m$ and $\wti \eta_m$ and their regularizations:

\begin{lemma} \lb{l8.5}
Assume Hypothesis \ref{h2.1} and $m \in \bbR\backslash\{0\}$. Then 
\begin{align}
\begin{split}
\eta_m(s) &= - m \f{s+1}{2} \int_{[0,\infty)} 
\f{\xi(\lambda; \bsH_2, \bsH_1) \, d\lambda}{(\lambda + m^2)^{(s+3)/2}},    \\[1mm]
&= - m \f{s+1}{2 \pi^{1/2}} \f{\Gamma((s+2)/2)}{\Gamma((s+3)/2)} \int_{\bbR} 
\f{\xi(\nu; A_+, A_-) \, d\nu}{(\nu^2 + m^2)^{(s+2)/2}}, \quad s>0,    \lb{8.45}
\end{split}
\end{align}
and $\eta_m (\cdot)$ extends analytically to the open right half-plane $\Re(s) > - 1/2$. Moreover, 
\begin{align}
\wti \eta_m(t) &= m \int_{[0, \infty)} \xi(\lambda; \bsH_2, \bsH_1) \, d \lambda  
\bigg(\f{d}{d\lambda} 
\big[(\lambda + m^2)^{-1/2} e^{-t (\lambda + m^2)}\big]\bigg)    \no \\
& = - \f{m}{2 \pi^{1/2}} \int_{\bbR} \f{\xi(\nu; A_+, A_-) \, d\nu}{\nu^2 + m^2} 
W_{-1/2,-1/2} (t(\nu^2 + m^2)) e^{-t (\nu^2 + m^2)/2}      \lb{8.46}  \\
& \quad - \f{m}{\pi} t \int_{\bbR} \xi(\nu; A_+, A_-) \, d\nu \, K_0(t(\nu^2 + m^2)/2)
e^{-t (\nu^2 + m^2)/2},  \quad t>0.    \no 
\end{align}
In addition, 
\begin{align}
\begin{split}
\eta_m = \wti \eta_m 
& = - \f{m}{2} \int_{[0,\infty)} 
\f{\xi(\lambda; \bsH_2, \bsH_1) \, d\lambda}{(\lambda + m^2)^{3/2}}    \\
&= - \f{m}{\pi} \int_{\bbR} \f{\xi (\nu; A_+, A_-) \, d\nu}{\nu^2 + m^2}. 
\lb{8.47}
\end{split} 
\end{align}
\end{lemma}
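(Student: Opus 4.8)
The plan is to prove Lemma~\ref{l8.5} by combining the known operator-integral representations for heat kernels and resolvents with the two trace formulas already established in this paper, namely \eqref{2.36} (the definition of $\xi(\,\cdot\,;\bsH_2,\bsH_1)$ via resolvents) and the Pushnitski-type formula \eqref{2.37}. First I would handle the zeta-regularized quantity $\eta_m(s)$. Starting from \eqref{8.41}, I would rewrite the trace of the heat kernel difference using the Laplace transform identity $e^{-t(\bsH_j+m^2\bsI)} = $ an integral transform of resolvents, or more directly, integrate the resolvent trace formula: from \eqref{2.36} one has
\begin{equation*}
{\tr}_{L^2(\bbR;\cH)}\big(e^{-t(\bsH_1+m^2\bsI)}-e^{-t(\bsH_2+m^2\bsI)}\big)
= t\int_{[0,\infty)} e^{-t(\lambda+m^2)}\,\xi(\lambda;\bsH_2,\bsH_1)\,d\lambda,
\end{equation*}
which follows by differentiating $-\int\xi(\lambda;\bsH_2,\bsH_1)e^{-t(\lambda+m^2)}\,d\lambda$ and matching against the spectral representation of the heat semigroup difference (the interchange of trace and integral is justified by the $\cB_1$-estimates from Lemma~\ref{trclLHS} and Lemma~\ref{12bprime}). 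Substituting this into \eqref{8.41}, performing the $t$-integral via the standard Gamma-function identity $\int_{[0,\infty)} t^{(s-1)/2}\,t\,e^{-t(\lambda+m^2)}\,dt = \Gamma((s+3)/2)(\lambda+m^2)^{-(s+3)/2}$, and simplifying the ratio of Gamma factors against the $1/\Gamma((s+1)/2)$ prefactor using $\Gamma((s+3)/2)=\frac{s+1}{2}\Gamma((s+1)/2)$, yields the first line of \eqref{8.45}.

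For the second line of \eqref{8.45} I would insert Pushnitski's formula \eqref{2.37} for $\xi(\lambda;\bsH_2,\bsH_1)$ into the first line, interchange the order of integration (Tonelli, using $\xi(\,\cdot\,;A_+,A_-)\in L^1(\bbR;(|\nu|+1)^{-2}d\nu)$ from \eqref{2.33xi} together with the convergence of the inner integral), and evaluate the resulting one-dimensional integral
\begin{equation*}
\int_{\nu^2}^{\infty}\frac{d\lambda}{(\lambda-\nu^2)^{1/2}(\lambda+m^2)^{(s+3)/2}}
= B\big(\tfrac12,\tfrac{s+2}{2}\big)(\nu^2+m^2)^{-(s+2)/2}
= \frac{\Gamma(1/2)\Gamma((s+2)/2)}{\Gamma((s+3)/2)}(\nu^2+m^2)^{-(s+2)/2},
\end{equation*}
via the substitution $\lambda = \nu^2 + (\nu^2+m^2)u/(1-u)$ reducing it to a Beta integral. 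Collecting constants with $\Gamma(1/2)=\pi^{1/2}$ gives the claimed form. Analytic continuation of $\eta_m(\cdot)$ to $\Re(s)>-1/2$ is then immediate from the second line of \eqref{8.45}, since the integral $\int_\bbR (\nu^2+m^2)^{-(s+2)/2}\,d\nu$ converges (absolutely, using the $L^1((|\nu|+1)^{-2}d\nu)$ bound on $\xi$) and is holomorphic in $s$ for $\Re((s+2)/2)>1/2$, i.e.\ $\Re(s)>-1$, and in particular for $\Re(s)>-1/2$.

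For $\wti\eta_m(t)$ in \eqref{8.46} I would start from \eqref{8.43}, express the trace of $(\bsH_j+m^2\bsI)^{-1/2}e^{-t(\bsH_j+m^2\bsI)}$ as $\int_{[0,\infty)}(\lambda+m^2)^{-1/2}e^{-t(\lambda+m^2)}\,dE_{\bsH_j}(\lambda)$, take the difference, and integrate by parts against $\xi(\lambda;\bsH_2,\bsH_1)$ using the trace formula \eqref{2.36}; this gives the first line of \eqref{8.46} with the derivative $\frac{d}{d\lambda}[(\lambda+m^2)^{-1/2}e^{-t(\lambda+m^2)}]$, after checking that the boundary term at $\lambda\to\infty$ vanishes and at $\lambda=0$ there is no contribution because $\xi(\lambda;\bsH_2,\bsH_1)=0$ for $\lambda<0$ by \eqref{2.35}. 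Substituting Pushnitski's formula \eqref{2.37} and interchanging integration order as before reduces everything to the one-dimensional integrals
\begin{equation*}
\int_{\nu^2}^\infty (\lambda-\nu^2)^{-1/2}\Big(\tfrac{d}{d\lambda}\big[(\lambda+m^2)^{-1/2}e^{-t(\lambda+m^2)}\big]\Big)d\lambda,
\end{equation*}
which — after the substitution $\lambda-\nu^2 = u$ and recognizing the resulting integrals as Whittaker-function and modified-Bessel-$K_0$ integral representations (\cite[Sects.\ 9.6, 13.1]{AS72}, together with the integral representation \cite[No.\ 3.7542]{GR80} for $K_0$ already cited after \eqref{4.R01/2}) — produce the $W_{-1/2,-1/2}$ term and the $K_0$ term in \eqref{8.46}. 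Finally, \eqref{8.47}: $\eta_m$ follows by setting $s=0$ in \eqref{8.45} (the first line gives $-\frac{m}{2}\int_{[0,\infty)}\xi(\lambda;\bsH_2,\bsH_1)(\lambda+m^2)^{-3/2}\,d\lambda$, the second, using $\Gamma(1)/\Gamma(3/2)=2/\pi^{1/2}$, gives $-\frac{m}{\pi}\int_\bbR\xi(\nu;A_+,A_-)(\nu^2+m^2)^{-1}\,d\nu$); $\wti\eta_m=\lim_{t\downarrow 0}\wti\eta_m(t)$ follows by dominated convergence in the first line of \eqref{8.46} (or \eqref{8.43}), using the asymptotics $W_{-1/2,-1/2}(x)e^{-x/2}\to 1$ and $x\,K_0(x/2)e^{-x/2}\to 0$ as $x\downarrow 0$, so the $K_0$ term drops out and the Whittaker term converges to $-\frac{m}{2\pi^{1/2}}\Gamma(1/2)^{-1}\cdot$(same integral) — matching $\eta_m$.

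The main obstacle I expect is justifying the various interchanges of trace, Lebesgue integration, and the spectral/Stieltjes integrals — in particular, the interchange of the trace with the $dt$-integral defining $\eta_m(s)$ near $s=0^+$ and near $t=0^+$, where one must control the small-$t$ behavior of ${\tr}(e^{-t\bsH_1}-e^{-t\bsH_2})$; this is where the uniform $\cB_1$-bounds packaged in Lemma~\ref{12bprime} and the trace formula of Theorem~\ref{Ntrindf} do the real work, ensuring the relevant kernels are integrable and the Fubini/Tonelli applications are legitimate. A secondary technical point is verifying that the one-dimensional integrals genuinely match the stated special functions with the correct normalization, but that is a bookkeeping matter resolvable by the cited tables \cite{AS72}, \cite{GR80}.
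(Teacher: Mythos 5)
Your route is essentially the paper's: Krein's trace formula applied to $f(\lambda)=e^{-t\lambda}$, respectively to $f(\lambda)=(\lambda+m^2)^{-1/2}e^{-t(\lambda+m^2)}$, then Fubini with the Gamma integral, insertion of Pushnitski's formula \eqref{2.37}, evaluation of the resulting Beta integral for \eqref{8.45} (the paper quotes \cite[no.\ 3.1962]{GR80}), and table integrals producing the Whittaker and Bessel terms in \eqref{8.46}. The one step that does not hold as written is the analytic continuation to $\Re(s)>-1/2$. You deduce it from the second line of \eqref{8.45}, claiming absolute convergence of $\int_\bbR\xi(\nu;A_+,A_-)(\nu^2+m^2)^{-(s+2)/2}\,d\nu$ for $\Re(s)>-1$ ``using the $L^1((|\nu|+1)^{-2}d\nu)$ bound'' \eqref{2.33xi}. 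That bound only absorbs decay of order $|\nu|^{-2}$, whereas $(\nu^2+m^2)^{-(\Re(s)+2)/2}\sim|\nu|^{-(\Re(s)+2)}$ decays \emph{slower} than $|\nu|^{-2}$ exactly when $\Re(s)<0$; so \eqref{2.33xi} yields convergence only for $\Re(s)\geq0$, and the stronger weight $(1+|\nu|)^{-1}$ for $\xi(\cdot;A_+,A_-)$ that your argument would require is precisely what the paper leaves open (see the remark after the lemma). The continuation has to be read off the \emph{first} line of \eqref{8.45}: by Lemma \ref{l8.1}, $\xi(\cdot;\bsH_2,\bsH_1)\in L^1\big(\bbR;(|\lambda|+1)^{-1}d\lambda\big)$, so $\int_{[0,\infty)}\xi(\lambda;\bsH_2,\bsH_1)(\lambda+m^2)^{-(s+3)/2}\,d\lambda$ converges and is holomorphic for $\Re(s)>-1$, which is the paper's argument.

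On the $t\downarrow0$ limit for $\wti\eta_m$, your parenthetical via the second line of \eqref{8.46} carries a wrong constant: $W_{-1/2,-1/2}(x)e^{-x/2}\to 2\pi^{-1/2}$, not $1$, as $x\downarrow0$, and with your quoted value the Whittaker term would tend to $-\f{m}{2\pi}\int_\bbR\xi(\nu;A_+,A_-)(\nu^2+m^2)^{-1}d\nu$, which does \emph{not} equal $\eta_m=-\f{m}{\pi}\int_\bbR\xi(\nu;A_+,A_-)(\nu^2+m^2)^{-1}d\nu$; with the correct limit it does (and the vanishing of the $K_0$ term, as well as the limit of the Whittaker term, should be justified by dominating the integrands by $C|\xi(\nu;A_+,A_-)|(\nu^2+m^2)^{-1}$ after writing $t=x/(\nu^2+m^2)$). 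This is harmless for the lemma because your primary route is the correct one and is the paper's: dominated convergence in the first line of \eqref{8.46}, using $\big|\f{d}{d\lambda}\big[(\lambda+m^2)^{-1/2}e^{-t(\lambda+m^2)}\big]\big|\leq C(\lambda+m^2)^{-3/2}$ uniformly in $t>0$ together with \eqref{2.36aa}, which requires no special-function asymptotics. A final bookkeeping point: the table entries actually needed for \eqref{8.46} are \cite[no.\ 3.3843]{GR80} together with $W_{0,0}(z)=\pi^{-1/2}z^{1/2}K_0(z/2)$ from \cite{AS72}, not the $K_0$ representation \cite[No.\ 3.7542]{GR80} you cite.
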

\begin{proof}
Using \eqref{8.41}, 
one obtains from the standard trace formula applied to the pair $(\bsH_2, \bsH_1)$ 
(cf.\ \cite[Theorem\ 8.7.1]{Ya92}), Fubini's theorem, and the gamma function 
representation \cite[no.\ 6.1.1, p.\ 255]{AS72}, that 
\begin{align}
\eta_m (s) &= \f{m}{\Gamma((s+1)/2)} \int_0^{\infty} t^{(s-1)/2} \, 
{\tr}_{L^2(\bbR; \cH)} \big(e^{- t (\bsH_2 + m^2 \bsI)} - e^{- t (\bsH_1 + m^2 \bsI)}\big) 
\, dt     \no \\
&= - \f{m}{\Gamma((s+1)/2)} \int_0^{\infty} t^{(s+1)/2} \, e^{- t m^2}
\bigg(\int_{[0,\infty)} \xi(\lambda; \bsH_2, \bsH_1) e^{-t \lambda} \, d\lambda\bigg) 
dt   \no \\
&=  - \f{m}{\Gamma((s+1)/2)} \int_{[0,\infty)}  \xi(\lambda; \bsH_2, \bsH_1) 
\bigg(\int_0^{\infty} t^{(s+1)/2} \, e^{- t (\lambda + m^2)} \, dt \bigg) d\lambda  \no \\
&= - m \f{\Gamma((s+3)/2)}{\Gamma((s+1)/2)} \int_{[0,\infty)}  
\f{\xi(\lambda; \bsH_2, \bsH_1) \, d\lambda} 
{(\lambda + m^2)^{(s+3)/2}}    \no \\
&= - m \f{s+1}{2}  \int_{[0,\infty)}  \f{\xi(\lambda; \bsH_2, \bsH_1) \, d\lambda}
{(\lambda + m^2)^{(s+3)/2}},     \no \\
&= - m \f{s+1}{2 \pi}  \int_{[0,\infty)} 
\bigg(\int_{- \lambda^{1/2}}^{\lambda^{1/2}} 
\f{\xi(\nu; A_+, A_-) \, d\nu}{(\lambda - \nu^2)^{1/2}} 
\bigg) \f{d\lambda}{(\lambda + m^2)^{(s+3)/2}}, \quad s>0,     \lb{8.49}
\end{align}
using the functional equation $\Gamma(z+1) = z \Gamma(z)$ to arrive at the next 
to last step and inserting \eqref{2.37} in the last step.

Next, one transforms the double integral in \eqref{8.49}, where   
$(\lambda,\nu) \in [0,\infty) \times [0,\lambda^{1/2})$, to 
$(\nu,\lambda) \in [0,\infty) \times [\nu^2,\infty)$, and similarly, that where  
$(\lambda,\nu) \in [0,\infty) \times [- \lambda^{1/2},0]$, to 
$(\nu,\lambda) \in (-\infty,0] \times [\nu^2,\infty)$, and using Fubini's theorem 
again one obtains  
\begin{align}
\eta_m (s) &=  - m \f{s+1}{2 \pi} \int_{\bbR} \xi(\nu; A_+, A_-) 
\bigg( \int_{[\nu^2, \infty)} 
\f{d \lambda}{(\lambda - \nu^2)^{1/2} (\lambda + m^2)^{(s+3)/2}}\bigg) d\nu  \no \\
&= - m \f{s+1}{2 \pi^{1/2}} \f{\Gamma((s+2)/2)}{\Gamma((s+3)/2)} 
\int_{\bbR} \f{\xi(\nu; A_+, A_-) \, d\nu}{(\nu^2 + m^2)^{(s+2)/2}},  
\quad s > 0,    \lb{8.50}
\end{align} 
where we used 
\begin{equation}
\int_{\alpha}^{\infty} \f{d\lambda}{(\lambda - \alpha)^{1-a} (\lambda + \beta)^{b}} 
= (\alpha + \beta)^{- (b - a)} B(b-a,a),  \quad \alpha + \beta > 0, \; b>a>0,      \lb{8.51}
\end{equation}
according to \cite[no.\ 3.1962, p.\ 285]{GR80}, 
with $B(z,w) = \Gamma(z) \Gamma(w)/\Gamma(z+w)$, the beta function, and 
$\Gamma(1/2) = \pi^{1/2}$ (cf.\ \cite[Sects.\ 6.1, 6.2]{AS72}). This proves \eqref{8.45}. 
By \eqref{2.36aa}, the first equation in \eqref{8.45} proves the existence of an 
analytic continuation of $\eta_m(\cdot)$ to the open right half-plane $\Re(s) > - 1/2$.  
The facts \eqref{2.33xi} and \eqref{2.36aa} together with Lebesgue's dominated 
convergence theorem employed in both equalities in \eqref{8.45} then prove 
\eqref{8.47} in the case of $\eta_m$.

The corresponding proof of \eqref{8.46}, and the remaining proof of \eqref{8.47} 
in the case of $\wti \eta_m$ proceed along entirely analogous steps, but 
naturally, the second equality in \eqref{8.46} is based on more involved arguments. 
To shorten the remainder of this proof a bit we now focus just on the major steps 
in the computations: Employing \eqref{8.43}, 
one concludes from the standard trace formula in 
\cite[Theorem\ 8.7.1]{Ya92} that
\begin{align}
\wti \eta_m (t) &= m \int_{[0,\infty)} \xi(\lambda; \bsH_2, \bsH_1) \, d \lambda 
\bigg(\f{d}{d\lambda} \big[(\lambda + m^2)^{-1/2} e^{- t (\lambda + m^2)}\big]
\bigg)  \no \\
&= - m \int_{[0,\infty)} \xi(\lambda; \bsH_2, \bsH_1) \, d \lambda 
\bigg[ \f{1}{2 (\lambda + m^2)^{3/2}} + \f{t}{(\lambda + m^2)^{1/2}}\bigg] 
e^{- t (\lambda + m^2)}   \no \\
&= - \f{m}{\pi} \int_{[0,\infty)} \bigg(\int_{- \lambda^{1/2}}^{\lambda^{1/2}} 
\f{\xi(\nu; A_+, A_-) \, d\nu}{(\lambda - \nu^2)^{1/2}}\bigg)   \no \\
& \hspace*{2.4cm} \times 
\bigg[ \f{1}{2 (\lambda + m^2)^{3/2}} + \f{t}{(\lambda + m^2)^{1/2}}\bigg] 
e^{- t (\lambda + m^2)} \, d\lambda  \no \\
&= - \f{m}{\pi} \int_{\bbR} \xi(\nu; A_+, A_-) \bigg(\int_{[\nu^2,\infty)} 
\f{1}{(\lambda - \nu^2)^{1/2}}    \no \\
& \hspace{2.4cm} \times  
\bigg[ \f{1}{2 (\lambda + m^2)^{3/2}} + \f{t}{(\lambda + m^2)^{1/2}}\bigg] 
e^{- t (\lambda + m^2)} \, d\lambda\bigg) d\nu   \no \\
& = - \f{m}{2 \pi^{1/2}} \int_{\bbR} \f{\xi(\nu; A_+, A_-) \, d\nu}{\nu^2 + m^2} 
W_{-1/2,-1/2} (t(\nu^2 + m^2)) e^{-t (\nu^2 + m^2)/2}     \no \\
& \quad - \f{m}{\pi} t \int_{\bbR} \xi(\nu; A_+, A_-) \, d\nu \, K_0(t(\nu^2 + m^2)/2)
e^{-t (\nu^2 + m^2)/2},  \quad t>0.        \lb{8.53}
\end{align}
Here we used
\begin{align}
& \int_{\alpha}^{\infty} \f{e^{-c \lambda} \, d\lambda}
{(\lambda - \alpha)^{1-a} (\lambda + \beta)^{b}} 
= (\alpha + \beta)^{- (b - a + 1)/2} c^{(b-a-1)/2} e^{-c(\alpha - \beta)/2} \Gamma (a) 
\no \\
& \quad \times W_{(1-b-a)/2,(a-b)/2} (c(\alpha + \beta)),  
\quad \alpha > 0, \; \alpha + \beta > 0, \; c>0, \; a>0,      \lb{8.54}
\end{align}
according to \cite[no.\ 3.3843, p.\ 320]{GR80}, and 
\begin{equation}
W_{0,0} (z) = \pi^{-1/2} z^{1/2} K_0(z/2),     \lb{8.55}
\end{equation}
combining no.\ 13.1.33 on p.\ 505 and no.\ 13.6.21 on p.\ 510 in \cite{AS72}.
\end{proof}

Equation \eqref{8.45} and the existence of an analytic continuation of   
$\eta_m(\cdot)$ to the open right half-plane $\Re(s) > - 1/2$ suggests the 
possibility that under the assumptions of Hypothesis \ref{h2.1} (and in analogy 
to \eqref{2.36aa}), one actually has 
$\xi(\,\cdot\,; A_+, A_-) \in L^1(\bbR; (1+|\nu|)^{-1} d\nu)$, but this is left to a future investigation.

\section{Connections Between the Index and the Spectral Flow}  \lb{s9}

In this section we briefly discuss connections of our results to the topic of the spectral flow for 
the family of operators $\{A(t)\}_{t=-\infty}^\infty$ defined in \eqref{defAT}, \eqref{dfnAplus}. While 
there are several definitions of the spectral flow available in the literature, we will follow the scheme originated in \cite{Ph96} (see also \cite{BBLP05} and  \cite{Le05}), but also note, for instance, the definition in \cite[Theorem 4.23]{RS95} that uses the Kato Selection Theorem (cf.\ e.g., 
\cite[Theorems II.5.4 and II.6.8]{Ka80} and \cite[Theorem 4.23]{RS95}), and the definition in \cite{Pu01} and \cite{Pu09}.

The spectral flow is defined in \cite[Definition 1.1]{Le05} for a family of operators continuous 
with respect to the graph metric (which induces convergence in the norm resolvent sense, 
cf.\ \cite[Sect.\ VIII.7]{RS80}).  The {\em graph} metric,  $d_G$, on the space of (unbounded) self-adjoint operators on the Hilbert space $\cH$ is defined as follows: for any two self-adjoint operators, $S_1$ and $S_2$, we set
\begin{equation}
d_G(S_1,S_2)=\|(S_2-i)^{-1}-(S_1-i)^{-1}\|_{\cB(\cH)}.
\end{equation}
Another metric on the space of (unbounded) self-adjoint operators is the {\em Riesz} metric, $d_R$,  defined by the formula
\begin{equation}
d_R(S_2,S_1)=\|g(S_2)-g(S_1)\|_{\cB(\cH)}, \quad g(x)=x(1+x^2)^{-1}.
\end{equation}
Finally, given a self-adjoint operator $A_-$, let us consider the set of all (unbounded) self-adjoint operators having the same domain as $A_-$. 
On this set one can define a metric, $d_{|A_-|}$, by the formula
\beq d_{|A_-|}(S_2,S_1)=\|(S_2-S_1)(|A_-|+I)^{-1}\|_{\cB(\cH)}.      \lb{8.3}
\enq
The metric $d_{|A_-|}$ is strictly stronger than $d_R$, and the metric $d_R$ is strictly stronger than $d_G$, see \cite[Proposition 2.2]{Le05} (as well as comments 
following that proposition and further references therein) for the proof of this result. 

\begin{lemma}\label{lem_cont}
Assume Hypothesis \ref{h2.1}. Then the family $\{A(t)\}_{t=-\infty}^\infty$ of operators defined in \eqref{defAT}, \eqref{dfnAplus} is continuous at each $t\in\bbR$, and also $\lim_{t\to\pm\infty}A(t)=A_\pm$ holds with respect to each of the metrics $d_{|A_-|}$, $d_R$, and $d_G$.
\end{lemma}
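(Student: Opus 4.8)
The plan is to reduce everything to the single quantitative estimate already established in Lemma \ref{lHYP}, namely
\[
\sup_{t\in\bbR}\big\|B(t)(|A_-|-zI)^{-1}\big\|_{\cB_1(\cH)}\underset{\substack{z\to\infty\\ z\notin C_\varepsilon}}{=}o(1),
\]
together with the integrability and absolute continuity consequences of Remark \ref{r2.3} (especially \eqref{2.13jk}, \eqref{2.13jl}). Since the metric $d_{|A_-|}$ is, by \cite[Proposition 2.2]{Le05}, strictly stronger than $d_R$, which in turn is strictly stronger than $d_G$, it suffices to prove continuity of $t\mapsto A(t)$ and convergence $A(t)\to A_\pm$ as $t\to\pm\infty$ only in the $d_{|A_-|}$-metric; the statements for $d_R$ and $d_G$ then follow automatically. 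Note that $d_{|A_-|}$ is well-defined on the family $\{A(t)\}_{t\in\bbR}\cup\{A_+\}$ precisely because $\dom(A(t))=\dom(A_+)=\dom(A_-)$ for all $t\in\bbR$ by Theorem \ref{t3.7}\,$(i)$, $(iii)$.

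First I would write, for $s,t\in\bbR$,
\[
d_{|A_-|}(A(t),A(s))=\big\|[B(t)-B(s)](|A_-|+I)^{-1}\big\|_{\cB(\cH)}\le\big\|[B(t)-B(s)](|A_-|+I)^{-1}\big\|_{\cB_1(\cH)},
\]
and then invoke \eqref{2.13jk}, which expresses $B(t)(|A_-|+I)^{-1}$ as the Bochner integral $\int_{-\infty}^t B'(\tau)(|A_-|+I)^{-1}\,d\tau$ in $\cB_1(\cH)$. Hence
\[
\big\|[B(t)-B(s)](|A_-|+I)^{-1}\big\|_{\cB_1(\cH)}\le\Big|\int_s^t\big\|B'(\tau)(|A_-|+I)^{-1}\big\|_{\cB_1(\cH)}\,d\tau\Big|,
\]
which tends to $0$ as $s\to t$ by absolute continuity of the integral (Lemma \ref{l3.1}, since $\|B'(\cdot)(|A_-|+I)^{-1}\|_{\cB_1(\cH)}\in L^1(\bbR)$ by \eqref{2.13i}). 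This gives continuity of $\{A(t)\}_{t\in\bbR}$ at each $t\in\bbR$ in the $d_{|A_-|}$-metric.

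Next I would handle the limits as $t\to\pm\infty$. For $t\to-\infty$, the normalization $D_-=0$ in \eqref{3.D-=0} gives $B(-\infty)=0$ and $A_-=A(-\infty)$, so
\[
d_{|A_-|}(A(t),A_-)=\big\|B(t)(|A_-|+I)^{-1}\big\|_{\cB(\cH)}\le\int_{-\infty}^t\big\|B'(\tau)(|A_-|+I)^{-1}\big\|_{\cB_1(\cH)}\,d\tau\xrightarrow[t\to-\infty]{}0
\]
by \eqref{2.13jl} and the integrability \eqref{2.13i}. For $t\to+\infty$, I would use \eqref{limB}, which states precisely that $\lim_{t\to\infty}\|[B(t)-B(+\infty)](A_-^2+I)^{-1/2}\|_{\cB_1(\cH)}=0$; since $(A_-^2+I)^{-1/2}(|A_-|+I)\in\cB(\cH)$ with norm $\le\sqrt2$ (a one-line spectral-theorem bound), one gets
\[
\big\|[B(t)-B(+\infty)](|A_-|+I)^{-1}\big\|_{\cB_1(\cH)}\le\sqrt2\,\big\|[B(t)-B(+\infty)](A_-^2+I)^{-1/2}\big\|_{\cB_1(\cH)}\xrightarrow[t\to\infty]{}0,
\]
and therefore $d_{|A_-|}(A(t),A_+)\to0$ as $t\to+\infty$ by $A_+=A_-+B(+\infty)$ from \eqref{dfnAplus}. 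The main (though mild) obstacle is bookkeeping: one must confirm at each step that the relevant differences of operators genuinely land in $\cB_1(\cH)$ and that the passage from $d_{|A_-|}$ (a $\cB(\cH)$-norm estimate) to the trace-norm estimates is legitimate — but this is immediate from $\|\cdot\|_{\cB(\cH)}\le\|\cdot\|_{\cB_1(\cH)}$ and the membership facts recorded in Remark \ref{r2.3}, Theorem \ref{t3.7}, and \eqref{limB}. Once continuity and convergence in $d_{|A_-|}$ are in hand, the conclusions for $d_R$ and $d_G$ require no further work.
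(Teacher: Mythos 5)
Your proof is correct and follows essentially the paper's own route: the paper likewise begins by noting that it suffices to treat $d_{|A_-|}$ alone (its separate proofs for $d_R$ and $d_G$ are included only to make the underlying issues transparent), and its $d_{|A_-|}$ argument is precisely your estimate $d_{|A_-|}(A(b),A(a))\le\int_a^b\big\|B'(s)(|A_-|+I)^{-1}\big\|_{\cB_1(\cH)}\,ds$, valid for $-\infty\le a<b\le+\infty$ via \eqref{2.13jk}, so that continuity and both limits follow from \eqref{2.13i}. One cosmetic remark: in your $t\to+\infty$ step the bounded factor that actually does the work is $(A_-^2+I)^{1/2}(|A_-|+I)^{-1}$ (norm $\le 1$), not $(A_-^2+I)^{-1/2}(|A_-|+I)$, though your displayed inequality is still valid; alternatively, \eqref{2.13jk} together with \eqref{intB} yields the $t\to+\infty$ limit directly in the $(|A_-|+I)^{-1}$-weighted trace norm without any such conversion.
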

\begin{proof}
By the observation following \eqref{8.3}, it suffices to consider $d_{|A_-|}$ only. However, to make the underlying issues more transparent, we will present 
independent proofs for  all three metrics.

Metric $d_{|A_-|}$: For any $-\infty\le a<b\le+\infty$
 the distance $d_{|A_-|}(A(b),A(a))$ (cf.\ \eqref{2.13jk}), is dominated by
 \beq
 \begin{split}
\|(A(b)-A(a))(|A_-|+I)^{-1}\|_{\cB_1(\cH)}
 \le\int_a^b
 \|B'(s)(|A_-|+I)^{-1}\|_{\cB_1(\cH)}\,ds,
 \end{split}        \label{intineq}
 \enq
and the required in the lemma assertions follow from \eqref{2.13i}.

Metric $d_R$: This follows, essentially, from Lemma \ref{ext_subtle}.  
Indeed, using \eqref{subtle2} with $S_+=A(b)$ and $S_-=A(a)$, the distance $d_R(A(b),A(a))$ is dominated by
\begin{align}
\|g(A(b))&-g(A(a))\|_{\cB_1(\cH)}\le
\big\|T_\phi\big[\overline{\varkappa(A(b))^{-1/2}\big(A(b)-A(a)\big)\varkappa(A(a))^{-1/2}}\big]\big\|_{\cB_1(\cH)} \no \\
&\le\|T_\phi\|_{\cB(\cB_1(\cH))}
\big\|\overline{\varkappa(A(b))^{-1/2}(|A_-|+I)^{1/2}}\big\|_{\cB(\cH)}\label{intin1}\\
&\quad \times 
\big\|\overline{(|A_-|+I)^{-1/2}(A(b)-A(a))(|A_-|+I)^{-1/2}}\big\|_{\cB_1(\cH)} 
\label{lastin}\\ 
&\quad \times
\big\|(|A_-|+I)^{1/2}\varkappa(A(a))^{-1/2}\big\|_{\cB(\cH)}.    \lb{intin2}
\end{align}
We claim that 
\beq\label{bddcl}
\sup_{t\in\bbR} \big\|(|A_-|+I)^{1/2}\varkappa(A(t))^{-1/2}\big\|_{\cB(\cH)}<\infty.
\enq

Assuming the claim, the proof is completed as follows. First,
for $T_\phi=T_\phi^{(A(b),A(a))}$ in \eqref{intin1} the norms $\|T_\phi\|_{\cB(\cB_1(\cH))}$ are bounded uniformly for 
$a,b\in\bbR$ due to \eqref{normTphi} and Lemma \ref{ext_subtle} (i). The $\cB(\cH)$-norms in \eqref{intin1} and \eqref{intin2} are also bounded uniformly for $a,b\in\bbR$ due to claim \eqref{bddcl} and the relation
\beq\big(\overline{\varkappa(A(b))^{-1/2}(|A_-|+I)^{1/2}}\big)^*= (|A_-|+I)^{1/2}\varkappa(A(b))^{-1/2}.\enq
It remains to estimate the norm in \eqref{lastin}. Using \eqref{normin1} for 
$S=A(b)-A(a)$ and $T= |A_-|+I$, we infer that the norm in \eqref{lastin} is dominated by the expression in the left-hand side of \eqref{intineq}.
Putting all this together, one concludes that there is a constant $c>0$ such that, for any interval $-\infty\le a<b\le+\infty$, 
\begin{equation}\label{Fac}
\|g(A(b))-g(A(a))\|_{\cB_1(\cH)}\le c\int_a^b\|B'(\tau)(|A_-|+I)^{-1}\|_{\cB_1(\cH)}\,d\tau,
\end{equation}
and the assertions in the lemma follow from \eqref{2.13i}.

To proof the claim \eqref{bddcl}, one notes that 
\begin{align} 
&  \|(|A_-|+I)^{1/2}\varkappa(A(t))^{-1/2}\|_{\cB(\cH)}
\le\|(|A_-|+I)^{1/2}(|A(t)|+I)^{-1/2}\|_{\cB(\cH)}  \no \\
& \qquad\times \|(|A(t)|+I)^{1/2}\varkappa(A(t))^{-1/2}\|_{\cB(\cH)}    \no \\
& \quad \le
\|(|A_-|+I)^{1/2}(|A(t)|+I)^{-1/2}\|_{\cB(\cH)} \, 
\sup_{y\in\bbR}\frac{(|y|+1)^{1/2}}{(|y^2|+1)^{1/4}}, 
\end{align} 
and thus it suffices to show that 
\beq\label{bddcl2}\sup_{t\in\bbR}\|(|A_-|+I)^{1/2}(|A(t)|+I)^{-1/2}\|_{\cB(\cH)}<\infty.\enq By  Lemma \ref{UNIFnorms}, there are constants $c_1, c>0$ such that, for all $f\in\cH$ and $t\in\bbR$, 
\begin{align}
\|(|A_-|&+I)^{1/2}(|A(t)|+I)^{-1/2}f\|_{\cH}\le\|(|A(t)|+I)^{-1/2}f\|_{\cH_{1/2}(|A_-|)}\nonumber\\&\le c_1 \|(|A(t)|+I)^{-1/2}f\|_{\cH_{1/2}(|A(t)|)}\\&
=c_1\big(
\|(|A(t)|+I)^{-1/2}f\|_{\cH}^2+\|\,|A(t)|^{1/2}(|A(t)|+I)^{-1/2}f\|_{\cH}^2\big)^{1/2}
\le c\|f\|_{\cH},\nonumber\end{align}
completing the proof.

Metric $d_G$: Using the resolvent identity,  $d_G(A(b),A(a))$ is dominated by
\begin{align}
\|(&A(a)-i)^{-1}\|_{\cB(\cH)}\|(A(b)-A(a))(|A_-|+I)^{-1}\|_{\cB_1(\cH)}\label{ff1}\\&\times
\|(|A_-|+I)(|A(b)|+I)^{-1}\|_{\cB(\cH)}\|(|A(b)|+I)(A(b)-i)^{-1}\|_{\cB(\cH)}\label{ff2}.\end{align}
The first factor in \eqref{ff1} and the second factor in \eqref{ff2} are uniformly bounded for $a,b\in\bbR$ employing the fact that $A(a),A(b)$ are self-adjoint and using 
Lemma \ref{UNIFnorms}. 
By \eqref{UNn1} in Lemma \ref{UNIFnorms}, the first factor in \eqref{ff2} is uniformly bounded for $b\in\bbR$. The second factor in \eqref{ff1} is estimated as in \eqref{intineq}, and again the assertions in the lemma follow from \eqref{2.13i}.
\end{proof}

For additional references in connection with metrics for closed operators we also 
refer to \cite{BBLP05}, \cite{CL63}, \cite{Ka84}, \cite{MMAD08}, \cite{Me99}, 
\cite{Ni07}, \cite{Sh09}, and \cite{Wa08}.

Assuming Hypothesis \ref{h2.1} and $0\in\rho(A_-)\cap\rho(A_+)$, we will now recall the definition of the spectral flow for the operator path $\{A(t)\}_{t=-\infty}^{\infty}$, 
following the line of arguments in \cite{Ph96} (see also \cite{BBLP05,Le05}), where the spectral flow has been defined for paths with $t\in[0,1]$. 

\begin{remark}\label{rem:pril} 
Since   $0\in\rho(A_-)\cap\rho(A_+)$, there exists $\varepsilon_0>0$ such that
$[-\varepsilon_0,\varepsilon_0]\cap\sigma(A_\pm)=\emptyset$.
Since the family $\{A(t)\}_{t=-\infty}^{\infty}$ is $d_G$-continuous by Lemma \ref{lem_cont}, the 
function $\bbR\ni t\mapsto\sigma(A(t))$ is upper semicontinuous by \cite[Theorem VIII.2.3(a)]{RS80}.
Since $d_R(A(t),A_\pm)\to0$ as $t\to\pm\infty$ by Lemma \ref{lem_cont}, there exists $T_0>0$ 
such that $[-\varepsilon_0,\varepsilon_0]\cap\sigma(A(t))=\emptyset$ for all $|t|\ge T_0$. Moreover, using \eqref{3.esssp}, $[-\varepsilon_0,\varepsilon_0]\cap\sigma_{\text{ess}}(A(t))=\emptyset$ for all 
$t\in\bbR$. Thus, the operators $A_\pm$ and $A(t)$, $t\in\bbR$, are Fredholm, and for each $t\in\bbR$,  the set $[-\varepsilon_0,\varepsilon_0]\cap\sigma(A(t))$ consists at most of finitely many isolated eigenvalues of finite multiplicity.\end{remark}

\begin{remark}\label{rem:pr} By Remark \ref{rem:pril}, for each $t\in\bbR$, there exist 
$\e\in(0,\e_0)$ and $\delta>0$ such that the following assertions hold: 
\begin{align} \label{spas2}
&\pm\e\notin\sigma(A(s))\text{ for all $s\in(t-\delta,t+\delta)$},   \\
\label{spas3} 
& E_{[-\e,\e]}(A(s)) \text{ has finite rank}\\
&\quad \text{and is norm continuous as a function of  $s\in(t-\delta,t+\delta)$}. 
\label{spas3a}
\end{align} 
Indeed, since $[-\varepsilon_0,\varepsilon_0]\cap\sigma_{\text{ess}}(A(t))=\emptyset$, the interval 
$[-\varepsilon_0,\varepsilon_0]$ contains at most finitely many points of $\sigma(A(t))$. Fix 
$\e\in(0,\e_0)$ such that
$\pm\e\notin\sigma(A(t))$. Since 
\begin{equation} 
\sigma(A(t))\subseteq\bbR\backslash\{-\e,+\e\},
\end{equation}
there is an open $d_G$-ball containing $A(t)$, such that 
\begin{equation}
\sigma(A(s))\subseteq\bbR\backslash\{-\e,+\e\}, 
\end{equation} 
provided $A(s)$ is in this ball 
(cf.\ \cite[Theorem VIII.2.3(a)]{RS80}). In addition, 
since $A(\cdot)$ is $d_G$-continuous, there is a $\delta > 0$ such that \eqref{spas2} holds. The inclusion  $[-\e,\e]\subset[-\e_0,\e_0]$ yields that 
$[-\varepsilon,\varepsilon]\cap\sigma_{\text{ess}}(A(s))=\emptyset$, and thus assertion \eqref{spas3}  for all $s\in(t-\delta,t+\delta)$. Finally, the norm continuity in 
\eqref{spas3a} follows by \cite[Theorem VIII.2.3(b)]{RS80}.
\end{remark}

\begin{remark}\label{subd}
By compactness of $[-T_0,T_0]$ (with $T_0$ as in Remark \ref{rem:pril}) and Remark \ref{rem:pr}, 
we may choose a subdivision $-T_0=t_0<t_1<\dots<t_{n-1}<t_n=T_0$, and numbers 
$\varepsilon_j\in(0,\e_0)$ (with $\varepsilon_0 > 0$ as in Remark \ref{rem:pril}), such that for 
each $j=1,\dots,n$, and for all $t\in[t_{j-1},t_j]$, the following assertions hold: \\
$(i)$ $\pm\varepsilon_j\notin\sigma(A(t))$.  \\
$(ii)$ $[-\varepsilon_j,\varepsilon_j]\cap\sigma_{\text{ess}}(A(t))=\emptyset$. \\
$(iii)$ $E_{[-\e_j,\e_j]}(A(t))$ is of finite rank and is norm continuous in  $t\in[t_{j-1},t_j]$. 
\end{remark}

\begin{definition} [\cite{BBLP05,Le05,Ph96}] \label{defSPF} 
Given the notation used in Remark \ref{subd}, we define the spectral flow of the $d_G$-continuous 
path $\{A(t)\}_{t=-\infty}^\infty$ of self-adjoint Fredholm operators by the formula
\begin{align}
\begin{split}
& \text{\rm SpFlow} (\{A(t)\}_{t=-\infty}^\infty)   \label{dfnSpFl} \\
& \quad =\sum_{j=1}^n\big(\dim(\ran (E_{A(t_{j-1})}([0,\varepsilon_j)))) 
- \dim(\ran (E_{A(t_{j})}([0,\varepsilon_j))))\big). 
\end{split} 
\end{align}
\end{definition}

\begin{remark}\label{indep}
As in \cite{Ph96}, one can see that the definition is independent of the choice of 
$T_0$, the subdivision, and the numbers $\e_j > 0$ with the properties described in 
Remarks \ref{rem:pril} and \ref{subd}. Indeed, since  $A(t)$ does not have the 
eigenvalue zero for all $|t|\ge T_0$, the right-hand side  of \eqref{dfnSpFl} does not depend on $T_0$. Adding a point $t_*$ to the subdivision yields adding and 
subtracting the term $\dim(\ran (E_{A(t_*)}([0,\e_*))))$ on the right-hand side of 
\eqref{dfnSpFl}. Finally, changing $\e_j$ by, say, a smaller $\e'_j > 0$, we remark 
that the dimension of the range of 
$E_{[0,\e_j)}(A(t))-E_{[0,\e'_j)}(A(t))=E_{[\e'_j,\e_j)}(A(t))$ is constant
for $t\in[t_{j-1},t_j]$ by the norm continuity of the spectral projections.
Therefore, this change does not affect the right-hand side of  \eqref{dfnSpFl} either.
\end{remark}

\begin{remark}\label{reparam} Equivalently, the definition of  
$\text{SpFlow} (\{A(t)\}_{t=-\infty}^\infty)$ can be reduced to the  definition in 
\cite{BBLP05,Le05,Ph96} for $t\in[0,1]$, by a re-parameterization: Indeed, for any continuous strictly increasing function $r:[0,1]\to\bbR$, we introduce the path 
$\{S(t)\}_{t=0}^1$ by letting $S_0=A_-$, $S(t)=A(r(t))$, $t\in(0,1)$, and $S_1=A_+$, and then define $\text{SpFlow} (\{A(t)\}_{t=-\infty}^\infty)=\text{SpFlow} (\{S(t)\}_{t=0}^1)$; the latter spectral flow is defined by formula \eqref{dfnSpFl}, 
with $A(t)$ replaced by $S(t)$ and the $t_j$'s representing a subdivision of $[0,1]$. An argument similar to Remark \ref{indep} shows that this new definition does not depend on the choice of the re-parameterization $r$ and is equivalent to Definition \ref{defSPF}. An advantage of the definition by re-parameterization is that the proof of \eqref{spf1} becomes shorter as one does not need to show \eqref{tails}. Nevertheless, we prefer to use Definition  \ref{defSPF} as it provides direct insight into the process where eigenvalues of $A(t)$ are passing through zero as $t$ changes from $-\infty$ to 
$+\infty$.
\end{remark} 

Next,  we recall some terminology and several results from \cite{Ab01}, 
\cite{ASS94}, \cite{Le05}:

\begin{definition}\label{defFRP}
A pair $(P,Q)$ of orthogonal projections on $\cH$ is called Fredholm $($see, e.g., 
\cite{ASS94}$)$, if $QP$ is a Fredholm operator from $\ran (P)$ to $\ran (Q)$; the index of the pair $(P,Q)$ is defined to be the Fredholm index of the operator $QP$, that is, by the formula
\beq \label{dfnIn} 
\ind(P,Q)=\dim\big(\ran (P) \cap (\ran (Q))^\bot\big) 
- \dim \big((\ran (P))^\bot \cap \ran (Q)\big).
\enq
\end{definition}

We note that a pair $(P,Q)$ is a Fredholm pair if and only if the essential spectrum of the difference $P-Q$ is a subset of the open interval $(-1,1)$.

\begin{remark}\label{rem:term} $(i)$ If $(P,Q)$ is a Fredholm pair, then  $(Q,P)$ is a Fredholm pair and $\ind(P,Q)=-\ind(Q,P)=-\ind(I-P,I-Q)$ (see \cite[Theorem 3.4(a)]{ASS94}). \\
$(ii)$ If $P-Q$ is compact, then $(P,Q)$ is Fredholm (see \cite[Proposition 3.1]{ASS94}). \\
$(iii)$ If $P-Q\in\cB_1(\cH)$, then $\ind(P,Q)={\tr}_{\cH}(P-Q)$ (see 
\cite[Theorem 4.1]{ASS94}).
\end{remark}

\begin{definition}\label{defFRS}
A pair $(M,N)$ of closed subspaces of $\cH$ is called Fredholm $($see, e.g., \cite[Section 2.2]{Ab01}, \cite[Sect.\ IV.4]{Ka80}$)$, if $M\cap N$ is finite-dimensional, $M+N$ is closed and has finite codimension; the index of the pair $(M,N)$ is defined as 
\beq\label{indssp} \ind(M,N)=\dim(M\cap N)-\dim(M^\bot\cap N^\bot).\enq
The number on the right-hand side  of \eqref{indssp} is also called the relative dimension of the subspaces $M$ and $N^\bot$.
\end{definition}

\begin{remark}\label{rem:term2} Clearly, the pair $(P,Q)$ of orthogonal projections is Fredholm if and only if
the pair of subspaces $M=\ran (P)$ and $N = (\ran (Q))^\bot$ is Fredholm; the indices of the pairs 
$(P,Q)$ and $(M,N)$ are equal. The subspaces $M, N$ are called commensurable 
if $P-Q$ is 
compact (see, e.g., \cite[Section 2.2]{Ab01}); in this case the pair $(M,N^\bot)$ is Fredholm by Remark \ref{rem:term}\,$(ii)$ (see also  \cite[Lemma 7.3]{LT05}). We refer to \cite{BP08} for a detailed discussion of  relations between Fredholm pairs of projections and Fredholm pairs of subspaces.
\end{remark}

For a variety of additional material on closed subspaces, including a number of classical references on the subject, as well as the study of pairs of projections that differ by a compact operator (and necessarily being far from complete), we refer, for instance, to \cite{AS94}, \cite{Ar70}, \cite{ASS94}, \cite{BS10}, \cite{Bo79}, \cite{BL01}, 
\cite{Da58}, \cite{Di48}, \cite{Di49}, \cite{Fr37}, \cite{GM00}, \cite{Go05}, 
\cite{Ha69}, \cite{Ka55}, \cite{Ka97}, \cite{KMM03}, 
\cite{KK47}, \cite{KKM48}, \cite{Pu09}, \cite{Sp94}, \cite{Wo85}, and the numerous references cited therein.

\begin{proposition} [Lesch \cite{Le05}]  \label{prop:lesch}  
Assume that $\{S_t\}_{t=0}^1$ is a $d_R$-continuous path of $($unbounded$\,)$ self-adjoint  Fredholm operators. Assume furthermore that the domain of $S_t$ does not depend on $t$, $\dom (S_t) = \dom (S_0)$, and that for $t\in[0,1]$, the difference
$S_t-S_0$ is an $S_0$-compact symmetric operator. Then the following assertions hold: \\
$(i)$ Suppose that $\lambda\notin\sigma(S_t)$, $t \in [0,1]$. Then the path of spectral projections $t\mapsto E_{S_t}((\lambda,\infty))$ is norm continuous $(cf.\ $\cite[Lemma 3.3]{Le05}$)$.  \\
$(ii)$ Assume that $\lambda\notin\sigma_{\text{ess}}(S_t)$, $t \in [0,1]$. Then the difference of the spectral projections $E_{S_t}([\lambda,\infty))-E_{S_0}([\lambda,\infty))$ is a compact operator $(cf.\ $\cite[Corollary 3.5]{Le05}$)$.  \\
$(iii)$ The pair of spectral projections $(E_{S_1}([0,\infty)),E_{S_0}([0,\infty)))$
is Fredholm and 
\begin{equation} 
\text{\em SpFlow} (\{S_t\}_{t=0}^1)=\ind(E_{S_1}([0,\infty)),E_{S_0}([0,\infty)))
\end{equation} 
$(cf.\ $\cite[Theorem 3.6]{Le05}$)$.
\end{proposition}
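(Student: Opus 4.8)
\textbf{Proof plan for Proposition \ref{prop:lesch}.}

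The plan is to treat the three assertions essentially as black-box imports from Lesch \cite{Le05}, but to spell out why the hypotheses of the corresponding statements in \cite{Le05} are met in the present formulation. First I would recall the standing assumptions here: $\{S_t\}_{t=0}^1$ is $d_R$-continuous, $\dom(S_t)=\dom(S_0)$ for all $t$, and $S_t-S_0$ is $S_0$-compact and symmetric. These are precisely the hypotheses under which \cite[Lemmas 3.3, Corollary 3.5, Theorem 3.6]{Le05} are proved, so assertions $(i)$, $(ii)$, $(iii)$ are direct citations; the only genuine content is to verify the spectral localization needed for $(i)$ and $(ii)$, namely that the relevant spectral value $\lambda$ stays off $\sigma(S_t)$ (resp.\ off $\sigma_{\mathrm{ess}}(S_t)$) uniformly in $t$.

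For $(i)$, I would argue as follows: if $\lambda\notin\sigma(S_t)$ for each $t\in[0,1]$, then $d_R$-continuity of the path (which implies norm resolvent continuity) together with the upper semicontinuity of the spectrum (cf.\ \cite[Theorem VIII.2.3(a)]{RS80}) shows that for each fixed $t_0$ there is a neighborhood on which $\lambda$ remains in the resolvent set, and hence the Riesz projection $E_{S_t}((\lambda,\infty))$ — expressible via a contour integral of the resolvent around the part of the spectrum above $\lambda$, or equivalently via $E_{S_t}((\lambda,\infty)) = \chi_{(\lambda,\infty)}(S_t)$ with $\chi_{(\lambda,\infty)}$ continuous on $\sigma(S_t)$ — depends norm-continuously on $t$. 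This is exactly \cite[Lemma 3.3]{Le05}. For $(ii)$, the hypothesis $\lambda\notin\sigma_{\mathrm{ess}}(S_t)$ for all $t$ combined with $S_0$-compactness of $S_t-S_0$ forces $\sigma_{\mathrm{ess}}(S_t)=\sigma_{\mathrm{ess}}(S_0)$ (Weyl's theorem), so the spectrum near $\lambda$ consists at most of isolated eigenvalues of finite multiplicity; $S_0$-compactness of the perturbation then yields that $E_{S_t}([\lambda,\infty))-E_{S_0}([\lambda,\infty))$ is compact, which is \cite[Corollary 3.5]{Le05}.

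For $(iii)$, assertion $(ii)$ applied with $\lambda=0$ (using that each $S_t$ is Fredholm, so $0\notin\sigma_{\mathrm{ess}}(S_t)$) shows $E_{S_1}([0,\infty))-E_{S_0}([0,\infty))$ is compact, whence the pair $\big(E_{S_1}([0,\infty)),E_{S_0}([0,\infty))\big)$ is Fredholm by Remark \ref{rem:term}\,$(ii)$. The identification of the spectral flow with $\ind\big(E_{S_1}([0,\infty)),E_{S_0}([0,\infty))\big)$ is then \cite[Theorem 3.6]{Le05}: one uses a subdivision $0=t_0<\dots<t_n=1$ and numbers $\varepsilon_j>0$ with $\pm\varepsilon_j\notin\sigma(S_t)$ on $[t_{j-1},t_j]$ (available by the compactness argument of Remark \ref{subd}), telescopes the index over the subintervals using the additivity property $\ind(P,Q)+\ind(Q,R)=\ind(P,R)$ for Fredholm triples whose pairwise differences are compact, and on each subinterval identifies the local contribution $\dim(\ran E_{S_{t_{j-1}}}([0,\varepsilon_j)))-\dim(\ran E_{S_{t_j}}([0,\varepsilon_j)))$ with a finite-rank index via assertion $(i)$. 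The main obstacle, such as it is, is not conceptual but one of bookkeeping: one must check that the definition of spectral flow used here (Definition \ref{defSPF}, via $E_{A(t)}([0,\varepsilon_j))$) matches the one in \cite{Le05} up to the trivial reparameterization of Remark \ref{reparam}, and that the finite-rank contributions assemble correctly into the Fredholm index of the pair; both are routine once $(i)$ and $(ii)$ are in hand, and both are carried out in \cite[Sect.\ 3]{Le05}, so I would simply invoke that reference.
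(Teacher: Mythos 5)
Your proposal matches the paper exactly: Proposition \ref{prop:lesch} is imported verbatim from Lesch \cite{Le05} (Lemma 3.3, Corollary 3.5, Theorem 3.6) and the paper offers no independent proof, so citing those results after checking that the standing hypotheses ($d_R$-continuity, constant domain, $S_0$-compact symmetric perturbation) coincide with Lesch's is precisely what is done there. One cosmetic caveat: your aside about realizing $E_{S_t}((\lambda,\infty))$ as a contour integral around the part of the spectrum above $\lambda$ is not literally available for unbounded operators (that spectral part is unbounded; one works instead through the bounded transform or the complementary projection), but since you ultimately just invoke \cite[Lemma 3.3]{Le05} this does not affect the argument.
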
 

Assuming Hypothesis \ref{h2.1} and $0\in\rho(A_-)\cap\rho(A_+)$, we are now ready to proceed with the main result of this section. Its proof uses $d_R$-continuity of the family $\{A(t)\}_{t=-\infty}^\infty$ since it requires the norm continuity in $t$
of the spectral projections $E_{A(t)}([0,\infty))$ when $0\notin\sigma(A(t))$. This is in contrast to the definition of the spectral flow which requires $d_G$-continuity yielding the norm continuity of 
$E_{A(t)}([0,\e))$, $\varepsilon > 0$, for just a {\em finite} $\e\notin\sigma(A(t))$.

The spectral projections $E_{A_+}((-\infty,0))$ and $E_{A_-}((-\infty,0))$ are called Morse projections. We recall that by \eqref{2.43a} in Corollary \ref{c2.11}  the difference  $E_{A_-}((-\infty,0))-E_{A_+}((-\infty,0))$ of the Morse projections is of trace class.
 We introduce the notation $\cS_\pm=\ran (E_{A_\pm}((-\infty,0)))$ for the ranges of the Morse projections.
 
\begin{theorem}\label{thSPFI}
Assume Hypothesis \ref{h2.1} and suppose that $0 \in \rho(A_+)\cap\rho(A_-)$. 
Then the pair $\big(E_{A_+}((-\infty,0)),E_{A_-}((-\infty,0))\big)$ of the Morse projections is Fredholm, the pair of subspaces $(\cS_+,\cS_-)$ is commensurable,
the pair of subspaces $(\cS_+,\cS_-^\bot)$ is Fredholm, 
and  the following equalities hold:
\begin{align}
\text{\rm SpFlow} (\{A(t)\}_{t=-\infty}^\infty) & =\ind(E_{A_-}((-\infty,0)),E_{A_+}((-\infty,0)))\label{spf1}\\
&=\ind(\cS_-,\cS_+^\bot)=
\dim (\cS_-\cap \cS_+^\bot)-\dim(\cS_-^\bot\cap \cS_+)\label{spf1a}\\&
={\tr}_{\cH}(E_{A_-}((-\infty,0))-E_{A_+}((-\infty,0)))\label{spf2} \\
&=\xi(0;A_+,A_-) \label{spf3}\\
& = \xi(0_+; \bsH_2, \bsH_1)    \lb{spf3A} \\ 
&=\ind (\bsD_\bsA^{}).    \label{spf4}
\end{align}
\end{theorem}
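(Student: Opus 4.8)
The plan is to establish the chain of equalities \eqref{spf1}--\eqref{spf4} by combining the abstract spectral flow machinery of Lesch with the scattering-theoretic identities already proved in Sections \ref{s7} and \ref{s8}. The starting point is the $d_R$-continuity of the path $\{A(t)\}_{t=-\infty}^\infty$ established in Lemma \ref{lem_cont}, together with the relative compactness statement: by Theorem \ref{t3.7}\,$(v)$ the differences $(A(t)-z I)^{-1}-(A_-- z I)^{-1}$ lie in $\cB_1(\cH)$, so each $A(t)-A_-$ is $A_-$-compact, and by Remark \ref{rem:pril} all operators $A(t)$, $t\in\bbR$, as well as $A_\pm$, are Fredholm. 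After re-parameterizing the path to $[0,1]$ as in Remark \ref{reparam} (with $S_0=A_-$, $S_1=A_+$, $S(t)=A(r(t))$), Proposition \ref{prop:lesch}\,$(iii)$ applies and yields that the pair $\big(E_{A_+}([0,\infty)),E_{A_-}([0,\infty))\big)$ is Fredholm with
\[
\text{\rm SpFlow} (\{A(t)\}_{t=-\infty}^\infty)=\ind\big(E_{A_+}([0,\infty)),E_{A_-}([0,\infty))\big).
\]
Since $0\in\rho(A_\pm)$, we have $E_{A_\pm}([0,\infty))=E_{A_\pm}((0,\infty))=I-E_{A_\pm}((-\infty,0))$, and by Remark \ref{rem:term}\,$(i)$ this index equals $\ind\big(E_{A_-}((-\infty,0)),E_{A_+}((-\infty,0))\big)$, proving \eqref{spf1}. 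One subtle point that must be addressed here is that Definition \ref{defSPF} of the spectral flow uses $d_G$-continuity and finite thresholds $\e_j$, whereas Lesch's theorem uses $d_R$-continuity and the threshold $0$; the equivalence of the two is part of what Remark \ref{reparam} asserts, and the argument for \eqref{tails}---that no eigenvalue crossing is missed near $t=\pm\infty$---follows from $d_R$-convergence $A(t)\to A_\pm$ and $0\in\rho(A_\pm)$, which forces $[-\e_0,\e_0]\cap\sigma(A(t))=\emptyset$ for $|t|\ge T_0$.

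Next I would handle \eqref{spf1a} and \eqref{spf2}. The equality with $\ind(\cS_-,\cS_+^\bot)$ and its explicit dimension formula is immediate from Remark \ref{rem:term2}: the Fredholm pair of projections $(E_{A_-}((-\infty,0)),E_{A_+}((-\infty,0)))$ corresponds to the Fredholm pair of subspaces $(\cS_-,\cS_+^\bot)$ with the same index, and \eqref{indssp} gives the dimension count. Commensurability of $(\cS_+,\cS_-)$ follows because the difference of the Morse projections is compact---indeed trace class---by Corollary \ref{c2.11}. Then \eqref{spf2}, the identification of the index with the trace of the difference of Morse projections, is exactly Remark \ref{rem:term}\,$(iii)$ applied to $P-Q=E_{A_-}((-\infty,0))-E_{A_+}((-\infty,0))\in\cB_1(\cH)$.

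For \eqref{spf3} I would invoke Lemma \ref{l7.trxi}, which states precisely that ${\tr}_{\cH}\big(E_{A_-}((-\infty,0))-E_{A_+}((-\infty,0))\big)=\xi(0;A_+,A_-)$, with $\xi(\,\cdot\,;A_+,A_-)$ understood via the continuous representative near $\lambda=0$ guaranteed by \eqref{2.47}. The remaining equalities \eqref{spf3A} and \eqref{spf4}, namely $\xi(0;A_+,A_-)=\xi(0_+;\bsH_2,\bsH_1)=\ind(\bsD_\bsA)$, are exactly the content of Corollary \ref{c8.index}, whose hypotheses ($0\in\rho(A_+)\cap\rho(A_-)$ plus Hypothesis \ref{h2.1}) are in force; there $\bsD_\bsA$ is shown to be Fredholm via Lemma \ref{l8.2}, and the index is computed from the $\xi$-function in the essential spectral gap of $\bsH_1$, $\bsH_2$ together with Pushnitski's formula \eqref{2.37}. (I would also note that $\xi(0_+;\bsH_1,\bsH_2)=-\xi(0_+;\bsH_2,\bsH_1)$ is consistent with $\ind(\bsD_\bsA)=\dim\ker(\bsH_1)-\dim\ker(\bsH_2)$, matching the sign convention in \eqref{2.46}.)

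The main obstacle, and the step deserving the most care, is the verification in \eqref{spf1} that the abstract spectral flow defined combinatorially in Definition \ref{defSPF} coincides with Lesch's spectral flow of the re-parameterized path and hence with the Fredholm index of the pair of spectral projections at threshold $0$. This requires reconciling three technical subtleties: first, upgrading the $d_G$-continuity used in Definition \ref{defSPF} to the $d_R$-continuity required by Proposition \ref{prop:lesch}, which is supplied by Lemma \ref{lem_cont}; second, checking that the contributions of the ``tails'' $|t|\ge T_0$ vanish, using Remark \ref{rem:pril}; and third, justifying the passage from finite thresholds $\e_j$ to threshold $0$, which relies on norm continuity of $E_{A(t)}([0,\infty))$ away from $0\in\sigma(A(t))$---a consequence of $d_R$-continuity and Proposition \ref{prop:lesch}\,$(i)$---a property that genuinely fails under mere $d_G$-continuity. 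Once these are in place, the entire chain \eqref{spf1}--\eqref{spf4} assembles from previously established results with no further computation.
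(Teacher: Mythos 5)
Your proposal is correct and assembles the chain from the same ingredients the paper uses: Corollary \ref{c2.11} and Remarks \ref{rem:term}, \ref{rem:term2} for the Fredholm/commensurability statements, \eqref{spf1a}, and \eqref{spf2}; Lemma \ref{l7.trxi} for \eqref{spf3}; Corollary \ref{c8.index} for \eqref{spf3A}--\eqref{spf4}; and Lemma \ref{lem_cont} plus Proposition \ref{prop:lesch}\,$(iii)$ for \eqref{spf1}. The only genuine divergence is in \eqref{spf1}: you invoke Lesch's theorem on the re-parameterized path $[0,1]\ni t\mapsto A(r(t))$ with endpoints $A_\mp$, i.e.\ the shortcut the paper itself flags in Remark \ref{reparam} but deliberately avoids, whereas the paper applies Proposition \ref{prop:lesch} only on $[-T_0,T_0]$ (verifying the $A(-T_0)$-compactness of $A(T_0)-A(-T_0)$ via \eqref{8.28}) and then proves \eqref{tails} by the norm continuity of $t\mapsto E_{A(t)}((0,\infty))$ and the constancy of the index along norm-continuous families of Fredholm pairs; your route buys brevity at the cost of relying on the (only sketched) equivalence of the re-parameterized definition with Definition \ref{defSPF}, while your separate discussion of the tails covers the same ground as the paper's argument, so the substance matches. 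Two small points: the relative compactness you need is Theorem \ref{t3.7}\,$(ii)$,\,$(iv)$ (the statement $B(t)(A_--zI)^{-1}\in\cB_1(\cH)$ and its analogue for $A_+-A_-$), not item $(v)$; and your parenthetical on \eqref{spf3A} is muddled --- if the chain literally equaled $\xi(0_+;\bsH_1,\bsH_2)=-\xi(0_+;\bsH_2,\bsH_1)$ as well as $\ind(\bsD_\bsA)$, the index would have to vanish, so the ordering in \eqref{spf3A} is a slip in the statement (compare \eqref{1.9a} and Corollary \ref{c8.index}), and your citation of Corollary \ref{c8.index}, which yields $\ind(\bsD_\bsA)=\xi(0_+;\bsH_2,\bsH_1)$, is the correct resolution rather than a consistency check.
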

\begin{proof}
All assertions about the Fredholm properties of the pairs of projections and subspaces follow from Remark \ref{rem:term}\,$(ii)$ and Remark \ref{rem:term2},  
using compactness of the difference of the Morse projections in 
\eqref{2.43a}. Equality \eqref{spf4} of the Fredholm index of $\bsD_\bsA^{}$ and the $\xi$-function $\xi(0;A_+,A_-)$ is one of the main results of this paper; it is contained in Corollary \ref{c8.index}. Similarly, equality of \eqref{spf4} and \eqref{spf3A} is proved 
in Corollary \ref{c8.index}.  
Equality of \eqref{spf2} and \eqref{spf3} of the trace and the $\xi$-function is proved in Lemma \ref{l7.trxi}. Equality of the trace \eqref{spf2} and the index of the pair of the Morse projections holds due to \eqref{2.43a} by 
Remark \ref{rem:term}\,$(iii)$. Equality \eqref{spf1a} holds by Remark \ref{rem:term2}.

It remains to prove equality \eqref{spf1}. In fact, the main step in its proof is an application of 
\cite[Theorem 3.6]{Le05} as recorded in Proposition \ref{prop:lesch}\,$(iii)$ above.  First, we recall that $T_0$ is chosen as in Remark \ref{rem:pril}. By Lemma \ref{lem_cont},  the path
$\{A(t)\}_{t=-T_0}^{T_0}$ of self-adjoint Fredholm operators is $d_R$-continuous. Moreover, 
it follows from
Hypothesis \ref{h2.1} that the domain of $A(t)$ does not depend on $t$, and the difference $A(T_0)-A(-T_0)$ is $A(-T_0)$-compact. Indeed, the operator
\begin{align}
(A(T_0)-A(-T_0))(A(-T_0))^{-1}=\int_{-T_0}^{T_0}B'(s)(|A_-|+I)^{-1}\,ds\,
(|A_-|+I)(A(-T_0))^{-1}    \lb{8.28}
\end{align}
is compact since the integral on the right-hand side of \eqref{8.28} is a trace class operator 
by \eqref{2.12a} and
$(|A_-|+I)(A(-T_0))^{-1}\in\cB(\cH)$ (see \eqref{UNn1}). Thus, the assumptions of 
Proposition \ref{prop:lesch} are satisfied for $S_t=A(t)$. By
Proposition \ref{prop:lesch}\,$(iii)$, the pair of projections
$(E_{A(T_0)}([0,\infty)), E_{A(-T_0)}([0,\infty)))$ is Fredholm, and
\beq\label{spfT0} \text{SpFlow} (\{A(t)\}_{t=-T_0}^{T_0})=\ind(E_{A(T_0)}([0,\infty)), E_{A(-T_0)}([0,\infty))).
\enq
According to Definition \ref{defSPF}, one has $\text{SpFlow} (\{A(t)\}_{t=-T_0}^{T_0})=\text{SpFlow} (\{A(t)\}_{t=-\infty}^{\infty})$, and thus it remains to show that 
\beq\label{tails}
\ind(E_{A(T_0)}([0,\infty)), E_{A(-T_0)}([0,\infty)))=\ind(E_{A_-}((-\infty,0)),E_{A_+}((-\infty,0))).
\enq
For each $t\ge T_0$, the difference of the projections
$E_{A(t)}([0,\infty)) - E_{A(-t)}([0,\infty))$ is compact by Proposition \ref{prop:lesch}\,$(ii)$ and thus the pair $(E_{A(t)}([0,\infty)), E_{A(-t)}([0,\infty)))$ is Fredholm by Remark \ref{rem:term}\,$(ii)$. Since 
$0\notin\sigma(A(t))$ for $|t|\ge T_0$, one infers that $E_{A(t)}([0,\infty))=E_{A(t)}((0,\infty))$. Since $A(t)$ is $d_R$-continuous
and $d_R(A(t),A_\pm)\to0$ as $t\to\pm\infty$ by Lemma \ref{lem_cont},  
the function $\bbR\ni t\mapsto E_{A(t)}((0,\infty))$ is norm continuous and $\|E_{A(t)}((0,\infty))-E_{A_\pm}((0,\infty))\|_{\cB(\cH)}\to0$ as $t\to\pm\infty$ by Proposition  \ref{prop:lesch}\,$(i)$. The index of a norm-continuous family of Fredholm pairs of projections is constant (see, e.g., \cite[Lemma 3.2]{Le05}), and thus, if $t\ge T_0$, then
\beq \ind(E_{A(t)}([0,\infty)), E_{A(-t)}([0,\infty)))=\ind(E_{A_+}((0,\infty)), E_{A_-}((0,\infty))), \enq
yielding \eqref{tails} by Remark \ref{rem:term}\,$(i)$.
\end{proof}
Finally, we note that if both subspaces $\cS_+$ and $\cS_-$ are finite-dimensional then formulas 
\eqref{spf1}, \eqref{spf1a}, \eqref{spf2} become the well-known formula in finite-dimensional Morse theory (see, e.g., \cite{Ab01,AM03,RS95} and the much earlier literature cited therein):
\begin{equation}\label{indppr2}\begin{split}
\ind (\bsD_\bsA^{}) &=\ind(E_{A_-}((-\infty,0)),E_{A_+}((-\infty,0)))\\
&=\dim (\cS_+) - \dim (\cS_-), \quad \dim(\cS_\pm) <\infty.\end{split} 
\end{equation}

\appendix
\section{Some Facts on Direct Integrals of Closed Operators}
\lb{sA}
\renewcommand{\theequation}{A.\arabic{equation}}
\renewcommand{\thetheorem}{A.\arabic{theorem}}
\setcounter{theorem}{0} \setcounter{equation}{0}

We briefly recall some basic facts on closed operators and their graphs discussed 
in detail in Stone's fundamental paper \cite{St51} and then review some of its 
consequences for direct integrals of (unbounded) closed operators as developed in 
Nussbaum \cite{Nu64} (see also Pallu de la Barri\`ere \cite{Pa51}). For a detailed 
treatment of some of the material in this appendix we refer to \cite{GGST10}.

For simplicity, we make the following assumption:

\begin{hypothesis} \lb{hA.-1}
Let $\cH$ be a complex separable Hilbert space and $T$ a densely defined, closed,  
linear operator in $\cH$. 
\end{hypothesis}

We note that Stone \cite{St51} considers a more general situation, but Hypothesis 
\ref{hA.-1} perfectly fits the purpose of our paper.

By $\Gamma (T)$ we denote the graph of 
$T$, that is, the following subspace of the direct sum $\cH \oplus \cH$,
\begin{equation}
\Gamma (T) = \{\langle f, Tf\rangle \,|\, f \in \dom(T)\} \subseteq \cH \oplus \cH.  \lb{A.1}
\end{equation}  
Since $T$ is assumed to be closed, $\Gamma (T)$ is a closed subspace of 
$\cH \oplus \cH$. Here $\langle f, g \rangle$ denotes the ordered pair of $f, g \in \cH$,  and we use the standard norm
\begin{equation}
\|\langle f, g\rangle\|_{\cH\oplus \cH} = \big[\|f\|^2_{\cH} + \|g\|^2_{\cH}\big]^{1/2},   
\quad f, g \in \cH,    \lb{A.2}
\end{equation}
and scalar product 
\begin{equation}
(\langle f_1, g_1\rangle, \langle f_2, g_2\rangle)_{\cH\oplus \cH} 
= (f_1,f_2)_{\cH} + (g_1,g_2)_{\cH},   \quad f_j, g_j \in \cH, \; j=1,2,     \lb{A.3}
\end{equation}
in $\cH\oplus\cH$. 

If $B \in \cB(\cH\oplus\cH)$, one can uniquely represent $B$ as the $2\times 2$ block operator matrix
\begin{equation}
B = \begin{pmatrix} B_{1,1} & B_{1,2} \\ B_{2,1} & B_{2,2} \end{pmatrix},  \lb{A.4}
\end{equation} 
where $B_{j,k} \in \cB(\cH)$, $j, k \in \{1,2\}$. 

Denoting by 
\begin{equation}
P(\Gamma(T)) =  \begin{pmatrix} P(\Gamma(T))_{1,1} & P(\Gamma(T))_{1,2} \\ 
P(\Gamma(T))_{2,1} & P(\Gamma(T))_{2,2} \end{pmatrix}    \lb{A.5}
\end{equation}
the orthogonal projection onto $\Gamma (T)$, the corresponding matrix 
$(P(\Gamma(T))_{j,k})_{1\leq j,k \leq 2}$ will be called the {\it characteristic matrix} 
of $T$. Since by hypothesis $T$ is closed and densely defined, one actually obtains 
(cf.\ \cite{St51})
\begin{align}
\begin{split} 
& P(\Gamma(T))_{1,1} = (T^* T + I)^{-1},    \\ 
& P(\Gamma(T))_{1,2} = T^* (T T^* + I)^{-1},   \\ 
& P(\Gamma(T))_{2,1} = T (T^* T + I)^{-1} = (P(\Gamma(T))_{1,2})^*,   \lb{A.14}  \\
& P(\Gamma(T))_{2,2} = T T^* (T T^* + I)^{-1} = I - (T T^* + I)^{-1}. 
\end{split}  
\end{align} 

Next, we turn to families of densely defined, closed operators $\{T(t)\}_{t\in\bbR}$ 
in $\cH$ and use the following assumption for the remainder of this appendix:

\begin{hypothesis} \lb{hA.0}
Let $T(t)$, $t\in\bbR$, be densely defined, closed, linear operators in $\cH$. 
\end{hypothesis}

We need the following notions of measurable vector and operator families:

\begin{definition} \lb{dA.1}
$(i)$ Let $\bbR \ni t \mapsto g(t)\in \cH$. Then the family $\{g(t)\}_{t\in\bbR}$ 
is called {\it weakly measurable} in $\cH$ if $\bbR \ni t \mapsto (h, g(t))_{\cH}$ 
is $($Lebesgue$)$ measurable for each $h \in \cH$. \\ 
Next, assume Hypothesis \ref{hA.0}: \\
$(ii)$ The family $\{T(t)\}_{t\in\bbR}$ is called {\it weakly measurable} if for any 
weakly measurable family of elements $\{f(t)\}_{t\in\bbR}$ in $\cH$ such that 
$f(t) \in \dom(T(t))$ for all $t\in\bbR$, the family of elements $\{T(t) f(t)\}_{t\in\bbR}$ is weakly measurable in $\cH$. \\
$(iii)$ The family $\{T(t)\}_{t\in\bbR}$ is called {\it $N$-measurable} if the entries of the characteristic matrix of $T(t)$ are weakly measurable, that is, if 
$\big\{P(\Gamma(T(t)))_{j,k}\big\}_{t\in\bbR}$, $j,k \in \{1, 2\}$, are weakly measurable.  
\end{definition} 

We note that measurability of the characteristic matrix 
$(P(\Gamma(T(\cdot)))_{j,k})_{1\leq j,k \leq 2}$ of $T(\cdot)$ was introduced by 
Nussbaum \cite{Nu64}. In fact, he considered the more general situation of a general measure $d\mu$ and a $\mu$-measurable family of Hilbert spaces $\{\cH(t)\}_{t\in\bbR}$.   

We refer to \cite{Nu64} for more details in connection with items $(ii)$--$(iv)$ in 
Remark \ref{rA.2} below: 

\begin{remark}  \lb{rA.2} 
$(i)$ Since $\cH$ is assumed to be separable,
weak measurability of the family  $\{g(t)\}_{t\in\bbR}$ in $\cH$
is equivalent to measurability, that is, there exists a sequence of
countably-valued elements $\{g_n(t)\}_{t\in\bbR} \subset \cH$,
$n\in\bbN$, and a set $\cE \subset \bbR$ of Lebesgue measure zero
such that $\lim_{n\to\infty} \|g_n(t) - g(t)\|_{\cH} =0$ for each
$t\in \bbR\backslash \cE$. Thus, the family $\{g(t)\}_{t\in\bbR}$ is 
(weakly) measurable in $\cH$ if there exists a dense set $\cY \subset \cH$
such that the function $(y,g(\cdot))_{\cH}$ is measurable for every $y \in \cY$, see, 
for instance, \cite[Corollary\ 1.1.3]{ABHN01}, \cite[p.\ 42--43]{DU77}. Moreover,
\begin{equation}
f,g: \bbR \mapsto \cH \, \text{ measurable } \,
\Longrightarrow \, (f(\cdot), g(\cdot))_{\cH} \, \text{ is measurable}.   \lb{A.14aa}
\end{equation}
$(ii)$ If $\cH_1, \cH_2, \cH_3$ are complex, separable Hilbert spaces and 
$F: \bbR \mapsto \cB(\cH_1, \cH_2)$ and $G: \bbR \mapsto \cB(\cH_2, H_3)$
are strongly measurable, then $G \, F : \bbR \mapsto \cB(\cH_1, \cH_3)$ is strongly measurable, see, for instance, \cite[Lemma A4]{Ka73a}. (Here strong (operator) measurability of $F: \bbR \mapsto \cB(\cH_1, \cH_2)$ is defined pointwise, i.e., 
for all $f\in\cH_1$, $\{F(t) f\}_{t\in\bbR}$ is (weakly) measurable in $\cH_2$.)  \\ 
$(iii)$ One can show that 
\begin{equation}
\text{$N$-measurability of $\{T(t)\}_{t\in\bbR}$ $\Longrightarrow$ weak measurability of 
$\{T(t)\}_{t\in\bbR}$},   \lb{A.14a}
\end{equation} 
but the converse is false. For an example of a weakly measurable family of symmetric operators which is not $N$-measurable, we refer to Example \ref{eA.5} below. \\
$(iv)$ Since $P(\Gamma(T(t)))_{2,1} = (P(\Gamma(T(t)))_{1,2})^*$, or 
equivalently, since 
\begin{align}
\begin{split}
\big[T(t) \big(T(t)^* T(t) + I\big)^{-1}\big]^* &= T(t)^* (T(t) T(t)^* + I)^{-1} \\ 
& \supseteq  (T(t)^* T(t) + I)^{-1} T(t)^*,  
\end{split} 
\end{align}
as $T(t)$ is closed in $\cH$, weak measurability of 
$\{P(\Gamma(T(t)))_{1,2}\}_{t\in\bbR}$ is equivalent to that of 
$\{P(\Gamma(T(t)))_{2,1}\}_{t\in\bbR}$. 
Thus, by \eqref{A.14}, 
\begin{align}
& \text{$N$-measurability of $\{T(t)\}_{t\in\bbR}$ is equivalent to weak measurability of}  \no  \\
& \quad \big\{\big(|T(t)|^2 + I\big)^{-1}\big\}_{t\in\bbR}, 
\quad \big\{T(t) \big(|T(t)|^2 + I\big)^{-1}\big\}_{t\in\bbR},    \lb{A.14A} \\
& \quad \text{and } \, \big\{\big(|T(t)^*|^2 + I\big)^{-1}\big\}_{t\in\bbR}.    \no 
\end{align} 
\end{remark}

\begin{example}  [\cite{GGST10}] \lb{eA.5}
Let $T_0$ and $T_1$ be densely defined, closed, unbounded, symmetric operators in $\cH$ satisfying
\begin{equation}
T_0 \subsetneq T_1.   \lb{4.2}
\end{equation}
Let $\gE\subset \bbR$ be a nonmeasurable subset of $\bbR$ (in the sense of Lebesgue measure) and introduce the linear operators 
\begin{equation}
\wti T(t) = \begin{cases} T_0, & t \in\gE,  \\
T_1, & t \in \bbR\backslash\gE,       
\end{cases}      \lb{4.3}
\end{equation} 
in $\cH$. Then the family $\big\{\wti T(t)\big\}_{t\in\bbR}$ is weakly measurable, but not 
$N$-measurable.
\end{example}

\smallskip 
The Hilbert space $L^2(\bbR; dt; \cH)$, in short, $L^2(\bbR; \cH)$, consists of equivalence classes $f$ of (weakly) Lebesgue measurable 
$\cH$-valued elements $f(\cdot)\in\cH$ (whose elements are 
equal a.e.\ on $\bbR$), such that $\|f(\cdot)\|_{\cH} \in L^2(\bbR; dt)$. The 
norm and scalar product on $L^2(\bbR; \cH)$ are then given by
\beq
  \|f\|_{L^2(\bbR; \cH)}^2 = \int_{\bbR}  \|f(t)\|_{\cH}^2 \, dt, \;\;
  (f,g)_{L^2(\bbR; \cH)} = \int_{\bbR}  (f(t), g(t))_{\cH} \, dt, \;\;
  f, g \in L^2(\bbR; \cH).   \lb{A.15} 
\enq

Of course, $L^2(\bbR; \cH)$ can be identified with the constant fiber 
direct integral $\int_{\bbR}^{\oplus} \cH \, dt$, that is,
\beq
L^2(\bbR; \cH) = \int_{\bbR}^{\oplus} \cH \, dt.    \lb{A.16} 
\enq

Throughout the rest of this appendix, operators denoted by a calligraphic boldface letter such as $\boldsymbol{\cS}$ in the 
Hilbert space $L^2(\bbR;\cH)$ represent operators associated with a 
family of operators $\{S(t)\}_{t\in\bbR}$ in $\cH$, defined by
\begin{align}
&(\boldsymbol{\cS} f)(t) = S(t) f(t) \, \text{ for a.e.\ $t\in\bbR$,}    \no \\
& f \in \dom(\boldsymbol{\cS}) = \bigg\{g \in L^2(\bbR;\cH) \,\bigg|\,
g(t)\in \dom(S(t)) \text{ for a.e.\ } t\in\bbR,    \lb{A.17}  \\
& \quad t \mapsto S(t)g(t) \text{ is (weakly) measurable,} \, 
\int_{\bbR} \|S(t) g(t)\|_{\cH}^2 \, dt < 
\infty\bigg\}.   \no
\end{align}

Assuming Hypothesis \ref{hA.0}, we note that $\boldsymbol{\cT}$, defined according to \eqref{A.17}, with $T(t)$ satisfying Hypothesis \ref{hA.0}, is closed in $L^2(\bbR; \cH)$ since $T(t)$, $t\in\bbR$, are closed in $\cH$ (but $\boldsymbol{\cT}$ might not be densely defined). If in addition, the family $\{T(t)\}_{t\in\bbR}$ is $N$-measurable, then $\boldsymbol{\cT}$ is called {\it decomposable} in 
$L^2(\bbR; \cH) = \int_{\bbR}^{\oplus} \cH \, dt$ and also denoted by the direct integral of the family $\{T(t)\}_{t\in\bbR}$ over $\bbR$ with respect to Lebesgue measure, 
\begin{equation}
\bsT = \int_{\bbR}^{\oplus} T(t) \, dt.   \lb{A.18}
\end{equation}
In this case, one also has 
\begin{equation}
\bsP(\Gamma(\bsT))_{j,k} = \int_{\bbR}^{\oplus} P(\Gamma(T(t)))_{j,k} \, dt, \quad 
j,k \in \{1,2\}.   \lb{A.19}
\end{equation} 

If $T(t) \in \cB(\cH)$, $t\in\bbR$, then 
\begin{equation}
\boldsymbol{\cT} \in \cB(L^2(\bbR; \cH)) \Longleftrightarrow {\rm esssup}_{t\in\bbR} \|T(t)\|_{\cB(\cH)} 
< \infty, 
\end{equation}
in particular, if $\boldsymbol{\cT} \in \cB(L^2(\bbR; \cH))$, then 
\begin{equation}
\|\boldsymbol{\cT} \|_{\cB(L^2(\bbR; \cH))} = {\rm esssup}_{t\in\bbR} \|T(t)\|_{\cB(\cH)}.  
\end{equation}

We recall the following results of Nussbaum \cite{Nu64} (in fact, he deals with the more general situation where the constant fiber space $\cH$ is replaced by a measurable family of Hilbert spaces $\{\cH(t)\}_{t\in\bbR}$):

\begin{lemma} [Nussbaum \cite{Nu64}]  \lb{lA.5} 
Assume Hypothesis \ref{hA.0} and suppose in addition that the family 
$\{T(t)\}_{t\in\bbR}$ is weakly measurable. Define $\boldsymbol{\cT}$ according 
to \eqref{A.17},
\begin{align}
&(\boldsymbol{\cT} f)(t) = T(t) f(t) \, \text{ for a.e.\ $t\in\bbR$,}    \no \\
& f \in \dom(\boldsymbol{\cT}) = \bigg\{g \in L^2(\bbR;\cH) \,\bigg|\,
g(t)\in \dom(T(t)) \text{ for a.e.\ } t\in\bbR,    \lb{A.19a}  \\
& \quad t \mapsto T(t)g(t) \text{ is $($weakly\,$)$ measurable,} \, 
\int_{\bbR} \|T(t) g(t)\|_{\cH}^2 \, dt < 
\infty\bigg\}.   \no
\end{align}
Then $\boldsymbol{\cT}$ is a closed, decomposable operator in 
$L^2(\bbR; \cH) = \int_{\bbR}^{\oplus} \cH \, dt$. Thus, there exists an $N$-measurable 
family of closed operators $\big\{\hatt T(t)\big\}_{t\in\bbR}$ in $\cH$ such that
\begin{equation}
\boldsymbol{\cT} = \int_{\bbR}^{\oplus} \hatt T(t) \, dt
\end{equation}
and
\begin{equation}
\hatt T(t) \subseteq T(t) \, \text{ for a.e.\ $t\in\bbR$.}
\end{equation}
\end{lemma}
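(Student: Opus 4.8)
The statement to be proved is Lemma \ref{lA.5}, which asserts that the maximally defined multiplication operator $\boldsymbol{\cT}$ associated with a weakly measurable family $\{T(t)\}_{t\in\bbR}$ of densely defined, closed operators is closed and \emph{decomposable}, i.e., it coincides with a direct integral of an $N$-measurable family $\{\hatt T(t)\}_{t\in\bbR}$ with $\hatt T(t)\subseteq T(t)$ a.e. The key conceptual device is the characteristic matrix: by \eqref{A.14}, a closed densely defined operator $S$ in $\cH$ is completely encoded by the block operator matrix $P(\Gamma(S))=(P(\Gamma(S))_{j,k})_{1\le j,k\le 2}\in\cB(\cH\oplus\cH)$, the orthogonal projection onto the graph $\Gamma(S)\subseteq\cH\oplus\cH$. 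The plan is to pass from the (merely weakly measurable) operator family to the bounded operator family of characteristic matrices, where measurability questions become tractable, use standard direct-integral theory for \emph{bounded} decomposable operators, and then translate back.

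\textbf{Step 1: reduce to the graph projection.} First I would record that closedness of $\boldsymbol{\cT}$ in $L^2(\bbR;\cH)$ is immediate from closedness of each $T(t)$ in $\cH$ (as already noted in the text following \eqref{A.17}); this part requires no measurability. Next, consider the operator $\boldsymbol{\cP}$ in $L^2(\bbR;\cH)\oplus L^2(\bbR;\cH)\cong L^2(\bbR;\cH\oplus\cH)$ defined fiberwise by $(\boldsymbol{\cP} F)(t)=P(\Gamma(T(t)))F(t)$. The crucial point is that weak measurability of $\{T(t)\}_{t\in\bbR}$, together with Remark \ref{rA.2}, does \emph{not} a priori give measurability of the entries of the characteristic matrix — that is precisely the distinction between weak measurability and $N$-measurability, and Example \ref{eA.5} shows it can genuinely fail. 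So I would instead argue that $\boldsymbol{\cP}$, as the orthogonal projection onto the \emph{closed subspace} $\Gamma(\boldsymbol{\cT})\subseteq L^2(\bbR;\cH\oplus\cH)$, is a bounded operator that commutes with all diagonalizable (scalar multiplication) operators on $L^2(\bbR;\cH\oplus\cH)$: indeed, $\Gamma(\boldsymbol{\cT})$ is invariant under multiplication by any $L^\infty(\bbR)$ scalar function because the defining conditions in \eqref{A.17} are local in $t$. By the standard characterization of decomposable operators (von Neumann's theory of direct integrals — the commutant of the diagonal algebra consists exactly of decomposable operators; see, e.g., the treatment in \cite{Nu64}), $\boldsymbol{\cP}$ is therefore decomposable: $\boldsymbol{\cP}=\int_\bbR^\oplus Q(t)\,dt$ for some weakly measurable family of operators $Q(t)\in\cB(\cH\oplus\cH)$, and a.e.\ $Q(t)$ is an orthogonal projection.

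\textbf{Step 2: identify the fibers and conclude.} For a.e.\ $t$, $Q(t)$ is the orthogonal projection onto some closed subspace $\cM(t)\subseteq\cH\oplus\cH$. I would show that $\cM(t)$ is the graph of a closed operator $\hatt T(t)$ with $\hatt T(t)\subseteq T(t)$: since $\Gamma(\boldsymbol{\cT})\subseteq\int_\bbR^\oplus\Gamma(T(t))\,dt$ in the obvious sense (any $f\in\dom(\boldsymbol{\cT})$ has $f(t)\in\dom(T(t))$ and $(\boldsymbol{\cT}f)(t)=T(t)f(t)$ a.e.), one gets $\cM(t)\subseteq\Gamma(T(t))$ a.e. The subspace $\cM(t)$ is a graph (i.e., contains no nonzero element of the form $\langle 0,g\rangle$) because $\Gamma(T(t))$ has this property and $\cM(t)$ is a subspace of it; hence $\cM(t)=\Gamma(\hatt T(t))$ for a closed $\hatt T(t)\subseteq T(t)$, and $Q(t)=P(\Gamma(\hatt T(t)))$, whose entries are weakly measurable in $t$ by construction — that is exactly $N$-measurability of $\{\hatt T(t)\}_{t\in\bbR}$. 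Finally, $\boldsymbol{\cT}=\int_\bbR^\oplus\hatt T(t)\,dt$ follows because two closed operators with the same graph projection coincide, and $\Gamma\big(\int_\bbR^\oplus\hatt T(t)\,dt\big)$ has graph projection $\int_\bbR^\oplus Q(t)\,dt=\boldsymbol{\cP}=P(\Gamma(\boldsymbol{\cT}))$.

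\textbf{Main obstacle.} The delicate point is Step 1: establishing that $\boldsymbol{\cP}$ is decomposable despite having only weak measurability of $\{T(t)\}_{t\in\bbR}$ as input. One must be careful that the commutation-with-diagonals argument is legitimate — this rests on the separability of $\cH\oplus\cH$ and on the measurability theory for bounded operator fields — and that the selection of the fiber family $Q(t)$ (hence $\hatt T(t)$) is done on a genuinely measurable footing, so that the resulting family is $N$-measurable rather than merely weakly measurable. This is the crux that separates the present lemma from the false converse noted in \eqref{A.14a}, and it is exactly where the full force of Nussbaum's framework \cite{Nu64} (or the detailed treatment in \cite{GGST10}) is needed; I would invoke those references for the precise measurable-selection statement rather than reprove it.
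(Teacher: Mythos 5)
The paper does not actually prove Lemma \ref{lA.5}; it is quoted verbatim from Nussbaum \cite{Nu64} (with \cite{GGST10} for details), so there is no in-paper argument to compare yours against. That said, your route is essentially the classical one behind Nussbaum's theorem: closedness of $\boldsymbol{\cT}$ is fiberwise and needs no measurability; the graph $\Gamma(\boldsymbol{\cT})\subseteq L^2(\bbR;\cH\oplus\cH)$ is invariant under multiplication by $L^\infty(\bbR)$ scalars because the defining conditions in \eqref{A.19a} are local in $t$ and homogeneous under such multiplications, hence the graph projection $\boldsymbol{\cP}$ commutes with the (self-adjoint) diagonal algebra and is decomposable by the von Neumann commutation/reduction theorem, $\boldsymbol{\cP}=\int_\bbR^\oplus Q(t)\,dt$ with $Q(t)$ orthogonal projections a.e.; the fibers are then subgraphs of $\Gamma(T(t))$, which forces them to be graphs of closed $\hatt T(t)\subseteq T(t)$, and weak measurability of the entries of $Q(\cdot)$ is exactly $N$-measurability in the sense of Definition \ref{dA.1}\,$(iii)$. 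This is sound, and deferring the commutant-equals-decomposables theorem to the literature is no worse than what the paper itself does.

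Two points you leave implicit deserve a sentence each. First, the inclusion $\ran(Q(t))\subseteq\Gamma(T(t))$ for a.e.\ $t$ cannot be read off from an ``inclusion of direct integrals of graphs,'' since the field $t\mapsto\Gamma(T(t))$ need not be measurable (this is precisely the content of Example \ref{eA.5}); instead one evaluates $\boldsymbol{\cP}$ on a countable family such as $G_{j,k}=\chi_{I_j}e_k$ with $\{I_j\}$ the rational intervals and $\{e_k\}$ dense in $\cH\oplus\cH$, notes that $(\boldsymbol{\cP}G_{j,k})(t)=Q(t)G_{j,k}(t)\in\Gamma(T(t))$ off a single null set, observes that $\{G_{j,k}(t)\}$ is dense in $\cH\oplus\cH$ for every $t$, and uses closedness of $\Gamma(T(t))$. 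Second, the concluding identity $\boldsymbol{\cT}=\int_\bbR^\oplus\hatt T(t)\,dt$ is most safely obtained by the two elementary inclusions between maximal multiplication operators ($\hatt T(t)\subseteq T(t)$ gives one direction; $F\in\ran(\boldsymbol{\cP})\Rightarrow F(t)\in\ran(Q(t))$ a.e.\ gives the other), which avoids invoking \eqref{A.19} — itself part of the framework being established. With these routine additions your proposal is a complete proof along the standard (Nussbaum) lines.
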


We note that in general $\boldsymbol{\cT}$ is not densely defined in $L^2(\bbR; \cH)$ (cf.\ \cite{GGST10}). 

\begin{theorem} [Nussbaum \cite{Nu64}]  \lb{tA.6} 
Assume Hypothesis \ref{hA.0} and suppose in addition that the family 
$\{T(t)\}_{t\in\bbR}$ is $N$-measurable. Then the following assertions hold: \\
$(i)$ $\bsT = \int_{\bbR}^{\oplus} T(t) \, dt$ is densely defined and closed in 
$L^2(\bbR; \cH) = \int_{\bbR}^{\oplus} \cH \, dt$ and  
\begin{equation}
\bsT^* = \int_{\bbR}^{\oplus} T(t)^* \, dt, \quad 
|\bsT| = \int_{\bbR}^{\oplus} |T(t)| \, dt.      \lb{A.20}
\end{equation}
$(ii)$ $\bsT$ is symmetric $($resp., self-adjoint, or normal\,$)$ if and only if $T(t)$ is 
symmetric $($resp., self-adjoint, or normal\,$)$ for a.e.\ $t\in\bbR$. \\
$(iii)$ $\ker(\bsT) = \{0\}$ if and only if $\ker(T(t)) = \{0\}$ for a.e.\ $t\in\bbR$. In addition, 
if $\ker(\bsT) = \{0\}$ then $\big\{T(t)^{-1}\big\}_{t\in\bbR}$ is $N$-measurable and 
\begin{equation}
\bsT^{-1} = \int_{\bbR}^{\oplus} T(t)^{-1} \, dt.    \lb{A.21}
\end{equation}
$(iv)$ If $\bsT$ is self-adjoint in $L^2(\bbR; \cH)$, then $\bsT \geq 0$ if and only 
if $T(t) \geq 0$ for a.e.\ $t\in\bbR$. \\
$(v)$ If $\bsT$ is normal in $L^2(\bbR; \cH)$, then 
\begin{equation}
p(\bsT) = \int_{\bbR}^{\oplus} p(T(t)) \, dt    \lb{A.22} 
\end{equation}
for any polynomial $p$. \\
$(vi)$ Let $S(t)$, $t\in\bbR$, be densely defined, closed operators in $\cH$ and assume 
that the family $\{S(t)\}_{t\in\bbR}$ is $N$-measurable and 
$\bsS = \int_{\bbR}^{\oplus} S(t)\, dt$. Then $\bsT \subseteq \bsS$ if and only if 
$T(t) \subseteq S(t)$ for a.e.\ $t\in\bbR$.
\end{theorem}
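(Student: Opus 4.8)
The plan is, following Nussbaum \cite{Nu64}, to realize $\bsT$ via the characteristic matrix of a bounded decomposable \emph{projection}, and then to reduce each of (i)--(vi) to the single principle that a decomposable operator inherits, and a.e.\ reflects, the corresponding fiberwise property. First I would introduce $\bsP = \int_{\bbR}^{\oplus} P(\Gamma(T(t)))\,dt$, which is a well-defined operator in $\cB\big(L^2(\bbR;\cH)\oplus L^2(\bbR;\cH)\big)$ precisely because $\{T(t)\}_{t\in\bbR}$ is $N$-measurable (the entries $\{P(\Gamma(T(t)))_{j,k}\}_{t\in\bbR}$ being weakly measurable and uniformly bounded by $1$). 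Being decomposable with self-adjoint idempotent fibers, $\bsP$ is an orthogonal projection, and its range is $\int_{\bbR}^{\oplus}\Gamma(T(t))\,dt = \{\langle f,g\rangle : \langle f(t),g(t)\rangle \in \Gamma(T(t)) \text{ for a.e.\ } t\}$, which by \eqref{A.17} is exactly $\Gamma(\bsT)$ (the measurability of $t\mapsto T(t)f(t)$ demanded in \eqref{A.17} being automatic, since there $T(t)f(t)=g(t)$). Hence $\Gamma(\bsT)$ is closed, so $\bsT$ is closed, $\bsP = P(\Gamma(\bsT))$, and \eqref{A.19} holds. Dense definedness of $\bsT$ then follows from the identity $\dom(\bsT)^{\perp} = \ker\!\big(P(\Gamma(\bsT))_{1,1}\big)$: by \eqref{A.14} and \eqref{A.19}, $P(\Gamma(\bsT))_{1,1} = \int_{\bbR}^{\oplus}(|T(t)|^2 + I)^{-1}\,dt$, and a decomposable operator with boundedly injective fibers is injective, so $\dom(\bsT)^{\perp} = \{0\}$.

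For the remaining identities in (i), I would use the orthogonality relation $J\,\Gamma(\bsT^*) = \Gamma(\bsT)^{\perp}$, where $J\langle f,g\rangle = \langle -g,f\rangle$ is unitary and decomposable with constant fiber $J_0$, together with $\Gamma(\bsT)^{\perp} = \ran(\bsI - \bsP) = \int_{\bbR}^{\oplus}\Gamma(T(t))^{\perp}\,dt = \int_{\bbR}^{\oplus} J_0\,\Gamma(T(t)^*)\,dt$; cancelling $J$ gives $\Gamma(\bsT^*) = \int_{\bbR}^{\oplus}\Gamma(T(t)^*)\,dt$, which is $\Gamma\big(\int_{\bbR}^{\oplus}T(t)^*\,dt\big)$ since $\{T(t)^*\}_{t\in\bbR}$ is again $N$-measurable (its characteristic matrix is the fixed unitary rearrangement of that of $T(t)$ encoded by $J_0$). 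Consequently $\bsT^*\bsT = \int_{\bbR}^{\oplus}|T(t)|^2\,dt$, and since for any decomposable self-adjoint $\bsC = \int_{\bbR}^{\oplus} C(t)\,dt$ and Borel $g$ one has $g(\bsC) = \int_{\bbR}^{\oplus} g(C(t))\,dt$ (first for bounded $g$, via weak measurability of the resolvents $(C(t)-z)^{-1}$ and a Stone--Weierstrass/monotone-class argument, then for $g(\lambda)=\lambda^{1/2}$ by a monotone strong limit), one obtains $|\bsT| = (\bsT^*\bsT)^{1/2} = \int_{\bbR}^{\oplus}|T(t)|\,dt$, completing (i).

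Assertions (ii)--(vi) then all follow from one general principle, of which (vi) is the cleanest: $\bsT\subseteq\bsS \iff \Gamma(\bsT)\subseteq\Gamma(\bsS) \iff P(\Gamma(\bsT))\le P(\Gamma(\bsS))$, and for decomposable projections $\int^{\oplus}P_1(t)\,dt \le \int^{\oplus}P_2(t)\,dt$ iff $P_1(t)\le P_2(t)$ for a.e.\ $t$ (test against decomposable vectors $f(t)=h(t)$ with $h$ from a countable dense set). Granting (vi): symmetry/self-adjointness in (ii) come from applying (vi) to $\bsT$ against $\bsT^*$ (using $\bsT^* = \int^{\oplus}T(t)^*\,dt$ from (i)); normality from $\bsT^*\bsT = \bsT\bsT^* \iff T(t)^*T(t)=T(t)T(t)^*$ a.e.\ (equality of direct integrals of self-adjoint operators being equivalent to a.e.\ equality of resolvents on a countable dense set of $z$). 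For (iii), the projection onto $\ker(T(t))$ equals $\slim_{\varepsilon\downarrow 0}\varepsilon(|T(t)|^2 + \varepsilon I)^{-1}$, a strong limit of functions of $P(\Gamma(T(t)))_{1,1}$, hence a weakly measurable family whose direct integral projects onto $\ker(\bsT)$; thus $\ker(\bsT)=\{0\}$ iff this vanishes a.e.\ iff $\ker(T(t))=\{0\}$ a.e., and then $\Gamma(T(t)^{-1}) = W_0\,\Gamma(T(t))$ with $W_0\langle f,g\rangle = \langle g,f\rangle$ gives $N$-measurability of $\{T(t)^{-1}\}$ and $\bsT^{-1} = \int^{\oplus}T(t)^{-1}\,dt$, exactly as for $\bsT^*$. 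Part (iv) is analogous, using weak measurability of $\{E_{T(t)}((-\infty,0))\}_{t\in\bbR}$ (extracted from the resolvents of the self-adjoint fibers) to detect negative spectrum on a positive-measure set; (v) reduces by linearity to powers $\bsT^n = \int^{\oplus}T(t)^n\,dt$, handled by induction since products of decomposable operators are decomposable.

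The main obstacle will be the backward (``a.e.\ reflects'') directions in (iii) and (iv): these are not purely formal, requiring that the spectral data of the unbounded fibers — kernel projections, negative spectral projections, and resolvents $(T(t)-z)^{-1}$ — form weakly measurable families extracted from $N$-measurability of the characteristic matrix through strong limits and functional calculus, together with the fact that a decomposable projection vanishes iff its fibers vanish a.e. Once this measurability bookkeeping is in place, the rest is routine manipulation of graphs and direct integrals.
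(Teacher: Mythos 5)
You should first be aware that the paper contains no proof of Theorem \ref{tA.6}: it is imported verbatim from Nussbaum \cite{Nu64}, and Appendix \ref{sA} merely builds the characteristic-matrix language needed to state it. So there is nothing in the paper to compare line by line; what one can say is that your proposal follows precisely the route this machinery anticipates (and that Nussbaum takes): realize $\Gamma(\bsT)$ as the range of the decomposable projection $\int_{\bbR}^{\oplus}P(\Gamma(T(t)))\,dt$, deduce closedness and \eqref{A.19}, get density from injectivity of the fibers of the $(1,1)$ entry via $\dom(\bsT)^{\perp}=\ker\big(P(\Gamma(\bsT))_{1,1}\big)$ together with \eqref{A.14} applied fiberwise, obtain $\bsT^{*}$ through the unitary graph rotation, and reduce (ii)--(vi) to the principle that a bounded decomposable operator vanishes iff its fibers vanish a.e.\ (tested against a countable dense set). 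That outline, including your treatment of (iii) and (iv) via weakly measurable kernel and spectral projections obtained as strong limits of functions of $P(\Gamma(T(t)))_{1,1}$, is sound.

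Two steps are thinner than the rest and would need to be filled in. First, the identity $\bsT^{*}\bsT=\int_{\bbR}^{\oplus}T(t)^{*}T(t)\,dt$, which you use both for $|\bsT|$ and for the normality criterion in (ii), is asserted with a ``consequently'': you need the inclusion $\int_{\bbR}^{\oplus}T(t)^{*}T(t)\,dt\subseteq\bsT^{*}\bsT$ (easy, by Cauchy--Schwarz in each fiber) together with self-adjointness of the left-hand side (for instance because $\{T(t)^{*}T(t)\}_{t\in\bbR}$ is again $N$-measurable, its characteristic matrix entries being Borel functions of $P(\Gamma(T(t)))_{1,1}$, so your adjoint formula applies), after which equality follows since self-adjoint operators are maximal symmetric. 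Second, in (v) the phrase ``products of decomposable operators are decomposable'' is not literally true for unbounded fibers without taking closures — compare Theorem \ref{lA.7}\,(ii), which only yields $\ol{\bsA\bsB}=\int_{\bbR}^{\oplus}\ol{A(t)B(t)}\,dt$; here normality of $\bsT$ and (by (ii)) of a.e.\ fiber rescues the statement, since then $\bsT^{n}$, $T(t)^{n}$, and more generally $p(\bsT)$, $p(T(t))$, are already closed with domains controlled by the functional calculus, but that is exactly the point you must make explicit. With these repairs your sketch is a faithful reconstruction of the cited result.
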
 

Since $N$-measurability is a crucial hypothesis in Theorem \ref{tA.6}, we emphasize 
Remark \ref{rA.2}\,$(iv)$ which represents necessary and sufficient conditions which 
seem verifiable in practical situations. In addition, we note the following result: 

\begin{lemma} \lb{lA.6} 
Assume Hypothesis \ref{hA.0} and suppose that 
\begin{equation} 
\{T(t)\}_{t\in\bbR}, \quad \big\{\big(|T(t)|^2 + I)^{-1}\big\}_{t\in\bbR}, \, \text{ and } \, 
\big\{T(t)\big(|T(t)|^2+I\big)^{-1}\big\}_{t\in\bbR}
\end{equation} 
are weakly measurable. Then $\{T(t)\}_{t\in\bbR}$ is $N$-measurable. 
\end{lemma}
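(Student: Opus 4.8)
The plan is to reduce $N$-measurability of $\{T(t)\}_{t\in\bbR}$ to the weak measurability of the three families forming the characteristic matrix entries, via the identities in \eqref{A.14}. By Remark \ref{rA.2}\,$(iv)$, and since $P(\Gamma(T(t)))_{2,2} = I - (T(t)T(t)^* + I)^{-1}$, it suffices to show that $\big\{\big(|T(t)^*|^2 + I\big)^{-1}\big\}_{t\in\bbR}$ is weakly measurable, given the weak measurability of $\{T(t)\}_{t\in\bbR}$, $\big\{\big(|T(t)|^2 + I\big)^{-1}\big\}_{t\in\bbR}$, and $\big\{T(t)\big(|T(t)|^2+I\big)^{-1}\big\}_{t\in\bbR}$. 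The key observation is that the operator $T(t)\big(|T(t)|^2 + I\big)^{-1}$ already appears as $P(\Gamma(T(t)))_{2,1}$, so by Remark \ref{rA.2}\,$(iv)$ its weak measurability is equivalent to that of $P(\Gamma(T(t)))_{1,2} = T(t)^*\big(|T(t)^*|^2 + I\big)^{-1}$ (this uses that $T(t)$ is closed, so the adjoint relation $\big[P(\Gamma(T(t)))_{2,1}\big]^* = P(\Gamma(T(t)))_{1,2}$ holds, together with the fact that taking adjoints of a weakly measurable family of bounded operators preserves weak measurability, since $(h, S(t)^* k)_{\cH} = \overline{(k, S(t) h)_{\cH}}$).

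First I would record the pointwise operator identity connecting the missing entry to the available ones. From the $2\times 2$ block structure of the orthogonal projection $P(\Gamma(T(t)))$ one has $P(\Gamma(T(t)))_{1,2}\,P(\Gamma(T(t)))_{2,1} = P(\Gamma(T(t)))_{1,1} - P(\Gamma(T(t)))_{1,1}^2$; more directly, one can use
\begin{equation}
\big(|T(t)^*|^2 + I\big)^{-1} = I - T(t)\big(|T(t)|^2+I\big)^{-1} \big[T(t)\big(|T(t)|^2+I\big)^{-1}\big]^*,
\end{equation}
which follows from \eqref{A.14} since $P(\Gamma(T(t)))_{2,2} = P(\Gamma(T(t)))_{2,1}\big[P(\Gamma(T(t)))_{2,1}\big]^*$ by idempotency and self-adjointness of the projection (the $(2,2)$ block of $P^2 = P$ reads $P_{2,1}P_{1,2} + P_{2,2}^2 = P_{2,2}$, and combined with $P_{2,2}=I-(T(t)T(t)^*+I)^{-1}$ one extracts the displayed formula). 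Thus $\big(|T(t)^*|^2 + I\big)^{-1}$ is expressed algebraically in terms of $T(t)\big(|T(t)|^2+I\big)^{-1}$ and its adjoint.

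Next I would invoke the two elementary stability properties of weak measurability for families of bounded operators: that the adjoint family $\{S(t)^*\}_{t\in\bbR}$ is weakly measurable whenever $\{S(t)\}_{t\in\bbR}$ is (immediate from $(h,S(t)^*k)_{\cH} = \overline{(k,S(t)h)_{\cH}}$ and measurability being preserved under complex conjugation), and that products $\{S_1(t)S_2(t)\}_{t\in\bbR}$ of weakly measurable bounded families are weakly measurable — this is Remark \ref{rA.2}\,$(ii)$ (strong operator measurability, which for $\cH$ separable coincides with weak measurability by Remark \ref{rA.2}\,$(i)$, is stable under composition). Applying these to the displayed identity shows $\big\{\big(|T(t)^*|^2 + I\big)^{-1}\big\}_{t\in\bbR}$ is weakly measurable. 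Together with the hypothesized weak measurability of $\big\{\big(|T(t)|^2 + I\big)^{-1}\big\}_{t\in\bbR}$ and $\big\{T(t)\big(|T(t)|^2+I\big)^{-1}\big\}_{t\in\bbR}$, and the adjoint-stability giving weak measurability of $\big\{T(t)^*\big(|T(t)^*|^2+I\big)^{-1}\big\}_{t\in\bbR} = \big\{P(\Gamma(T(t)))_{1,2}\big\}_{t\in\bbR}$, all four entries of the characteristic matrix are weakly measurable, which is precisely $N$-measurability of $\{T(t)\}_{t\in\bbR}$ by Definition \ref{dA.1}\,$(iii)$.

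The only mild obstacle I anticipate is bookkeeping the block-matrix algebra cleanly: one must be careful that the formula $P_{2,2} = P_{2,1}P_{2,1}^*$ genuinely follows from $P^2 = P = P^*$ (it does, reading off the $(2,2)$ entry of $P^*P = P$ and using $P_{1,2} = P_{2,1}^*$), and that no domain issues arise — but all the operators $P(\Gamma(T(t)))_{j,k}$ are everywhere-defined bounded operators by \eqref{A.14}, so the manipulations are purely within $\cB(\cH)$ and fully rigorous. Apart from this, the argument is a short chain of the stability properties of measurability already assembled in Remark \ref{rA.2}.
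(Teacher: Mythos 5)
Your first step is fine and matches the paper: since $T(t)\big(|T(t)|^2+I\big)^{-1}\in\cB(\cH)$ and $\big(T(t)(|T(t)|^2+I)^{-1}\big)^*=T(t)^*\big(|T(t)^*|^2+I\big)^{-1}$, adjoint-stability of weak measurability gives weak measurability of $\big\{T(t)^*(|T(t)^*|^2+I)^{-1}\big\}_{t\in\bbR}$. The gap is in your key displayed identity
$\big(|T(t)^*|^2+I\big)^{-1}=I-T(t)\big(|T(t)|^2+I\big)^{-1}\big[T(t)\big(|T(t)|^2+I\big)^{-1}\big]^*$,
which is false. Writing $R=(T^*T+I)^{-1}$, $S=(TT^*+I)^{-1}$, $G=TR$, one has $G^*=T^*S$ and, using the intertwining relations, $GG^*=S(I-S)$ and $G^*G=R(I-R)$; so your identity would force $S=I-S+S^2$, i.e.\ $T=0$, and it already fails for $T$ a nonzero scalar. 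The source of the error is the block algebra: the $(2,2)$ entry of $P^2=P$ gives $P_{2,1}P_{1,2}+P_{2,2}^2=P_{2,2}$, hence $P_{2,1}P_{1,2}=P_{2,2}-P_{2,2}^2$, not $P_{2,2}=P_{2,1}P_{2,1}^*$; you cannot drop the $P_{2,2}^2$ term.

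The flaw is not cosmetic, because your strategy is purely algebraic in the two bounded families and never invokes the hypothesis that the \emph{unbounded} family $\{T(t)\}_{t\in\bbR}$ is weakly measurable (Definition \ref{dA.1}\,$(ii)$). But from $R$, $G$, and $G^*$ alone one only obtains $S-S^2=GG^*$, and $S$ is not recoverable from $S-S^2$ by functional calculus (the map $s\mapsto s-s^2$ is not injective on $(0,1]$: $S$ and $I-S$ produce the same product), so no such algebraic shortcut can exist; this is exactly why the lemma assumes weak measurability of $\{T(t)\}_{t\in\bbR}$ itself. The paper's proof uses it at precisely this point: for each $g\in\cH$ the vector family $T(t)^*\big(|T(t)^*|^2+I\big)^{-1}g$ is measurable (your adjoint step) and lies in $\dom(T(t))$, so applying $T(t)$ and using $T(t)T(t)^*\big(T(t)T(t)^*+I\big)^{-1}=I-\big(T(t)T(t)^*+I\big)^{-1}$ shows that $\big\{\big(|T(t)^*|^2+I\big)^{-1}\big\}_{t\in\bbR}$ is weakly measurable; $N$-measurability then follows from \eqref{A.14A} in Remark \ref{rA.2}\,$(iv)$. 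Replacing your false identity by this use of the hypothesis repairs the proof.
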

\begin{proof}
Since $T(t)\big(|T(t)|^2+I\big)^{-1} \in \cB(\cH)$, $t\in\bbR$, and 
\begin{equation}
\big(T(t) \big(|T(t)|^2+I\big)^{-1}\big)^* = T(t)^* \big(|T(t)^*|^2+I\big)^{-1}, 
\quad t\in\bbR,
\end{equation}
one concludes that $\big\{T(t)^* \big(|T(t)^*|^2+I\big)^{-1}\big\}_{t\in\bbR}$ is weakly measurable too. Thus, for each $g \in \cH$, 
$\big\{T(t)^* \big(|T(t)^*|^2+I\big)^{-1} g\big\}_{t\in\bbR}$ is (weakly)  
measurable in $\cH$, in addition, $T(t)^* \big(|T(t)^*|^2+I\big)^{-1} g \in \dom (T(t))$ for all $t\in\bbR$. Since 
$\{T(t)\}_{t\in\bbR}$ is weakly measurable, one thus concludes that 
\begin{equation}
\big\{T(t) T(t)^* \big(|T(t)^*|^2+I\big)^{-1}\big\}_{t\in\bbR} 
= \big\{I - \big(|T(t)^*|^2+I\big)^{-1}\big\}_{t\in\bbR}, 
\end{equation}
and hence $\big\{\big(|T(t)^*|^2+I\big)^{-1}\big\}_{t\in\bbR}$, is weakly measurable as well. 
 \end{proof}

Next, we recall a result due to Lennon \cite{Le74} on sums and products of decomposable operators (actually, Lennon considers a slightly more general situation). We use the usual conventions that if $A$ and $B$ are linear operators in $\cH$ then 
\begin{equation}
\dom(A+B) = \dom(A) \cap \dom(B)
\end{equation}
and 
\begin{equation}
\dom(AB) = \{f \in \dom(B) \,|\, Bf \in \dom(A)\}. 
\end{equation}

\begin{theorem} [Lennon \cite{Le74}]  \lb{lA.7} 
Let $\bsA = \int_{\bbR}^{\oplus} A(t)\, dt$ and $\bsB = \int_{\bbR}^{\oplus} B(t)\, dt$ 
be closed decomposable operators in 
$L^2(\bbR; \cH) = \int_{\bbR}^{\oplus} \cH \, dt$ with the 
$N$-measurable families $\{A(t)\}_{t\in\bbR}$ and $\{B(t)\}_{t\in\bbR}$ in $\cH$ 
satisfying Hypothesis \ref{hA.0}. Then the following holds: \\
$(i)$ $\dom(\bsA + \bsB)$ is dense in $L^2(\bbR; \cH)$ if and only if 
$\dom(A(t)\cap B(t))$ is dense in $\cH$ for a.e.\ $t\in\bbR$. In addition, 
$\bsA + \bsB$ is closable  in $L^2(\bbR; \cH)$ if and only if $A(t) + B(t))$ is 
closable in $\cH$ for a.e.\ $t\in\bbR$. In this case the family 
$\big\{\ol{[A(t) + B(t)]}\big\}_{t\in\bbR}$ is $N$-measurable and 
\begin{equation}
\ol{\bsA + \bsB} = \int_{\bbR}^{\oplus} \ol{[A(t) + B(t)]} \, dt. 
\end{equation}
$(ii)$ $\dom(\bsA \bsB)$ is dense in $L^2(\bbR; \cH)$ if and only if $\dom(A(t) B(t))$ 
is dense in $\cH$ for a.e.\ $t\in\bbR$. In addition, $\bsA \bsB$ is closable  in 
$L^2(\bbR; \cH)$ if and only if $A(t) B(t))$ is closable in $\cH$ for a.e.\ $t\in\bbR$. In this case the family 
$\big\{\ol{[A(t) B(t)]}\big\}_{t\in\bbR}$ is $N$-measurable and 
\begin{equation}
\ol{\bsA \bsB} = \int_{\bbR}^{\oplus} \ol{[A(t) B(t)]} \, dt.    \lb{A.43}
\end{equation}
\end{theorem}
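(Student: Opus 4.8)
The final statement to prove is Theorem~\ref{lA.7}, Lennon's theorem on sums and products of decomposable operators. Since the excerpt attributes this result to Lennon \cite{Le74}, the cleanest plan is to reduce everything to the characteristic-matrix description of closed operators recalled from Stone \cite{St51} (see \eqref{A.14}) together with the decomposability criterion in Theorem~\ref{tA.6}. The organizing principle is that a closed operator is completely encoded by its graph projection, and a closed decomposable operator corresponds precisely to an $N$-measurable (equivalently, weakly measurable, via Lemma~\ref{lA.5} and Remark~\ref{rA.2}\,$(iv)$) family of graph projections.

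The plan is as follows. First, for part $(i)$, I would observe that $\bsA+\bsB$ is the restriction of the decomposable operator $\boldsymbol{\cC}$ associated via \eqref{A.17} with the (weakly measurable) family $\{A(t)+B(t)\}_{t\in\bbR}$: indeed $f\in\dom(\bsA)\cap\dom(\bsB)$ forces $f(t)\in\dom(A(t))\cap\dom(B(t))$ a.e., and conversely the constant-fiber structure lets one localize. Weak measurability of $\{A(t)+B(t)\}_{t\in\bbR}$ follows because $N$-measurability of each family implies weak measurability (Remark~\ref{rA.2}\,$(iii)$) and sums of weakly measurable vector families are weakly measurable (Remark~\ref{rA.2}\,$(i)$, \eqref{A.14aa}). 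Then Lemma~\ref{lA.5} gives that $\boldsymbol{\cC}$ is closed and decomposable, with an $N$-measurable family $\{\widehat C(t)\}_{t\in\bbR}$, $\widehat C(t)\subseteq A(t)+B(t)$. The density statement then reduces to a pointwise statement: $\dom(\bsA+\bsB)$ is dense in $L^2(\bbR;\cH)$ iff $\dom(A(t)+B(t))$ is dense in $\cH$ for a.e.\ $t$ --- one direction is a standard measurable-selection / approximation argument (picking a dense sequence fiberwise and using separability of $\cH$), the other follows by restricting an $L^2$-dense set to supports. For closability, one uses that an operator is closable iff the closure of its graph is still a graph (its $\{0\}$-fiber in the second slot is trivial), and that $\ol{\Gamma(\bsA+\bsB)}$ decomposes as $\int_\bbR^\oplus \ol{\Gamma(A(t)+B(t))}\,dt$ by Theorem~\ref{tA.6}\,$(vi)$ applied to graph inclusions in $L^2(\bbR;\cH\oplus\cH)\cong\int_\bbR^\oplus(\cH\oplus\cH)\,dt$; then closability of $\bsA+\bsB$ is equivalent to the second-slot fiber being $\{0\}$ a.e., i.e.\ to closability of $A(t)+B(t)$ a.e. Finally, $N$-measurability of $\big\{\ol{[A(t)+B(t)]}\big\}_{t\in\bbR}$ and the formula $\ol{\bsA+\bsB}=\int_\bbR^\oplus\ol{[A(t)+B(t)]}\,dt$ come out of identifying $\ol{\Gamma(\bsA+\bsB)}$ with the direct integral of the fiberwise graph closures and reading off \eqref{A.19}.

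For part $(ii)$ the argument is structurally identical, working with the family $\{A(t)B(t)\}_{t\in\bbR}$ and the operator $\boldsymbol{\cD}$ from \eqref{A.17}. The only extra care needed is in verifying weak measurability of $\{A(t)B(t)\}_{t\in\bbR}$: if $\{f(t)\}$ is weakly measurable with $f(t)\in\dom(A(t)B(t))$, then $\{B(t)f(t)\}$ is weakly measurable by weak measurability of $\{B(t)\}$, and $B(t)f(t)\in\dom(A(t))$, so $\{A(t)B(t)f(t)\}$ is weakly measurable by weak measurability of $\{A(t)\}$. After that, Lemma~\ref{lA.5}, the graph-decomposition identity $\ol{\Gamma(\bsA\bsB)}=\int_\bbR^\oplus\ol{\Gamma(A(t)B(t))}\,dt$, and the same density/closability dichotomy yield \eqref{A.43}.

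The main obstacle I anticipate is the measurable-selection bookkeeping underlying the density equivalences and the identification of the graph closure as a direct integral --- specifically, showing that $\ol{\Gamma(\boldsymbol{\cC})}=\int_\bbR^\oplus \ol{\Gamma(A(t)+B(t))}\,dt$ rather than something a priori smaller. This is where separability of $\cH$ is essential: one needs to produce, for a.e.\ $t$, a countable dense subset of $\ol{\Gamma(A(t)+B(t))}$ from a countable family of $L^2$-elements, and conversely glue fiberwise approximants into a global $L^2$-function. Since these are exactly the technical points handled in Nussbaum \cite{Nu64} and Lennon \cite{Le74} (and, for the present paper, in \cite{GGST10}), the cleanest write-up simply invokes those references after setting up the reduction above; a fully self-contained proof would require reproving the Nussbaum direct-integral machinery, which is beyond the scope here.
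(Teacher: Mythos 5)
The first thing to note is that the paper contains no proof of Theorem \ref{lA.7} at all: it is imported verbatim from Lennon \cite{Le74} (with the surrounding machinery quoted from Stone \cite{St51} and Nussbaum \cite{Nu64}), so there is no internal argument against which your write-up can be matched. Your decision to set up the reduction via characteristic matrices and decomposability and then invoke the cited literature for the measurable-selection core is therefore entirely consistent with how the paper itself treats the statement, and the route you sketch (graph projections, Lemma \ref{lA.5}, Theorem \ref{tA.6}) is indeed the framework in which Lennon's proof lives.

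That said, if your sketch were to be read as a proof rather than a reduction, two points would need repair. First, Lemma \ref{lA.5} as recalled in the appendix presupposes Hypothesis \ref{hA.0}, i.e.\ that the fiber operators are densely defined and \emph{closed}; the families $\{A(t)+B(t)\}_{t\in\bbR}$ and $\{A(t)B(t)\}_{t\in\bbR}$ are in general neither, so you cannot apply it verbatim to conclude that the maximal operator $\boldsymbol{\cC}$ attached to the sum family is closed and decomposable. Second, and more importantly, the heart of the theorem is precisely the step you defer: $\bsA+\bsB$ is a priori \emph{strictly smaller} than the maximal operator of the family $\{A(t)+B(t)\}_{t\in\bbR}$ (membership in $\dom(\bsA)\cap\dom(\bsB)$ forces $A(\cdot)f(\cdot)$ and $B(\cdot)f(\cdot)$ to lie in $L^2(\bbR;\cH)$ separately, whereas the fiberwise sum only controls their sum, so cancellation is possible), and showing that its closure nevertheless equals $\int_{\bbR}^{\oplus}\ol{[A(t)+B(t)]}\,dt$ is exactly Lennon's contribution, not a corollary of Theorem \ref{tA.6}\,$(vi)$ applied to graphs. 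You correctly identify this as the main obstacle and cite \cite{Le74}, \cite{Nu64}, \cite{GGST10} for it, which is acceptable here given that the paper does exactly the same; just be aware that your outline is a citation-backed reduction, not an independent proof.
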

\begin{lemma} \lb{l2.9}
Assume Hypotheses \ref{h2.1}. Then  
\begin{equation}
\{B(t)\}_{t\in\bbR}, \quad \{B(t)^*\}_{t\in\bbR}, \quad \{B'(t)\}_{t\in\bbR},  
\quad \{(B'(t))^*\}_{t\in\bbR},    \lb{2.37B}
\end{equation}
as well as 
\begin{align}
\begin{split}
& \big\{B(t) \big(|B(t)|^2 + I\big)^{-1}\big\}_{t\in\bbR},  \quad 
\big\{B'(t) \big(|B'(t)|^2 + I\big)^{-1}\big\}_{t\in\bbR},  \lb{2.37A} \\ 
& \big\{\big(|B(t)^*|^2 + I\big)^{-1}\big\}_{t\in\bbR}, \quad 
\big\{\big(|(B'(t))^*|^2 + I\big)^{-1}\big\}_{t\in\bbR},
\end{split}
\end{align} 
are weakly measurable. In particular, \eqref{2.13B} and \eqref{2.37A} 
together imply that $\{B(t)\}_{t\in\bbR}$ and $\{B'(t)\}_{t\in\bbR}$ are 
$N$-measurable. Consequently, $\bsB$ and $\bsB'$, defined according to \eqref{2.28},  
are densely defined in $L^2(\bbR; \cH)$, and the analogs of \eqref{2.29} hold in either case. 
\end{lemma}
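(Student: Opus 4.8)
The plan is to establish weak measurability of each family in \eqref{2.37B} and \eqref{2.37A} directly from Hypothesis \ref{h2.1}, then invoke Lemma \ref{lA.6} to upgrade to $N$-measurability of $\{B(t)\}_{t\in\bbR}$ and $\{B'(t)\}_{t\in\bbR}$, and finally apply Theorem \ref{tA.6}\,$(i)$ to conclude that $\bsB$ and $\bsB'$ are densely defined, closed, and symmetric, with \eqref{2.29}.

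First I would treat the families $\big\{\big(|B(t)|^2+I\big)^{-1}\big\}_{t\in\bbR}$ and $\big\{\big(|B'(t)|^2+I\big)^{-1}\big\}_{t\in\bbR}$: these are assumed weakly measurable outright by Hypothesis \ref{h2.1}\,$(v)$ (see \eqref{2.13B}). Next I would establish weak measurability of $\{B(t)\}_{t\in\bbR}$ and $\{B'(t)\}_{t\in\bbR}$ themselves. For $\{B'(t)\}_{t\in\bbR}$ one argues as in Remark \ref{r2.2}: for fixed $g,h\in\cH$, the map $t\mapsto (g, B'(t)(|A_-|+I)^{-1}h)_{\cH}$ is an a.e.\ pointwise limit of difference quotients $\tfrac{1}{\Delta}\big[(g,B(t+\Delta)(|A_-|+I)^{-1}h)_{\cH} - (g,B(t)(|A_-|+I)^{-1}h)_{\cH}\big]$ by \eqref{2.13h} and the weak local absolute continuity in Hypothesis \ref{h2.1}\,$(iii)$, and each difference quotient is measurable in $t$; hence $\{B'(t)(|A_-|+I)^{-1}\}_{t\in\bbR}$ is weakly measurable. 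Now for any weakly measurable family $\{f(t)\}_{t\in\bbR}$ with $f(t)\in\dom(A_-)=\dom(B'(t))$, writing $f(t) = (|A_-|+I)^{-1}\big[(|A_-|+I)f(t)\big]$ and using that $\{(|A_-|+I)f(t)\}_{t\in\bbR}$ is weakly measurable (since $(|A_-|+I)$ is a fixed closed operator and $\{f(t)\}$, $\{A_-f(t)\}$ are weakly measurable — here one uses that arbitrary weakly measurable families in $\dom(A_-)$ have weakly measurable image under $A_-$, a standard fact for a fixed self-adjoint operator), one obtains that $\{B'(t)f(t)\}_{t\in\bbR}$ is weakly measurable; this is exactly Definition \ref{dA.1}\,$(ii)$, so $\{B'(t)\}_{t\in\bbR}$ is weakly measurable. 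The same argument with $B(t)$ in place of $B'(t)$ — using that $B(t)(|A_-|+I)^{-1}\in\cB_1(\cH)$ is weakly (indeed strongly) measurable by \eqref{2.13jk}, \eqref{3.AC}, and Lemma \ref{l3.1} — gives weak measurability of $\{B(t)\}_{t\in\bbR}$. Passing to adjoints via Remark \ref{r2.6}, and using that $B(t)(|A_-|+I)^{-1} = B(t)^*(|A_-|+I)^{-1}$, $B'(t)(|A_-|+I)^{-1} = (B'(t))^*(|A_-|+I)^{-1}$ in \eqref{2.19}, together with the same factorization trick, yields weak measurability of $\{B(t)^*\}_{t\in\bbR}$ and $\{(B'(t))^*\}_{t\in\bbR}$, settling \eqref{2.37B}.

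For the families in \eqref{2.37A}: weak measurability of $\big\{B(t)\big(|B(t)|^2+I\big)^{-1}\big\}_{t\in\bbR}$ and $\big\{B'(t)\big(|B'(t)|^2+I\big)^{-1}\big\}_{t\in\bbR}$ follows from weak measurability of $\{B(t)\}_{t\in\bbR}$, $\{B'(t)\}_{t\in\bbR}$ (Definition \ref{dA.1}\,$(ii)$ applied to $f(t) = \big(|B(t)|^2+I\big)^{-1}h$, which lies in $\dom(|B(t)|^2)\subseteq\dom(B(t))$ and is weakly measurable in $t$ by \eqref{2.13B}). Then $\big\{\big(|B(t)^*|^2+I\big)^{-1}\big\}_{t\in\bbR}$ and $\big\{\big(|(B'(t))^*|^2+I\big)^{-1}\big\}_{t\in\bbR}$ are weakly measurable by the identity $\big[B(t)\big(|B(t)|^2+I\big)^{-1}\big]^* = B(t)^*\big(|B(t)^*|^2+I\big)^{-1} = I - \big(|B(t)^*|^2+I\big)^{-1} B(t)\cdots$ (as in the proof of Lemma \ref{lA.6}), or more directly by invoking Lemma \ref{lA.6} itself: since $\{B(t)\}_{t\in\bbR}$, $\big\{\big(|B(t)|^2+I\big)^{-1}\big\}_{t\in\bbR}$, and $\big\{B(t)\big(|B(t)|^2+I\big)^{-1}\big\}_{t\in\bbR}$ are all weakly measurable, Lemma \ref{lA.6} gives that $\{B(t)\}_{t\in\bbR}$ is $N$-measurable, and by Remark \ref{rA.2}\,$(iv)$ (the characterization \eqref{A.14A}) the remaining family $\big\{\big(|B(t)^*|^2+I\big)^{-1}\big\}_{t\in\bbR}$ is then weakly measurable as well; the same applies to $B'(t)$. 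Combining, $\{B(t)\}_{t\in\bbR}$ and $\{B'(t)\}_{t\in\bbR}$ are $N$-measurable, so by Theorem \ref{tA.6}\,$(i)$ the operators $\bsB = \int_{\bbR}^{\oplus} B(t)\,dt$ and $\bsB' = \int_{\bbR}^{\oplus} B'(t)\,dt$ are densely defined and closed in $L^2(\bbR;\cH)$, and by Theorem \ref{tA.6}\,$(ii)$ they are symmetric since each $B(t)$, $B'(t)$ is symmetric; the formulas \eqref{2.29} (i.e., $\bsB^* = \int^\oplus B(t)^*\,dt$, $|\bsB| = \int^\oplus |B(t)|\,dt$, etc.) follow from Theorem \ref{tA.6}\,$(i)$.

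The main obstacle I anticipate is the passage from ``$\{B'(t)(|A_-|+I)^{-1}\}_{t\in\bbR}$ weakly measurable'' to ``$\{B'(t)\}_{t\in\bbR}$ weakly measurable'' in the sense of Definition \ref{dA.1}\,$(ii)$, which requires knowing that for an \emph{arbitrary} weakly measurable family $\{f(t)\}_{t\in\bbR}$ with $f(t)\in\dom(A_-)$ for all $t$, the family $\{A_-f(t)\}_{t\in\bbR}$ is weakly measurable — equivalently that $\{(|A_-|+I)f(t)\}_{t\in\bbR}$ is weakly measurable — so that the factorization $B'(t)f(t) = \big[B'(t)(|A_-|+I)^{-1}\big]\big[(|A_-|+I)f(t)\big]$ composes two weakly measurable ingredients. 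This is a statement about the constant self-adjoint operator $A_-$ and follows from $N$-measurability (indeed decomposability) of the constant fiber operator $\bsA_-$ together with Theorem \ref{tA.6}, or can be extracted from Remark \ref{rA.2}\,$(ii)$; I would cite \cite{GGST10} and the relevant parts of Appendix \ref{sA} for the careful treatment of this measurability bookkeeping rather than reproduce it in full.
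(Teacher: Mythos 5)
There is a genuine gap, and it sits exactly at the step you treat as routine: proving that $\{B(t)\}_{t\in\bbR}$, $\{B'(t)\}_{t\in\bbR}$ (and their adjoints) are weakly measurable in the sense of Definition \ref{dA.1}\,$(ii)$. That definition quantifies over \emph{all} weakly measurable families $\{f(t)\}_{t\in\bbR}$ with $f(t)\in\dom(B(t))$ (resp.\ $\dom(B'(t))$, $\dom(B(t)^*)$), and Hypothesis \ref{h2.1} only gives $\dom(B(t))\supseteq\dom(A_-)$ and $\dom(B'(t))\supseteq\dom(A_-)$ -- your assertion ``$f(t)\in\dom(A_-)=\dom(B'(t))$'' assumes an equality of domains that is not part of the hypotheses. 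For a family $f(t)\in\dom(B'(t))\setminus\dom(A_-)$ the factorization $B'(t)f(t)=\big[B'(t)(|A_-|+I)^{-1}\big]\big[(|A_-|+I)f(t)\big]$ is not even defined, so your argument only yields a restricted notion of measurability (families valued in $\dom(A_-)$). This is not a harmless weakening: your own subsequent steps need the full-strength property, e.g.\ when you apply Definition \ref{dA.1}\,$(ii)$ to $f(t)=\big(|B(t)|^2+I\big)^{-1}h$, and when Lemma \ref{lA.6} internally applies weak measurability of $\{B(t)\}$ to the family $B(t)^*\big(|B(t)^*|^2+I\big)^{-1}g$ -- neither of these families is known to take values in $\dom(A_-)$.

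The repair is the symmetry trick the paper uses: for an arbitrary weakly measurable family $f(t)\in\dom(B(t))$ and a fixed $g\in\dom(|A_-|)\subseteq\dom(B(t))$, write $(g,B(t)f(t))_{\cH}=(B(t)g,f(t))_{\cH}$; here $t\mapsto B(t)g=\big[B(t)(|A_-|+I)^{-1}\big](|A_-|+I)g$ is measurable because $g$ (not $f(t)$) is a fixed vector of $\dom(A_-)$, so the scalar function is measurable by \eqref{A.14aa}, and density of $\dom(|A_-|)$ together with Remark \ref{rA.2}\,$(i)$ gives measurability of $\{B(t)f(t)\}_{t\in\bbR}$; the same argument (with $B'(t)$, and with the adjoint relation $(g,B(t)^*f(t))_{\cH}=(B(t)g,f(t))_{\cH}$ via Remark \ref{r2.6}) settles all of \eqref{2.37B}, and the analogous computation $(B(t)(|B(t)|^2+I)^{-1}f,g)_{\cH}=((|B(t)|^2+I)^{-1}f,B(t)g)_{\cH}$ settles the first line of \eqref{2.37A}. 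Incidentally, the ``main obstacle'' you flag (measurability of $A_-f(t)$ for measurable $f(t)\in\dom(A_-)$) is genuine but easy -- it follows from the same dualization, or from \eqref{A.14a} applied to the constant family -- yet it becomes moot once you dualize directly on $B(t)$. With this replacement, the remainder of your route (Hypothesis \ref{h2.1}\,$(v)$ for the resolvent families, Lemma \ref{lA.6} plus \eqref{A.14A} for $N$-measurability and the starred resolvents, Theorem \ref{tA.6}\,$(i)$ for dense definedness and the analogs of \eqref{2.29}) is exactly the paper's argument.
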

\begin{proof} 
Fix a (weakly) measurable family of elements $\{f(t)\}_{t\in\bbR}$ in
$\cH$ such that $f(t) \in \dom(B(t))$ for a.e.\ $t\in\bbR$. By
Hypothesis \ref{h2.1}\,$(ii)$, for every $g \in \dom(|A_{-}|)$, 
\begin{equation}
(g,B(\cdot)f(\cdot))_{\cH} = (B(\cdot)g, f(\cdot))_{\cH},
\end{equation}
where $\{B(t)g\}_{t\in\bbR}$ (as well as $\{f(t)\}_{t\in\bbR}$) is weakly measurable 
and hence measurable in $\cH$. By \eqref{A.14aa}, the function 
$(f(\cdot), B(\cdot)g)_{\cH}$ is measurable.
Since $\dom (|A_-|)$ is dense, $\{B(t)f(t)\}_{t\in\bbR}$ is
measurable in $\cH$ by Remark \ref{rA.2}\,$(i)$. Thus $\{B(t)\}_{t \in\bbR}$
is weakly measurable. Using \eqref{2.10a}, one similarly infers that 
$\{B'(t)\}_{t\in\bbR}$ is weakly measurable. Utilizing Remark \ref{r2.6}, one then 
also concludes that $\{B(t)^*\}_{t\in\bbR}$ and $\{(B'(t))^*\}_{t\in\bbR}$ are weakly measurable, proving \eqref{2.37B}.

Next, we invoke the fact that $\big\{\big(|B(t)|^2+I\big)^{-1}\big\}_{t\in\bbR}$
is  assumed to be weakly measurable by Hypothesis \ref{h2.1}\,$(v)$: As above, for a (weakly) measurable family
of elements $\{f(t)\}_{t\in\bbR}$ in $\cH$ such that $f(t) \in \dom(B(t))$ 
for a.e.\ $t\in\bbR$, and for every $g \in \dom(|A_{-}|)$, the function
\begin{equation}
\big(B(\cdot) \big(|B(\cdot)|^2+I\big)^{-1}f(\cdot), g\big)_{\cH}  
= \big(\big(|B(\cdot)|^2+I\big)^{-1}f(\cdot), B(\cdot) g\big)_{\cH}
\end{equation}
is  measurable since $\big\{\big(|B(t)|^2+I\big)^{-1}f(t)\big\}_{t\in\bbR}$ and
$\{B(t) g\}_{t\in\bbR}$ are measurable in $\cH$. Since $\dom (|A_{-} |)$ is
dense, Remark \ref{rA.2}\,$(ii)$ implies that 
$\big\{B(t)\big(|B(t)|^2+I\big)^{-1}\big\}_{t\in\bbR}$ is weakly measurable. Similarly one proves 
the weak measurability of the family $\big\{B'(t)\big(|B'(t)|^2+I\big)^{-1}\big\}_{t\in\bbR}$. 

Weak measurability of $\big\{\big(|B(t)^*|^2+I\big)^{-1}\big\}_{t\in\bbR}$ then follows from Lemma \ref{lA.6}; the weak measurability of the family 
$\big\{\big(|(B'(t))^*|^2 + I\big)^{-1}\big\}_{t\in\bbR}$ is proved analogously, completing 
the proof of \eqref{2.37A}. 

$N$-measurability of $\{B(t)\}_{t\in\bbR}$ and $\{B'(t)\}_{t\in\bbR}$ then follows from 
\eqref{A.14A}.

Finally, that $\bsB$ and $\bsB'$ are densely defined in $L^2(\bbR; \cH)$ and the 
analogs of \eqref{2.29} hold follows from Theorem \ref{tA.6}\,$(i)$. 
\end{proof}

Next, we show that Hypothesis \ref{h2.1}\,$(v)$ is essential, in particular, we will show 
that weak measurability of the family 
$\big\{\big(|B'(t)|^2 + I\big)^{-1}\big\}_{t\in\bbR}$ does not follow from weak 
measurability of $\{B'(t)\}_{t\in\bbR}$ and weak measurability of 
$\big\{B'(t)(|A_-| + I)^{-1}\big\}_{t\in\bbR}$.
For this purpose it suffices to consider the following example (a slight refinement of Example \ref{eA.5}):

\begin{example} \lb{e2.11}
Let $B_0$ and $B_1$ be densely defined, closed, unbounded, symmetric operators in $\cH$ satisfying
\begin{equation}
B_0 \subsetneq B_1   \lb{4.2A}
\end{equation}
and 
\begin{equation}
\dom(A_-) \subseteq \dom(B_0).    \lb{4.2B}
\end{equation}
Let $\gE\subset \bbR$ be a nonmeasurable subset of $\bbR$ (in the sense of Lebesgue measure) and introduce the linear operators 
\begin{equation}
\wti B(t) = \begin{cases} B_0, & t \in\gE,  \\
B_1, & t \in \bbR\backslash\gE,       
\end{cases}      \lb{4.3A}
\end{equation} 
in $\cH$. Then the family $\big\{\wti B(t)\big\}_{t\in\bbR}$ is weakly measurable, 
but not $N$-measurable, in particular, 
\begin{equation}
\big\{\big(\big|\wti B(t)\big|^2 + I\big)^{-1}\big\}_{t\in\bbR} \, 
\text{ is not weakly  measurable.}    \lb{4.3B}
\end{equation}
On the other hand, obviously, 
\begin{equation}
\wti B(t) (|A_-| + I)^{-1} = B_0 (|A_-| + I)^{-1}
\end{equation}
is $N$-measurable, in fact, even constant with respect to $t\in\bbR$. 
\end{example} 
\begin{proof}
Let $\{f(t)\}_{t\in\bbR}$ be a (weakly) measurable family of elements  in $\cH$ such that 
$f(t) \in \dom\big(\wti B(t)\big)$ for all $t\in\bbR$. Then, using the fact that 
\begin{equation}
B_0 \subset B_1 \subseteq B_1^* \subset B_0^*,     \lb{4.4}
\end{equation}
one concludes that 
\begin{equation}
\big(\wti B(t) f(t), g\big)_{\cH} = (f(t), B_0 g)_{\cH}, \quad t\in\bbR, \; g \in \dom(B_0),  
\lb{4.5}
\end{equation}
is measurable, and since $\dom(B_0)$ is dense in $\cH$, the family 
$\big\{\wti B(t)\big\}_{t\in\bbR}$ is weakly measurable by Remark \ref{rA.2}\,$(i)$. 

Since by hypothesis, $B_0 \subsetneq B_1$, $B_0^* B_0 \neq B_1^* B_1$, and hence there exists $0 \neq h \in \cH$ such that 
\begin{equation}
(h, (B_0^* B_0 + I)^{-1} h)_{\cH} 
\neq (h, (B_1^* B_1 + I)^{-1} h)_{\cH}.   \lb{4.6}
\end{equation}
Since nonmeasurability of $\gE$ is equivalent to nonmeasurability of its characteristic function $\chi_{\gE}$, one similarly infers that 
\begin{equation}
\big(h, \big(\big(\wti B(t)\big)^* \wti B(t) + I\big)^{-1} h\big)_{\cH} 
= \begin{cases}
(h, (B_0^* B_0 + I)^{-1} h)_{\cH}, & t \in \gE, \\
(h, (B_1^* B_1 + I)^{-1} h)_{\cH}, & t \in \bbR\backslash\gE,  
\end{cases}      \lb{4.7}
\end{equation}
is nonmeasurable, implying that the family $\big\{\wti B(t)\big\}_{t\in\bbR}$ is not 
$N$-measurable by \eqref{A.14A} and hence \eqref{4.3B} follows. 
\end{proof}

As another application of the notion of $N$-measurability we now conclude this 
appendix with an alternative proof of Lemma \ref{l2.3}\,$(iii)$, that is we reprove 
the fact that the operator $\bsD_{\bsA_-}^{}$ is normal in $L^2(\bbR; \cH)$: 

\begin{lemma} \lb{lA.10}
Suppose $A_-$ is self-adjoint  in $\cH$ on $\dom(A_-)\subseteq\cH$, 
and define the operator $\bsD_{\bsA_-}^{}$ as in \eqref{3.DA-1}. Then 
$\bsD_{\bsA_-}^{}$ is a normal $($and hence closed$)$ operator in 
$L^2(\bbR; \cH)$. 
\end{lemma}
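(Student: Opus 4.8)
The plan is to realize $\bsD_{\bsA_-}$ as a direct integral over the Fourier-transformed variable and then invoke the normality criterion in Theorem \ref{tA.6}\,$(ii)$. First I would pass to the unitary vector-valued Fourier transform $\gF_{\cH}$ of \eqref{ft}, \eqref{ft1}, under which $d/dt$ becomes multiplication by $i\lambda$ (since $(d/dt)\widehat{} = i\lambda$ on the Fourier side) and $\bsA_-$ is unchanged, being a constant-fiber operator. Thus $\gF_{\cH} \bsD_{\bsA_-} \gF_{\cH}^{-1}$ is (unitarily equivalent to) the operator acting in $L^2(\bbR;\cH) = \int_{\bbR}^{\oplus}\cH\,d\lambda$ fiberwise as $T(\lambda) = A_- + i\lambda I$ with $\dom(T(\lambda)) = \dom(A_-)$ for a.e.\ $\lambda$. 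Since normality is preserved under unitary equivalence, it suffices to show this decomposable operator is normal.

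Next I would verify the hypotheses needed to apply Theorem \ref{tA.6}. Each $T(\lambda) = A_- + i\lambda I$ is a closed, densely defined operator in $\cH$ which is normal, with $T(\lambda)^* = A_- - i\lambda I$ and $|T(\lambda)|^2 = T(\lambda)^* T(\lambda) = A_-^2 + \lambda^2 I$. The family $\{T(\lambda)\}_{\lambda\in\bbR}$ is $N$-measurable: by Remark \ref{rA.2}\,$(iv)$ it is enough to check weak measurability of $\{(|T(\lambda)|^2 + I)^{-1}\}_{\lambda\in\bbR}$, $\{T(\lambda)(|T(\lambda)|^2+I)^{-1}\}_{\lambda\in\bbR}$, and $\{(|T(\lambda)^*|^2+I)^{-1}\}_{\lambda\in\bbR}$; but $(|T(\lambda)|^2+I)^{-1} = (A_-^2 + (\lambda^2+1)I)^{-1}$, and for fixed $f,g\in\cH$ the map $\lambda\mapsto (f,(A_-^2+(\lambda^2+1)I)^{-1}g)_{\cH} = \int_{[0,\infty)} (\mu + \lambda^2+1)^{-1}\,d(f,E_{A_-^2}(\mu)g)_{\cH}$ is continuous in $\lambda$ by dominated convergence, hence measurable. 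The same argument handles $T(\lambda)(|T(\lambda)|^2+I)^{-1} = (A_- + i\lambda I)(A_-^2+(\lambda^2+1)I)^{-1}$ and, since $|T(\lambda)^*|^2 = A_-^2 + \lambda^2 I$ as well, the third family. Therefore $\{T(\lambda)\}_{\lambda\in\bbR}$ is $N$-measurable and $\bsT := \int_{\bbR}^{\oplus} T(\lambda)\,d\lambda$ is densely defined and closed in $L^2(\bbR;\cH)$ by Theorem \ref{tA.6}\,$(i)$.

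Then I would identify $\bsT$ with $\gF_{\cH}\bsD_{\bsA_-}\gF_{\cH}^{-1}$. The inclusion $\gF_{\cH}\bsD_{\bsA_-}\gF_{\cH}^{-1} \subseteq \bsT$ is clear since on the Schwartz-type domain the actions agree fiberwise; for the reverse inclusion one uses that both operators are closed and that $\dom(\bsT) = \dom(d/dt)\cap\dom(\bsA_-)$ transported by $\gF_{\cH}$ — this matches $\dom(\bsD_{\bsA_-})$ by \eqref{3.DA-1}, using the characterization $\dom(T(\lambda)) = \dom(A_-)$ together with the norm equivalence $\|f\|_{\cH_1(\bsT)}^2 \sim \|f\|^2 + \|\lambda f\|^2 + \|\bsA_- f\|^2$. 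Once the identification is made, Theorem \ref{tA.6}\,$(ii)$ gives that $\bsT$ is normal because each $T(\lambda) = A_- + i\lambda I$ is normal, and normality transfers back to $\bsD_{\bsA_-}$ via the unitary $\gF_{\cH}$.

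The main obstacle I anticipate is the precise domain bookkeeping in identifying $\gF_{\cH}\bsD_{\bsA_-}\gF_{\cH}^{-1}$ with the direct integral $\bsT$: one must be careful that the maximal domain defining $\bsT$ via \eqref{A.17} (measurability of $\lambda\mapsto T(\lambda)g(\lambda)$ plus square-integrability) genuinely coincides with the Fourier image of $\dom(d/dt)\cap\dom(\bsA_-)$, rather than being a priori larger. This is handled by the closedness of both operators and the graph-norm estimate already established in Lemma \ref{l2.3}\,$(i)$ (equivalently \eqref{eq37} applied to the constant fiber $\bsA_-$), but it is the only point requiring genuine care; everything else is a routine verification of measurability and an appeal to Nussbaum's theorem.
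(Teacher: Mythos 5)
Your proposal is correct and follows essentially the same route as the paper's own proof: pass to the Fourier transform, realize the operator as the direct integral of the normal fibers $A_-+i\lambda I$, verify $N$-measurability, and invoke Nussbaum's Theorem \ref{tA.6}\,$(ii)$, transferring normality back by unitary equivalence. The only cosmetic differences are that the paper checks $N$-measurability by exhibiting the (norm-continuous) characteristic matrix rather than citing Remark \ref{rA.2}\,$(iv)$, and it settles the domain identification directly via the exact identity $\|(A_-+itI)f\|_{\cH}^2=\|A_-f\|_{\cH}^2+t^2\|f\|_{\cH}^2$, so no appeal to Lemma \ref{l2.3}\,$(i)$ is needed.
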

\begin{proof} We start by considering the direct integral decomposition 
\begin{align}
& \, \widetilde{\bsD}_{\bsA_-}  
=\int_{\bbR}^{\oplus} D(t) \, dt,  \no \\
& 
\dom \big(\widetilde{\bsD}_{\bsA_-}\big) = \bigg\{g\in L^2(\bbR;\cH) \,\bigg|\,  
g(t)\in \dom(D(t)) \text{ for a.e.\ } t\in\bbR,   \lb{3.7til}
\\&
 \quad t \mapsto D(t) g(t) \text{ is (weakly) measurable,} \, 
\int_{\bbR} \|D(t)g(t)\|_{\cH}^2 \, dt < \infty \bigg\}   \no
\end{align}
in $L^2(\bbR; \cH)$. 
Here $\{D(t)\}_{t\in \bbR}$ is the family of normal operators in $\cH$
given by
\begin{equation}
D(t)f=it f +A_- f,\quad f  \in\dom(D(t))=\dom(A_-),  \; t\in \bbR.
\end{equation}
Next we show, that the family  $\{D(t)\}_{t\in \bbR}$ is $N$-measurable. Indeed, the 
orthogonal projection $P(D (t))$, $t\in \bbR$, in $\cH\oplus \cH$  onto the graph of 
the operator $D(t)$ is given by the $2 \times 2$ operator-valued matrix in 
$\cB(\cH) \oplus \cB(\cH)$, 
\begin{align}
& P(D(t))   \\
& \quad =\begin{pmatrix}
(A_-^2+(t^2+1)I_{\cH})^{-1}&(A_- - itI_{\cH})(A_-^2+(t^2+1)I_{\cH})^{-1}\\
(A_- +t^2I_{\cH})(A_- ^2+(t^2+1)I_{\cH})^{-1}&I_{\cH}-(A_-^2+(t^2+1)I_{\cH})^{-1}
\end{pmatrix}.    \no 
\end{align} 
The family $\{P(D (t))\}_{t\in \bbR}$ is a norm-continuous family of bounded operators 
and hence $\{P(D (t))\}_{t\in \bbR}$ is weakly measurable, which in turn proves
 that the family $\{D(t)\}_{t\in \bbR}$ is $N$-measurable. One observes that 
 $N$-measurabily of $\{D(t)\}_{t\in \bbR}$ implies
its weak measurability (cf. \eqref{A.14a}), and 
therefore, the requirement in \eqref{3.7til} that the map $t \mapsto D(t) g(t)$ 
is (weakly) measurable holds automatically and hence is redundant in this case. 
Combining Lemma \ref{lA.5} and Theorem \ref{tA.6}\,(ii), one concludes that 
the direct integral
\begin{equation}\label{dii}
\widetilde{\bsD}_{A_-}=\int_{\bbR}^{\oplus} D(t) \, dt, 
\end{equation} 
on the domain provided in \eqref{3.7til}, is a normal operator.  
 
Since $A_-$ is a self-adjoint operator, the following estimate holds, 
\begin{equation}
t^2\|f\|_{\cH}^2\le \|(A_-+itI_{\cH})f\|_{\cH}^2, \quad f \in \dom(A_-), \; t\in \bbR,
\end{equation}
and one concludes that  the requirement 
$\int_{\bbR} \|D(t) g(t)\|_{\cH}^2 \, dt < \infty$ in \eqref{3.7til} for
 $g\in L^2(\bbR, \cH)$
is equivalent to the conditions
\begin{equation}
\int_{\bbR} \|(1+t^2)g(t)\|_{\cH}^2 \, dt < \infty \, \text{ and } \, 
\int_{\bbR} \|A_-g(t)\|_{\cH}^2 \, dt < \infty, 
\end{equation}
and thus to 
\begin{equation}\label{dominter}
\dom \big(\wti{\bsD}_{\bsA_-}\big)=\dom (it\bsI)\cap \dom (\bsA_-).
\end{equation} 
Thus,  $\widetilde{\bsD}_{\bsA_-}$ on \eqref{dominter} is a normal operator.
Here, in obvious notation, $it \, \bsI$ denotes the maximally 
defined operator of multiplication by $it$ in $L^2(\bbR; \cH)$ with 
domain
\beq
\dom(it \, \bsI) = \bigg\{g\in L^2(\bbR; \cH) \,\bigg|\,
\int_{\bbR} (1+ t^2) \|g(t)\|_{\cH}^2 \, dt < \infty \bigg\}. 
\enq
Applying the unitary vector-valued Fourier transform $\gF_{\cH}$ (cf.\ the comments in connection with \eqref{ft}) 
one notes that
\beq
\gF_{\cH} \bsA_- \gF_{\cH}^{-1} = \bsA_-,    \lb{3.10F}
\enq
since $\bsA_-$ has constant fiber operators $A_-(t) = A_-$, 
$t\in\bbR$, in $\cH$, and
$\gF_{\cK}$ is unitary on any Hilbert space $L^2(\bbR; \cK)$, and 
hence particularly in the case $\cK = \cH_1(A_-)$ (cf.\ \eqref{grA-}). 
In this context one also notes that
\beq\label{dit}
\gF_{\cH} \bigg(\f{d}{dt}\bigg) \gF_{\cH}^{-1} = it \, \bsI.
\enq
In particular, 
\begin{equation}\label{iti}
\widetilde{\bsD}_{\bsA_-}=it \bsI+\bsA_- \, \text{ on } \, 
\dom \big(\widetilde{\bsD}_{\bsA_-}\big).
\end{equation}
Combining \eqref{3.10F}, \eqref{dit}, and \eqref{iti}, 
 one concludes  that 
\begin{equation}\label{fufu}
\gF_{\cH}^{-1} \widetilde{\bsD}_{\bsA_-}
 \gF_{\cH}={\bsD}_{\bsA_-}.
\end{equation}
Since $\widetilde{\bsD}_{\bsA_-}$
is a normal operator, from \eqref{fufu} one 
concludes  that 
$\bsD_{\bsA_-}^{}$ is a normal operator 
 on $\dom(d/dt) \cap \dom(\bsA_-)$ in $L^2(\bbR;\cH)$. 
\end{proof}

\section{Trace Norm Analyticity of $[g_z(A_+) - g_z(A_-)]$}
\lb{sB}
\renewcommand{\theequation}{B.\arabic{equation}}
\renewcommand{\thetheorem}{B.\arabic{theorem}}
\setcounter{theorem}{0} \setcounter{equation}{0}

The purpose of this appendix is to provide a straightforward proof of 
Lemma \ref{l7.4}, given the fact \eqref{7.trclgz}: 

\begin{lemma} \lb{lB.1}
Assume Hypothesis \ref{h2.1} and let $z\in\bbC\backslash [0,\infty)$.\  
Then $[g_z(A_+) - g_z(A_-)]$ is differentiable with respect to the 
$\cB_1(\cH)$-norm and 
\begin{align}
& \f{d}{dz} \tr_{\cH} \big(g_z(A_+) - g_z(A_-)\big) 
= \tr_{\cH}\bigg(\frac{d}{dz} g_z(A_+) - \frac{d}{dz} g_z(A_-)\bigg)     \lb{B.38a} \\
& \quad = \f{1}{2} {\tr}_{\cH} \big(A_+ (A_+^2 - z I)^{-3/2} - A_- (A_-^2 - z I)^{-3/2}\big), 
\quad z\in\bbC\backslash [0,\infty).    \no 
\end{align} 
\end{lemma}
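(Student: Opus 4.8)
The goal is to upgrade the already-established trace-class membership \eqref{7.trclgz} to trace-norm analyticity (equivalently, $\cB_1(\cH)$-differentiability) of $z \mapsto [g_z(A_+) - g_z(A_-)]$ on $\bbC\backslash[0,\infty)$, from which the formula \eqref{B.38a} follows by interchanging $d/dz$ and $\tr_{\cH}$. The natural strategy is to imitate the proof of Lemma \ref{ext_subtle}, but now tracking the $z$-dependence through the double operator integral machinery. Recall that in Lemma \ref{ext_subtle} we wrote $g(S_+) - g(S_-) = T_\phi[\ol K]$ with $\phi$ the divided-difference-type function in \eqref{def_phi} and $\ol K = \ol{(S_+^2+I)^{-1/4}(S_+-S_-)(S_-^2+I)^{-1/4}} \in \cB_1(\cH)$. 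First I would set up the analogous identity for $g_z$: namely, with $\alpha_z(\lambda) = (\lambda^2-z)^{-1/4}$, $\beta_z(\mu) = (\mu^2-z)^{-1/4}$ (using a branch cut along $[0,\infty)$ of the relevant square roots so these are well defined for $z \in \bbC\backslash[0,\infty)$), and
\[
\phi_z(\lambda,\mu) = \frac{g_z(\lambda) - g_z(\mu)}{\alpha_z(\lambda)(\lambda-\mu)\beta_z(\mu)},
\]
one shows $\phi_z \in \mathfrak{A}_0$ and applies Lemma \ref{ffTOI} (with $S_\pm = A_\pm$, which is legitimate since $\dom(A_+) = \dom(A_-)$ by Theorem \ref{t3.7}\,(iv) and the relevant $\ol{K_z} = \ol{(A_+^2-zI)^{-1/4}(A_+-A_-)(A_-^2-zI)^{-1/4}} \in \cB_1(\cH)$ by Theorem \ref{hadamard1} together with \eqref{3.Apmrtrcl}), to obtain
\[
g_z(A_+) - g_z(A_-) = T_{\phi_z}^{(A_+,A_-)}\big[\ol{K_z}\big] \in \cB_1(\cH).
\]

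The second, and I expect main, step is to prove that $z \mapsto \phi_z$ is an analytic $\mathfrak{A}_0$-valued map and $z \mapsto \ol{K_z}$ is an analytic $\cB_1(\cH)$-valued map; then $z \mapsto T_{\phi_z}^{(A_+,A_-)}[\ol{K_z}]$ is analytic in $\cB_1(\cH)$ because $\phi \mapsto T_\phi$ is bounded from $\mathfrak{A}_0$ to $\cB(\cB_1(\cH))$ by \eqref{normTphi} (and the map $(\phi, K) \mapsto T_\phi[K]$ is bilinear and jointly continuous, hence analytic in $z$ via the product rule for vector-valued analytic functions). For the $\mathfrak{A}_0$-analyticity of $\phi_z$ one uses the explicit decomposition analogous to \eqref{splitphi}--\eqref{final_psi}: the construction in the proof of Lemma \ref{newL6.7} expresses $\phi$ as a finite sum of terms, each of which is a product of bounded multipliers $(\lambda^2+1)^{\pm 1/2}$, $\lambda(\lambda^2+1)^{-1/2}$ times the kernel $\psi(\lambda,\mu)$, which in turn has the integral representation \eqref{final_psi} as an $\mathfrak{A}_0$-valued ``Fourier integral.'' Replacing $1$ by $-z$ throughout, one gets a $z$-dependent version $\psi_z(\lambda,\mu)$ with an analogous integral representation whose integrand depends analytically on $z$ with $L^1$-in-$s$ bounds locally uniform in $z$; differentiating under the integral sign gives $\mathfrak{A}_0$-valued analyticity. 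For the $\cB_1(\cH)$-analyticity of $\ol{K_z}$, factor $K_z = [(A_+^2-zI)^{-1/4}(A_-^2-zI)^{1/4}] \cdot [(A_-^2-zI)^{-1/4}(A_+-A_-)(A_-^2-zI)^{-1/4}]$ as in \eqref{prelim}: the first factor is a bounded analytic operator-valued function of $z$ (by the spectral theorem / functional calculus bounds from Remark \ref{r2.8}), and the second factor is $\cB_1(\cH)$-analytic because $(A_+-A_-)(A_-^2-zI)^{-1/2}$ is $\cB_1(\cH)$-analytic (its trace-norm analyticity is a standard consequence of $(A_+-A_-)(A_-^2+I)^{-1/2} \in \cB_1(\cH)$ and the resolvent-type identity $(A_-^2-zI)^{-1/2} = (A_-^2+I)^{-1/2}[(A_-^2+I)(A_-^2-zI)^{-1}]^{1/2}$) and Theorem \ref{hadamard1} / the double-operator-integral identity \eqref{normin1} passes analyticity through the symmetrization $S(S_-^2+I)^{-1} \mapsto (S_-^2+I)^{-1/2}S(S_-^2+I)^{-1/2}$.

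Once trace-norm analyticity of $[g_z(A_+) - g_z(A_-)]$ is in hand, the first equality in \eqref{B.38a} is immediate from the continuity of $\tr_{\cH}: \cB_1(\cH) \to \bbC$ (it commutes with the $\cB_1(\cH)$-norm limit defining $d/dz$). For the second equality, one computes $\frac{d}{dz} g_z(\lambda) = \frac{1}{2}\lambda(\lambda^2-z)^{-3/2}$ from \eqref{dfngz}, and the functional-calculus identity $\frac{d}{dz} g_z(A_\pm) = \frac{1}{2} A_\pm(A_\pm^2-zI)^{-3/2}$ (differentiation inside the spectral integral) together with the fact that the difference $\frac{1}{2}[A_+(A_+^2-zI)^{-3/2} - A_-(A_-^2-zI)^{-3/2}]$ is trace class — this difference is exactly $\frac{d}{dz}$ of the trace-class analytic function, hence lies in $\cB_1(\cH)$, and its trace is $\frac{d}{dz}\tr_{\cH}(g_z(A_+)-g_z(A_-))$. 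The main obstacle I anticipate is bookkeeping the branch-of-square-root conventions so that $\alpha_z, \beta_z, \psi_z$ are genuinely holomorphic and the $\mathfrak{A}_0$-norm bounds are locally uniform in $z$; once the $z$-dependent analogue of \eqref{final_psi} is correctly written down with a dominating $L^1(\bbR; ds)$ function valid on compact subsets of $\bbC\backslash[0,\infty)$, the rest is routine vector-valued complex analysis.
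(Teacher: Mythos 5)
Your route is genuinely different from the paper's. The paper proves Lemma \ref{lB.1} directly, with no double operator integrals: starting from the integral representation \eqref{B.39} for $A_\pm(A_\pm^2-zI)^{-1/2}$, it writes the $h$-difference quotient of $[g_{z+h}(A_+)-g_{z+h}(A_-)]$ minus the candidate derivative as an explicit $t$-integral of resolvent products and bounds that remainder in $\cB_1(\cH)$-norm by $O(h)$, using only the relative trace class fact \eqref{3.Apmrtrcl} and elementary spectral bounds of the type \eqref{B.43a}, \eqref{B.43b}; the membership $[\partial_z g_z(A_+)-\partial_z g_z(A_-)]\in\cB_1(\cH)$ needed alongside \eqref{7.trclgz} is obtained cheaply from \cite[Theorem\ 8.7.1]{Ya92}, since $\partial_z g_z$ satisfies \eqref{yaf1}, \eqref{yaf2} (this is \eqref{B.40b}). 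You instead propose to push the Section \ref{s6} machinery to complex $z$: write $g_z(A_+)-g_z(A_-)=T_{\phi_z}[\ol{K_z}]$ and deduce $\cB_1(\cH)$-analyticity from analyticity of $\phi_z$ in $\mathfrak{A}_0$ and of $\ol{K_z}$ in $\cB_1(\cH)$, via \eqref{normTphi}. This is viable and more conceptual, and it yields locally uniform control of $\|g_z(A_+)-g_z(A_-)\|_{\cB_1(\cH)}$ as a by-product; the paper's argument is more pedestrian but uses nothing beyond resolvent algebra and facts already in hand. Your closing step (continuity of the trace plus $\partial_z g_z(\lambda)=\tfrac12\lambda(\lambda^2-z)^{-3/2}$) coincides with the paper's.

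Two steps you label routine carry the real weight, and should be spelled out if you go this way. First, Hypothesis \ref{spm} and Lemmas \ref{zam}, \ref{ffTOI} are stated (and Lemma \ref{zam} is proved) for real-valued $\alpha,\beta$; with $\alpha_z(\lambda)=(\lambda^2-z)^{-1/4}$ complex, the adjoint manipulations behind \eqref{CommDefAssumptionsExplainedSLF} pick up complex conjugates, so these lemmas must be re-derived (harmless, but they cannot be quoted as is). Second, the $z$-dependent analogue of \eqref{final_psi} is not obtained by simply replacing $1$ by $-z$: the factors $(\lambda^2-z)^{\pm is/2}$ now have sup-norm of order $e^{|s|\theta_z/2}$ with $\theta_z=\sup_{\lambda\in\bbR}|\arg(\lambda^2-z)|$, and $\theta_z$ approaches $\pi$ as $z$ nears $(0,\infty)$; the $\mathfrak{A}_0$-bound therefore hinges on the precise decay $\widehat{\zeta}(s)=\pi/\cosh(\pi s)=O(e^{-\pi|s|})$, together with the continuation of the Fourier inversion for $\zeta$ into the strip $|\Im(w)|<\pi$ (here $|\Im(w)|<\pi/2$). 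The margin exists and the resulting bounds are locally uniform on compact subsets of $\bbC\backslash[0,\infty)$ — which is all you need — but this computation is exactly what your sketch defers, and without it the claimed $\mathfrak{A}_0$-analyticity of $\phi_z$ is not established.
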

\begin{proof}
Throughout this proof we choose $z\in\bbC\backslash [0,\infty)$ 
and $h \in\bbC$ satisfying 
$|h| < \varepsilon$ with $0 < \varepsilon$ sufficiently small such that also 
$z, (z + h) \in\bbC\backslash [0,\infty)$. Due to the self-adjointness 
of $A_\pm$ in $\cH$, 
\begin{equation}
\sigma\big(A_+^2\big) \cup \sigma\big(A_-^2\big) 
\subseteq \big[\sigma_0, \infty\big) \subseteq [0,\infty).
\end{equation}  
where we abbreviated
\begin{equation}
\sigma_0 = \min\big\{\inf \big(\sigma\big(A_+^2\big)\big), 
\inf \big(\sigma\big(A_-^2\big)\big)\big\} \geq 0. 
\end{equation}

We recall the integral representations 
\begin{equation}
A_{\pm} (A_{\pm}^2 -z I)^{-1/2}f = \f{1}{\pi} \int_0^{\infty} t^{-1/2} 
(A_{\pm}^2 + (- z + t) I)^{-1} A_{\pm} f \, dt, \quad f \in \dom(A_{\pm}),    \lb{B.39}
\end{equation}
valid in the strong sense in $\cB(\cH)$ (cf., e.g., \cite[Sect.\ V.3.11]{Ka80}). As a consequence 
of \eqref{B.39} one computes
\begin{align}
& \f{1}{h} [g_{z + h}(A_+) - g_{z }(A_+)] 
- \f{d}{dz} g_{z } (A_+)  
- \f{1}{h} [g_{z + h}(A_-) - g_{z }(A_-)] 
 + \f{d}{dz} g_{z } (A_-)    \no \\
& \quad = \f{h}{\pi} \int_0^{\infty} t^{-1/2} 
 \bigg[A_+ (A_+^2 + (-z +t) I)^{-2} 
 (A_+^2 + (-z - h + t) I)^{-1}   \no \\
& \hspace*{2.8cm}    - A_- (A_-^2 + (-z +t) I)^{-2} 
 (A_-^2 + (-z - h + t) I)^{-1}\bigg] dt  \no \\
 & \quad = \f{h}{\pi} \int_0^{\infty} t^{-1/2} 
 \bigg[(A_+ - A_-) (A_+^2 + (-z +t) I)^{-2} 
 (A_+^2 + (-z - h + t) I)^{-1}   \no \\
& \hspace*{2.8cm} + A_- (A_+^2 + (-z +t) I)^{-2} 
 (A_+^2 + (-z - h + t) I)^{-1}   \no \\
& \hspace*{2.8cm}    - A_- (A_-^2 + (-z +t) I)^{-2} 
 (A_-^2 + (-z - h + t) I)^{-1}\bigg] dt.    \lb{B.40} 
\end{align} 
One notes that in contrast to \eqref{B.39}, \eqref{B.40} now holds in the norm 
sense in $\cB(\cH)$. 

Next, we recall \eqref{7.trclgz}, that is, 
\begin{equation}
[g_{z}(A_+) - g_{z}(A_-)] \in\cB_1(\cH), \quad z\in \bbC\backslash [0, \infty), 
\lb{B.40a}
\end{equation}
and note that 
\begin{equation}
\bigg[\f{d}{dz} g_{z } (A_+) -  \f{d}{dz} g_{z } (A_-)\bigg] 
\in\cB_1(\cH), \quad z\in \bbC\backslash [0, \infty).     \lb{B.40b}
\end{equation}
Indeed, \eqref{B.40b} follows from \cite[Theorem\ 8.7.1]{Ya92}, as 
$(d/dz) g_z(\cdot)$ satisfies the conditions \eqref{yaf1} (with $\varepsilon = 1$) 
and \eqref{yaf2} (both limits vanishing). 

Hence, 
\begin{align}
& \bigg\|\f{1}{h} [g_{z + h}(A_+) - g_{z }(A_+)] 
- \f{d}{dz} g_{z } (A_+)   
- \f{1}{h} [g_{z + h}(A_-) - g_{z }(A_-)] 
 + \f{d}{dz} g_{z } (A_-)\bigg\|_{\cB_1(\cH)}      \no \\
 & \; \leq \f{|h|}{\pi} \int_0^{\infty} t^{-1/2} 
 \big\|(A_+ - A_-) (A_+^2 + (-z +t) I)^{-2} 
 (A_+^2 + (-z - h + t) I)^{-1}\big\|_{\cB_1(\cH)} dt   \no \\
& \;\quad + \f{|h|}{\pi} \int_0^{\infty} t^{-1/2} 
\big\|A_- (A_+^2 + (-z +t) I)^{-2} 
 (A_+^2 + (-z - h + t) I)^{-1}     \lb{B.41}  \\
& \hspace*{3.2cm}    - A_- (A_-^2 + (-z +t) I)^{-2} 
 (A_-^2 + (-z - h + t) I)^{-1}\big\|_{\cB_1(\cH)} \, dt.  \no
\end{align} 
Investigating the terms in \eqref{B.41} individually, and recalling,
\begin{equation}
(A_+ - A_-) (A_-^2 - z I)^{-1/2}, \, (A_+ - A_-) (A_+^2 - z I)^{-1/2} 
\in \cB_1(\cH), \quad z \in \rho(A_-^2),  \lb{B.42}
\end{equation}
by \eqref{3.Apmrtrcl}, one estimates for the first term on the right-hand 
side of \eqref{B.41}
\begin{align}
& \f{|h|}{\pi} \int_0^{\infty} t^{-1/2} 
 \big\|(A_+ - A_-) (A_+^2 + (-z +t) I)^{-2} 
 (A_+^2 + (-z - h + t) I)^{-1}\big\|_{\cB_1(\cH)} dt   \no \\
& \quad \leq C (\varepsilon, z) \f{|h|}{\pi} 
\big\|(A_+ - A_-) (|A_+| + I)^{-1}\big\|_{\cB_1(\cH)}  
\int_0^{\infty} t^{-1/2} (\eta_0 (\varepsilon, z) + t)^{-1}  \, dt < \infty,  \lb{B.43}
\end{align}
where
\begin{align}
& \|(|A_+| + I)(A_+^2 + (-z + t)I)^{-1}\|_{\cB(\cH)} 
= \sup_{\mu \geq \sigma_0} \bigg|\f{\mu^{1/2} + 1}{\mu - z + t}\bigg| 
\leq C (\varepsilon, z),    \lb{B.43a} \\
& \|(A_+^2 + (-z - h + t)^{-1}\|_{\cB(\cH)} = 
\sup_{\mu \geq \sigma_0} \f{1}{|\mu - z - h + t|} \leq \f{1}{\eta_0(\varepsilon, z) + t} 
\lb{B.43b} 
\end{align}
for $C (\varepsilon, z) > 0$ independent of $t>0$,  
and for some $\eta_0 (\varepsilon, z) >0$, with $\eta_0 (\varepsilon, z)$ independent of $h\in\bbC$ since we assumed $z, (z + h) \in \rho\big(A_+^2\big) \cap \rho\big(A_-^2\big)$ for all $h \in\bbC$, $|h| < \varepsilon$, with $0 < \varepsilon$ sufficiently small. 

Next, we turn to the second term on the right-hand side of \eqref{B.41} and write
\begin{align}
& \f{|h|}{\pi} \int_0^{\infty} t^{-1/2} 
\big\|A_- \big[(A_+^2 + (-z +t) I)^{-2} 
 (A_+^2 + (-z - h + t) I)^{-1}     \no  \\
& \hspace*{2.7cm}    - (A_-^2 + (-z +t) I)^{-2} 
 (A_-^2 + (-z - h + t) I)^{-1}\big]\big\|_{\cB_1(\cH)} \, dt   \no \\
& \quad = \f{|h|}{\pi} \int_0^{\infty} t^{-1/2} 
\big\|A_- \big[(A_+^2 + (-z +t) I)^{-2} 
 (A_+^2 + (-z - h + t) I)^{-1}     \no  \\
 & \hspace*{3.5cm}    - (A_+^2 + (-z +t) I)^{-2} 
 (A_-^2 + (-z - h + t) I)^{-1}     \no \\
 & \hspace*{3.5cm}    + (A_+^2 + (-z +t) I)^{-2} 
 (A_-^2 + (-z - h + t) I)^{-1}     \no \\
& \hspace*{3.5cm}    - (A_-^2 + (-z +t) I)^{-2} 
 (A_-^2 + (-z - h + t) I)^{-1}\big]\big\|_{\cB_1(\cH)} \, dt   \no \\
& \quad \leq \f{|h|}{\pi} \int_0^{\infty} t^{-1/2} 
 \big\|A_- (A_+^2 + (-z + t) I)^{-2}   \no \\
& \hspace*{2.78cm}  \times \big[(A_+^2 + (-z - h +t) I)^{-1} 
 - (A_-^2 + (-z - h + t) I)^{-1}\big]\big\|_{\cB_1(\cH)} \, dt   \no \\
& \qquad + \f{|h|}{\pi} \int_0^{\infty} t^{-1/2}  
 \big\|A_- \big[(A_+^2 + (-z +t) I)^{-2} 
-  (A_-^2 + (-z + t) I)^{-2}\big]    \no \\
& \hspace*{3.1cm} \times 
(A_-^2 + (-z - h + t) I)^{-1} \big\|_{\cB_1(\cH)} \, dt  \no \\
& \quad \leq \f{|h|}{\pi} \int_0^{\infty} t^{-1/2} (\eta_0 (\varepsilon, z) + t)^{-1}  
 \big\|A_- (A_+^2 + (-z + t)I)^{-1}\big\|_{\cB(\cH)}  \no \\
& \hspace*{1.8cm} \times \big\|(A_+^2 + (-z - h +t) I)^{-1} 
 - (A_-^2 + (-z - h + t) I)^{-1}\big\|_{\cB_1(\cH)} \, dt    \no \\
& \qquad + \f{|h|}{\pi} \int_0^{\infty} t^{-1/2} (\eta_0 (\varepsilon, z) + t)^{-1} 
\lb{B.44} \\
& \hspace*{2.1cm}  \times 
\big\|A_- \big[(A_+^2 + (-z +t) I)^{-2} 
-  (A_-^2 + (-z + t) I)^{-2}\big]\big\|_{\cB_1(\cH)} \, dt.  \no  
\end{align}

To complete the proof one estimates the following norms:
\begin{align}
& \big\|A_- (A_+^2 + (-z + t)I)^{-1}\big\|_{\cB(\cH)}    \no \\
& \quad \leq \big\|A_- (|A_+| + I)^{-1}\big\|_{\cB(\cH)} 
\big\|(|A_+| + I) (A_+^2 + (-z + t)I)^{-1}\big\|_{\cB(\cH)}   \no \\
& \quad \leq C_1 (\varepsilon, z)
\sup_{\mu \geq \sigma_0} \bigg|\f{\mu^{1/2} + 1}{\mu - z + t}\bigg| 
\leq \wti C_1 (\varepsilon, z)       \lb{B.45} 
\end{align}
and 
\begin{align} 
& \big\|(A_+^2 + (-z - h +t) I)^{-1} 
 - (A_-^2 + (-z - h + t) I)^{-1}\big\|_{\cB_1(\cH)}   \no \\
& \quad = \big\|A_+(A_+^2 + (-z - h +t) I)^{-1} 
\big[(A_- - A_+)\big] (A_-^2 + (-z - h + t) I)^{-1}  \no \\ 
& \qquad + \big[(A_- - A_+) (A_+^2 + (-z - h + t) I)^{-1}\big]^* 
A_- (A_-^2 + (-z - h + t) I)^{-1}\big\|_{\cB_1(\cH)}   \no \\
& \quad \leq \big\|A_+ (A_+^2 + (-z - h +t) I)^{-1} \big\|_{\cB(\cH)}   \no \\
& \qquad \quad \times \big\|(A_- - A_+) (A_-^2 + (-z - h + t) I)^{-1}\big\|_{\cB_1(\cH)}
   \no \\
& \qquad + \big\|(A_- - A_+) (A_+^2 + (-z - h +t) I)^{-1} \big\|_{\cB_1(\cH)}  \no \\
& \qquad \quad \times \big\|A_- (A_-^2 + (-z - h + t) I)^{-1}\big\|_{\cB(\cH)}   \no \\
& \quad = C_1 (\varepsilon, z)\big\|(A_- - A_+) (|A_-| + I)^{-1}\big\|_{\cB_1(\cH)}   
\no \\
& \qquad + C_2 (\varepsilon, z) \big\|(A_- - A_+) (|A_+| + I)^{-1}\big\|_{\cB_1(\cH)}, 
\lb{B.46}
\end{align}
for appropriate constants $C_j (\varepsilon, z)>0$, $j=1,2$, independent of 
$t>0$ and $h\in\bbC$, $|h|<\varepsilon$, and similarly, 
\begin{align}
& \big\|A_- \big[(A_+^2 + (-z +t) I)^{-2} 
-  (A_-^2 + (-z + t) I)^{-2}\big]\big\|_{\cB_1(\cH)}  \no \\
& \quad = \big\|A_-\big[(A_+^2 + (-z +t) I)^{-2} 
- (A_-^2 + (-z +t) I)^{-1} (A_+^2 + (-z +t) I)^{-1}   \no \\ 
& \qquad \;\,\, + (A_-^2 + (-z +t) I)^{-1} (A_+^2 + (-z +t) I)^{-1} 
-  (A_-^2 + (-z + t) I)^{-2}\big]\big\|_{\cB_1(\cH)}  \no \\
& \quad = \big\|A_- (A_-^2 + (-z +t) I)^{-1} 
(A_+^2 + (-z +t) I)^{-1}(A_-^2 - A_+^2) (A_-^2 + (-z +t) I)^{-1} 
\no \\
& \qquad \;\,\, + A_- (A_+^2 + (-z +t) I)^{-1} (A_-^2 - A_+^2) 
(A_-^2 + (-z +t) I)^{-1}     \no \\
& \qquad \quad \;\,\, \times (A_+^2 + (-z +t) I)^{-1}\big\|_{\cB_1(\cH)} 
\no \\ 
& \quad = \big\|A_- (A_-^2 + (-z +t) I)^{-1} 
A_+ (A_+^2 + (-z +t) I)^{-1}     \no \\ 
& \qquad \;\,\, \times \big[(A_- - A_+) (A_-^2 + (-z +t) I)^{-1}\big]     \no \\ 
& \qquad \;\,\, + A_- (A_-^2 + (-z +t) I)^{-1} 
\big[(A_- - A_+) (A_+^2 + (-z +t) I)^{-1}\big]^*     \no \\ 
& \qquad \quad \;\,\, \times A_- (A_-^2 + (-z +t) I)^{-1}  \no \\ 
& \qquad \;\,\, + A_- A_+ (A_+^2 + (-z +t) I)^{-1} 
\big[(A_- - A_+) (A_-^2 + (-z +t) I)^{-1}\big]     \no \\ 
& \qquad \quad \;\,\, \times (A_+^2 + (-z +t) I)^{-1}  \no \\
& \qquad \;\,\, + A_- (A_+^2 + (-z +t) I)^{-1} 
\big[(A_- - A_+) (A_-^2 + (-z +t) I)^{-1}\big]     \no \\ 
& \qquad \quad \;\,\, \times A_- (A_+^2 + (-z +t) I)^{-1}\big\|_{\cB_1(\cH)}    
\no \\
& \quad \leq \big\|A_-(A_-^2 + (-z +t) I)^{-1}\big\|_{\cB(\cH)} 
\big\|A_+ (A_+^2 + (-z +t) I)^{-1}\big\|_{\cB(\cH)}    \no \\
& \qquad \quad \; \times \big\|(A_- - A_+)  (A_-^2 + (-z +t) I)^{-1} \big\|_{\cB_1(\cH)}    
\no \\
& \qquad \; + \big\|A_-(A_-^2 + (-z +t) I)^{-1}\big\|_{\cB(\cH)}^2 
\big\|(A_- - A_+) (A_+^2 + (-z +t) I)^{-1}\big\|_{\cB_1(\cH)}      \no \\
& \qquad \; + \big\|A_- A_+ (A_+^2 + (-z +t) I)^{-1} \big\|_{\cB(\cH)} 
\big\|(A_+^2 + (-z +t) I)^{-1}\big\|_{\cB(\cH)}    \no \\
& \qquad \quad \; \times \big\|(A_- - A_+) (A_-^2 + (-z +t) I)^{-1}\big\|_{\cB_1(\cH)}    
\no \\
& \qquad \; + \big\|A_- (A_+^2 + (-z +t) I)^{-1}\big\|_{\cB(\cH)}^2 
\big\|(A_- - A_+) (A_-^2 + (-z +t) I)^{-1}\big\|_{\cB_1(\cH)}     \no \\
& \quad = C_3 (\varepsilon, z) \big\|(A_- - A_+) (|A_-| + I)^{-1}\big\|_{\cB_1(\cH)} 
\no \\
& \qquad + C_4 (\varepsilon, z) \big\|(A_- - A_+) (|A_+| + I)^{-1}\big\|_{\cB_1(\cH)},   
\lb{B.47}
\end{align}
for appropriate constants $C_k  (\varepsilon, z)>0$, $k=3,4$, independent of 
$t>0$ and $h\in\bbC$, $|h|<\varepsilon$, repeatedly applying estimates of the 
type \eqref{B.43a}, \eqref{B.43b}, and \eqref{B.45}.

Finally, combining \eqref{B.41}--\eqref{B.47} yields 
\begin{align}
& \bigg\|\f{1}{h} \Big[[g_{z + h}(A_+) - g_{z + h}(A_-)] - [g_{z }(A_+) - g_{z }(A_-)]\Big] 
\no \\
& \;\;  - \bigg(\f{d}{dz} g_{z } (A_+) - \f{d}{dz} g_{z } (A_-)\bigg)\bigg\|_{\cB_1(\cH)} 
 \underset{h\to 0}{=} \Oh(h)    \lb{B.49}
\end{align}
and proves the required differentiability in trace norm. Since 
$z\in \bbC\backslash [0,\infty)$ was arbitrary, 
one concludes that \eqref{B.38a} holds. 
\end{proof}

We note that Lemma \ref{lB.1} extends to $z\in \rho(A_+^2) \cap \rho(A_-^2)$.

The function $g_z(x)$, $x\in\bbR$, in Lemma \ref{lB.1} should be viewed as a smooth version of a step function approaching $\pm 1$ as $x \to \pm \infty$. In this context 
we also note that compactness for operators of the type 
\begin{equation}
[\arg(A_+ - z I) - \arg(A_- - z I)], \quad z \in \bbC_+= \{z\in\bbC\,|\, \Im(z) > 0\}, 
\end{equation}
was proved in \cite[Theorem\ 7.3]{Pu01}. 

\medskip

\noindent {\bf Acknowledgments.}
We are indebted to Alan Carey, Alexander Gomilko, Galina Levitina, Alexander Pushnitski, 
Arnd Scheel, Barry Simon, and Alexander Stroh\-maier for helpful discussions. 
We are particularly grateful to Alan Carey for his steadfast support of this 
project.

 
\end{document}